\setlist[enumerate]{leftmargin=2em}
\setlist[itemize]{leftmargin=2em}
\definecolor{green}{rgb}{0,0.8,0} 
\newtheorem{maintheorem}{Theorem}
\newtheorem{maincorollary}[maintheorem]{Corollary}
\newtheorem{theorem}{Theorem}[section]
\newtheorem{lemma}[theorem]{Lemma}
\newtheorem{proposition}[theorem]{Proposition}
\theoremstyle{definition}
\theoremstyle{remark}
\newtheorem{remark}[theorem]{Remark}
\numberwithin{equation}{section}
\newcommand{\nrm}[1]{\Vert#1\Vert}
\newcommand{\abs}[1]{\vert#1\vert}
\newcommand{\brk}[1]{\langle#1\rangle}
\newcommand{\set}[1]{\{#1\}}
\newcommand{\nnrm}[1]{{\vert\kern-0.25ex\vert\kern-0.25ex\vert #1 
    \vert\kern-0.25ex\vert\kern-0.25ex\vert}}
\newcommand{\supp}{{\mathrm{supp}}\,}
\renewcommand{\Re}{\mathrm{Re}}
\newcommand{\aeq}{\simeq}
\newcommand{\aleq}{\lesssim}
\newcommand{\ageq}{\gtrsim}
\newcommand{\lap}{\Delta}
\newcommand{\ud}{\mathrm{d}}
\newcommand{\rd}{\partial}
\newcommand{\nb}{\nabla}
\newcommand{\alp}{\alpha}
\newcommand{\bt}{\beta}
\newcommand{\dlt}{\delta}
\newcommand{\eps}{\epsilon}
\newcommand{\veps}{\varepsilon}
\newcommand{\lmb}{\lambda}
\newcommand{\sgm}{\sigma}
\newcommand{\tht}{\theta}
\newcommand{\omg}{\omega}
\newcommand{\bfb}{{\bf b}}
\newcommand{\bfp}{{\bf p}}
\newcommand{\bfu}{{\bf u}}
\newcommand{\bfv}{{\bf v}}
\newcommand{\bfx}{{\bf x}}
\newcommand{\bfy}{{\bf y}}
\newcommand{\bfB}{{\bf B}}
\newcommand{\bfE}{{\bf E}}
\newcommand{\bfG}{{\bf G}}
\newcommand{\bfH}{{\bf H}}
\newcommand{\bfJ}{{\bf J}}
\newcommand{\bfU}{{\bf U}}
\newcommand{\bfV}{{\bf V}}
\newcommand{\bfW}{{\bf W}}
\newcommand{\bfZ}{{\bf Z}}
\newcommand{\bfdlt}{\boldsymbol{\dlt}}
\newcommand{\bbN}{\mathbb N}
\newcommand{\bbP}{\mathbb P}
\newcommand{\bbR}{\mathbb R}
\newcommand{\bbT}{\mathbb T}
\newcommand{\bbZ}{\mathbb Z}
\newcommand{\calB}{\mathcal B}
\newcommand{\calC}{\mathcal C}
\newcommand{\calE}{\mathcal E}
\newcommand{\calF}{\mathcal F}
\newcommand{\calL}{\mathcal L}
\newcommand{\calR}{\mathcal R}
\newcommand{\calS}{\mathcal S}
\newcommand{\frkb}{\mathfrak b}
\newcommand{\pfstep}[1]{\vskip.5em \noindent {\it #1.}}
\newcommand{\ackn}[1]{
\addtocontents{toc}{\protect\setcounter{tocdepth}{1}}
\subsection*{Acknowledgements} {#1}
\addtocontents{toc}{\protect\setcounter{tocdepth}{2}} }
\newcommand{\Abs}[1]{\left\vert#1\right\vert}		
\newcommand{\bgu}{\mathring{\bfu}}				
\newcommand{\bgB}{\mathring{\bfB}}				
\newcommand{\bfomg}{\boldsymbol{\omg}}		
\newcommand{\err}{\boldsymbol{\epsilon}}		
\newcommand{\errh}{\boldsymbol{\delta}}			
\newcommand{\errwp}{\boldsymbol{e}}		
\newcommand{\tu}{\tilde{u}}					
\newcommand{\tb}{\tilde{b}}					
\newcommand{\tpsi}{\tilde{\psi}}					
\newcommand{\tomg}{\tilde{\omg}}				
\newcommand{\dfrm}[1]{{}^{(#1)} \pi}				
\newcommand{\mean}[1]{\bar{#1}}				
\begin{document}

\title[]{On the Cauchy problem for the Hall and electron magnetohydrodynamic equations without resistivity I: illposedness near degenerate stationary solutions}
\author{In-Jee Jeong}%
\address{KIAS, Seoul, Korea 02455}%
\email{ijeong@kias.re.kr}%

\author{Sung-Jin Oh}%
\address{UC Berkeley, CA, USA 94720 and KIAS, Seoul, Korea 02455}%
\email{sjoh@math.berkeley.edu}%

\begin{abstract}
In this article, we prove various illposedness results for the Cauchy problem for the incompressible Hall- and electron-magnetohydrodynamic (MHD) equations without resistivity. These PDEs are fluid descriptions of plasmas, where the effect of collisions is neglected (no resistivity), while the motion of the electrons relative to the ions (Hall current term) is taken into account. The Hall current term endows the magnetic field equation with a quasilinear dispersive character, which is key to our mechanism for illposedness.

Perhaps the most striking conclusion of this article is that the Cauchy problems for the Hall-MHD (either viscous or inviscid) and the electron-MHD equations, under one translational symmetry, are ill-posed near the trivial solution in any sufficiently high regularity Sobolev space $H^{s}$ and even in any Gevrey spaces. This result holds despite obvious wellposedness of the linearized equations near the trivial solution, as well as conservation of the nonlinear energy, by which the $L^{2}$ norm (energy) of the solution stays constant in time. The core illposedness (or instability) mechanism is degeneration of certain high frequency wave packet solutions to the linearization around a class of linearly degenerate stationary solutions of these equations, which are essentially dispersive equations with degenerate principal symbols. The method developed in this work is sharp and robust, in that we also prove nonlinear $H^{s}$-illposedness (for $s$ arbitrarily high) in the presence of fractional dissipation of any order less than $1$, matching the previously known wellposedness results.

The results in this article are complemented by a companion work, where we provide geometric conditions on the initial magnetic field that ensure wellposedness(!) of the Cauchy problems for the incompressible Hall and electron-MHD equations. In particular, in stark contrast to the results here, it is shown in the companion work that the nonlinear Cauchy problems are well-posed near any nonzero constant magnetic field.
\end{abstract}

\maketitle

\tableofcontents

\section{Introduction}
In magnetohydrodynamics (MHD), a plasma is described as a single electrically conducting fluid interacting with a magnetic field. In the incompressible case, the equation of motion takes the form
\begin{equation} \label{eq:mhd} \tag{MHD}
\left\{
\begin{aligned}
	& \rd_{t} \bfu + \bfu \cdot \nb \bfu + \nb \bfp - \nu \lap \bfu = \bfJ \times \bfB,\\
	& \rd_{t} \bfB + \nb \times \bfE = 0, \\
	& \nb \cdot \bfu = \nb \cdot \bfB = 0,
\end{aligned}
\right.
\end{equation}
where $\bfu(t) : \bbR^{3} \to \bbR^{3}$ is the bulk plasma velocity field, $\bfp(t) : \bbR^{3} \to \bbR$ is the plasma pressure, $\nu \geq 0$ is the plasma viscosity, $\bfB(t), \bfE(t) : \bbR^{3} \to \bbR^{3}$ are the magnetic and electric fields, and $\bfJ$ is the current density. The celebrated \emph{ideal} (resp. \emph{resistive}) \emph{MHD equation} is obtained by additionally assuming no viscosity $\nu = 0$, Amp\'ere's law $\bfJ = \nb \times \bfB$ and ideal Ohm's law $\bfE + \bfu \times \bfB = 0$ (resp. Ohm's law $\bfE + \bfu \times \bfB = \eta \bfJ$, where $\eta > 0$ is the resistivity); the latter two effective laws close the system in terms of $(\bfu, \bfB)$.

Actual plasmas, however, are made up of at least two distinct species, namely, negatively-charged, lighter electrons and positively-charged, heavier ions. When the motion of the electrons is significantly faster compared to the bulk plasma, which is the case in many settings of astrophysical importance, Ohm's law attains a correction proportional to $\bfJ \times \bfB$, called the \emph{Hall current term}; see \cite{Light, Pecseli, ADF, JM} for formal derivations. The resulting system, first introduced by M.~J.~Lighthill \cite{Light}, is referred to as the \emph{Hall-MHD equation}. 

The subject of this paper and its companion \cite{JO2} is \emph{the incompressible Hall-MHD equation without resistivity}, i.e., \eqref{eq:mhd} with Amp\`ere's law, but with Ohm's law supplanted by  (normalized) \emph{generalized ideal Ohm's law}
\begin{equation*}
\bfE + \bfu \times \bfB = \bfJ \times \bfB.
\end{equation*}
In terms of $(\bfu, \bfB)$, the system takes the form
\begin{equation} \tag{Hall-MHD} \label{eq:hall-mhd}
\left\{
\begin{aligned}
&\rd_{t} \bfu + \bfu \cdot \nb \bfu + \nb \bfp  - \nu \lap \bfu = (\nb \times \bfB) \times \bfB,  \\
&\rd_{t} \bfB - \nb \times (\bfu \times \bfB) + \nb \times ((\nb \times \bfB) \times \bfB)= 0, \\
&\nb \cdot \bfu = \nb \cdot \bfB = 0.
\end{aligned}
\right.
\end{equation}
In the special case $\nu = 0$, the resulting system is called the \emph{ideal Hall-MHD equation}. 

The Hall current term $\nb \times ((\nb \times \bfB) \times \bfB)$ is both quasilinear\footnote{By which we mean that the Hall current term is nonlinear, but is linear in the highest order (i.e., second order) derivatives of $\bfB$.} and of the highest order; a priori, it may incur derivative losses. For this reason, previous mathematically rigorous investigations of the Hall-MHD equation were mostly carried out either in the presence of resistivity \cite{CDL, CL, CWo0, CWo1, CWo2, CWW, DaiLiu, Dai1, Dai2, Dai3}, which gives rise to a strong dissipative term $\eta \lap \bfB$ compensating for this loss, or in axisymmetry \cite{CWe, JKL}, in which the second order terms vanish. In the absence of resistivity and symmetries, even the basic question of (local) wellposedness of the Cauchy problem for \eqref{eq:hall-mhd} had been open. The answer to this question, as we show in this paper and its companion \cite{JO2}, turns out to be strikingly rich and markedly different compared to both the resistive Hall-MHD and the ideal MHD equations.

In the present paper we confirm that the derivative loss in the Hall current term cannot be avoided in general by establishing a number of illposedness results. In the central result (Theorem~\ref{thm:norm-growth}), we identify a strong instability mechanism for the linearized \eqref{eq:hall-mhd} around a stationary magnetic field with a degeneracy (i.e., vanishing) along a hypersurface, by which the energy of the initial perturbation is transferred to extremely small scales at a rate proportional to the frequency of the initial perturbation. Various linear and nonlinear illposedness results are proved as a consequence of this instability mechanism; see Sections~\ref{subsec:illposed-linear}--\ref{subsec:illposed-gevrey} below for their statements. In particular, we show that the Cauchy problem for \eqref{eq:hall-mhd} is ill-posed near the trivial solution $(\bfu, \bfB) = (0, 0)$ in any high regularity Sobolev space on any domain of the form $M = \bbT^{k} \times \bbR^{3-k}$ with $1 \leq k \leq 3$ (Corollary~\ref{cor:illposed-zero} and Theorem~\ref{thm:illposed-strong2}); this result is on the contrary to the cases of the resistive Hall-MHD equation \cite{CDL} and the ideal MHD equation \cite{Sch}. The Cauchy problem remains illposed even in Gevrey spaces (Theorem~\ref{thm:illposed-gevrey}), which is in stark difference to, for instance, the reverse heat equation $\rd_{t} f = - \lap f$ or the ill-posed problems underlying some classical hydrodynamical instabilities; see Section~\ref{subsec:discussions}. Our method also extends to the fractionally dissipative case, thereby establishing local illposedness in $H^{s}$ with arbitrarily high $s$ as long as the order of the dissipative term in the $\bfB$-equation is strictly less than $1$ (Theorem~\ref{thm:illposed-fradiss}). This result is sharp, exactly matching the wellposedness results previously obtained by Chae--Wan--Wu \cite{CWW} in the case the order is at least $1$.

In the companion work \cite{JO2}, we complement the illposedness results in this paper by providing geometric conditions on the initial magnetic field that ensure wellposedness(!) of the Cauchy problems for \eqref{eq:hall-mhd}. For instance, in contrast to the aforementioned illposedness result near the trivial solution, we prove that the Cauchy problem for \eqref{eq:hall-mhd} is well-posed near any \emph{nontrivial} constant magnetic field. We note that the latter setting is the more physically relevant one, going back to the original work of M.~J.~Lighthill \cite{Light}. For a short (and partial) summary of the results proved in \cite{JO2}, see Section~\ref{subsec:wellposed} below.

The essential features of \eqref{eq:hall-mhd} relevant in the issue of local ill- or wellposedness are more clearly seen in the simpler system
\begin{equation} \tag{E-MHD} \label{eq:e-mhd}
\left\{
\begin{aligned}
&\rd_{t} \bfB + \nb \times ((\nb \times \bfB) \times \bfB)= 0, \\
&\nb \cdot \bfB = 0,
\end{aligned}
\right.
\end{equation}
which is called the \emph{electron-MHD equation} (or the Hall equation) \cite[Section~10.7]{Pecseli}. It corresponds to the case when the bulk plasma is essentially at rest compared to the motion of the electrons. All the results in this paper and \cite{JO2} apply to both \eqref{eq:hall-mhd} and \eqref{eq:e-mhd}\footnote{This is with the exception of Theorem \ref{thm:illposed-strong2}, which works in a somewhat more restrictive setting for the \eqref{eq:hall-mhd} case.}. In fact, all the proofs will proceed by first handling the case of \eqref{eq:e-mhd}, and then extending the argument to \eqref{eq:hall-mhd}.

In both this paper and its companion \cite{JO2}, our basic insight is that the Hall current term endows the magnetic field equation with a \emph{quasilinear dispersive} (i.e., Schr\"odinger-like) character. The main ideas behind both the ill- and wellposedness results are most natural with such a viewpoint. In particular, the instability mechanism revealed in this paper is qualitatively distinct from the more classical examples of hydrodynamic instabilities (Kelvin--Helmholtz, Rayleigh--Taylor, boundary layer etc.), but it seems to be a prevalent phenomenon for degenerate dispersive equations.  We refer to Sections~\ref{subsec:ideas} and \ref{subsec:discussions} below for further discussion.

\subsection{Basic properties of \eqref{eq:hall-mhd} and \eqref{eq:e-mhd}} \label{subsec:hall-e-mhds}
To set the stage for the precise formulation of our main results, we begin with a discussion of some basic properties of \eqref{eq:hall-mhd} and \eqref{eq:e-mhd}. 
\subsubsection*{Energy identities}
A fundamental property of \eqref{eq:hall-mhd} and \eqref{eq:e-mhd}, of both mathematical and physical importance, is the energy identity. 
\begin{proposition} \label{prop:nonlin-en}
For a solution $(\bfu, \bfB)$ to \eqref{eq:hall-mhd} on $M = \bbT^{k} \times \bbR^{3-k}$ with sufficiently regularity and spatial decay, we have
\begin{equation*}
		\frac{\ud}{\ud t} \left( \frac{1}{2} \int_{M} (\abs{\bfu}^{2} + \abs{\bfB}^{2})(t) \, \ud x \ud y \ud z \right) = - \nu \int_{M} \abs{\nb \bfu}^{2}(t) \, \ud x \ud y \ud z.
\end{equation*}
Similarly, for a solution $\bfB$ to \eqref{eq:e-mhd} on $M = \bbT^{k} \times \bbR^{3-k}$ with sufficiently regularity and spatial decay, we have
\begin{equation*}
	\frac{\ud}{\ud t} \left( \frac{1}{2} \int_{M} \abs{\bfB}^{2}(t) \, \ud x \ud y \ud z \right)= 0.
\end{equation*}
\end{proposition}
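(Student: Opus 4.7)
\smallskip

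\noindent\textbf{Proof proposal.} The plan is to carry out a standard energy estimate: take the $L^{2}$ pairing of the $\bfu$-equation with $\bfu$ and of the $\bfB$-equation with $\bfB$, integrate by parts over $M = \bbT^{k} \times \bbR^{3-k}$ (boundary terms at infinity vanish by the spatial decay assumption; there are no boundaries in the torus directions), and check that all nonlinear contributions either cancel individually or cancel in pairs. I will first treat \eqref{eq:e-mhd}, which contains only the Hall structure, and then add in the coupling with $\bfu$ for \eqref{eq:hall-mhd}.

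For \eqref{eq:e-mhd}, pairing the equation with $\bfB$ gives
\begin{equation*}
\frac{1}{2} \frac{\ud}{\ud t} \int_{M} \abs{\bfB}^{2} \, \ud x \ud y \ud z = -\int_{M} \bfB \cdot \nb \times \left( (\nb \times \bfB) \times \bfB \right) \ud x \ud y \ud z.
\end{equation*}
I integrate by parts, using the vector-calculus identity $\int \bfA \cdot (\nb \times \bfC) = \int (\nb \times \bfA) \cdot \bfC$, to move the curl onto $\bfB$; the integrand then becomes $(\nb \times \bfB) \cdot ((\nb \times \bfB) \times \bfB)$, which vanishes pointwise because the scalar triple product $\bfX \cdot (\bfX \times \bfY)$ is zero. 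This yields the second identity. The only place regularity is used is to justify the integration by parts and the boundary-term vanishing.

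For \eqref{eq:hall-mhd}, I treat the $\bfu$- and $\bfB$-equations in parallel. Pairing the $\bfu$-equation with $\bfu$, the transport term $\int \bfu \cdot (\bfu \cdot \nb) \bfu$ and the pressure term $\int \bfu \cdot \nb \bfp$ both integrate to zero by $\nb \cdot \bfu = 0$; the viscous term produces $+\nu \int \abs{\nb \bfu}^{2}$ on the left-hand side. Pairing the $\bfB$-equation with $\bfB$ and integrating the two curls by parts as in the \eqref{eq:e-mhd} calculation, the Hall term again contributes zero pointwise, while the ideal MHD coupling becomes $-\int (\nb \times \bfB) \cdot (\bfu \times \bfB)$. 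Summing the two identities, the remaining Lorentz-force term from the $\bfu$-equation is
\begin{equation*}
\int_{M} \bfu \cdot \left( (\nb \times \bfB) \times \bfB \right) \ud x \ud y \ud z = \int_{M} (\nb \times \bfB) \cdot (\bfB \times \bfu) \, \ud x \ud y \ud z,
\end{equation*}
by the cyclic property of the triple product, which is exactly the negative of the ideal coupling term above. These two terms cancel, yielding the first identity.

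I do not expect any genuine obstacle: the argument is purely algebraic once one has enough regularity and decay to justify the integrations by parts (say $\bfu, \bfB \in H^{2}$ with suitable decay at spatial infinity, which is much less than anything used elsewhere in the paper). The only point that requires mild care is bookkeeping of the two uses of $\int \bfA \cdot (\nb \times \bfC) = \int (\nb \times \bfA) \cdot \bfC$ in the $\bfB$-equation and the cyclic identity for the triple product that produces the cancellation of the Lorentz force against the induction term.
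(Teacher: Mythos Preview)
Your proposal is correct and follows essentially the same approach as the paper: for \eqref{eq:e-mhd} the paper also pairs with $\bfB$, moves the curl by symmetry, and observes the pointwise vanishing of $(\nb\times\bfB)\cdot((\nb\times\bfB)\times\bfB)$; for \eqref{eq:hall-mhd} the paper leaves the details to the reader, and your bookkeeping of the Lorentz/induction cancellation via the cyclic triple product is exactly the intended computation.
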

The expressions inside the parentheses on the LHS are the \emph{energies} for \eqref{eq:hall-mhd} and \eqref{eq:e-mhd}, respectively. 

We only sketch the proof for \eqref{eq:e-mhd} and leave to the reader the (slightly more involved but similar) case of \eqref{eq:hall-mhd}. Multiplying \eqref{eq:e-mhd} by $\bfB$ and integrating on $M$, we have
\begin{equation*}
	\frac{1}{2} \frac{\ud}{\ud t} \int_{M} \abs{\bfB}^{2} \, \ud x \ud y \ud z
	= - \int_{M} \bfB \cdot (\nb \times ((\nb \times \bfB) \times \bfB) \, \ud x \ud y \ud z.
\end{equation*}
The Hall term multiplied by $\bfB$ disappears since the operator $\nb \times$ is symmetric:
\begin{equation*}
	\int_{M} \bfB \cdot (\nb \times ((\nb \times \bfB) \times \bfB) \, \ud x \ud y \ud z
	= \int_{M} (\nb \times \bfB) \cdot ((\nb \times \bfB) \times \bfB) \, \ud x \ud y \ud z = 0,
\end{equation*}
which completes the proof.

However, the situation is different when one tries to control higher Sobolev norms. For concreteness, consider the task of controlling $\nrm{\rd^{(N)} \bfB(t)}_{L^{2}}$ for a solution $\bfB$ to \eqref{eq:e-mhd}, where $\rd^{(N)}$ refers to an $N$-th order spatial derivative.  Performing a similar computation as above, we have from the Hall term a contribution of the form
\begin{equation} \label{eq:d-loss}
	\frac{1}{2} \frac{\ud}{\ud t} \int_{M} \abs{\rd^{(N)} \bfB}^{2} \, \ud x \ud y \ud z
	= - \int_{M} (\nb \times \rd^{(N)} \bfB) \cdot ((\nb \times \bfB) \times \rd^{(N)} \bfB) \, \ud x \ud y \ud z + \cdots
\end{equation}
where the other terms only involve up to $N$ derivatives of $\bfB$. It is not clear at all how to bound the integral on the right-hand side using $N$ derivatives of $\bfB$ only, and indeed, the results of this paper show that this loss of one derivative is unavoidable in certain cases.

\subsubsection*{Continuous and discrete symmetries}
Next, we describe some continuous and discrete symmetries of \eqref{eq:hall-mhd} and \eqref{eq:e-mhd}, which will be used in this paper.
\begin{itemize}
\item (Translational symmetry) For any $(t_{0}, x_{0},y_{0}, z_{0}) \in \bbR \times M$, \eqref{eq:hall-mhd} and \eqref{eq:e-mhd} are invariant under the translation $(\bfu, \bfB) \mapsto (\bfu, \bfB)(t - t_{0}, x - x_{0}, y - y_{0}, z - z_{0})$ and $\bfB \mapsto \bfB(t - t_{0}, x - x_{0}, y - y_{0}, z - z_{0})$, respectively.
\item (Rotational symmetry)
For any rotation matrix $O$, \eqref{eq:hall-mhd} and \eqref{eq:e-mhd} are invariant under the rotation $(\bfu, \bfB) \mapsto (O^{\top} \bfu, O^{\top} \bfB)(O(x, y, z)^{\top})$ and $\bfB \mapsto O^{\top} \bfB (O(x, y, z)^{\top})$, respectively.
\item (Reflection symmetry)
\eqref{eq:hall-mhd} and \eqref{eq:e-mhd} are invariant under the reflection about any hyperplane. For instance, the reflection about $\set{y = 0}$ for \eqref{eq:hall-mhd} is:
\begin{equation*}
	(\bfu, \bfB) \mapsto \calR (\bfu, \bfB)(x, y, z) = \left( \begin{pmatrix} \bfu^{x}(x, -y, z) \\ -\bfu^{y}(x, -y, z) \\ \bfu^{z}(x, -y, z) \end{pmatrix}, \begin{pmatrix} - \bfB^{x}(x, -y, z) \\ \bfB^{y}(x, -y, z) \\ - \bfB^{z}(x, -y, z) \end{pmatrix} \right),
\end{equation*}
and for \eqref{eq:e-mhd} is:
\begin{equation*} 
	\bfB \mapsto \calR \bfB(x, y, z) = \begin{pmatrix} - \bfB^{x}(x, -y, z) \\ \bfB^{y}(x, -y, z) \\ - \bfB^{z}(x, -y, z) \end{pmatrix}.
\end{equation*}
\item (Time reversal symmetry) In the ideal case $\nu = 0$, \eqref{eq:hall-mhd} is invariant under the time reversal $(\bfu, \bfB) \mapsto (-\bfu, -\bfB)(-t, x, y, z)$, and similarly \eqref{eq:e-mhd} is invariant under $\bfB \mapsto -\bfB(-t, x, y, z)$.

\item  {(Scaling symmetries for \eqref{eq:e-mhd}) For any $\alp \in \bbR$, \eqref{eq:e-mhd} on $M = \bbR^{3}$ is invariant under $\bfB \mapsto \lmb^{2-\alp} \bfB(\lmb^{-\alp} t, \lmb^{-1}(x, y, z))$. There is no exact scaling symmetry for \eqref{eq:hall-mhd}.}
\end{itemize}

\begin{remark} \label{rem:other-symm}
We mention in passing the following additional  {symmetry}, which will not be used in this paper, but  {is} used in \cite{JO2}:
\begin{itemize}
\item Galilean symmetry for \eqref{eq:hall-mhd}, $(\bfu, \bfB) \mapsto (\bfu - \mean{\bfU}, \bfB)(t, (x, y, z) + t \mean{\bfU})$. 
\end{itemize}
\end{remark}

\subsubsection*{Stationary solutions}
As is typical in (magneto)hydrodynamics, \eqref{eq:hall-mhd} and \eqref{eq:e-mhd} possess rich families of stationary solutions. A special class of highly symmetric stationary solutions, namely \emph{planar stationary magnetic fields with an additional symmetry}, will play a central role in this paper (see Section~\ref{subsec:ideas}). These solutions are characterized by the following properties:
\begin{itemize}
\item (Stationary magnetic field) The solution is of the form $\bfB = \bgB$ for \eqref{eq:e-mhd}, and $(\bfu, \bfB) = (0, \bgB)$ for \eqref{eq:hall-mhd}, where $\bgB$ is a $t$-independent vector field on $\bbR^{3}$ such that $\nb \cdot \bgB = 0$ (divergence-free) and $(\nb \times \bgB) \times \bgB$ is a pure gradient.

\item (Planarity) $\bgB$ is independent of the $z$-coordinate and $\bgB^{z}$ = 0. 
\item (Additional symmetry) $\bgB = \bgB^{x} \rd_{x} + \bgB^{y} \rd_{y}$, viewed as a vector field on $\bbR^{2}_{x, y}$, is invariant under a one-parameter family of isometries of $\bbR^{2}_{x, y}$.
\end{itemize}
The first property implies that $(0, \bgB)$ and $\bgB$ solve \eqref{eq:hall-mhd} and \eqref{eq:e-mhd}, respectively\footnote{Indeed, for the $\bfB$-equation in both \eqref{eq:hall-mhd} and \eqref{eq:e-mhd}, one uses the fact that gradient is curl-free. For the $\bfu$-equation in \eqref{eq:hall-mhd}, the contribution of $\bgB$ can be put into the pressure.}. In the third property, note that there are only two distinct possibilities up to symmetries: Either $\bgB$ is independent of one of the coordinates (say $x$) or it is axi-symmetric in $\bbR^{2}_{x,y}$.  

A complete classification of such stationary solutions is possible:
\begin{proposition} \label{prop:planar-stat}
A smooth planar stationary magnetic field with an additional symmetry is, up to symmetries, one of the following forms: ($f, g$ are smooth and $c_{0}, c_{1}, d \in \bbR$)
\begin{equation*}
	\bgB = f(y) \rd_{x}, \quad (c_{1} y + c_{0}) \rd_{x} + d \rd_{y}, \quad g(x^{2} + y^{2}) (x \rd_{y} - y \rd_{x}).
\end{equation*}
\end{proposition}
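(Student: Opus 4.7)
The strategy is to reduce the problem to an ODE analysis by exploiting the symmetry hypothesis, and then to impose the divergence-free and ``pure gradient'' conditions case by case. The key preliminary is the classification of connected one-parameter subgroups of $\mathrm{Isom}(\bbR^{2}_{x,y})$. The Lie algebra of the identity component is three-dimensional, spanned by $\rd_{x}$, $\rd_{y}$, and $-y\rd_{x} + x\rd_{y}$, so a general infinitesimal isometry takes the form $X = (a - cy)\rd_{x} + (b + cx)\rd_{y}$. When $c = 0$, $X$ generates a translation, which after conjugation by a rotation (a symmetry of \eqref{eq:hall-mhd} and \eqref{eq:e-mhd}) becomes $\rd_{x}$. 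When $c \neq 0$, $X$ has a unique fixed point $(-b/c, a/c)$, and conjugation by the translation moving that point to the origin reduces $X$ to a multiple of $-y\rd_{x} + x\rd_{y}$. Hence, up to applying a translation and rotation symmetry of the equations, one may assume $\bgB$ is invariant either under translation in $x$ or under rotation about the origin.

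In the translation-invariant case, $\bgB = \bgB^{x}(y)\rd_{x} + \bgB^{y}(y)\rd_{y}$, and the divergence-free condition forces $\bgB^{y} = d$ for a constant $d$. A direct computation gives
\begin{equation*}
(\nb \times \bgB) \times \bgB = d\, (\bgB^{x})'(y)\, \rd_{x} - \tfrac{1}{2} \tfrac{\ud}{\ud y}\bigl[(\bgB^{x}(y))^{2}\bigr] \rd_{y}.
\end{equation*}
For this to equal $\nb p$, integration of the $\rd_{x}$-component in $x$ yields $p = d(\bgB^{x})'(y)\,x + q(y)$; matching the $\rd_{y}$-component and using that its right-hand side is independent of $x$ forces $d(\bgB^{x})''(y) \equiv 0$. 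Thus either $d = 0$ (giving $\bgB = f(y)\rd_{x}$ with $f$ arbitrary) or $\bgB^{x}(y) = c_{1}y + c_{0}$ is affine (giving $(c_{1}y + c_{0})\rd_{x} + d\rd_{y}$).

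In the rotation-invariant case, write $\bgB = \calB_{r}(r)\hat{r} + \calB_{\theta}(r)\hat{\theta}$ in polar coordinates. The divergence-free condition reduces to $(r\calB_{r})' = 0$, so $r\calB_{r}$ is constant, and smoothness of $\bgB$ at the origin forces $\calB_{r} \equiv 0$. Smoothness of the vector field $\calB_{\theta}(r)\hat{\theta}$ at the origin then forces $\calB_{\theta}(r) = r\, g(r^{2})$ for some smooth $g$, yielding $\bgB = g(x^{2}+y^{2})(x\rd_{y} - y\rd_{x})$. For the gradient condition, a short computation shows $(\nb \times \bgB) \times \bgB = F(x^{2}+y^{2})(x\rd_{x} + y\rd_{y})$ for some smooth $F$, and since $x\rd_{x} + y\rd_{y} = \tfrac{1}{2}\nb(x^{2}+y^{2})$, this is automatically a pure gradient with no further restriction on $g$.

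The main technical point I expect to require care is the smoothness extraction $\calB_{\theta}(r) = r\, g(r^{2})$ in the rotational case, which uses the standard fact that a smooth axisymmetric vector field on $\bbR^{2}$ vanishing tangentially at the origin has an even Taylor expansion of the appropriate parity; the remaining ingredients are the Lie-algebraic classification of one-parameter subgroups of $SE(2)$ and a pair of direct computations.
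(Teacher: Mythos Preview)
Your proof is correct and follows the same overall strategy as the paper (reduce to the two symmetry cases, impose divergence-free, then impose stationarity as an ODE), but the computational route differs slightly. The paper does not compute $(\nb\times\bgB)\times\bgB$ directly; instead, it invokes the $(2+\tfrac{1}{2})$-dimensional reformulation of \eqref{eq:e-mhd} derived in Section~\ref{sec:2.5d} to extract the single scalar condition $\bgB\cdot\nb(\nb\times\bgB)^{z}=0$, which under $x$-invariance reads $\bgB^{y}\rd_{y}^{2}\bgB^{x}=0$ and under axi-symmetry reads $\bgB^{r}\rd_{r}(r^{-1}\rd_{r}(r^{2}\bgB^{\theta}))=0$. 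Since this scalar condition is precisely the curl of your vector $(\nb\times\bgB)\times\bgB$, the two computations are equivalent in one step; your approach is self-contained, while the paper's illustrates the utility of the reduced system it is about to use throughout. You are also more explicit than the paper in two places: the Lie-algebraic reduction of one-parameter isometry groups of $\bbR^{2}$ to translation or rotation, and the smoothness extraction $\calB_{\theta}(r)=r\,g(r^{2})$ at the origin (the paper's proof simply ends with ``$\bfB=f(r)\rd_{\theta}$'' and leaves the passage to $g(x^{2}+y^{2})$ implicit).
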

We postpone the proof until Section~\ref{subsec:planar-stat}.

\subsubsection*{Linearization around stationary solutions}
Returning to a general stationary solution to \eqref{eq:hall-mhd} of the form $(0, \bgB)$, let us consider perturbations of the form $(\bfu, \bfB) = (u, \bgB + b)$. The linearized equation satisfied by $(u, b)$ (i.e., the linearization of \eqref{eq:hall-mhd} around $\bgB$) is:
\begin{equation} \label{eq:hall-mhd-lin}
	\left\{
\begin{aligned}
	& \rd_{t} u - \nu \lap u = \bbP ((\nb \times \bgB) \times b + (\nb \times b) \times \bgB) \\
	& \rd_{t} b + \nb \times (u \times \bgB) + \nb \times ((\nb \times b) \times \bgB) + \nb \times ((\nb \times \bgB) \times b) = 0, \\
	& \nb \cdot u = \nb \cdot b = 0,
\end{aligned}
\right.
\end{equation}
where $\bbP$ is the Leray projection operator onto divergence-free vector fields.

In the case of \eqref{eq:e-mhd}, the linearization around a stationary solution of the form $\bgB$ takes the form
\begin{equation} \label{eq:e-mhd-lin}
	\left\{
\begin{aligned}
	& \rd_{t} b + \nb \times ((\nb \times b) \times \bgB) + \nb \times ((\nb \times \bgB) \times b) = 0, \\
	& \nb \cdot b = 0.
\end{aligned}
\right.
\end{equation}

For the linearized equations, the $L^{2}$ norm of the perturbation is still under control. Indeed, we have the following linearized energy identities:
\begin{proposition} \label{prop:lin-en}
For a sufficiently regular and decaying solution $(u, b)$ to \eqref{eq:hall-mhd-lin}, we have
\begin{equation} \label{eq:lin-en-hall}
\begin{aligned}
	& \frac{\ud}{\ud t} \left(\frac{1}{2} \int_{M} \abs{u}^{2}(t) + \abs{b}^{2}(t) \, \ud x \ud y \ud z \right) + \nu \int_{M} \abs{\nb u}^{2}(t) \, \ud x \ud y \ud z \\
	& =  \int_{M} ((b \cdot \nb) \bgB_{j}) u^{j} - ((u \cdot \nb) \bgB_{j}) b^{j} \, \ud x \ud y \ud z 
	+ \int_{M} ((b \cdot \nb) (\nb \times \bgB)_{j}) b^{j} \, \ud x \ud y \ud z.
\end{aligned}
\end{equation}

Similarly, for a sufficiently regular and decaying solution $b$ to \eqref{eq:e-mhd-lin}, we have
\begin{equation} \label{eq:lin-en-e}
	\frac{\ud}{\ud t} \left(\frac{1}{2} \int_{M} \abs{b}^{2}(t) \, \ud x \ud y \ud z \right)
	=  \int_{M} ((b \cdot \nb) (\nb \times \bgB)_{j}) b^{j} \, \ud x \ud y \ud z.
\end{equation}
\end{proposition}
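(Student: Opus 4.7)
The plan is to pair each linearized equation in the $L^2$ inner product with the corresponding perturbation and simplify using two algebraic tools: self-adjointness of the curl, $\int V \cdot (\nb \times W) = \int (\nb \times V) \cdot W$, and the vector identity
\[
\nb \times (V \times W) = (W \cdot \nb) V - (V \cdot \nb) W + V (\nb \cdot W) - W (\nb \cdot V).
\]
The divergence-free conditions $\nb \cdot u = \nb \cdot b = \nb \cdot \bgB = \nb \cdot (\nb \times \bgB) = 0$ annihilate the last two pieces of the identity for every choice of $V, W \in \{u, b, \bgB, \nb \times \bgB\}$ relevant below.

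I would first handle \eqref{eq:e-mhd-lin}. Pairing with $b$ produces two Hall contributions. The term with $V = \nb \times b$, $W = \bgB$ vanishes at once: by self-adjointness of curl it equals $\int (\nb \times b) \cdot ((\nb \times b) \times \bgB)$, which is pointwise zero by antisymmetry of the cross product. For the other term, $\int b \cdot \nb \times ((\nb \times \bgB) \times b)$, I would apply the vector identity with $V = \nb \times \bgB$, $W = b$ to rewrite the integrand as $b \cdot [(b \cdot \nb)(\nb \times \bgB) - ((\nb \times \bgB) \cdot \nb) b]$. The second piece equals $\tfrac{1}{2}(\nb \times \bgB) \cdot \nb(|b|^2)$, which integrates to zero after one more integration by parts using $\nb \cdot (\nb \times \bgB) = 0$, leaving precisely the right-hand side of \eqref{eq:lin-en-e}.

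For \eqref{eq:hall-mhd-lin}, I would add the $L^2$ pairings of the $u$- and $b$-equations against $u$ and $b$. The viscous term contributes $\nu \int |\nb u|^2$ after one integration by parts, and the pure $b$-Hall terms are handled exactly as in the E-MHD case, producing the last integral on the right. The decisive new step is the cancellation of the $u$-$b$ cross couplings: the Leray projection $\bbP$ disappears when paired against the divergence-free $u$, and self-adjointness of curl together with the scalar triple product give $\int b \cdot \nb \times (u \times \bgB) = -\int u \cdot ((\nb \times b) \times \bgB)$, so the $(\nb \times b) \times \bgB$ cross term from the $u$-equation cancels its counterpart from the $b$-equation. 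The remaining cross term $\int u \cdot ((\nb \times \bgB) \times b)$ is expanded via the pointwise identity $((\nb \times \bgB) \times b)_i = (b \cdot \nb) \bgB_i - b^j \rd_i \bgB_j$, yielding exactly $\int ((b \cdot \nb) \bgB_j) u^j - ((u \cdot \nb) \bgB_j) b^j$.

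The main delicacy is organizational rather than analytic: one must track which of the two linearized Hall pieces carries a derivative on the perturbation and which carries it on $\bgB$. The $(\nb \times b) \times \bgB$ piece (derivative on $b$) either vanishes by antisymmetry or cancels against its cross-coupling counterpart, while the $(\nb \times \bgB) \times b$ piece (derivative on $\bgB$) is the only one that survives. This asymmetry is what prevents the identity from degenerating to a tautology, and already foreshadows the obstruction \eqref{eq:d-loss} to higher-order Sobolev energy estimates that drives the rest of the paper.
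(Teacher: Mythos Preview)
Your proposal is correct and is precisely the ``simple exercise in vector calculus'' that the paper alludes to but omits: pairing the linearized equations against $(u,b)$, using self-adjointness of $\nb\times$, the identity \eqref{eq:curl-cross} for $\nb\times(V\times W)$, the pointwise identity \eqref{eq:cross-curl} for $(\nb\times\bgB)\times b$, and the divergence-free conditions to isolate the surviving low-order terms. There is nothing to compare against, since the paper gives no proof; your organization of which Hall piece vanishes by antisymmetry, which cancels against its cross-coupling counterpart, and which survives is exactly the expected one.
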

We omit the proof, which is a simple exercise in vector calculus.

\subsection{Linear illposedness results in Sobolev spaces} \label{subsec:illposed-linear}
The energy identities in Proposition~\ref{prop:lin-en} suggest that for any ``reasonable'' solutions to the linearized equations \eqref{eq:hall-mhd-lin} and \eqref{eq:e-mhd-lin} around a sufficiently regular $\bgB$, the $L^{2}$ norm (energy) would enjoy good local-in-time bounds. Nevertheless, our results demonstrate that around certain stationary solutions, the linearized equation is ill-posed(!) in any higher Sobolev spaces.

Our first main result concerns the linearization of \eqref{eq:hall-mhd} and \eqref{eq:e-mhd} around a \emph{linearly degenerate} (to be defined below) planar stationary magnetic field with an additional symmetry. It asserts the existence of a sequence of initial data sets with frequencies $\lmb \in 2^{\bbN_{0}}$, such that the $H^{s}$ norms of the corresponding solutions for any $s > 0$ grow at rates that are sharp in view of the loss of one derivative observed in \eqref{eq:d-loss}.

In what follows, by an \emph{$L^{2}$-solution} on an interval $I$, we mean:
\begin{itemize}
\item ({linearized \eqref{eq:hall-mhd} with $\nu > 0$}) a pair of vector fields $(u, b)$ such that $u \in C_{w} (I; L^{2}) \cap L^{2}_{t}(I; \dot{H}^{1})$ and $b \in C_{w}(I; L^{2})$ that satisfies \eqref{eq:hall-mhd-lin} in the sense of distributions;
\item ({linearized \eqref{eq:hall-mhd} with $\nu = 0$}) a pair of vector fields $(u, b) \in C_{w}(I; L^{2})$ that satisfies \eqref{eq:hall-mhd-lin} with $\nu = 0$ in the sense of distributions; or
\item ({linearized \eqref{eq:e-mhd}}) a vector field $b \in C_{w}(I; L^{2})$ that satisfies \eqref{eq:e-mhd-lin} in the sense of distributions.
\end{itemize} 
Here, $C_w(I;L^2)$ is a subspace of $L^\infty(I;L^2)$ consisting of functions weakly continuous in time with values in $L^2$. Moreover, in the case $M = \bbT^{3}$, we assume\footnote{The interpretation of this assumption is that the constant part in $(u, b)$ should not be considered a perturbation, but rather should be put in the background.} in addition that 
\begin{equation} \label{eq:mean-zero}
	\int_{M} u(t) = \int_{M} b(t) = 0 \quad \hbox{ for all } t \in I,
\end{equation}
where we ignore the condition for $u$ in the case of \eqref{eq:e-mhd}. Note that, with the regularity assumptions above, the mean of any solution in the sense of distributions is preserved; thus it suffices to ensure \eqref{eq:mean-zero} for the initial data.

\begin{maintheorem}[Sharp norm growth] \label{thm:norm-growth}
Consider a stationary planar magnetic field $\bgB$ on $M$ of one of the following forms\footnote{We use $\bbT_{z}$ for convenience, but  it is not crucial for topological or algebraic reasons; note that both the stationary solution and the perturbations (i.e., solution to the linearized equation) are independent of $z$.  {See Section~\ref{subsec:discussions} for a further discussion on the issue of $z$-independence.}}:
\begin{enumerate}[label=(\alph*)]
\item (linearly degenerate, translationally-symmetric) On $M = (\bbT, \bbR)_{x} \times (\bbT, \bbR)_{y} \times \bbT_{z}$, $\bgB = f(y) \rd_{x}$ where $f$ is uniformly smooth (i.e., $f$ and its derivatives are bounded) and $f(y_{0}) = 0$, $\ud f (y_{0}) \neq 0$ for some $y_{0} \in (\bbT, \bbR)_{y}$;
\item (linearly degenerate, axi-symmetric) On $M = \bbR^{2}_{x, y} \times \bbT_{z}$, $\bgB = f(r) \rd_{\tht} = f(\sqrt{x^{2} + y^{2}})(x \rd_{y} - y \rd_{x})$ where $\bgB$ is uniformly smooth\footnote{In terms of $f$, it is equivalent to the condition that the odd extension of $r f$ to $\bbR$ is uniformly smooth.}, and $f(r_{0}) = 0$, $\ud f(r_{0}) \neq 0$ for some $r_{0} > 0$;
\end{enumerate}
where by the notation $(\bbT, \bbR)_{x}$ we mean that both $\bbT_{x}$ and $\bbR_{x}$ are allowed.
Then the following statements hold.
\begin{enumerate}
\item Consider the linearized \eqref{eq:hall-mhd} with $\nu > 0$ around the stationary solution $\bgB$ on a time interval $I \ni 0$. For each $\lmb \in \bbN$ sufficiently large depending on $\bgB$, there exists an initial data set of the form
\begin{itemize}
\item (Case (a): translationally-symmetric background)
\begin{equation*}
	u_{0} = 0, \quad b_{0} = \Re (e^{i (\lmb x + \lmb G(y))}) \frkb(x,y)
\end{equation*}
where $G(y) \in C^{\infty}((\bbT, \bbR)_{y})$ and $\frkb(x, y) \in \calS((\bbT, \bbR)_{x} \times (\bbT, \bbR)_{y})$ with compact support in $y$ and either compact support in $x$ or real-analyticity in $x$; or
\item (Case (b): axi-symmetric background)
\begin{equation*}
	u_{0} = 0, \quad b_{0} = \Re (e^{i (\lmb \tht + \lmb G(r))}) \frkb(r)
\end{equation*}
where $G(r) \in C^{\infty}((0, \infty))$ and $\frkb(r) \in \calC^{\infty}((0, \infty))$ with compact support in $r$,
\end{itemize}
such that any corresponding $z$-independent $L^{2}$-solution $(u, b)$ exhibits norm growth of the form
\begin{align*}
	\nrm{b(t)}_{W^{s, p}(M)} 
	\geq c \left(s, p, \bgB, \tfrac{\nrm{(u, b)}_{L^{\infty} (I; L^{2})} + \nrm{\nb u}_{L^{2}(I; L^{2})}}{\nrm{(u_{0}, b_{0})}_{L^{2}}}\right) \nrm{b_{0}}_{W^{s, p}(M)} e^{c_{0}(\bgB) \cdot (s + \frac{1}{2} - \frac{1}{p}) \lmb t},
\end{align*}
for $t \in I$ satisfying $0 \leq t < \dlt ( \tfrac{\nrm{(u, b)}_{L^{\infty} (I; L^{2})} + \nrm{\nb u}_{L^{2}(I; L^{2})} }{\nrm{(u_{0}, b_{0})}_{L^{2}}} )$ and for any $p \in [1, \infty]$, $s \in \bbR$ such that $s + \frac{1}{2} - \frac{1}{p} \geq 0$.

\item Consider the linearized \eqref{eq:hall-mhd} with $\nu = 0$ around the stationary solution $\bgB$ on a time interval $I \ni 0$. For each $\lmb \in \bbN$ sufficiently large depending on $\bgB$, there exists an initial data set of the form
\begin{itemize}
\item (Case (a): translationally-symmetric background)
\begin{equation*}
	 {u_{0}} = 0, \quad b_{0} = \Re (e^{i (\lmb x + \lmb G(y))}) \frkb(x,y)
\end{equation*}
where $G(y)$ and $\frkb(x, y)$ are as in part~(1); or
\item (Case (b): axi-symmetric background)
\begin{equation*}
	u_{0} = 0, \quad b_{0} = \Re (e^{i (\lmb \tht + \lmb G(r))}) \frkb(r)
\end{equation*}
where $G(r)$ and $\frkb(r)$ are as in part~(1),
\end{itemize}
such that any corresponding $z$-independent $L^{2}$-solution $(u, b)$ on $I$ exhibits norm growth of the form
\begin{equation*}
	\nrm{b(t)}_{W^{s, p}(M)} \geq c \left(\bgB, \tfrac{\nrm{(u, b)}_{L^{\infty} (I; L^{2})}}{\nrm{(u_{0}, b_{0})}_{L^{2}}}\right) \nrm{b_{0}}_{W^{s, p}(M)} e^{c_{0}(\bgB) \cdot (s + \frac{1}{2} - \frac{1}{p}) \lmb t}
\end{equation*}
for $t \in I$ satisfying $0 \leq t < \dlt ( \tfrac{\nrm{(u, b)}_{L^{\infty} (I; L^{2})}}{\nrm{(u_{0}, b_{0})}_{L^{2}}} )$ and  for any $p \in [1, \infty]$, $s \in \bbR$ such that $s + \frac{1}{2} - \frac{1}{p} \geq 0$.

\item Consider the linearized \eqref{eq:e-mhd} around the stationary solution $\bgB$ on a time interval $I \ni 0$. For each $\lmb \in \bbN$ sufficiently large depending on $\bgB$, there exists an initial data set of the form
\begin{itemize}
\item (Case (a): translationally-symmetric background)
\begin{equation*}
	\quad b_{0} = \Re (e^{i (\lmb x + \lmb G(y)}) \frkb
\end{equation*}
where $G(y)$ and $\frkb(x, y)$ are as in part~(1); or
\item (Case (b): axi-symmetric background)
\begin{equation*}
	b_{0} = \Re (e^{i (\lmb \tht + \lmb G(r))}) \frkb(r)
\end{equation*}
where $G(r)$ and $\frkb(r)$ are as in part~(1),
\end{itemize}
such that any corresponding $z$-independent $L^{2}$-solution $b$ on $I$ exhibits norm growth of the form
\begin{equation*}
	\nrm{b(t)}_{W^{s, p}(M)} \geq c \left(s, p, \bgB, \tfrac{\nrm{b}_{L^{\infty} (I; L^{2})}}{\nrm{b_{0}}_{L^{2}}}\right)  {\nrm{b_0}_{W^{s,p}(M)}} e^{c_{0}(\bgB) \cdot (s + \frac{1}{2} - \frac{1}{p})\lmb t}
\end{equation*}
for $t \in I$ satisfying $0 \leq t < \dlt ( \tfrac{\nrm{b}_{L^{\infty} (I; L^{2})}}{\nrm{b_{0}}_{L^{2}}} )$ and  for any $p \in [1, \infty]$, $s \in \bbR$ such that $s + \frac{1}{2} - \frac{1}{p} \geq 0$.
\end{enumerate}
\end{maintheorem}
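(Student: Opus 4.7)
The proof is a quasilinear geometric optics construction: near a \emph{linearly degenerate} background, the principal symbol of the linearized Hall operator degenerates along a hypersurface, and the Hamilton flow of that symbol generates exponential growth of momenta, hence of $W^{s,p}$ norms. I treat the \eqref{eq:e-mhd} case (part (3)) first; the ideal and viscous Hall cases follow with minor modifications. Using $z$-independence and $\nb\cdot b=0$, write $b=(\rd_y\psi,-\rd_x\psi,b^z)$ in case (a). A direct vector-calculus computation reduces \eqref{eq:e-mhd-lin} around $\bgB=f(y)\rd_x$ to
\[
\rd_t\psi=-f(y)\rd_x b^z,\qquad \rd_t b^z=f(y)\rd_x\Dlt\psi-f''(y)\rd_x\psi,
\]
so that $\rd_t^2\psi=-f^2\rd_x^2\Dlt\psi+\text{l.o.t.}$, with principal symbol $\tau^2=f(y)^2\xi_x^2|\xi|^2$, degenerate along $\{f=0\}$. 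Case (b) reduces to a structurally identical system in polar coordinates after setting $b=(\rd_\tht\psi/r,-\rd_r\psi,b^z)$. For \eqref{eq:hall-mhd-lin} the Leray-projected coupling to $u$ is of lower differential order and the viscous term $-\nu\Dlt u$ is parabolic, so the same principal analysis governs that case.

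\textbf{WKB construction and Hamilton flow.} Seek $\psi\approx A(t,x,y)e^{i\lmb\Phi(t,x,y)}$ with $\Phi|_{t=0}=x+G(y)$ and $A|_{t=0}=\frkb$, $b^z$ being determined by $\psi$ to principal order. The eikonal equation $\rd_t\Phi=\pm f(y)\rd_x\Phi\sqrt{(\rd_x\Phi)^2+(\rd_y\Phi)^2}$ is the Hamilton--Jacobi equation for $H(y,\xi)=f(y)\xi_x|\xi|$, whose bicharacteristics satisfy
\[
\dot x=f(y)(|\xi|+\xi_x^2/|\xi|),\quad \dot y=f(y)\xi_x\xi_y/|\xi|,\quad \dot\xi_x=0,\quad \dot\xi_y=-f'(y)\xi_x|\xi|.
\]
The crucial observation is that $(y_0,0)$ is a hyperbolic fixed point (with $\xi_x=\lmb$ frozen): linearizing near it yields $\zt:=\xi_y$ growing and $\eta:=y-y_0$ shrinking, both at rate $\aeq|f'(y_0)|\lmb$, in a symplectic (area-preserving) manner. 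Thus $|\xi(t)|\aeq\lmb_{\mathrm{eff}}(t):=\lmb\,e^{c_0\lmb t}$ with $c_0=c_0(\bgB)\aeq|f'(y_0)|$, while the physical $y$-width compresses as $\dlt_y(t)\aeq e^{-c_0\lmb t}$. The amplitude $A$ evolves by a linear transport equation along this flow, and $L^2$-mass conservation of transport together with the compression yields $\nrm{A(t)}_{L^p}/\nrm{A_0}_{L^p}\aeq e^{(\frac12-\frac1p)c_0\lmb t}$.

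\textbf{Error control and extraction of norm growth.} Iterating the WKB ansatz through correctors $A_0,\dots,A_N$ produces an approximate solution $b^{\mathrm{app}}$ whose residual under the linearized equation is $O_N(\lmb^{-N})$ in $L^2$, uniformly on the time interval where the Hamilton flow remains smooth. The linearized energy identity of Proposition~\ref{prop:lin-en} (supplemented by the parabolic gain when $\nu>0$) yields $L^2$-stability for the linearized equations; applied to $b-b^{\mathrm{app}}$ with its small source, this gives $\nrm{b-b^{\mathrm{app}}}_{L^\infty(I;L^2)}=O_N(\lmb^{-N})$ for \emph{any} $L^2$-solution with matching initial data. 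The ratio $\nrm{b}_{L^\infty_t L^2}/\nrm{b_0}_{L^2}$ enters the constant precisely at this comparison step, accounting for the $C$-dependence in the statement. Since $b^{\mathrm{app}}$ is frequency-localized near $|\xi|\aeq\lmb_{\mathrm{eff}}(t)$,
\[
\nrm{b^{\mathrm{app}}(t)}_{W^{s,p}}\aeq\lmb_{\mathrm{eff}}(t)^{s}\nrm{A(t)}_{L^{p}}\aeq\nrm{b_0}_{W^{s,p}}\,e^{(s+\frac12-\frac1p)c_0\lmb t},
\]
and the triangle inequality combined with a Bernstein-type lower bound exploiting the frequency concentration of $b^{\mathrm{app}}$ produces the claimed lower bound on $\nrm{b(t)}_{W^{s,p}}$.

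\textbf{Main obstacle.} The delicate point is the uniform-in-$\lmb$ error analysis. As the packet exponentially concentrates near $\{f=0\}$, derivatives of the amplitude grow like positive powers of $\lmb_{\mathrm{eff}}(t)$, so each corrector in a naive WKB expansion loses more than one power of $\lmb$, and the residual could blow up before the claimed growth is achieved. Controlling this requires tracking derivatives \emph{along} the bicharacteristic flow (in the stable--unstable frame adapted to the fixed point $(y_0,0)$) rather than transverse to it, so that a fixed finite number $N$ of correctors suffices uniformly on the maximal time interval where the lower bound is asserted. The axi-symmetric case (b) and the Hall system require additional algebra to handle the angular Laplacian and the Leray projection, respectively, but do not introduce qualitatively new difficulties.
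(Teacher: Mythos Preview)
Your overall strategy---WKB wave packets driven by the Hamilton flow of the degenerate principal symbol---matches the paper's, but the two central technical steps differ and your versions have genuine gaps.

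\textbf{WKB error control.} You claim that iterated correctors give residual $O_N(\lmb^{-N})$ uniformly for $O(1)$ time, and you correctly flag the obstacle: as the packet compresses toward $\{f=0\}$, each $\rd_y$ on the amplitude costs $e^{c_0\lmb t}$, so naive correctors blow up. Your proposed fix (``stable--unstable frame'') is too vague to be a proof. The paper does \emph{not} iterate correctors. Instead it introduces the change of variables $\eta'(y)=1/f(y)$ together with the conjugation $\varphi=f^{-1/2}\psi$, which remove the degeneracy in the principal term \emph{and} kill all subprincipal (third-order) terms; see \eqref{eq:e-mhd-eta-conj2}. In the renormalized $(\tau,\eta)$ coordinates the amplitude obeys a uniformly smooth transport equation (Lemma~\ref{lem:wkb-h-est}), and a \emph{single} WKB step already gives residual $O(1)$ in $L^2$ for all $t\ge 0$ (Proposition~\ref{prop:wavepackets}). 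No hierarchy of correctors is needed---but $O(1)$ residual is too coarse for your direct $L^2$-comparison.

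\textbf{Extraction of norm growth.} With only $O(1)$ residual, the energy estimate yields $\nrm{b-b^{\mathrm{app}}}_{L^2}\aleq t$, not $O(\lmb^{-N})$; your triangle-inequality step (which requires $\nrm{b-b^{\mathrm{app}}}_{W^{s,p}}$ to be small) is therefore unavailable. The paper instead \emph{tests by degenerating wave packets}: the bilinear energy identity \eqref{eq:en-e-mhd-parallel} gives $\brk{\tilde b_{(\lmb)},b}(t)\ge\tfrac12$ for $0\le t\le T$ uniformly in $\lmb$, and then duality $\tfrac12\le\brk{\tilde b_{(\lmb)},b}\le\nrm{\tilde b_{(\lmb)}}_{W^{-s,p'}}\nrm{b}_{W^{s,p}}$ together with the \emph{negative-norm degeneration} $\nrm{\tilde b_{(\lmb)}(t)}_{L^{p'}_xW^{-s,p'}_y}\aleq\lmb^{-s}e^{-c_f(s+\frac12-\frac1p)\lmb t}$ from \eqref{eq:wavepackets-degen}--\eqref{eq:wavepackets-degen-small} yields the growth for all $(s,p)$ at once. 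Your Bernstein route, even granting $L^2$ closeness, does not obviously recover the $\tfrac12-\tfrac1p$ exponent for $p\ne 2$. Two smaller points: $(y_0,\xi_y{=}0)$ is not a hyperbolic fixed point with $\xi_x=\lmb$ frozen (there $\dot\xi_y=-f'(y_0)\lmb|\xi|\ne 0$; the correct picture in Section~\ref{subsec:ideas} starts away from the degeneracy and uses conservation of $f(y)|\xi|$); and treating the Hall-MHD $u$-coupling as merely ``lower order'' misses the specific choice $\tilde u^z=-\tilde\psi$, $\tilde\omega=-\tilde b^z$ of Proposition~\ref{prop:wavepackets-hall}, tied to the good variable $\bfZ=\bfB+\nb\times\bfu$, which makes $\tilde u$ one order smoother than $\tilde b$ and lets $\nu\lap u$ be handled perturbatively.
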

\begin{remark} \label{rem:L2-sol}
Theorem~\ref{thm:norm-growth} is carefully formulated so that it does not rely on any wellposedness theory for the linearized \eqref{eq:hall-mhd} and \eqref{eq:e-mhd} equations, whose validity seems to be a delicate question precisely due to the illposedness issues considered here. 

For a reasonable notion of a solution for the linearized \eqref{eq:hall-mhd} and \eqref{eq:e-mhd} equations, it is expected that the $z$-independence property follows from uniqueness, and that the ratios \begin{equation*}
\begin{split}
\frac{\nrm{(u, b)}_{L^{\infty} L^{2}} + \nrm{\nb u}_{L^{2} L^{2}}}{\nrm{(u_{0}, b_{0})}_{L^{2}}},   \quad \frac{\nrm{b}_{L^{\infty} L^{2}}}{\nrm{b_{0}}_{L^{2}}}
\end{split}
\end{equation*}  are uniformly bounded by a constant that only depends $\bgB$ in view of the energy identities in Proposition~\ref{prop:lin-en}.

By appealing to a standard argument based on the Aubin--Lions lemma, one can show at least the \emph{existence} of such an $L^2$ solution, for any $L^2$ initial data; see Appendix~\ref{sec:L2-exist} for details. In particular, the class of solutions to which Theorem \ref{thm:norm-growth} applies is not vacuous.
\end{remark}
 
\begin{remark} \label{rem:norm-growth-cascade}
Due to the boundedness of energy, the norm growth in Theorem~\ref{thm:norm-growth} necessarily involves a rapid transfer of energy from larger to smaller scales. Such a phenomenon is reflected in the $s$-dependence of the growth rate $e^{c_{0} s \lmb t}$ of the $H^{s}$ norm (indeed, for $s \geq s_{0} > 0$, the $s$-dependent growth rate is a quick consequence of $L^{2}$ boundedness, $H^{s_{0}}$ growth and interpolation). It is also the key mechanism behind Theorem~\ref{thm:illposed-gevrey} below, which asserts illposedness of the linearized equations in all Gevrey spaces.

This phenomenon is clearly impossible for constant coefficient linear PDEs, where there are no energy transfers between different Fourier modes. Moreover, it is qualitatively different compared to well-known examples of illposedness in hydrodynamics such as the Kelvin--Helmholtz instability, Rayleigh--Taylor instability and boundary layer instability, in all of which the growth rate of the $H^{s}$ norm is independent of $s$ and wellposedness is recovered in a strong enough Gevrey space (at least in the linearized case). We refer to Section~\ref{subsec:discussions} for further discussion.
\end{remark}
 
\begin{remark} \label{rem:norm-growth-sharpness}
When $s$, $p$ and $\bgB$ are fixed, the norm growth inequality asserted in Theorem~\ref{thm:norm-growth} is optimal in that the both sides are comparable (uniformly in $\lmb$) for a suitably constructed wave packet approximate solution with the same initial data (see Section~\ref{sec:wavepackets} for the construction). This optimality is crucial for our proof of the illposedness results in the fractionally dissipative case (Theorem~\ref{thm:illposed-fradiss}). Moreover, with $p = 2$ and with a suitable definition of the norms $H^{s}$, the constant $c$ may be chosen to be independent on $s$; this observation is used in our proof of illposedness in Gevrey spaces (Theorem~\ref{thm:illposed-gevrey}). While we expect this property to generalize to all other values of $p$, we do not investigate this issue further in this paper.
\end{remark}

Our next result is a conditional refinement of Theorem~\ref{thm:norm-growth}. We assume that \eqref{eq:hall-mhd-lin} and \eqref{eq:e-mhd-lin} are well-posed in $L^2$, and show the existence of initial data sets with arbitrarily high regularity and decay, such that the corresponding solutions (unique by assumption) immediately exits any Sobolev space above $L^{2}$. More precisely, by \emph{$L^{2}$-wellposedness} of the linearized equation on an interval $I$, we mean the existence of a bounded linear solution map from $L^{2}$ into the energy class $\calE(I)$, where
\begin{equation*}
\calE(I) = 
\begin{cases}
\left( C_{w}(I; L^{2}) \cap L^{2}_{t} (I; \dot{H}^{1}) \right) \times C_{w}(I; L^{2}) 
& \hbox{ for linearized } \eqref{eq:hall-mhd}, \nu > 0; \\
C_{w}(I; L^{2}) \times C_{w} (I; L^{2})
& \hbox{ for linearized } \eqref{eq:hall-mhd}, \nu = 0; \\
C_{w} (I; L^{2})			 & \hbox{ for linearized } \eqref{eq:e-mhd}.
\end{cases}
\end{equation*}  
Since we know the existence of at least one $L^2$ solution, the following result maybe rephrased as follows: either the linearized system does not have a unique $L^2$ solution for some $L^2$ data, or there is \textit{nonexistence} in any higher regularity Sobolev spaces. 
 
\begin{maintheorem}[Instantaneous instability in $H^{s}$ with $s > 0$] \label{thm:inst}
Let $\bgB$ and $M$ be as in Theorem~\ref{thm:norm-growth}, and suppose that \eqref{eq:hall-mhd-lin} with $\nu \geq 0$ around $(0, \bgB)$ (resp. \eqref{eq:e-mhd-lin} around $\bgB$) is $L^{2}$-well-posed on $[0, 1]$. 
\begin{enumerate}
\item ($C^{\infty}$, polynomially decaying data) There exists an initial data set $(u_{0}, b_{0}) \in \set{0} \times \calS$ (resp. $b_{0} \in \calS$) and $0 < \dlt \leq 1$ such that the $L^{2}$-solution $(u, b)(t)$ (resp. $b(t)$) fails to be in any local Sobolev space $L^{2} \times H^{s'}_{loc}$ (resp. $H^{s'}_{loc}$) for any $s' > 0$ and $0 < t < \dlt$.

\item (Arbitrarily regular data with compact support) 
For any $s > 0$, there exists an initial data set $(u_{0}, b_{0}) \in \set{0} \times H^{s}_{comp}$ (resp. $b_{0} \in H^{s}_{comp}$) and $0 < \dlt \leq 1$ such that the $L^{2}$-solution $(u, b)(t)$ (resp. $b(t)$) fails to be in any local Sobolev space $L^{2} \times H^{s'}_{loc}$ (resp. $H^{s'}_{loc}$) for any $s' > 0$ and $0 < t < \dlt$.
\end{enumerate}
\end{maintheorem}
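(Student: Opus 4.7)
\smallskip

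The plan is to construct a single initial datum as a carefully weighted superposition of the high-frequency wave packet data produced by Theorem~\ref{thm:norm-growth}, and to use $L^{2}$-wellposedness together with near-frequency-localization of each wave packet to transfer the individual exponential norm growth into blow-up of $\nrm{b(t)}_{H^{s'}_{loc}}$ at every positive time, for every $s' > 0$ simultaneously. Fix a common phase function $G$ and profile $\frkb$ (with fixed compact support in Case (a), or in $r$ in Case (b)) so that the family of initial data $b_{0, \lmb}$ from Theorem~\ref{thm:norm-growth} is available for all sufficiently large $\lmb \in \bbN$. Pick $\lmb_{n} = 2^{n_{0}+n}$ large enough and set
\begin{equation*}
	b_{0} = \sum_{n \geq 1} a_{n} b_{0, \lmb_{n}},
\end{equation*}
with $a_{n} = e^{-\sqrt{\lmb_{n}}}$ for part~(1) (super-polynomial but sub-exponential decay, hence $b_{0} \in \calS$), and $a_{n} = \lmb_{n}^{-s-1}$ for part~(2) (ensuring $b_{0} \in H^{s}_{comp}$ thanks to the common support). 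By $L^{2}$-wellposedness and linearity, $b(t) = \sum_{n} a_{n} b_{\lmb_{n}}(t)$, and the operator-norm bound on $S(t)$ uniformly controls the ratio $\nrm{b_{\lmb_{n}}}_{L^{\infty} L^{2}} / \nrm{b_{0, \lmb_{n}}}_{L^{2}}$, so Theorem~\ref{thm:norm-growth} applies to each summand with uniform constants, yielding $\nrm{b_{\lmb_{n}}(t)}_{H^{s'}} \gtrsim \lmb_{n}^{s'} e^{c_{0} s' \lmb_{n} t}$ for $s' > 0$ and $t \in [0, \dlt_{0}]$.

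To extract the individual growth from the superposition, I would use a Littlewood--Paley decomposition combined with the sharpness of the wave packet approximation asserted in Remark~\ref{rem:norm-growth-sharpness}: because the growth inequality is saturated by an explicit wave packet, each $b_{\lmb_{n}}(t)$ is essentially frequency-localized at $\lmb_{n}$ on $[0, \dlt_{0}]$, so the off-diagonal projections $P_{\sim \lmb_{n}} b_{\lmb_{m}}(t)$ with $m \neq n$ decay faster than any polynomial in $\lmb_{m}/\lmb_{n}$. Letting $K$ be a small spatial neighborhood of $\supp \frkb$, this gives
\begin{equation*}
	\nrm{P_{\sim \lmb_{n}} b(t)}_{H^{s'}(K)} \gtrsim a_{n} \nrm{b_{\lmb_{n}}(t)}_{H^{s'}(K)} - \sum_{m \neq n} a_{m} \nrm{P_{\sim \lmb_{n}} b_{\lmb_{m}}(t)}_{H^{s'}(K)} \gtrsim a_{n} \lmb_{n}^{s'} e^{c_{0} s' \lmb_{n} t}.
\end{equation*}
Taking $\sup_{n}$ produces $\nrm{b(t)}_{H^{s'}(K)} = +\infty$ at every $t \in (0, \dlt_{0}]$ and every $s' > 0$: in part~(1), $a_{n}^{2} \lmb_{n}^{2s'} e^{2 c_{0} s' \lmb_{n} t} \geq e^{-2\sqrt{\lmb_{n}} + 2 c_{0} s' \lmb_{n} t} \to +\infty$; in part~(2), the polynomial loss $\lmb_{n}^{-2(s+1)+2s'}$ is eventually overwhelmed by the exponential. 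Therefore $b(t) \notin H^{s'}_{loc}$ as claimed. The velocity component $u(t)$ remains in $L^{2}$ by $L^{2}$-wellposedness and $u_{0} = 0$, completing the argument in both parts.

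The main obstacle is rigorously justifying the frequency-decoupling estimate, since the exact solutions $b_{\lmb_{m}}(t)$ are not frequency-localized a priori. The cleanest route is to compare $b_{\lmb_{m}}(t)$ with the explicit wave packet approximate solution of Section~\ref{sec:wavepackets}: the approximate solution is literally localized at frequency $\lmb_{m}$, and the $L^{2}$-error admits a polynomial-in-$\lmb_{m}$ smallness bound strong enough that off-diagonal projections decay superpolynomially, even after the $H^{s'}$ weight. An appealing alternative, bypassing Littlewood--Paley altogether, is to pair $b(t)$ directly against a rapidly oscillating test function with phase $e^{-i(\lmb_{n} x + \lmb_{n} G(y))} \chi(x,y)$ and use nonstationary phase to isolate the $n$-th contribution, suppressing the others by any desired power of $\lmb_{m}/\lmb_{n}$. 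In either route the super-exponential growth $e^{c_{0} s' \lmb_{n} t}$ dominates all polynomial corrections, and the diagonalization over $n$ yields a \emph{single} initial datum that simultaneously witnesses failure of $H^{s'}_{loc}$ for all $s' > 0$.
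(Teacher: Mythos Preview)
Your overall architecture---superpose wave packets at frequencies $\lmb_{n} = 2^{n_{0}+n}$ with weights $a_{n}$ decaying super-polynomially (part~1) or like $\lmb_{n}^{-s-1}$ (part~2), then invoke $L^{2}$-wellposedness and linearity to write $b(t) = \sum_{n} a_{n} b_{\lmb_{n}}(t)$---matches the paper exactly. The gap is in your decoupling step.

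You assert that $b_{\lmb_{m}}(t)$ (or its wave-packet approximation $\tb_{(\lmb_{m})}(t)$) is ``essentially frequency-localized at $\lmb_{m}$,'' and then apply a Littlewood--Paley projection $P_{\sim \lmb_{n}}$ to isolate the $n$-th summand. But this is precisely what the instability mechanism forbids: the degenerating wave packet $\tb_{(\lmb_{m})}(t)$ has $y$-frequency of order $\lmb_{m} e^{c_{f} \lmb_{m} t}$, not $\lmb_{m}$---this frequency migration is the entire content of the norm growth. Hence for $m < n$ and $t > 0$, the $y$-frequency of $b_{\lmb_{m}}(t)$ can easily land in the annulus $\abs{\xi} \sim \lmb_{n}$, and $P_{\sim \lmb_{n}} b_{\lmb_{m}}(t)$ need not be small at all. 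Your alternative of pairing against a \emph{static} test function with phase $e^{-i(\lmb_{n} x + \lmb_{n} G(y))}$ has the same defect: at time $t$ the $n$-th packet has moved in $y$-phase space, so this pairing does not capture it.

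The paper's remedy is to decouple only in the \emph{symmetric} variable $x$ (or $\tht$), where the frequency is exactly conserved by the flow, and to test $b(t)$ against the \emph{time-dependent} approximate solution $\tb_{(\lmb_{n})}(t)$ itself. For part~(1) one chooses the profile so that $\tb_{(\lmb_{n})}(t)$ has $x$-Fourier support in $\lmb_{n} + O(1)$; since the $L^{2}$-solution map preserves $x$-Fourier support, $\brk{b_{(\lmb_{m})}(t), \tb_{(\lmb_{n})}(t)} = 0$ for $m \neq n$ exactly. For part~(2) (compact $x$-support) exact orthogonality is unavailable, so the paper instead uses the $x$-invariance property $\nrm{(\lmb_{n}^{-1} \rd_{x})^{-k} \tb_{(\lmb_{n})}}_{L^{\infty}_{t} L^{2}} \aleq_{k} 1$ from Proposition~\ref{prop:wavepackets} to gain $(\lmb_{m}/\lmb_{n})^{k}$ when $m < n$, and the trivial $L^{2}$ bound when $m > n$. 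In both cases one obtains $\brk{b(t), \tb_{(\lmb_{n})}(t)} \gtrsim a_{n}$, and then the degeneration estimate $\nrm{\tb_{(\lmb_{n})}(t)}_{H^{-s'}} \aleq \lmb_{n}^{-s'} e^{-c_{0} s' \lmb_{n} t}$ (valid for $0 < s' < \tfrac{1}{2}$) combined with duality yields $\nrm{b(t)}_{H^{s'}} \gtrsim a_{n} \lmb_{n}^{s'} e^{c_{0} s' \lmb_{n} t} \to \infty$. The local conclusion follows because $\tb_{(\lmb_{n})}(t)$ remains supported in a fixed $y$-interval for all $t$.
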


\subsection{Nonlinear illposedness results in Sobolev spaces} \label{subsec:illposed-nonlinear}
Given the preceding illposedness results for the linearized equations, it is natural to ask whether the corresponding statements are still valid for the nonlinear Cauchy problem. We show that the nonlinear Cauchy problems for \eqref{eq:hall-mhd} and \eqref{eq:e-mhd} are ill-posed in the sense of Hadamard \cite{Had} with $ {(\bfu,\bfB)} \in L^\infty_tH^2 \times L^\infty_tH^3$ and $ \bfB \in L^\infty_t H^3$ in the Hall- and electron-MHD cases, respectively. Moreover, we establish \textit{nonexistence} for certain initial data close to the trivial solution, which may be regarded as the strongest notion of illposedness. 

To describe the results we need some  {notation and conventions}. In what follows, we denote a function-space ball of radius $\eps$ with respect to a norm $\nrm{\cdot}_{X}$ centered at $\bfx$ by
\begin{equation*}
\calB_{\eps}(\bfx; X) = \set{\bfy \in \bfx + X : \nrm{\bfy - \bfx}_{X} < \eps},
\end{equation*}
and its restriction to compactly supported functions by
\begin{equation*}
\calB_{\eps}(\bfx; X_{comp}) = \set{\bfy \in \calB_{\eps}(\bfx; X) : \bfy - \bfx \hbox{ has compact support in $M$}}.
\end{equation*}
 {For any interval $I$ and $s_{0} \leq 1$, the notion of an $L^{\infty}_{t}(I; H^{s_{0}}(M))$ solution $\bfb$ to \eqref{eq:e-mhd} is formulated in the sense of distributions. For \eqref{eq:hall-mhd}, we need to also specify the pressure gradient; we say that $(\bfu,  \bfB) \in L^{\infty}_{t} ([0, \dlt]; H^{s_{0}-1}(M)) \times L^{\infty}_{t} ([0, \dlt]; H^{s_{0}}(M))$ is a (weak) solution to \eqref{eq:hall-mhd} if the equation is satisfied with
\begin{equation} \label{eq:p-grad}
	\nb_{j} \bfp = R_{k} R_{\ell} \nb_{j} (-\bfu^{k} \bfu^{\ell} + \bfB^{k} \bfB^{\ell}),
\end{equation}
where $R_{j} = (-\lap)^{-\frac{1}{2}} \rd_{j}$ is the Riesz transform.
}

The first nonlinear illposedness result shows that the solution map near the degenerate stationary solutions in high enough Sobolev spaces, even if it exists, must be unbounded.

\begin{maintheorem}[Unboundedness of the solution map] \label{thm:illposed-strong}
Let $M = (\bbT, \bbR)_{x} \times (\bbT, \bbR)_{y} \times \bbT_{z}$ and the stationary magnetic field $\bgB$ is given either by $f(y) \rd_{x}$ or $f(r)\rd_{\theta}$ as in Theorem \ref{thm:norm-growth}. Assume that for some $\eps, \dlt, r, s, s_{0} > 0$, the solution map for \eqref{eq:hall-mhd} (resp. \eqref{eq:e-mhd}) exists as a map 
 \begin{gather*}
\calB_{\eps}((0, \bgB); H^{r}_{comp} \times H^{s}_{comp})  \to L^{\infty}_{t} ([0, \dlt]; H^{s_{0}-1}) \times L^{\infty}_{t} ([0, \dlt]; H^{s_{0}})  \\
\left( \hbox{resp. } \calB_{\eps}(\bgB; H^{s}_{comp}) \to L^{\infty}_{t} ([0, \dlt]; H^{s_{0}}) \right).
\end{gather*}Then this solution map is unbounded for  $s_0 \ge 3$, and is not $\alpha$-H\"older continuous $(0 < \alpha \le 1)$ for $s_0 > \max\{ 2, 3(1-\alpha) \}$.  
\end{maintheorem}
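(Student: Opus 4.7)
The plan is to reduce the nonlinear illposedness to the sharp linear norm growth of Theorem~\ref{thm:norm-growth} via a perturbative comparison, in the spirit of Lebeau's and Koch--Tzvetkov's treatments of quasilinear dispersive equations.

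\textbf{Setup.} For each large dyadic $\lmb$, let $b_{0}^{(\lmb)}$ be the compactly supported wave-packet initial datum from Theorem~\ref{thm:norm-growth}, normalized so that $\nrm{b_{0}^{(\lmb)}}_{L^{2}} \aeq 1$ and hence $\nrm{b_{0}^{(\lmb)}}_{H^{\sgm}} \aeq \lmb^{\sgm}$ for $\sgm \geq 0$. Choose an amplitude $\eps_{\lmb} > 0$ and consider the nonlinear initial data
\begin{equation*}
\bfB_{0}^{(\lmb)} = \bgB + \eps_{\lmb} b_{0}^{(\lmb)}, \qquad \bfu_{0}^{(\lmb)} = 0
\end{equation*}
(the velocity component being absent in the \eqref{eq:e-mhd} case). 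Under the hypothesis the corresponding solution $\bfB^{(\lmb)}$ exists in $L^{\infty}_{t}([0, \dlt]; H^{s_{0}})$ with controlled norm; write $\bfB^{(\lmb)} = \bgB + \eps_{\lmb} b^{(\lmb)}$, so that $b^{(\lmb)}$ obeys the linearized system \eqref{eq:e-mhd-lin} (resp.\ \eqref{eq:hall-mhd-lin}) about $\bgB$ with an additional quadratic self-interaction forcing of the schematic form $\eps_{\lmb} \nb \times ((\nb \times b^{(\lmb)}) \times b^{(\lmb)})$, and analogously for the velocity piece in the Hall case.

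\textbf{Nonlinear--linear comparison.} Let $b^{lin,(\lmb)}$ denote the $L^{2}$ solution to the linearized equation with datum $b_{0}^{(\lmb)}$ produced in the proof of Theorem~\ref{thm:norm-growth}, which satisfies the sharp lower bound $\nrm{b^{lin,(\lmb)}(t)}_{H^{s_{0}}} \ageq \lmb^{s_{0}} e^{c_{0} s_{0} \lmb t}$. Set $\errwp^{(\lmb)} := b^{(\lmb)} - b^{lin,(\lmb)}$; this difference satisfies the same linearized equation with forcing of size $\eps_{\lmb}$ times a quadratic nonlinearity in $b^{(\lmb)}$. The standing hypothesis $\bfB^{(\lmb)} \in L^{\infty}_{t}H^{s_{0}}$ with $s_{0} \geq 3$ gives, via Sobolev embedding on the $z$-independent slice (effectively two-dimensional), uniform $L^{\infty}$ bounds on $b^{(\lmb)}$ and $\nb b^{(\lmb)}$. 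The same quasilinear $L^{2}$ symmetrization that underlies the proof of Theorem~\ref{thm:norm-growth} then closes an energy estimate for $\errwp^{(\lmb)}$ of the form
\begin{equation*}
\nrm{\errwp^{(\lmb)}(t)}_{L^{2}} \aleq \eps_{\lmb} \Phi(\lmb, t),
\end{equation*}
with $\Phi(\lmb, t)$ growing at most polynomially in $\lmb$ times the linear growth factor $e^{c_{0} \lmb t}$, as guaranteed by the sharpness in Remark~\ref{rem:norm-growth-sharpness}. Interpolating against the assumed uniform $H^{s_{0}}$ bound yields an $H^{s_{0}}$ estimate for $\errwp^{(\lmb)}$ that is strictly smaller than $\nrm{b^{lin,(\lmb)}}_{H^{s_{0}}}$ on a short-enough interval.

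\textbf{Contradictions.} \emph{Unboundedness:} take $\eps_{\lmb} = \lmb^{-s}$ (so the data is bounded in $H^{s}$) and $t_{\lmb} = A \lmb^{-1} \log \lmb$ with $A$ so large that
\begin{equation*}
\nrm{\eps_{\lmb} b^{lin,(\lmb)}(t_{\lmb})}_{H^{s_{0}}} \ageq \lmb^{s_{0} - s + c_{0} s_{0} A} \to \infty \quad (\lmb \to \infty),
\end{equation*}
while $t_{\lmb} \leq \dlt$ for $\lmb$ large. The error estimate forces $\bfB^{(\lmb)}(t_{\lmb}) - \bgB$ to inherit this lower bound in $H^{s_{0}}$, contradicting the assumed uniform $L^{\infty}_{t}H^{s_{0}}$ bound. \emph{Failure of $\alp$-H\"older continuity:} compare the nonlinear solutions from $\bfB_{0}^{(\lmb)}$ and from $\bgB$ (the latter being stationary). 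H\"older continuity forces
\begin{equation*}
\eps_{\lmb} \lmb^{s_{0}} e^{c_{0} s_{0} \lmb t_{\lmb}} \aleq (\eps_{\lmb} \lmb^{s})^{\alp},
\end{equation*}
which is violated for suitable $\eps_{\lmb} = \lmb^{-M}$ and $t_{\lmb} = A \lmb^{-1} \log \lmb$ precisely when $s_{0} > 3(1-\alp)$; the secondary threshold $s_{0} > 2$ arises from the two-dimensional Sobolev embedding $H^{s_{0}-1} \hookrightarrow L^{\infty}$ used to close the error estimate on the $z$-independent slice.

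\textbf{Main obstacle.} The essential difficulty is the closure of the error estimate for $\errwp^{(\lmb)}$. The Hall current $\nb \times ((\nb \times b) \times b)$ places two derivatives on $b^{(\lmb)}$ while $b^{(\lmb)}$ is itself the unknown, so a crude energy estimate loses a derivative; one must therefore invoke the same degenerate-dispersive symmetrization of the linearization about $\bgB$ that is the engine of Theorem~\ref{thm:norm-growth}. This ensures that the error grows at \emph{the same exponential rate} as $b^{lin,(\lmb)}$, so that the amplitude smallness $\eps_{\lmb}$ can keep $\eps_{\lmb} \errwp^{(\lmb)}$ below $\eps_{\lmb} b^{lin,(\lmb)}$ throughout the critical window $t \sim \lmb^{-1} \log \lmb$ during which the norm inflation becomes visible.
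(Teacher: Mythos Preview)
Your strategy has a genuine gap at the step where you pass from the $L^{2}$ control of $\errwp^{(\lmb)} = b^{(\lmb)} - b^{lin,(\lmb)}$ to an $H^{s_{0}}$ bound that is strictly smaller than $\nrm{b^{lin,(\lmb)}}_{H^{s_{0}}}$. You write that this follows by ``interpolating against the assumed uniform $H^{s_{0}}$ bound,'' but interpolation needs a second endpoint, and none is available. The contradiction hypothesis bounds $\nrm{b^{(\lmb)}}_{H^{s_{0}}}$, not $\nrm{\errwp^{(\lmb)}}_{H^{s_{0}}}$; and crucially there is \emph{no} upper bound on $\nrm{b^{lin,(\lmb)}}_{H^{s_{0}}}$, since the linearized equation about $\bgB$ is itself ill-posed in $H^{s_{0}}$ --- that is the content of Theorems~\ref{thm:norm-growth} and \ref{thm:inst}, which give only lower bounds. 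In fact, under the contradiction hypothesis $\nrm{b^{(\lmb)}}_{H^{s_{0}}} \aleq 1$ together with $\nrm{b^{lin,(\lmb)}}_{H^{s_{0}}} \to \infty$, the triangle inequality forces $\nrm{\errwp^{(\lmb)}}_{H^{s_{0}}} \to \infty$ as well, so the comparison cannot close.

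The paper avoids this trap by never invoking the linear solution $b^{lin}$. Instead it implements the method of \emph{testing by degenerating wave packets} (Section~\ref{subsec:ideas}): one pairs the nonlinear perturbation $b = \bfB - \bgB$ directly with the explicit approximate solution $\tilde{b}_{(\lmb)}$ from Proposition~\ref{prop:wavepackets}, and controls $\frac{\ud}{\ud t}\brk{b, \tilde{b}_{(\lmb)}}$ via the generalized energy identity \eqref{eq:en-e-mhd-parallel}. The quadratic terms are estimated schematically by $\nrm{\tilde{b}_{(\lmb)}}_{L^{2}} \nrm{b}_{L^{2}} \nrm{b}_{H^{3}}$, which is $\aleq \nrm{b}_{L^{2}}$ under the contradiction hypothesis $\nrm{b}_{H^{s_{0}}} \aleq 1$ with $s_{0} \geq 3$. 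This yields $\brk{b, \tilde{b}_{(\lmb)}}(t) \geq \tfrac{1}{2}\nrm{b(0)}_{L^{2}}$ on an interval independent of $\lmb$, and then the $H^{s_{0}}$ growth of $b$ is extracted by \emph{duality} using the degeneration $\nrm{\tilde{b}_{(\lmb)}}_{H^{-s_{0}}} \aleq \lmb^{-s_{0}} e^{-c s_{0} \lmb t}$ from \eqref{eq:wavepackets-degen}--\eqref{eq:wavepackets-degen-small}. The point is that $\tilde{b}_{(\lmb)}$ is explicit, so all its Sobolev norms (positive and negative) are known; this is what your comparison object $b^{lin,(\lmb)}$ lacks. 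The H\"older case proceeds by the same mechanism, with the extra smallness $\nrm{b}_{H^{s_{0}}} \aleq \eps^{\alp}\lmb^{-n\alp}$ fed into the interpolation for the quadratic terms, which is where the threshold $s_{0} > \max\{2, 3(1-\alp)\}$ arises.
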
 
\begin{remark}
	Let us comment on the statement regarding the absence of H\"older continuity of the solution map. This notion of illposedness is analogous to that in a theorem of G.~M\'etivier \cite[Theorem~3.2]{Met}, which applies to any first order $n \times n$ nonlinear PDE of the form $\rd_{t} u = F(t, x, u, \rd_{x} u)$ in $\bbR^{d}$, where $F$ is real-analytic and \emph{nonhyperbolic}, i.e., $\rd_{v}F(0, x_{0}, u_{0}, v_{0})$ has a nonreal eigenvalue for some $(x_{0}, u_{0}, v_{0}) \in \bbR^{d} \times \bbR^{n} \times \bbR^{n\times d}$. See also a work of F.~John (\cite{Fr}) which discusses relevance of H\"older continuity for evolutionary systems of physical origin. 
	
	The case $\alpha = 1$ corresponds to the notion of Lipschitz continuity of the solution map, which  {has} been considered by many authors. We note that in a work of Guo and Tice \cite{GT} (see also \cite{GN}), a somewhat general argument was presented, which enables one to pass from a strong ill-posedness result for a linearized system to the failure of Lipschitz continuity for the corresponding nonlinear system. See \cite{GT} for details.
\end{remark}

Observe that the background stationary magnetic field $\bgB$ in Theorem~\ref{thm:illposed-strong} may be arbitrarily close to $0$ in quite strong topologies, or more specifically, in $H^{s}_{comp}(\bbR^{2})$ using axi-symmetric ones and $H^{s}_{comp}(\bbT_{x} \times \bbR_{y})$ using translationally-symmetric ones for any $s > 0$. Hence, the preceding result immediately implies that nonlinear Cauchy problems for \eqref{eq:hall-mhd} and \eqref{eq:e-mhd} are ill-posed near the trivial solution in the same sense.
\begin{maincorollary}[Unboundedness of the solution map near $0$] \label{cor:illposed-zero} 
The results of Theorem \ref{thm:illposed-strong} holds with $\bgB \equiv 0$.
\end{maincorollary}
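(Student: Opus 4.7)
The plan is to reduce Corollary~\ref{cor:illposed-zero} directly to Theorem~\ref{thm:illposed-strong} by exhibiting, for each prescribed $\eps>0$ and each pair of Sobolev exponents $r,s>0$, a linearly degenerate background $\bgB$ of the form $f(y)\rd_x$ or $f(r)\rd_\theta$ that has \emph{compact support} and satisfies $\nrm{\bgB}_{H^{s}(M)} < \eps/2$. Both hypotheses of Theorem~\ref{thm:illposed-strong} are invariant under the scaling $f \mapsto \kappa f$, which preserves the transversal zero used to enforce linear degeneracy, so such a small degenerate $\bgB$ is easy to produce. Concretely, in the translationally-symmetric case on $M=\bbT_x\times\bbR_y\times\bbT_z$ I would choose $f_0\in C^{\infty}_{c}(\bbR_y)$ vanishing transversally at some $y_0\in\supp f_0$ and set $\bgB=\kappa f_0(y)\rd_x$ with $\kappa$ so small that $\nrm{\bgB}_{H^{s}}<\eps/2$; compactness of $\supp\bgB$ follows from compactness of $\bbT_x$ together with $\supp f_0$. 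In the axi-symmetric case on $M=\bbR^{2}_{x,y}\times\bbT_z$ I would instead take $f_0\in C^{\infty}_{c}((0,\infty))$ vanishing transversally at some $r_0\in\supp f_0$ and rescale; confining $\supp f_0$ to an annulus away from the origin guarantees both Euclidean smoothness of $\bgB=\kappa f_0(r)\rd_\theta$ at the origin and its compact support in $\bbR^{2}$.

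Next I would set up the relevant inclusion of function-space balls. For such a $\bgB$, any pair $(\bfu,\bfB)=(0,\bgB)+(\tilde u,\tilde b)$ with $(\tilde u,\tilde b)\in H^{r}_{comp}\times H^{s}_{comp}$ satisfying $\nrm{(\tilde u,\tilde b)}_{H^{r}\times H^{s}}<\eps/2$ can equivalently be written as a compactly supported perturbation of $(0,0)$ with $H^{r}\times H^{s}$-norm strictly less than $\eps$, by the triangle inequality and the fact that the sum of two compactly supported functions is compactly supported. This establishes the inclusion
\begin{equation*}
\calB_{\eps/2}((0,\bgB); H^{r}_{comp}\times H^{s}_{comp}) \;\subset\; \calB_{\eps}((0,0); H^{r}_{comp}\times H^{s}_{comp}),
\end{equation*}
and analogously $\calB_{\eps/2}(\bgB; H^{s}_{comp})\subset\calB_{\eps}(0; H^{s}_{comp})$ in the \eqref{eq:e-mhd} case.

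Finally, I would argue by contradiction. Assume the solution map for \eqref{eq:hall-mhd} existed as a bounded (resp.\ $\alpha$-H\"older continuous) map on $\calB_{\eps}((0,0);H^{r}_{comp}\times H^{s}_{comp})$ into $L^{\infty}_{t}([0,\dlt];H^{s_0-1})\times L^{\infty}_{t}([0,\dlt];H^{s_0})$ for the values $s_0\geq 3$ (resp.\ $s_0>\max\{2,3(1-\alpha)\}$) in Theorem~\ref{thm:illposed-strong}. Restricting through the inclusion just established produces a bounded (resp.\ $\alpha$-H\"older continuous) solution map on $\calB_{\eps/2}((0,\bgB);H^{r}_{comp}\times H^{s}_{comp})$, which directly contradicts the conclusion of Theorem~\ref{thm:illposed-strong} applied to the degenerate background $\bgB$ we constructed. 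The same reduction, with $(0,\bgB)$ replaced by $\bgB$ and the product ball by $\calB_{\eps}(0;H^{s}_{comp})$, handles \eqref{eq:e-mhd}.

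The argument presents no genuine obstacle: the entire content is to verify that both properties required of the perturbed background, namely compact support and arbitrarily small $H^{s}$-norm, can be realized simultaneously on each of the ambient manifolds allowed by Theorem~\ref{thm:illposed-strong}, which is just the construction of scaled compactly supported profiles described above.
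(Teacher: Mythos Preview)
Your argument is correct and is essentially the same as the paper's: the paper simply observes that the degenerate backgrounds $\bgB$ in Theorem~\ref{thm:illposed-strong} can be taken compactly supported and arbitrarily small in $H^{s}_{comp}$ (via axi-symmetric profiles on $\bbR^{2}_{x,y}$ and translationally-symmetric profiles on $\bbT_{x}\times(\bbT,\bbR)_{y}$), whence the ball inclusion and contradiction you wrote out follow immediately. Your write-up just makes this ``immediate'' reduction explicit.
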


Moreover, we show that there exist initial data, which are compactly supported arbitrarily close to the trivial solution in a high regularity Sobolev space, for which no solution can be found in the same Sobolev space. We emphasize that uniqueness does \emph{not} have to be assumed. 
\begin{maintheorem}[Nonexistence near $0$]\label{thm:illposed-strong2}
Let $s > 3+\frac{1}{2}$ and  {$M = (\bbT, \bbR)_{x} \times \bbR_{y} \times \bbT_{z}$ in the case of \eqref{eq:hall-mhd} and $M = (\bbT, \bbR)_{x} \times (\bbT, \bbR)_{y} \times \bbT_{z}$ for \eqref{eq:e-mhd}}. Given any $\epsilon > 0$, there exist initial data $(\mathbf{u}_0,\mathbf{B}_0) \in H^{s-1}_{comp} \times  {H^s}$ for  \eqref{eq:hall-mhd} satisfying $\nrm{\mathbf{u}_0}_{H^{s-1}} + \nrm{\mathbf{B}_0}_{H^s} < \epsilon$ (resp.~$\mathbf{B}_0 \in H^s_{comp} $ for \eqref{eq:e-mhd} satisfying $\nrm{\mathbf{B}_0}_{H^s} < \epsilon$) such that for any $\delta > 0$, there is no corresponding $ L^\infty_t([0,\delta];H^{s-1} \times H^s)$ solution to \eqref{eq:hall-mhd} (resp.~$L^\infty_t([0,\delta];H^s)$ solution to \eqref{eq:e-mhd}). \end{maintheorem}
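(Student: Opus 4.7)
My plan is to upgrade the unboundedness of Theorems~\ref{thm:norm-growth}--\ref{thm:illposed-strong} to true nonexistence by a superposition-and-concentration construction. The initial data $\mathbf{B}_0$ will be built as an infinite sum $\sum_n (\bgB_n + b_0^{(n)})$ of pairwise disjoint building blocks, where the $n$-th block forces the local $H^s$ norm to multiply by at least $2^n$ on a time window $[0, T_n]$ with $T_n \le 1/n$. Since for any prescribed $\delta > 0$ and any prospective solution-norm bound $M_0$ one can pick $n$ with both $1/n < \delta$ and $2^n > M_0$, no $L^\infty_t([0, \delta]; H^s)$ solution can exist.

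\textbf{Step 1 (Building blocks).} For each $n \in \bbN$, I apply Theorem~\ref{thm:norm-growth}(3) in Case~(a) to produce (i) a compactly supported stationary profile $\bgB_n = f_n(y)\partial_x$ with a simple zero, $\nrm{\bgB_n}_{H^s} \le \epsilon \cdot 2^{-n-2}$, and support in a thin strip $\Omega_n$; and (ii) a wave-packet perturbation $b_0^{(n)}$ of frequency $\lambda_n$ supported in $\Omega_n$ with $\nrm{b_0^{(n)}}_{H^s} \le \epsilon \cdot 2^{-n-2}$. The sharp linear growth (Theorem~\ref{thm:norm-growth}(3) combined with Remark~\ref{rem:norm-growth-sharpness}) gives a factor $e^{c_0(\bgB_n) s \lambda_n t}$, and choosing $\lambda_n$ large and $T_n \sim (\log \lambda_n)/\lambda_n$ makes this factor $\ge 2^{2n}$ on $[0, T_n]$. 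To transfer this to the \emph{nonlinear} flow I invoke $s > 3 + 1/2$ (which exceeds $d/2 + 1$ in $d = 3$) to run a standard strong-solutions energy estimate and apply Gronwall to the error $\mathbf{B}^{(n)} - \bgB_n - b^{(n)}$, showing that the nonlinear correction is negligible in $H^s$ on the short window $[0, T_n]$; hence $\nrm{\mathbf{B}^{(n)}(T_n)}_{H^s} \ge 2^n$ for any strong nonlinear solution with initial data $\bgB_n + b_0^{(n)}$.

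\textbf{Step 2 (Superposition and contradiction).} Place the strips $\Omega_n$ pairwise disjoint in a compact region, e.g.~$\Omega_n \subset \{y_\infty - 2^{-n} \le y \le y_\infty - 2^{-n-1}\}$, so that $\mathbf{B}_0 := \sum_n (\bgB_n + b_0^{(n)}) \in H^s_{comp}(M)$ satisfies $\nrm{\mathbf{B}_0}_{H^s} < \epsilon$; take $\mathbf{u}_0 = 0$ in the Hall-MHD case. Suppose toward contradiction that a solution exists in $L^\infty_t([0, \delta]; H^{s-1} \times H^s)$, with $M_0 := \nrm{\mathbf{B}}_{L^\infty_t H^s}$, and pick $n$ with $T_n \le 1/n < \delta$ and $2^n > M_0$. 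A \emph{localization lemma}---to be proved via frequency- and space-localized energy estimates at regularity $H^s$ using the global bound $M_0$---shows that near $\Omega_n$ and on $[0, T_n]$, the difference $\mathbf{B} - \mathbf{B}^{(n)}$ is $o(1)$ in $H^s$ as $n \to \infty$, where $\mathbf{B}^{(n)}$ denotes the nonlinear evolution of only the $n$-th block. Combined with Step~1 this gives $\nrm{\mathbf{B}(T_n)}_{H^s} \ge 2^n > M_0$, a contradiction.

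\textbf{Main obstacle.} The crux is the localization lemma of Step~2. Because Hall- and E-MHD are dispersive rather than of finite propagation speed, the other blocks $\bgB_m + b_0^{(m)}$ with $m \neq n$ can in principle influence the evolution near $\Omega_n$ via nonlocal effects. The argument must balance three competing scales---wave-packet frequency $\lambda_n \to \infty$, short time $T_n \to 0$, and spatial separation $\dist(\Omega_n, \Omega_m) \gtrsim 2^{-n}$---so that dispersive spreading of the other blocks into a neighborhood of $\Omega_n$ contributes only $o(1)$ in $H^s$, while the $n$-th block's inflation proceeds essentially as in isolation. The regularity assumption $s > 7/2$ yields $\mathbf{B} \in L^\infty_t C^2$, which is what permits the pointwise, commutator, and cutoff estimates underlying this bootstrap; managing this interplay carefully constitutes the bulk of the technical work.
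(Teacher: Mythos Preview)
Your overall architecture---superposing disjoint instability blocks and deriving a contradiction---matches the paper. The gap is in the technical implementation of both Steps~1 and~2: they rest on $H^s$-level Gronwall/energy estimates that do not close for this system.

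Concretely, in Step~1 you propose to control the error $w:=\mathbf{B}^{(n)}-\bgB_n-b^{(n)}$ in $H^s$ by ``standard strong-solutions energy estimates.'' But $w$ solves the linearized equation around $\bgB_n$ plus quadratic terms, and the very obstruction this paper is about is that the Hall term $\nabla\times((\nabla\times w)\times\bgB_n)$ produces a commutator of the form \eqref{eq:d-loss} which costs one derivative: $\tfrac{d}{dt}\|\partial^{(N)}w\|_{L^2}^2$ contains $\int(\nabla\times\partial^{(N)}w)\cdot((\nabla\times\bgB_n)\times\partial^{(N)}w)$, and there is no way to bound this by $\|w\|_{H^N}^2$. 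So no $H^s$ Gronwall closes, even on the short window $T_n\sim\lambda_n^{-1}\log\lambda_n$. The same derivative loss defeats your localization lemma in Step~2 if you try to run it at regularity $H^s$.

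The paper's remedy is to never estimate in $H^s$ until the final step. Instead one (i)~localizes the \emph{$L^2$} energy identity for $b$ with a cutoff $\chi_k$: the bad commutator $[\chi_k,(\bgB_k\cdot\nabla)\nabla\times]$ loses one derivative, but that derivative is controlled by interpolating $\|\nabla(\chi_k b)\|_{L^2}\lesssim\|\chi_k b\|_{L^2}^{1-1/s}\|\chi_k b\|_{H^s}^{1/s}$ against the \emph{assumed} global $H^s$ bound, yielding $\|\chi_k b(t)\|_{L^2}\lesssim\lambda_k^{-s}+C_k t^s$; (ii)~localizes the \emph{generalized energy identity} $\tfrac{d}{dt}\langle\chi_k b,\tilde b_k\rangle$ (testing against the degenerating wave packet), which is again an $L^2$ computation with acceptable commutators; (iii)~extracts $H^s$ growth only at the end by duality, using that $\|\tilde b_k(t)\|_{H^{-s'}}$ decays like $e^{-c s'\lambda_k t}$. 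The resulting useful time window is $t_k^*\sim\lambda_k^{-s/(s+1)}$, not $\lambda_k^{-1}\log\lambda_k$, precisely because of the $t^s$ error in (i). For \eqref{eq:hall-mhd} the pressure is nonlocal, so the paper places the blocks at dyadically separated $y_k\to\infty$ (hence the restriction $(\bbT,\bbR)_y=\bbR_y$) and uses compactly supported cutoffs with $\|\chi_k'\|_{L^\infty}\lesssim\mu^{-k}$; your nesting $\Omega_n\subset\{y_\infty-2^{-n}\le y\le y_\infty-2^{-n-1}\}$ would not survive the pressure term.
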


 { \begin{remark}
		The nonlinear ill-posedness result stated in this section can be established for the scale of $C^{k,\alpha}$-spaces (with a straightforward modification of the arguments); for \eqref{eq:hall-mhd} and \eqref{eq:e-mhd}, one respectively needs the assumption that $(\bfu,\bfB) \in C^{k-1,\alpha}(M) \times C^{k,\alpha}(M)$ and $\bfB \in C^{k,\alpha}(M)$ with $k + \alpha \ge 2$. Note that this level of regularity corresponds exactly to the threshold for classical solutions -- solutions for which every term in the system can be identified with a continuous function. 
\end{remark} }

\subsection{Illposedness results in Gevrey spaces} \label{subsec:illposed-gevrey}
As discussed in Remark~\ref{rem:norm-growth-cascade}, the $s$-depen\-dence of the growth rate of the $H^{s}$ norm in Theorem~\ref{thm:norm-growth} hints at illposedness of \eqref{eq:hall-mhd} and \eqref{eq:e-mhd} in all Gevrey spaces; this behavior is in stark contrast to the ill-posed constant coefficients PDEs, as well as many traditional examples of ill-posed problems in hydrodynamics. Our goal is to rigorously illustrate this property; to avoid technical nuisances, we contend ourselves with a linear illposedness result on the domain\footnote{For consideration of other domains, see Remark~\ref{rem:gevrey-other-domain}} $M = \bbT^{3}$.
Before we describe the statements, let us briefly review the notion of Gevrey regularity classes and some basic properties. We will follow the illuminating work of Levermore--Oliver \cite{LO}.

A function $b \in C^\infty(\mathbb{T}^3)$ belongs to the \emph{Gevrey class $\sigma$} for some $\sigma > 0$ if there exist constants $\rho > 0, A < \infty$ such that for any $\alpha \in \mathbb{N}^3$, \begin{equation}\label{eq:gevrey-uniform} 
\begin{split}
\sup_{x \in \mathbb{T}^{3}}|\rd^\alpha b(x)| & \le A \left(\frac{\alpha!}{\rho^{|\alpha|}}\right)^\sigma
\end{split}
\end{equation} with $\rd^\alp= \rd_x^{\alp_1}\rd_y^{\alp_2}\rd_z^{\alp_3}$ and $|\alp|=\alp_1+\alp_2+\alp_3$. We denote $G^\sigma(\mathbb{T}^{3})$ to be the space of Gevrey class $\sigma$ functions. It is closed under multiplication and differentiation for all $\sgm >0$, and under composition as well for $\sgm \geq 1$. It is clear that for $0 < \sigma_1 < \sigma_2 < \infty$, we have $G^{\sigma_1}(\mathbb{T}^{3}) \subset G^{\sigma_2}(\mathbb{T}^{3}) \subset C^\infty(\mathbb{T}^{3})$ and that the containments are proper. It is well-known that $G^1(\mathbb{T}^{3})$ coincides with the space of real analytic functions. 

As in \cite{LO}, it will be convenient to characterize Gevrey classes in terms of the Sobolev norms. Then using Sobolev embedding, it is not difficult to show that $b \in G^\sigma(\mathbb{T}^{3})$ if and only if there are constants $0 < \rho, A < \infty$ such that \begin{equation}\label{eq:gevrey-Sobolev}
\begin{split}
\nrm{\rd^{\alp} b}_{L^2} & \le A\left(\frac{\alpha!}{\rho^{|\alpha|}}\right)^\sigma
\end{split}
\end{equation} where $\alp!=\alp_1!\alp_2!\alp_3!$. Furthermore, with the operator $|\nabla| := \sqrt{-\Delta}$, we define a family of normed spaces \begin{equation*} 
\begin{split}
&\mathcal{D}(e^{\tau |\nabla|^{1/\sigma}} : L^2(\mathbb{T}^{3})) = \{ b \in L^2(\mathbb{T}^{3}) : \nrm{ e^{\tau |\nabla|^{1/\sigma}} b }_{L^2} < \infty \}. 
\end{split}
\end{equation*} Then \cite[Theorem 4]{LO} states that for any $\sigma > 0$, \begin{equation*} 
\begin{split}
G^\sigma(\mathbb{T}^{3}) = \bigcup_{\tau > 0} \mathcal{D}(e^{\tau |\nabla|^{1/\sigma}} : L^2(\mathbb{T}^{3})) . 
\end{split}
\end{equation*} Here $\tau > 0$ corresponds precisely to the radius of Gevrey regularity, namely \begin{equation} \label{eq:gevrey-rad} 
\begin{split}
\frac{1}{\tau} = \limsup_{ |\alp| \rightarrow \infty} \left( \frac{\nrm{\rd^\alp b}_{L^2}^{1/\sigma}}{\alp!} \right)^{\frac{1}{\abs{\alp}}} . 
\end{split}
\end{equation} In the case of analytic functions ($\sigma = 1$), $\tau$ is simply the radius of analyticity.  

We are ready to state our linear illposedness results in Gevrey spaces for \eqref{eq:hall-mhd} and \eqref{eq:e-mhd}.

\begin{maintheorem}[Gevrey space illposedness]\label{thm:illposed-gevrey}
	Consider the stationary magnetic field $\bgB = f(y)\rd_x$ on $\mathbb{T}^3$ where $f(y)$ is a smooth function on $\mathbb{T}_y$ with $f(y_0) = 0$ and $f'(y_0) \ne 0$ for some $y_0$. Then the linearized \eqref{eq:hall-mhd} and \eqref{eq:e-mhd} systems at $(0, \bgB)$ and $\bgB$ respectively are  illposed in any $G^\sigma(\mathbb{T}^3)$ with $\sigma > 0$ when $f \in G^\sigma(\mathbb{T})$. To be more precise, assuming $L^2$-wellposedness,  {for $\sgm \geq 1$ (resp.~$0 < \sgm < 1$) there exist initial data in $G^\sigma(\mathbb{T}^3)$ whose corresponding unique solution escapes $C^\infty(\mathbb{T}^3)$ (resp.~$\cup_{\sgm' > 0} G^{\sgm'}(\bbT^{3})$) instantaneously for $t > 0$. }
\end{maintheorem}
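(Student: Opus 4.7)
The plan is to realize a Gevrey-class initial datum as a Gevrey-weighted superposition of the degenerating wave packets produced by Theorem~\ref{thm:norm-growth}, and then convert the packet-by-packet norm growth into an immediate escape from $G^{\sgm'}$ (or $C^\infty$). The crucial structural input is the $x$-translation invariance of the background $\bgB = f(y)\rd_x$: the linearized equations commute with $x$-translations, so the $x$-Fourier support of any solution is preserved in time. For each large $\lmb \in \bbN$ I would invoke Theorem~\ref{thm:norm-growth}~(3) to obtain a wave packet datum $b_0^{(\lmb)} = \Re(e^{i(\lmb x + \lmb G(y))}) \frkb(x,y)$ with $\nrm{b_0^{(\lmb)}}_{L^2} \simeq 1$ and with $\frkb$ real-analytic in $x$, so that $b_0^{(\lmb)}$ is essentially $x$-frequency-localized near $\pm \lmb$ (with exponentially decaying tails) and the corresponding solution $b^{(\lmb)}(t)$ retains this localization throughout the interval on which the norm growth holds.

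Next, with $\lmb_n = 2^n$ for $n$ large, I would set $b_0 = \sum_n c_n b_0^{(\lmb_n)}$ with Gevrey weights $c_n = e^{-\tau \lmb_n^{1/\sgm}}$ for a suitably large $\tau > 0$. Since $f, G, \frkb \in G^\sgm$, a standard Fa\`a di Bruno computation for $\rd_y^N(e^{i\lmb G(y)} \frkb)$ shows $\nrm{e^{\tau'\abs{\nb}^{1/\sgm}} b_0^{(\lmb_n)}}_{L^2} \lesssim e^{C\tau' \lmb_n^{1/\sgm}}$ for some $C = C(\bgB)$ and any $\tau' > 0$; choosing $\tau \gg C\tau'$ makes $\sum_n |c_n|^2 \nrm{e^{\tau'\abs{\nb}^{1/\sgm}} b_0^{(\lmb_n)}}_{L^2}^2$ summable, so $b_0 \in \calD(e^{\tau'\abs{\nb}^{1/\sgm}} : L^2) \subset G^\sgm(\bbT^3)$. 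Assuming $L^2$-wellposedness, linearity forces $b(t) = \sum_n c_n b^{(\lmb_n)}(t)$ as the unique $L^2$-solution. After absorbing the exponentially small cross-terms via an $x$-Littlewood--Paley projection onto lacunary shells $\abs{\xi} \simeq \lmb_n$, Plancherel in $x$ combined with the $s$-independent growth constant highlighted in Remark~\ref{rem:norm-growth-sharpness} yields
\begin{equation*}
\nrm{b(t)}_{H^s}^2 \gtrsim \sum_n |c_n|^2 \nrm{b^{(\lmb_n)}(t)}_{H^s}^2 \gtrsim \sum_n e^{-2\tau \lmb_n^{1/\sgm}} \lmb_n^{2s} e^{2 c_0 s \lmb_n t}
\end{equation*}
on a short time interval independent of $n$.

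To conclude: when $\sgm \geq 1$ one has $1/\sgm \leq 1$, so the exponent $c_0 s \lmb_n t - \tau \lmb_n^{1/\sgm}$ tends to $+\infty$ along $n$ for every $s, t > 0$ when $\sgm > 1$ (and once $s > \tau/(c_0 t)$ when $\sgm=1$); hence the series diverges and $b(t) \notin H^s$, which a fortiori excludes $C^\infty$. When $0 < \sgm < 1$, optimizing a single term over $\lmb_n \simeq (c_0 s t \sgm/\tau)^{\sgm/(1-\sgm)}$ produces $\nrm{b(t)}_{H^s} \gtrsim \exp(C(t) s^{1/(1-\sgm)})$, and since $1/(1-\sgm) > 1$ this outgrows any Gevrey bound $\nrm{b(t)}_{H^s} \lesssim A (s!/\rho^s)^{\sgm'} \simeq e^{\sgm' s \log s}$, so $b(t)$ escapes every $G^{\sgm'}$. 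The main obstacle will be the uniform-in-$\lmb$ Gevrey estimate on $b_0^{(\lmb)}$: the Fa\`a di Bruno bookkeeping for $\rd_y^N(e^{i\lmb G} \frkb)$ must be tight enough that a single choice of $\tau$ in $c_n = e^{-\tau \lmb_n^{1/\sgm}}$ simultaneously keeps $b_0$ in one $G^\sgm$ ball and still allows the growth factor $e^{2c_0 s \lmb_n t}$ in the last display to beat the polynomial cost $\lmb_n^{2s}$ on the correct timescale; without the $s$-uniformity of the constant recorded in Remark~\ref{rem:norm-growth-sharpness}, such a simultaneous choice would not be available.
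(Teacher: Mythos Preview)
Your high-level plan---superpose frequency-$\lmb$ wave packets with Gevrey weights $c_\lmb = e^{-\tau\lmb^{1/\sgm}}$, exploit $x$-frequency orthogonality, then run the dichotomy $\sgm\ge1$ versus $0<\sgm<1$---is exactly the skeleton of the paper's argument, and your endgame (divergence of the series for $\sgm\ge1$, optimization over $\lmb_n$ for $\sgm<1$) is essentially what the paper does. But there is a genuine gap at the very first step: the wave packets $b_0^{(\lmb)} = \Re(e^{i\lmb(x+G(y))})\frkb$ produced by Theorem~\ref{thm:norm-growth} are \emph{not} in $G^\sgm$ for $\sgm\le1$, so your claimed estimate $\nrm{e^{\tau'\abs{\nb}^{1/\sgm}} b_0^{(\lmb_n)}}_{L^2} \lesssim e^{C\tau' \lmb_n^{1/\sgm}}$ cannot hold in that range. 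The amplitude $\frkb$ must be compactly supported in $y\in(\tfrac12 y_1,y_1)$ for the WKB construction behind Theorem~\ref{thm:norm-growth} to apply, and no nonzero compactly supported function belongs to $G^\sgm$ for $\sgm\le1$. You cannot have both $\frkb\in G^\sgm$ and the support condition. (For $\sgm<1$ there is a second obstruction: $G$ is built from $f$ via the degenerate change of variables $\eta'(y)=1/f(y)$ and a further composition, and $G^\sgm$ is not closed under composition when $\sgm<1$.) The most interesting case, real-analytic data $\sgm=1$, is thus out of reach of your proposal.

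The paper explicitly identifies this obstruction (see the paragraph \emph{Proof of Gevrey illposedness} in Section~\ref{subsec:ideas}) and resolves it by building \emph{new} degenerating wave packets (Proposition~\ref{prop:wavepackets-Gevrey}) whose initial phase is the linear function $e^{i\lmb(x+y)}$ rather than $e^{i\lmb(x+G(y))}$. The linear phase is trivially real-analytic, so the solution's initial data can be placed in any $G^\sgm$. The price is that the WKB phase $\Phi(\tau,\eta)$ is no longer of the separated form $\tau+G(\eta)$ used in Section~\ref{sec:wavepackets}; one must solve the Hamilton--Jacobi equation $(\rd_\tau\Phi)^2-(\rd_\eta\Phi)^2=f^2$ with the genuinely time-dependent initial condition $\Phi(0,\eta)=y(\eta)$ and redo the characteristic and transport estimates from scratch (Section~6.2). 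This analysis---not the Fa\`a di Bruno bookkeeping you anticipate---is the main technical content of Section~\ref{sec:gevrey}. Once Proposition~\ref{prop:wavepackets-Gevrey} is available, the remainder of the paper's proof follows your outline closely.
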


In the statement of the above theorem, one can simply take $f(y) = \sin(y)$, which belongs to $\cap_{\sigma > 0} G^\sigma$ trivially and thus the associated linearized \eqref{eq:hall-mhd} and \eqref{eq:e-mhd} systems are illposed in every Gevrey class $G^{\sgm}$.

\begin{remark}[Nonlinear illposedness in Gevrey spaces]
Using the same methods involved in extending the linear result to the nonlinear one in the Sobolev case, one can easily obtain illposedness statements in Gevrey regularity. Let us just state the results which can be obtained, restricting ourselves to the $\bbT^3$-case. When $\sigma>1$, there exists $\bfB_0\in G^\sigma$ such that there is no local-in-time $C^\infty$ solution to \eqref{eq:e-mhd} with initial data $\bfB_0$. On the other hand, when $0<\sigma\le1$, we can prove the following norm-inflation type statement: for any $\eps>0$, there exists a data $\bfB_0\in G^\sigma$ such that any corresponding solution to \eqref{eq:e-mhd} in $L^\infty([0,\dlt];G^\sigma)$, if exists, satisfies $\tau(\bfB(t(\eps)))<\eps$ for some $0<t(\eps)$ with $t(\eps)\rightarrow 0$ as $\eps\rightarrow 0$. Here, $\tau(\bfB(t(\eps)))$ denotes the radius of Gevrey-$\sigma$ regularity for $\bfB(t(\eps))$. Observe that this statement contradicts the usual well-posedness statement in Gevrey spaces; see \cite{LO}.
\end{remark} 

\subsection{Illposedness results in the fractionally dissipative case}\label{rem:norm-growth-order}

The power of $\lmb$ in the growth rates $e^{c_{0} (s + \frac{1}{2} - \frac{1}{p}) \lmb t}$ in Theorem~\ref{thm:norm-growth} is sharp in view of the loss of one derivative seen in \eqref{eq:d-loss}. These rates are also consistent with the previously proved wellposedness due to Chae--Wan--Wu \cite{CWW}\footnote{In \cite{CWW} the domain is $\bbR^{3}$, but the result easily extends to any of $M = (\bbT, \bbR)_{x} \times (\bbT, \bbR)_{y} \times (\bbT, \bbR)_{z}$.} for the fractionally dissipative system 
 \begin{equation}\label{eq:e-mhd-fradiss}
\left\{
\begin{aligned}
& \rd_t \mathbf{B} + \nabla \times ((\nabla\times \mathbf{B})\times\mathbf{B}) = -\eta (-\Delta)^\alpha \mathbf{B},  \\
& \nabla\cdot\bfB=0 , 
\end{aligned}
\right.
\end{equation}
with $\eta\ge 0$ and $\alpha > 1/2$; see also \cite{CDL}. In view of the instability observed in this paper, one can expect this system to be illposed in the range $0\le \alpha < 1/2$ and similarly for the Hall-MHD system with a fractional dissipation in the magnetic field: \begin{equation}  \label{eq:hall-mhd-fradiss}
\left\{
\begin{aligned}
&\rd_{t} \bfu + \bfu \cdot \nb \bfu + \nb \bfp - (\nb \times \bfB) \times \bfB  = -\nu (-\Delta)^{1+\beta }\bfu,  \\
&\rd_{t} \bfB - \nb \times (\bfu \times \bfB) + \nb \times ((\nb \times \bfB) \times \bfB)= -\eta (-\Delta)^\alpha \mathbf{B}, \\
&\nb \cdot \bfu = \nb \cdot \bfB = 0,
\end{aligned}
\right.
\end{equation} where $0\le \alpha, \beta<\frac{1}{2}$. In the critical case where $\alp =\frac{1}{2}$, it is not difficult to show that the system \eqref{eq:e-mhd-fradiss} is globally well-posed in $H^s$ with $s$ large enough for \textit{small} (relative to $\eta$) $H^s$ initial data\footnote{The proof of this statement boils down to the inequality (with $|\nb|=(-\lap)^{\frac{1}{2}}$ and $s$ large)\begin{equation*}
\begin{split}
\left| \brk{|\nb|^{s} \nb\times ((\nb\times \bfB)\times\bfB), |\nb|^s\bfB }\right| &\le C\nrm{|\nb|^{s+\frac{1}{2}}\bfB}_{L^2} \nrm{|\nb|^{\frac{1}{2}} (\nb\bfB |\nb|^s\bfB) }_{L^2} \\
&\le C \nrm{\bfB}_{H^s} \nrm{|\nb|^{s+\frac{1}{2}}\bfB}_{L^2}^2 ,
\end{split}
\end{equation*} which gives \begin{equation*}
\begin{split}
\frac{1}{2}\frac{\ud }{\ud t} \nrm{|\nb|^s \bfB}_{L^2}^2 + (\eta-C\nrm{\bfB}_{H^s}) \nrm{|\nb|^{s+\frac{1}{2}}\bfB}_{L^2}^2 \le 0. 
\end{split}
\end{equation*} Therefore, there exists a universal constant $m_0>0$ such that if $\nrm{\bfB_0}_{H^s}\le \eta m_0$ then the solution still satisfies $\nrm{\bfB(t)}_{H^s}\le \eta m_0$ for all $t\ge0$.}; a similar argument . 

Indeed, we confirm that the fractionally dissipative systems \eqref{eq:e-mhd-fradiss} and \eqref{eq:hall-mhd-fradiss} are strongly ill-posed in Sobolev spaces in the expected range $0 \leq \alp < \frac{1}{2}$, in the sense that the solution map cannot be bounded near the trivial solution (similarly as in Theorem \ref{thm:illposed-strong}).

To avoid excessive technicalities, we contend ourselves with the case $M = \bbT^{3}$. However, with the ideas already in this paper, it is straightforward to extend what follows to the case $M = (\bbT, \bbR)_{x} \times (\bbT, \bbR)_{y} \times \bbT_{z}$. 

\begin{maintheorem}[Illposedness for fractionally dissipative systems]\label{thm:illposed-fradiss}
	Let $M = \bbT^{3}$, and consider the fractionally dissipative system \eqref{eq:hall-mhd-fradiss} with $0 \leq \alp, \bt < \frac{1}{2}$, $\eta > 0$ and $\nu \geq 0$ (resp.~\eqref{eq:e-mhd-fradiss} with $0 \leq \alp < \frac{1}{2}$ and $\eta > 0$). Assume that for some $\eps, \dlt,r > 0$ and $s_0\le s < \min\{ \frac{1}{2\alp}, \frac{1}{2\beta}, 3\}s_0$ (resp.~$s_{0} \leq s < \min \set{\frac{1}{2\alp}, 3}s_{0}$), the solution operator for \eqref{eq:hall-mhd-fradiss} (resp.~\eqref{eq:e-mhd-fradiss}) exists as a map 
		\begin{gather*}
		\calB_{\eps}((0,0); H^{r} \times H^{s})  \to L^{\infty}_{t} ([0, \dlt]; H^{s_{0}-1}) \times L^{\infty}_{t} ([0, \dlt]; H^{s_{0}})  \\
		\left( \hbox{resp. } \calB_{\eps}(0; H^{s}) \to L^{\infty}_{t} ([0, \dlt]; H^{s_{0}}) \right).
		\end{gather*} Then this solution map is unbounded for  $s_0 \ge 3$. 
\end{maintheorem}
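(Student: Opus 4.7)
The plan is to follow the blueprint of Theorem~\ref{thm:illposed-strong}: assuming boundedness of the nonlinear solution map, exploit the linear instability around a small linearly degenerate planar stationary magnetic field $\bgB$ to produce a sequence of initial data that violates the assumed bound. The only new ingredient compared to the inviscid case is the fractional dissipation. On a wave packet concentrated at spatial frequency $\sim\lmb$, $\eta(-\lap)^{\alp}$ acts essentially as multiplication by $-\eta\lmb^{2\alp}$, producing a damping factor $e^{-\eta\lmb^{2\alp}t}$, whereas the Hall-driven mechanism of Theorem~\ref{thm:norm-growth} drives $H^{s}$ growth at rate $\sim c_{0}s\lmb$. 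Since $\alp<1/2$ (and similarly $\bt<1/2$ for the viscous term in \eqref{eq:hall-mhd-fradiss}), for $\lmb\gg 1$ the instability dominates and yields net growth of the form $e^{(c_{0}s\lmb-C\eta\lmb^{2\alp})t}$ on a suitable time window.

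First I would adapt the wave-packet construction of Section~\ref{sec:wavepackets} to the dissipative equation. Because $\eta(-\lap)^{\alp}$ is diagonal in frequency and of lower order than the Hall operator, it enters the geometric-optics transport equation only as an extra factor $\exp(-\eta\int_{0}^{t}|\xi(\tau)|^{2\alp}\,\ud\tau)$ multiplying the amplitude, where $\xi(\tau)$ is the evolving packet frequency. Combined with the inviscid degeneration, which produces $|\xi(\tau)|\sim\lmb e^{c_{0}\lmb\tau}$, this gives a sharp lower bound
\begin{equation*}
\nrm{b^{\mathrm{lin}}(t)}_{H^{s_{0}}}\gtrsim e^{c_{0}s_{0}\lmb t}\exp\Bigl(-C\eta\lmb^{2\alp-1}(e^{2\alp c_{0}\lmb t}-1)\Bigr)\nrm{b_{0}}_{H^{s_{0}}}.
\end{equation*}
Choosing $t_{\lmb}=T\lmb^{-1}\log\lmb$ with $T=(1-2\alp)/(2\alp c_{0})$ makes the dissipative integral bounded while the instability factor reaches $\lmb^{c_{0}s_{0}T}$. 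Starting from a wave packet with $\nrm{b_{0}}_{H^{s}}=1$ (so $\nrm{b_{0}}_{H^{s_{0}}}\sim\lmb^{s_{0}-s}$), one finds $\nrm{b^{\mathrm{lin}}(t_{\lmb})}_{H^{s_{0}}}\sim\lmb^{s_{0}/(2\alp)-s}$, which blows up as $\lmb\to\infty$ precisely under the assumption $s<s_{0}/(2\alp)$; the parallel analysis of the viscous term in \eqref{eq:hall-mhd-fradiss} accounts for the threshold $s<s_{0}/(2\bt)$.

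Following the pattern of Theorem~\ref{thm:illposed-strong}, I would then pick initial data $(0,\bgB+b_{0}^{(\lmb)})$ (resp.~$\bgB+b_{0}^{(\lmb)}$), where $\bgB$ is a small linearly degenerate stationary solution from Proposition~\ref{prop:planar-stat} and $b_{0}^{(\lmb)}$ is an $H^{s}$-normalized wave packet at frequency $\lmb$; for $\lmb$ large these data lie in $\calB_{\eps}$. The assumed solution map furnishes a uniform $L^{\infty}_{t}([0,\dlt];H^{s_{0}})$ bound on $\bfB^{\mathrm{nonlin}}$, which I would compare to $b^{\mathrm{lin}}$ via a Duhamel expansion: the error $e:=\bfB^{\mathrm{nonlin}}-\bgB-b^{\mathrm{lin}}$ satisfies the dissipative linearized equation with a source quadratic in $b^{\mathrm{lin}}+e$, controlled in $H^{s_{0}}$ by Moser-type product estimates. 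Since $\nrm{b^{\mathrm{lin}}(t_{\lmb})}_{H^{s_{0}}}\sim\lmb^{s_{0}/(2\alp)-s}$ diverges while $\nrm{e(t_{\lmb})}_{H^{s_{0}}}$ grows only polynomially in $\lmb$, we obtain a contradiction. The main obstacle is exactly closing this bootstrap uniformly in $\lmb$ on the logarithmically long window $t_{\lmb}$: the third threshold $s<3s_{0}$ in $\min\{1/(2\alp),1/(2\bt),3\}s_{0}$ arises from requiring the Sobolev product estimate on $(\nb\times\bfB)\times\bfB$ at regularity $H^{s_{0}}$ (which needs $s_{0}\ge 3$) to remain tame in the frequency gap $s-s_{0}$. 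The coupling to $\bfu$ in the Hall case, including control of $-\nu(-\lap)^{1+\bt}\bfu$, is handled by parallel energy estimates, mirroring the inviscid case of Theorem~\ref{thm:illposed-strong}.
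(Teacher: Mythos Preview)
Your proposal has two genuine gaps that would prevent the argument from closing.

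\textbf{The background is not stationary.} Once you add $-\eta(-\lap)^{\alp}\bfB$ to the equation, a planar magnetic field $f_{0}(y)\rd_{x}$ is no longer a stationary solution: it evolves according to $\rd_{t}f=-\eta(-\lap)^{\alp}f$. So you cannot center the data at ``a small linearly degenerate stationary solution from Proposition~\ref{prop:planar-stat}'' and linearize there. The paper handles this by taking the \emph{time-dependent} background $\bgB(t)=f(t,y)\rd_{x}$ solving the fractional heat equation, carefully choosing $f_{0}$ so that $(-\lap)^{\alp}f_{0}\equiv 0$ on the interval where the wave packet lives (forcing the local drift to be $O(t^{2})$ rather than $O(t)$), and then treating the discrepancy $\bgB(t)-\bgB_{0}$ as an additional error $\bfG$ in the generalized energy identity. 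This error, after integration over $[0,t^{*}]$ with $t^{*}=\lmb^{-1}\ln(\lmb^{\veps})$, is what actually produces the constraint $\veps C_{f_{0}}<2$ and hence the threshold $s<3s_{0}$; your attribution of the $3$ to Moser product estimates on the nonlinearity is not where it comes from.

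\textbf{The Duhamel/bootstrap comparison cannot be closed.} To bound $e=\bfB^{\mathrm{nonlin}}-\bgB-b^{\mathrm{lin}}$ in $H^{s_{0}}$ via Duhamel, you would need $H^{s_{0}}\to H^{s_{0}}$ bounds on the linearized evolution around $\bgB$---but the nonexistence of such bounds is exactly the illposedness you are trying to prove. There is no reason for $\nrm{e(t_{\lmb})}_{H^{s_{0}}}$ to grow only polynomially in $\lmb$. The paper avoids this entirely by the method of testing by degenerating wave packets: it does not modify the wave packets to absorb the dissipation, nor does it try to control the full solution in $H^{s_{0}}$. Instead it uses the same wave packets $\tb_{(\lmb)}$ from Proposition~\ref{prop:wavepackets}, controls only the scalar $\brk{b(t),\tb_{(\lmb)}(t)}$ via the (nonlinear) generalized energy identity, and estimates the new dissipative term by $|\brk{(-\lap)^{\alp}b,\tb_{(\lmb)}}|\le\nrm{b}_{L^{2}}\nrm{(-\lap)^{\alp}\tb_{(\lmb)}}_{L^{2}}\lesssim\lmb^{2\alp}e^{2\alp C_{f_{0}}\lmb t}\nrm{b(0)}_{L^{2}}$. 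All nonlinear terms are bounded using only the $L^{2}$ energy identity and the contradiction hypothesis $\nrm{b}_{L^{\infty}_{t}H^{s_{0}}}\lesssim 1$, exactly as in Theorem~\ref{thm:illposed-strong}. The lower bound on $\nrm{b(t^{*})}_{H^{s_{0}}}$ then comes from duality and the degeneration estimate for $\tb_{(\lmb)}$, not from tracking $b^{\mathrm{lin}}$.
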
 

\begin{remark}[Critical dissipation]\label{rem:fradiss-crit}
By a suitable adaptation of our proof of Theorem~\ref{thm:illposed-fradiss} to the critical case $\alp = \frac{1}{2}$ of \eqref{eq:e-mhd-fradiss}, the following statement may be proved (see Remark~\ref{rem:fradiss-crit-pf}):

{\it Assume that for some $s\ge 3$, the solution operator for \eqref{eq:e-mhd-fradiss} exists  as a map $\mathbf{\Phi}: H^s \rightarrow L^\infty_{t}([0,\dlt);H^s)$, where $\dlt = \dlt(\bfB_0)\in (0,\infty]$ is the maximal lifespan of $\bfB_0$; that is, for any $\eps>0$, there is no $L^\infty_t([0,\dlt+\eps);H^s)$-solution to \eqref{eq:e-mhd-fradiss} with initial data $\bfB_0$. Then, there exists an absolute constant $A_0>0$ such that for any $A\ge A_0$, there exists a sequence of initial data $\bfB_0^{(\lmb)}$  such that 
\begin{equation*}
		\begin{split}
		\nrm{\bfB_0^{(\lmb)}}_{H^s} \le \eta A  \mbox{ for any } \lmb\ge 1 
		\end{split}
\end{equation*} 
and at least one of the following holds: 
\begin{itemize}
			\item $\dlt(\bfB_0^{(\lmb)}) \rightarrow 0$ as $\lmb\rightarrow+\infty$, 
			\item for any $\veps > 0$, there exists a constant $c_{\veps} > 0$ and a sequence $t^{(\lmb)} \in (0, \dlt(\bfB_0^{(\lmb)}))$ satisfying  $t^{(\lmb)}\rightarrow 0$ as $\lmb\rightarrow+\infty$ and $\nrm{\mathbf{\Phi}[\bfB_0^{(\lmb)}](t^{(\lmb)})}_{H^s} \ge c_{\veps} \eta A^{(1-\veps)s+1}$.
\end{itemize}
}
While this statement does \emph{not} rule out the possibility of $H^{s}$-wellposedness for $s$ large, it nevertheless shows that the modulus of continuity of $t \mapsto \nrm{\mathbf{\Phi}[\bfB_0](t)}_{H^{s}}$ at $t=0$ cannot be not uniform for $\bfB_0 \in \set{\bfB_0 \in H^{s} : \nrm{\bfB_0}_{H^{s}} \leq \eta A}$, which is in contrast to the case $\alp > 1/2$.
\end{remark}

Compared to the statement of Theorem \ref{thm:illposed-strong}, there are additional restrictions on the range of $s$ relative to $s_0$, which become more strict as $\alp$ approaches $\frac{1}{2}$. Similarly with Theorem \ref{thm:illposed-strong}, this nonlinear illposedness is based on a linear one which we do not state. For the linearized electron-MHD equation with fractional dissipation (i.e., \eqref{eq:e-mhd-lin} with $-\eta(-\lap)^\alp b$ on the right hand side), norm growth statements similar to those given in Theorem \ref{thm:norm-growth} can be proved but they are valid only for $O(\lmb^{-1}\ln \lmb)$ timescale.

\subsection{Brief summary of the results in \cite{JO2}} \label{subsec:wellposed}
We give a short, partial summary of the results in \cite{JO2}, which are various linear and nonlinear wellposedness statements for \eqref{eq:hall-mhd} and \eqref{eq:e-mhd}. They are complementary to the illposedness results proved in the present paper.

\subsubsection*{Linear case} The main linear result in \cite{JO2} is a local geometric condition on the stationary magnetic field that (together with some regularity and uniformity conditions) that ensures above-energy wellposedness for the linearized equations.

\begin{theorem} [{\cite{JO2}}] \label{thm:lin-stab}
Let $\bgB$ be a stationary magnetic field on $M = (\bbT, \bbR)_{x} \times (\bbT, \bbR)_{y} \times (\bbT, \bbR)_{z}$, and consider the deformation tensor associated to $\bgB$:
\begin{equation*}
\dfrm{\bgB}^{jk} = \frac{1}{2} (\nb^{j} \bgB^{k} + \nb^{k} \bgB^{j}).
\end{equation*}
Assume that $\bgB$ is uniformly smooth, $\abs{\bgB}^{-1} \abs{\dfrm{\bgB}}$ is essentially bounded and the following \emph{no-orthogonal-deformation} condition hold everywhere on $M$:
\begin{equation} \label{eq:shear-cond}
	\dfrm{\bgB} \vert_{\bgB^{\perp}}	 = 0 \quad (\hbox{i.e., } \dfrm{\bgB}^{jk} v_{j} w_{k} = 0 \hbox{ if } \bgB^{k} v_{k} = \bgB^{k} w_{k} = 0).
\end{equation}
Then the Cauchy problems for the linearized \eqref{eq:hall-mhd} and \eqref{eq:e-mhd} equations around the stationary solutions $(0, \bgB)$ and $\bgB$, respectively, are well-posed for $H^{\infty}$ (i.e., all derivatives are square integrable) data.
\end{theorem}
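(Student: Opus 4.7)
My plan is to focus first on the \eqref{eq:e-mhd-lin} equation; for \eqref{eq:hall-mhd-lin} the companion velocity equation is a linear transport/parabolic system with a source, which can be handled by standard techniques once the magnetic field equation is controlled, so the essential difficulty lies in the $\bfB$-equation. The starting point is that, on divergence-free fields, the linearized Hall operator
\[
	L b = \nabla \times ((\nabla \times b) \times \bgB) + \nabla \times ((\nabla \times \bgB) \times b)
\]
has principal symbol $l_{2}(x, \xi) = -(\bgB(x) \cdot \xi)(\xi \times \cdot)$, which is pointwise real skew on $\xi^{\perp}$. This immediately yields the $L^2$ energy identity of Proposition~\ref{prop:lin-en} and makes precise the Schr\"odinger-like dispersive character emphasized throughout the paper.

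The real issue is closing the $H^{N}$ energy estimate for $N \geq 1$. A direct pseudodifferential calculation identifies the principal symbol of $[\Lambda^{N}, L]$ as
\[
	p(x, \xi) = iN \langle \xi \rangle^{N-2} \, \dfrm{\bgB}^{jk}(x) \, \xi_{j} \xi_{k} \, (\xi \times \cdot),
\]
formally of order $N + 1$; this is the loss of one derivative responsible in general for the illposedness theorems of the present paper. The crucial use of the no-orthogonal-deformation condition \eqref{eq:shear-cond} is precisely to save that one order: decomposing $\xi = \xi^{\parallel} + \xi^{\perp}$ with respect to $\bgB(x)$ at points where $\bgB \neq 0$, the condition forces the purely perpendicular contribution $\dfrm{\bgB}(\xi^{\perp}, \xi^{\perp})$ to vanish, so that
\[
	\dfrm{\bgB}^{jk} \xi_{j} \xi_{k} = (\bgB \cdot \xi) \, G(x, \xi), \quad G \text{ of order } 1 \text{ in } \xi,
\]
while the hypothesis $|\bgB|^{-1} |\dfrm{\bgB}| \in L^{\infty}$ guarantees that $G$ remains a bounded symbol uniformly in $x$, including across the zero set of $\bgB$.

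With this factorization one has $[\Lambda^{N}, L] = (\bgB \cdot \nabla) \tilde{P} + R_{N}$, where $\tilde P$ is pseudodifferential of order $N$ and $R_N$ is of order $N$. Because $\nabla \cdot \bgB = 0$, the transport derivative $\bgB \cdot \nabla$ is skew in $L^{2}$, and the resulting contribution to $\frac{d}{dt} \|b\|_{H^{N}}^{2}$ can be symmetrized. Concretely, I would construct an invertible pseudodifferential symmetrizer $A_{N}(x, D)$ of order zero, tailored to the factorization $p = (\bgB \cdot \xi) \, q$, such that $A_{N} L A_{N}^{-1}$ is skew-adjoint on $L^{2}$ modulo bounded operators. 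The modified energy $E_{N}(t) = \langle A_{N} b(t), A_{N} b(t) \rangle$, equivalent to $\|b(t)\|_{H^{N}}^{2}$, then satisfies $\frac{d}{dt} E_{N} \lesssim E_{N}$, which closes by Gronwall uniformly in $N$. Standard Galerkin / vanishing-viscosity approximations upgrade these a priori bounds to wellposedness of the $H^{\infty}$ Cauchy problem, with uniqueness already supplied by the $L^{2}$ estimate.

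The main technical obstacle is the careful pseudodifferential book-keeping: the factorization through $(\bgB \cdot \xi)$ must be propagated both to the subprincipal symbol of $[\Lambda^{N}, L]$ and through the construction of $A_{N}$, and the resulting symbols must be controlled uniformly across the zero set of $\bgB$, which is exactly where the assumption $|\bgB|^{-1} |\dfrm{\bgB}| \in L^{\infty}$ enters alongside the uniform smoothness of $\bgB$. The lower-order piece $\nabla \times ((\nabla \times \bgB) \times b)$ of $L$ does not enjoy the factorization above, but being of order one in $b$ with smooth coefficients it contributes only routine commutator errors of order $N$ that are readily absorbed into the Gronwall estimate.
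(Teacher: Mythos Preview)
This theorem is \emph{not} proved in the present paper: it is quoted from the companion work \cite{JO2} (note the citation in the theorem header and the surrounding discussion in Section~\ref{subsec:wellposed}), and the current paper only states it to contextualize the illposedness results. So there is no ``paper's own proof'' here to compare against.

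That said, your outline does identify the right mechanism: the no-orthogonal-deformation condition \eqref{eq:shear-cond} is precisely what forces the dangerous part of the commutator symbol $\dfrm{\bgB}^{jk}\xi_j\xi_k$ to factor through $(\bgB\cdot\xi)$, converting the apparent loss of a derivative into something that can be absorbed by the skew-adjoint principal part. This is indeed the heart of the matter.

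However, your sketch has two gaps worth flagging. First, the symbol $G(x,\xi)$ you obtain from the factorization involves $|\bgB|^{-2}$ explicitly (since $\xi^\parallel = (\bgB\cdot\xi)|\bgB|^{-2}\bgB$), and while the hypothesis $|\bgB|^{-1}|\dfrm{\bgB}|\in L^\infty$ makes $G$ \emph{bounded}, it does not make $G$ \emph{smooth} in $x$ near the zero set of $\bgB$. Standard pseudodifferential calculus and the construction of your symmetrizer $A_N$ require symbol regularity in $x$, so you cannot simply invoke off-the-shelf composition and commutator estimates; this is not merely book-keeping but a genuine structural issue that needs a dedicated argument (or an additional hypothesis that $\bgB$ is nonvanishing, which is how the theorem is actually applied in the examples following its statement). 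Second, your treatment of the \eqref{eq:hall-mhd} case is too quick: the $u$-equation contains $(\nabla\times b)\times\bgB$ on the right-hand side, so it is not a transport/parabolic equation with an $L^2$-bounded source but one that sees a full derivative of $b$; closing the coupled system requires either exploiting the good-variable structure $\bfZ=\bfB+\nabla\times\bfu$ (cf.~Remark~\ref{rem:wavepackets-hall-err}) or a joint energy argument, not a sequential one.
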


This result, when combined with Theorems~\ref{thm:norm-growth} and \ref{thm:inst}, provides a fairly comprehensive description of well- and illposedness of the linearized equations around magnetic fields of the form $f(y) \rd_{x}$ or $f(r) \rd_{\tht}$. 

Indeed, let us first consider $\bgB = f(y) \rd_{x}$, where $f$ is uniformly smooth. When $f$ has a zero of order $1$, the linear illposedness results (Theorems~\ref{thm:norm-growth} and \ref{thm:inst}) apply. On the other hand, in general we have
\begin{equation*}
	\dfrm{\bgB} = \begin{pmatrix} 0 & \frac{1}{2} \rd_{y} f & 0 \\ \frac{1}{2} \rd_{y} f & 0 & 0\\ 0 & 0 & 0 \end{pmatrix}.
\end{equation*}
The no-orthogonal-deformation condition \eqref{eq:shear-cond} obviously fails at a zero of order $1$ of $f$ since $\bgB^{\perp} = \bbR^{3}$ and $\dfrm{\bgB} \neq 0$ there. On the other hand, at points where $f \neq 0$, we have $\bgB^{\perp} = \mathrm{span}\set{\rd_{y}, \rd_{z}}$. Thus, the wellposedness result (Theorem~\ref{thm:lin-stab}) applies whenever $f$ does not vanish anywhere and $\abs{f}^{-1} \abs{f'}$ is essentially bounded.

Similar statements hold for $\bgB = f(r) \rd_{\tht} = f(r) (x \rd_{y} - y \rd_{x})$, for which
\begin{equation*}
	\dfrm{\bgB} = \begin{pmatrix} - f' \frac{x y}{r} & \frac{1}{2} f' \frac{x^{2} - y^{2}}{r} & 0 \\ \frac{1}{2} f' \frac{x^{2} - y^{2}}{r} & f' \frac{xy}{r} & 0 \\ 0 & 0 & 0 \end{pmatrix}.
\end{equation*}
At a zero of order $1$ of $f$, \eqref{eq:shear-cond} clearly fails. However, when $f \neq 0$ and $(x, y) \neq (0, 0)$,
\begin{equation*}
	\bgB^{\perp} = \mathrm{span} \left\{ \frac{x}{r} \rd_{x} + \frac{y}{r} \rd_{y}, \rd_{z} \right\},
\end{equation*}
so it can be checked that $\dfrm{\bgB} \vert_{\bgB^{\perp}} = 0$. In conclusion, Theorems~\ref{thm:norm-growth} and \ref{thm:inst} apply when $f$ has a zero of order $1$, whereas Theorem~\ref{thm:lin-stab} applies whenever $f$ does not vanish anywhere and $\abs{f}^{-1} \abs{f'}$ is essentially bounded.

\begin{remark} \label{rem:planar-stat-excep-0}
Note that the no-orthogonal-deformation condition \eqref{eq:shear-cond} fails for $\bgB = c y \rd_{x} + d \rd_{y}$ with $c \neq 0$, which is the remaining class of planar stationary magnetic fields with an additional symmetry according to Proposition~\ref{prop:planar-stat}. An analysis at the level of bicharacteristics (to be explained below) suggests illposedness of the linearized equations around such a $\bgB$; see Remark~\ref{rem:planar-stat-excep}.
\end{remark}
\subsubsection*{Nonlinear case}
The main nonlinear result in \cite{JO2} is a set of global geometric conditions on the initial magnetic field that imply wellposedness of the nonlinear Cauchy problems for \eqref{eq:hall-mhd} and \eqref{eq:e-mhd}. The precise statement of the conditions and the results requires more preparation, which would take us too far from the subject of this paper. Here we will be content with giving a rough statement of a corollary of the nonlinear results in \cite{JO2}, which illustrates a remarkably different behavior of \eqref{eq:hall-mhd} and \eqref{eq:e-mhd} near nonzero constant magnetic fields, as opposed to the zero magnetic field (cf. Corollary~\ref{cor:illposed-zero} and Theorem~\ref{thm:illposed-strong2}).
\begin{theorem}[{{A special case of the nonlinear wellposedness result in \cite{JO2}}}] \label{thm:parallel-mag-wp}
Let $\bgB$ be any non\-zero constant vector field on $M = \bbR_{x} \times (\bbT, \bbR)_{y} \times (\bbT, \bbR)_{z}$, whose integral curves are noncompact. The Cauchy problem for \eqref{eq:hall-mhd} (resp. \eqref{eq:e-mhd}) is locally well-posed on the unit time interval for initial data $(\bfu_{0}, \bfB_{0})$ such that $(\bfu_{0}, \bfB_{0} - \bgB)$ is sufficiently regular, decaying and small  {(resp. $\bfB_{0}$ such that $\bfB_{0} - \bgB$ is sufficiently regular, decaying and small)}.
\end{theorem}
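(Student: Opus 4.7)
My plan is to reduce the theorem to a nonlinear energy estimate that piggybacks on the linear wellposedness of Theorem~\ref{thm:lin-stab}. The key observations are: (i) for any constant $\bgB$, the deformation tensor $\dfrm{\bgB}$ vanishes identically, so the no-orthogonal-deformation condition \eqref{eq:shear-cond} holds trivially and stably under small perturbations; and (ii) the noncompactness of the integral curves of $\bgB$ ensures that the Fourier-side operator $i \bgB \cdot \xi$ is a genuine symbol of order one with no obstructing zero modes, so the linearization is authentically dispersive, with whistler-type relation $\omg = \pm (\bgB \cdot \xi)\abs{\xi}$.

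I would first focus on \eqref{eq:e-mhd}, where the core difficulty resides. Writing $\bfB = \bgB + \bfb$ and using $(\nb \times \bfb) \times \bgB = (\bgB \cdot \nb)\bfb$ (from $\nb \cdot \bfb = 0$) together with $(\nb \times \bfb) \times \bfb = (\bfb \cdot \nb)\bfb - \tfrac{1}{2}\nb\abs{\bfb}^{2}$, the equation for $\bfb$ becomes
\begin{equation*}
\rd_{t} \bfb + \nb \times \bigl( (\bgB + \bfb) \cdot \nb \, \bfb \bigr) = 0,
\end{equation*}
which is structurally the linearized \eqref{eq:e-mhd-lin} equation \emph{around the time-dependent, non-constant magnetic field} $\bgB + \bfb$. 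For sufficiently small and regular $\bfb$, the background $\bgB + \bfb$ inherits the uniform regularity and (perturbatively) the no-orthogonal-deformation property of $\bgB$, placing the problem squarely within the regime where Theorem~\ref{thm:lin-stab} applies. For the \eqref{eq:hall-mhd} case, one uses the rotational symmetry to place $\bgB$ along a noncompact direction, and invokes the Galilean symmetry in Remark~\ref{rem:other-symm} if necessary to remove a $\bfu$-drift; the $\bfu$-equation is then semilinearly coupled to $\bfB$ and handled by standard energy estimates.

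The next step is to close a high-regularity $H^{s}$ energy estimate for $\bfb$. The apparent loss of one derivative in the Hall term is overcome by a modified energy, obtained by paralinearizing $\nb \times ((\bfb \cdot \nb)\bfb)$ and adapting the symmetrizer/multiplier from the proof of Theorem~\ref{thm:lin-stab} so that the $\bgB$-coefficient is replaced by $\bgB + \bfb$. Because $\bgB$ is constant, this adaptation is a genuine small-perturbation argument: every correction to the constant-coefficient symmetrizer is quadratic or higher in $\bfb$ and hence tame as long as $\nrm{\bfb}_{H^{s}}$ remains small. Existence is then obtained via parabolic regularization (adding $\eps \lap \bfb$, which restores a standard theory), uniform-in-$\eps$ bounds from the modified-energy estimate, and a Bona--Smith-type compactness/continuity-in-time argument; uniqueness follows from the same estimate at a lower regularity applied to the difference of two solutions.

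The principal obstacle is the quasilinear-dispersive derivative loss: standard commutator bounds for $H^{s}$ estimates do not close, and one must design the modified energy precisely to absorb the top-order antisymmetric term produced by paralinearization of the Hall nonlinearity. Making this rigorous requires a careful paradifferential decomposition and bookkeeping of how the (only approximate) no-orthogonal-deformation structure of $\bgB + \bfb$ propagates through the commutator estimates, and this paradifferential analysis — together with verifying that the corrections to the linear symmetrizer remain subcritical for all $\bfb$ with $\nrm{\bfb}_{H^{s}}$ small — constitutes the main technical content of the argument.
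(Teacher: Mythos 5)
The paper does not prove Theorem~\ref{thm:parallel-mag-wp}; it is imported from the companion work \cite{JO2}, and the only indication given here of the actual mechanism is the sentence following the statement: wellposedness near a nonzero constant field rests on a \emph{local smoothing estimate} for the linearized equation, i.e.\ a dispersive gain of derivatives in a spacetime-integrated norm that absorbs the one-derivative loss of \eqref{eq:d-loss}. Your proposal instead tries to close a pure $H^{s}$ energy estimate with a symmetrizer, and this is where it breaks down.

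The first gap is your claim (i) that the no-orthogonal-deformation condition \eqref{eq:shear-cond} "holds trivially and stably under small perturbations." It holds trivially for $\bgB$ (since $\dfrm{\bgB}=0$), but it is an exact algebraic identity, not an open condition: for a generic small perturbation $\bfb$, the tensor $\dfrm{\bgB+\bfb}\vert_{(\bgB+\bfb)^{\perp}} = \dfrm{\bfb}\vert_{(\bgB+\bfb)^{\perp}}$ is nonzero no matter how small $\nrm{\bfb}$ is. So the hypothesis of Theorem~\ref{thm:lin-stab} is \emph{not} inherited by the effective background $\bgB+\bfb$, and the bootstrap you describe has no foundation. The second, closely related gap is the assertion that all corrections to the constant-coefficient symmetrizer are "quadratic or higher in $\bfb$ and hence tame." After paralinearization, the top-order antisymmetric part can indeed be symmetrized away, but the energy estimate still produces a genuinely \emph{symmetric} first-order term of the schematic form $\int \nb\bfb \, (\nb \rd^{(N)}\bfb)\, \rd^{(N)}\bfb$ (this is exactly \eqref{eq:d-loss} with $\bfB = \bgB+\bfb$, and its obstruction is exactly $\dfrm{\bgB+\bfb}\vert_{(\bgB+\bfb)^{\perp}}$). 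This term loses one derivative, and smallness of $\nrm{\bfb}_{H^{s}}$ is irrelevant because the obstruction is regularity, not amplitude: $\nrm{\bfb}_{H^{s+1}}$ is not controlled by $\nrm{\bfb}_{H^{s}}$ for any small $\bfb$. No zeroth-order modified energy removes a symmetric first-order term; the only known remedies are the exact geometric cancellation \eqref{eq:shear-cond} (unavailable here) or the local smoothing estimate, which trades the lost derivative for a spatially weighted $L^{2}_{t,x}$ gain, in the spirit of quasilinear Schr\"odinger theory. Note also that this is precisely where the hypothesis that the integral curves of $\bgB$ be noncompact must enter — local smoothing in the $\bgB$-direction fails on compact orbits — whereas in your argument that hypothesis plays no operative role, which is itself a sign that the dispersive mechanism has been lost.
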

The key conceptual difference between $\bgB = 0$ and the constant magnetic field considered in Theorem~\ref{thm:parallel-mag-wp} is that in the latter case one can establish a \emph{local smoothing estimate} for the linearized equation, which is a robust (dispersive) smoothing mechanism that overcomes the loss of one derivative seen in \eqref{eq:d-loss}. For further details and more general results, in particular for possibly large perturbations of the constant magnetic field, we refer to \cite{JO2}.

\begin{remark}
We emphasize that the present article and \cite{JO2} are logically independent of each other; neither is a prerequisite for the other. However, since \cite{JO2} carries out a more general analysis of \eqref{eq:hall-mhd} and \eqref{eq:e-mhd}, a concurrent reading of \cite{JO2} may be useful for placing the results and the specialized analysis of the present article in a broader context.
\end{remark}

\subsection{Main ideas} \label{subsec:ideas}
Here, we explain the main ideas of the proofs of our results. We note in advance that Theorem~\ref{thm:norm-growth}, which gives the sharp rate of growth for solutions of the linearized systems, is the basic building block in the proof of all the others.

\subsubsection*{Dispersive character of the Hall current term}
The dispersive character of the Hall current term is most directly seen by linearizing \eqref{eq:e-mhd} around a constant magnetic field $\bgB = \bar{\bfB} \rd_{x}$, where $\bar{\bfB} \neq 0$. The resulting waveform is called the \emph{whistler wave}, which is well-known in the the plasma physics literature (see, for instance, \cite[Section~10.7.1]{Pecseli}). 

Using the vector calculus identities in Section~\ref{subsec:notation}, the linearized system \eqref{eq:e-mhd-lin} reduces to
\begin{equation} \label{eq:e-mhd-lin-const}
	\rd_{t} b + \bar{\bfB} \rd_{x} \nb \times b = 0, \quad \nb \cdot b = 0.
\end{equation}
To diagonalize this system, we take the Fourier transform, which gives
\begin{equation*}
	i \tau \hat{b}(\tau, \xi) - \bar{\bfB} \xi_{x} \xi \times \hat{b}(\tau, \xi) = 0, \quad \xi \cdot \hat{b} (\tau, \xi)= 0.
\end{equation*}
Note that $\bar{\bfB} \xi_{x} \xi \times$ is an anti-symmetric matrix, and $\left( \bar{\bfB} \xi_{x} \xi \times \right)^{2} \hat{b} = - \bar{\bfB}^{2} \xi_{x}^{2} \abs{\xi}^{2} \hat{b}$ (cf. \eqref{eq:curl-curl}) when $\xi \cdot \hat{b} = 0$. Thus $\bar{\bfB} \xi_{x} \xi \times$ restricted to $\set{\hat{b} \in \bbR^{3}: \xi \cdot \hat{b} = 0}$ is diagonalizble with eigenvalues $\pm i \bar{\bfB} \xi_{x} \abs{\xi}$. It follows that \eqref{eq:e-mhd-lin-const} splits into two constant coefficient dispersive PDEs $\rd_{t} b_{\pm} \pm i \omg_{\bgB} (i^{-1}\nb) b_{\pm} = 0$ with the dispersion relations $\pm \omg_{\bgB}$, where
\begin{equation*}
\omg_{\bgB} = \bar{\bfB} \xi_{x} \abs{\xi}.
\end{equation*}

Key to the analysis of \eqref{eq:e-mhd-lin-const} is the \emph{group velocity} $\pm \nb_{\xi} \omg_{\bgB}$, which describes the physical space trajectory of a wave packet solution\footnote{In this heuristic discussion, by a \emph{wave packet solution}, we mean a solution that is well-localized in both the physical and frequency spaces around certain points at each time $t$, which we call $X(t)$ and $\Xi(t)$, respectively. By the physical (resp. frequency or phase) space trajectory, we mean the trajectory of $X$ (resp. $\Xi$ or $(X, \Xi)$).}, at least for a short time. For a further discussion of this case, see \cite{JO2}, in which wellposedness near such a $\bgB$ is established.

\subsubsection*{Diagonalization of the principal symbol and the bicharacteristics}
To look for a mechanism for illposedness, we need an analogous way to analyze more general linearized systems. For an arbitrary stationary magnetic field $\bgB$, the linearized system \eqref{eq:e-mhd-lin} takes the form
\begin{equation*}
	\rd_{t} b + (\bgB \cdot \nb) \nb \times b = \hbox{first or lower order in $b$}.
\end{equation*}
For each $\xi \in \bbR^{3} \setminus \set{0}$, the matrix-valued principal symbol $\bfp_{\bgB} = - (\bgB \cdot \xi) \xi \times$ may be diagonalized on the subspace $\set{u \in \bbR^{3} :  \xi \cdot u = 0}$ in the same fashion as above. The eigenvalues $\pm i p_{\bgB}(x, \xi) = \pm i \bgB(x) \cdot \xi \abs{\xi}$ are analogous to the dispersion relations $\pm i \omg_{\bgB}(\xi)$. The analogue of the group velocity $\nb_{\xi} \omg_{\bgB}$ is the Hamiltonian vector field $(\nb_{\xi} p_{\bgB}, - \nb_{x} p_{\bgB})$ on $T^{\ast} M = M \times \bbR^{3}_{\xi}$; the associated ODE 
\begin{equation*}
\left\{
\begin{aligned}
\dot{X} & = \nb_{\xi} p_{\bgB}(X, \Xi), \\
\dot{\Xi} & = - \nb_{x} p_{\bgB}(X, \Xi),
\end{aligned}
\right.
\end{equation*}
is called the \emph{Hamiltonian ODE}, and its solution $(X, \Xi)(t)$ is called a \emph{bicharacteristic}. A bicharacteristic describes the phase space trajectory of a wave packet solution, at least for a short time. 

\subsubsection*{Stationary magnetic fields with symmetries; complete integrability of the Hamiltonian ODE}
With the above ideas, the natural first goal is to find a stationary magnetic field $\bgB$ with associated bicharacteristics exhibiting a rapid growth of $\abs{\Xi(t)}$. To simplify the problem, it is desirable to restrict to $\bgB$ whose associated Hamiltonian ODE is easily solved. Therefore, we are motivated to look for stationary magnetic fields with \emph{two} independent continuous symmetries, which makes the \emph{three}-dimensional Hamiltonian ODE completely integrable.

Such considerations lead to the idea of using \emph{planar\footnote{We note that the assumption $\bgB^{z} = 0$ may seem excessive in this regard, as it is not related to symmetry. However, this restriction ensures a finer cancellation that is important for the construction of an approximate solution to the PDE; see {\it Construction of ...} below.} stationary magnetic fields with an additional symmetry}. The restrictions make possible a complete classification of all such stationary magnetic fields; see Proposition~\ref{prop:planar-stat}. It is remarkable that the resulting family is still rich enough to allow for many stationary $\bgB$ with an instability mechanism.

\subsubsection*{Instability mechanism at the level of bicharacteristics}
We are ready to describe the key instability mechanism at the level of bicharacteristics in the model case $\bgB = y \rd_{x}$ near $y = 0$. Essentially the same mechanism is present for $\bgB = f(y) \rd_{x}$ or $f(r) \rd_{\tht}$ near any linear degeneracy of $f$ (i.e., $f(y_{0}) = 0$ but $f'(y_{0}) \neq 0$).

We begin by observing that, in addition to the Hamiltonian $p_{\bgB}(X, \Xi) = y(X) \Xi_{x} \abs{\Xi}$, the quantities $\Xi_{x}$ and $\Xi_{z}$ are conserved along the bicharacteristics by the $x$- and $z$-invariance of $\bgB$, respectively. Thus $y(X) \abs{\Xi}$ is conserved, which suggests that bicharacteristics starting from $\set{y = 1}$ and traveling to $\set{y = 0}$ would exhibit a blow-up of $\abs{\Xi}$. 

Motivated by such considerations, we take the bicharacteristic $(X, \Xi)(t)$ with the initial data $X(0) = (0, 1, 0)$ and $\Xi(0) = (\lmb, - \lmb, 0)$ for $\lmb > 0$, so that $\dot{X}(0)$ points towards $\set{y = 0}$; see Figure~\ref{fig:bicharacteristic}. Then the ODEs for $\Xi_{y}$ and $y = y(X)$ become
\begin{equation*}
\dot{\Xi}_{y} = - \Xi_{x} \abs{\Xi} = - \lmb \sqrt{\lmb^{2} + \Xi_{y}^{2}}, \quad
\dot{y} = y \frac{\Xi_{x} \Xi_{y}}{\abs{\Xi}} = \lmb y \frac{\Xi_{y}}{\sqrt{\lmb^{2} + \Xi_{y}^{2}}}
\end{equation*}
which may be explicitly integrated to
\begin{equation*}
	\Xi_{y} = - \lmb \sinh(\lmb t + \tht_{0}), \quad
	y = \frac{\cosh(\tht_{0})}{\cosh(\lmb t + \tht_{0})}.
\end{equation*}
where $\tht_{0} = \sinh^{-1} 1 = \log(1 + \sqrt{2})$. In particular, $y(t) \aeq e^{-\lmb t}$ and $\abs{\Xi(t)} \aeq \lmb e^{\lmb t}$ as desired.

\begin{figure}[h]
\begin{center}
\def\svgwidth{200px}
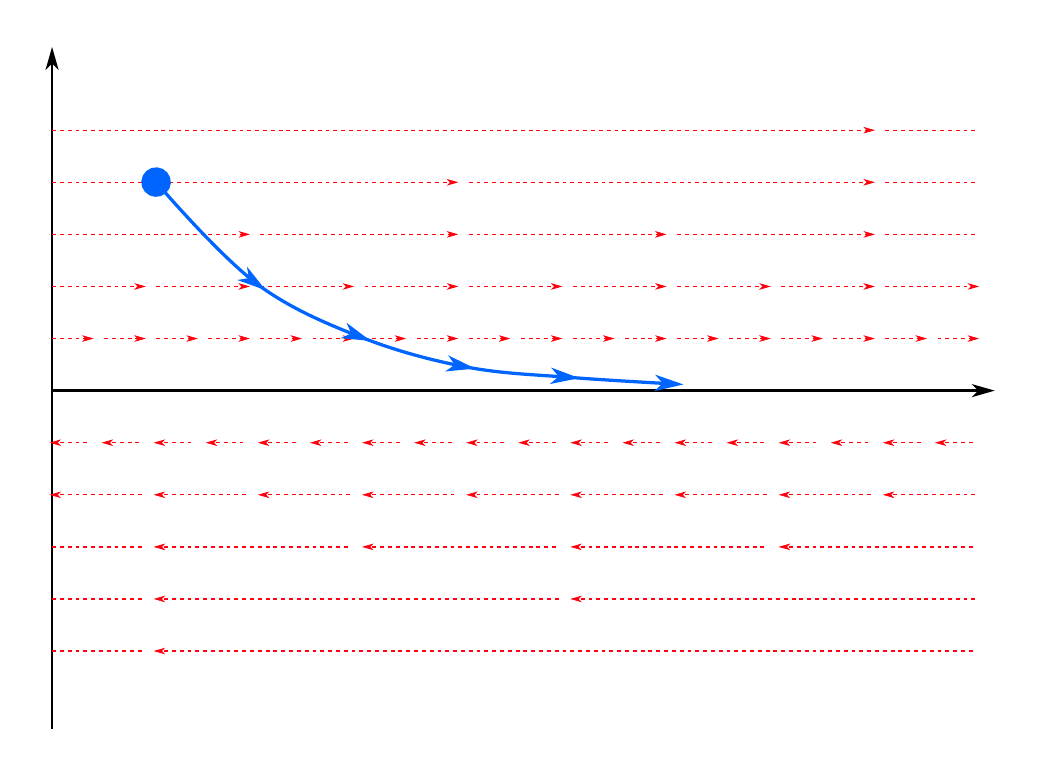 
\caption{Plot of the bicharacteristic $(X, \Xi)(t)$ (blue solid curve) on the background magnetic field $\bgB = y \rd_{x}$ (red dashed lines)} \label{fig:bicharacteristic}
\end{center}
\end{figure}

\begin{remark}[{Instability at the level of bicharacteristics for $\bgB = c y \rd_{x} + d \rd_{y}$, $c \neq 0$}] \label{rem:planar-stat-excep}
In the case of $cy \rd_{x} + d \rd_{y}$ with $c \neq 0$, the ODE for $\Xi_{y}$ is again $\dot{\Xi}_{y} = - c \Xi_{x} \abs{\Xi}$, where $\Xi_{x}$ and $\Xi_{z}$ are conserved. So for $(\Xi_{x}, \Xi_{y}, \Xi_{z})(0) = (\lmb, - \lmb, 0)$, $\abs{\Xi_{y}}$ grows as in the case of $\bgB = y \rd_{x}$! However, the physical space behavior of the bicharacteristic is quite different when $d \neq 0$, as it escapes to $y \to \infty$ at a speed that increases with $\lmb$. We leave open the interesting question of whether this mechanism can be made rigorous at the level of PDEs.
\end{remark}

\subsubsection*{Construction of degenerating wave packets; $(2+\frac{1}{2})$-dimensional reduction and renormalization}
The next step is to construct an actual, or at least approximate, solution to the linearized \eqref{eq:e-mhd} around $\bgB$ that follows the behavior of such a bicharacteristic; this is the basic idea of geometric optics (or semiclassical analysis). However, the standard construction stops short at the so-called \emph{semiclassical time scale} $\abs{t} \ll \lmb^{-1}$, which is just not enough for the growth rate $e^{c \lmb t}$ to take effect. To elongate the construction, we need favorable properties of not only the principal terms, but also the subprincipal (first order) terms of \eqref{eq:e-mhd}.

It is in this analysis, which is carried out in Section~\ref{sec:wavepackets} below, that we use the deepest structural properties of the Hall current term. First, we exploit planarity of $\bgB$ to make the $(2+\frac{1}{2})$-dimensional reduction, which leads to a remarkable simplification of the first order terms\footnote{A precise description of this simplification requires an adequate reformulation of the linearized \eqref{eq:e-mhd} as a system of dispersive equations; we refer the interested reader to \cite{JO2}. Here, we contend ourselves with just pointing out that it is analogous to the vanishing of the vortex-stretching term for the $(2+\frac{1}{2})$-dimensional Euler equation.}; see \eqref{eq:e-mhd-2.5d-lin-second} and \eqref{eq:e-mhd-2.5d-lin-second-axisym}. Moreover, we make a suitable change-of-variables and conjugation, which renormalize the second and first order terms, respectively, to more favorable forms; see \eqref{eq:e-mhd-eta-conj2} and \eqref{eq:e-mhd-eta-conj2-axisym}.  As a result, for any $\lmb > 0$, which corresponds to the initial frequency, we construct a wave packet approximate solution $\tb_{(\lmb)}$ for $\abs{t} \aleq 1$ (as opposed to $\abs{t} \ll \lmb^{-1}$) that is \emph{degenerating} in the sense that all of its $\dot{H}^{s}$ norm (resp. its $L^{p}$ norm) diminish or grow (possibly up to a small error) depending on the sign of $s$ (resp. whether $p < 2$ or $p > 2$) at the rate consistent with the $L^{2}$ boundedness and the growth rate of $\abs{\Xi(t)}$; see Proposition~\ref{prop:wavepackets}.

\subsubsection*{Generalized energy identity and testing by degenerating wave packets}
One way to conclude the proof of the norm growth (Theorem~\ref{thm:norm-growth}) for the linearized \eqref{eq:e-mhd} would be to show that an actual solution $b_{(\lmb)}$ with the same initial data $b_{(\lmb)}(0) = \tb_{(\lmb)}(0)$ is well-approximated by the degenenerating wave packet $\tb_{(\lmb)}$ constructed above, by explicitly estimating the error in the same Sobolev space that we want to see growth. While this approach is possible, it involves cumbersome technicalities, such as careful commutations and additional degenerate elliptic estimates; moreover, it is unclear how to handle in this way an arbitrary $L^{2}$-solution without additionally assuming uniqueness (cf. Remark~\ref{rem:L2-sol}).

Instead, we introduce what we call the method of \emph{testing by degenerating wave packets}, which curtails technicalities and is very robust (the latter property is most clearly demonstrated by the applications to \eqref{eq:hall-mhd} and to nonlinear settings below). Inspired by the work of Ifrim--Tataru \cite{IfTa}, we seek to capture the leading order behavior of the actual solution $b_{(\lmb)}$ by the (energy) inner product $\brk{\tb_{(\lmb)}, b_{(\lmb)}}$ with the wave packet approximate solution $\tb_{(\lmb)}$. By the bilinear generalization of the energy identity, we can control
\begin{equation*}
	\Abs{\frac{\ud}{\ud t}\brk{\tilde{b}_{(\lmb)}, b_{(\lmb)}}} \aleq \nrm{\tb_{(\lmb)}(0)}_{L^{2}} \nrm{b_{(\lmb)}(0)}_{L^{2}} \quad \hbox{ for } 0 < t \aleq 1.
\end{equation*}
Thus, for a sufficiently small $T > 0$ (independent of $\lmb$), $\brk{\tb_{(\lmb)}, b_{(\lmb)}}(t) \geq \frac{1}{2} \nrm{b_{(\lmb)}(0)}_{L^{2}}^{2}$ for $0 < t < T$.
By H\"older's inequality and the simple $L^{p}$-degeneration property of $\tilde{b}_{(\lmb)}$,
\begin{equation*}
	\frac{1}{2} \nrm{b_{(\lmb)}(0)}_{L^{2}}^{2} \leq \nrm{b_{(\lmb)}(t)}_{L^{p}} \nrm{\tilde{b}_{(\lmb)}(t)}_{L^{p'}} \aleq \nrm{b_{(\lmb)}}_{L^{p}} \nrm{b_{(\lmb)}(0)}_{L^{2}} e^{-c (\frac{1}{p} - \frac{1}{2})\lmb t},
\end{equation*}
which implies the desired growth of all $L^{p}$ norms with $p > 2$. By a similar argument using duality and the sharper form of the degeneration property, we also obtain the growth of all $W^{s, p}$ norms with $s + \frac{1}{2} - \frac{1}{p} > 0$, as claimed in Theorem~\ref{thm:norm-growth}.

\subsubsection*{Incorporation of the fluid component}
Now we describe the ideas behind the proof of the norm growth (Theorem~\ref{thm:norm-growth}) for the linearized \eqref{eq:hall-mhd}. The starting point is the bilinear generalization of the energy identity for the linearized \eqref{eq:hall-mhd}. Then, through the method of testing by degenerating wave packets, the problem is reduced to that of finding suitable approximate solutions $(\tu_{(\lmb)}, \tb_{(\lmb)})$, where $\lmb$ corresponds to the initial frequency.  To solve the latter problem, we exploit the remarkable structure of the ideal \eqref{eq:hall-mhd} (i.e., $\nu = 0$) that the combination $\bfZ := \bfB + \nb \times \bfu$ is simply transported by $\bfu$ (cf.~Remark~\ref{rem:wavepackets-hall-err}). Working with the ``good variables'' $(\bfZ, \bfB)$, it follows that $\bfu$ is smoother by one order compared to $\bfB$, if it is initially so. Motivated by this consideration, we consider a pair $(\tu_{(\lmb)}, \tb_{(\lmb)})$ corresponding to taking the $\bfZ$-perturbation to be zero and the $\bfB$-perturbation to be a degenerating wave packet for the linearized \eqref{eq:e-mhd}. Naturally, $(\tu_{(\lmb)}, \tb_{(\lmb)})$ is then a suitable approximate solution to the linearized \eqref{eq:hall-mhd} such that $\tu_{(\lmb)}$ is smoother by one order (i.e., smaller by a factor of $\lmb^{-1}$) compared to $\tb_{(\lmb)}$. Amusingly, the same choice works even for $\nu > 0$, since the improved smoothness of $\tu_{(\lmb)}$ allows us to treat the dissipative term $\nu \lap \bfu$ perturbatively in the method of testing by degenerating wave packets.

\subsubsection*{Superposition of instabilities in frequency space: Proof of Theorem~\ref{thm:inst}}
After the construction of norm-growing solutions (cf. Theorem~\ref{thm:norm-growth}), one may simply superpose (via linearity) a sequence of such solutions with increasing initial frequencies to obtain a solution that is smooth initially, but instantaneously exits any Sobolev space above $L^{2}$. This is the basic idea of the proof of Theorem~\ref{thm:inst}. We note that in order to carry it out, however, uniqueness of $L^{2}$-solutions needs to be assumed.

\subsubsection*{Contradiction argument for nonlinear illposedness: Proof of Theorem~\ref{thm:illposed-strong}}
We now turn to the first nonlinear illposedness result, Theorem~\ref{thm:illposed-strong}. Assuming that the solution map exists (i.e., existence and uniqueness) near a stationary solution $\bgB$ as in the statement of Theorem \ref{thm:illposed-strong}, we need to prove its unboundedness and absence of H\"older continuity depending on the range of $s_0$. In both cases, the idea is to treat the nonlinear terms as a perturbation in the context of testing by degenerating wave packets, using the hypothesis together with the energy identity. For instance, the contribution of a typical nonlinear term $\nb b \nb b$ in $\frac{\ud}{\ud t} \brk{\tb_{(\lmb)}, b}$ (where $b = \bfB - \bgB$) obeys
\begin{equation*}
	\abs{\brk{\tb_{(\lmb)}, \nb b \nb b}(t)} \aleq \nrm{\tb_{(\lmb)}(t)}_{L^{2}} \nrm{b(t)}_{L^{2}} \nrm{b(t)}_{H^{3}}
	\aleq \nrm{\tb_{(\lmb)}(0)}_{L^{2}}\nrm{b(0)}_{L^{2}} \nrm{b}_{L^{\infty}_{t} H^{3}},
\end{equation*}
where we used interpolation in the first inequality, and the energy identity (cf.~Proposition~\ref{prop:nonlin-en}) for the second inequality. This contribution is acceptable thanks to the contradiction assumption $\nrm{b}_{L^{\infty}_{t} H^{s_{0}}} < \infty$ if $s_{0} \geq 3$. Under the assumption of H\"older continuity, one obtains a better estimate for the nonlinear terms, which allows for a lower range of $s_0$. 

\subsubsection*{Superposition of instabilities in physical space: Proof of Theorem~\ref{thm:illposed-strong2}}
As in the proof of Theorem~\ref{thm:inst}, one idea for improving the nonlinear illposedness result is to superpose perturbations of different initial frequencies. Unfortunately, this strategy becomes daunting in the nonlinear case, as the low frequency part may strongly influence the high frequency part. Instead, inspired by an idea in Bourgain--Li \cite{BL1,BL2}, we exploit the nonlinear structure of \eqref{eq:hall-mhd} and \eqref{eq:e-mhd} to superpose disjoint sources of instability in physical space. As a result, we prove \emph{nonexistence} of the solution in high regularity Sobolev spaces.

 {More precisely, the idea is to start with a compactly supported stationary solution $\widetilde{\bfB} = \tilde{f}(y)\rd_x$ (or similarly $\tilde{f}(r)\rd_\theta$) with a linear degeneracy, and consider the following superposition: 
\begin{equation*}
\begin{split}
\bgB = \sum_{k=k_{0}}^{ \infty} \bgB_{k} := \sum_{k=k_{0}}^{\infty} a_{k} \widetilde{\bfB}(L_{k}^{-1} x, L_{k}^{-1} (y - y_{k})) 
\end{split}
\end{equation*} where $a_k > 0$ is decaying sufficiently fast in $k$ so that $\nrm{\bgB}_{H^{s}} \to 0$ as $k_{0} \to \infty$, and the sequence of center $(0, y_{k})$ and the scales $L_{k} > 0$ are chosen so that $\bgB_{k}$'s have disjoint supports; as a result, $\bgB$ is a planar stationary magnetic field. Then we perturb each $\bgB_{k}$ with a very high frequency $\lmb_{k}$, so that the instability induced by the perturbation dominates the decay of the coefficients $a_k$. 

The key ingredient of the proof is localization of the usual and generalized energy identities (i.e., $L^{2}$-inner product of the solution with each degenerating wave packet). The latter task is straightforward since the degenerating wave packets already have good physical space localization properties. The former task is at the heart of the matter; it is in this aspect that \eqref{eq:e-mhd} seems to behave much better than \eqref{eq:hall-mhd} (which may be guessed from the presence of the pressure in \eqref{eq:hall-mhd}), and therefore our proof can cover $M = \bbT^{3}$ only in the case of \eqref{eq:e-mhd}. We refer to Sections~\ref{subsec:illposedness-strong-e} and \ref{subsec:illposedness-strong-hall} for more details.
 }

\subsubsection*{Proof of Gevrey illposedness: Proof of Theorem~\ref{thm:illposed-gevrey}}
In the proof of Theorem \ref{thm:norm-growth}, the initial perturbation is chosen to be supported away from the degenerate point for $\bgB$, which allows for the freedom of choosing a $C^{\infty}$ phase $G(y)$ at our convenience (see \eqref{eq:wkb_G_sol} below). However, such choices are clearly not allowed once the perturbation is required to be in at least the analytic class $G^1$. The main step in proving Theorem~\ref{thm:illposed-gevrey} therefore consists of adapting the construction of degenerating wave packets for Gevrey class initial perturbations with the phase $y$ instead of $G(y)$.

\subsubsection*{Illposedness for the fractionally dissipative systems: Proof of Theorem~\ref{thm:illposed-fradiss}}
The strategy of the proof is identical to that for Theorem~\ref{thm:illposed-strong}; we first prove norm inflation of the linearized system and use a contradiction argument to handle the nonlinearity. Since continuous-in-time loss of one full derivative is explicit in the growth rates given in Theorem~\ref{thm:norm-growth}, we are able to treat the dissipation terms perturbatively. An additional complication arises due to the fact that now the background magnetic field $\bgB$ is time-dependent. Replacing $\bgB(t)$ by $\bgB_0$, one obtains another error term, which is handled by exploiting its smallness in time.

\subsection{Discussions} \label{subsec:discussions}
Further discussion of related papers and subjects is in order.
\subsubsection*{Work of Chae--Weng \cite{CWe} in axisymmetry}
In their intriguing paper \cite{CWe}, D.~Chae and S.~Weng showed that \eqref{eq:hall-mhd} is well-posed\footnote{This observation is implicit in \cite{CWe}, and was later made explicit in \cite{JKL} in the ideal case. To be more precise, one furthermore requires $\bfu^{\tht} = \bfB^{r} = \bfB^{z} = 0$.} within axisymmetry, and moreover that it admits finite time blow-up solutions with regular initial data. These properties are evident for \eqref{eq:e-mhd}, which reduces exactly to inviscid Burger's equation under axisymmetry and $\bfB^{r} = \bfB^{z} = 0$ (see Section~\ref{subsec:notation} for our conventions in cylindrical coordinates):
\begin{equation*}
	\rd_{t} \bfB^{\tht} + \rd_{z} (\bfB^{\tht})^{2} = 0.
\end{equation*}

Our results show severe illposedness for arbitrarily small perturbation away from axisymmetry, if the initial magnetic field either has a linear degeneracy or vanishes in an open annulus. Whether there is a regime in which both local wellposedness (outside symmetry) and the finite time blow-up of Chae--Weng hold remains an interesting open problem.

\subsubsection*{Comparison with the ideal MHD and Alfv\'en waves}
As is well-known, the linearized ideal MHD around a constant magnetic field $\bgB = \bar{\bfB} \rd_{x}$ ($\bar{\bfB} \neq 0$) exhibits a wave propagation phenomenon; the waveform is called the Alfv\'en wave \cite[Section~10.5]{Pecseli}. Unlike whistler waves, whose group speed is proportional to the frequency (dispersive), the group speed of Alfv\'en waves is independent of the frequency (hyperbolic). Moreover, while whistler waves can propagate in directions transversal to the magnetic field (which in fact plays a key role in our instability mechanism; see Figure~\ref{fig:bicharacteristic}), Alfv\'en waves travel only along the magnetic field lines. 

\subsubsection*{Comparison with instabilities in hydrodynamics}
The instability mechanism presented in this work is drastically different and much stronger compared to more traditional hydrodynamical instabilities, such as the Kelvin--Helmholtz, Rayleigh--Taylor, and boundary layer instabilities. They can be respectively described by the Birkhoff--Rott, (compressible or incompressible) Euler with variable density, and Prandtl equations. 

In the case of the Kelvin--Helmholtz instability (see \cite[Chap.~9]{MB}), the linearization around a steady solution explicitly takes the form $\rd_t b= |\nabla|b$ where $b$ is the perturbation. This shows the growth rate of $e^{\lmb t}$ for initial data with frequency $ {\aeq} \lmb$. Next, the growth rate of $e^{c \lmb^{1/2}t}$ is classically known for the linearized systems describing Rayleigh--Taylor (both for compressible and incompressible models; see for instance \cite{Eb, GT} and the references therein). While far less trivial, the same growth rate in frequency was established for the linearized Prandtl equations near certain shear flows \cite{GD}. 

From such growth rates, it follows immediately that these linear equations are ill-posed in $H^s$ with all $s \ge 0$, while the Hall-MHD and electron-MHD equations enjoy stability in $L^2$. On the other hand, these growth rates are not so catastrophic in the sense that as long as the initial data have Fourier spectrum decaying exponentially fast, such a decay property should propagate at least for some time interval. Indeed, local wellposedness in the analytic regularity class for these models (both for linear and nonlinear cases) have been established; see, for instance, \cite{CO1,Su,SC1,SC2,GM,BL}. The propagation of analytic regularity can be reformulated in terms of the growth of the sequence of Sobolev norms $H^s$ ($s \ge 0$), and as far as frequency localized perturbations are concerned, the above growth rates show that all the $H^s$ norms grow at a rate uniform in $s$. In stark contrast to this observation, Theorem \ref{thm:norm-growth} explicitly shows the growth rate of $e^{c s \lmb t}$ for the $H^s$ norm of a perturbation whose frequency is initially localized near $\lmb$, which is not compatible with local wellposedness in the analytic class. The difference in the growth mechanism in our systems can be summarized as follows: rather than simple amplitude growth of Fourier modes, instability is due to transfer of energy to higher Fourier modes with speed proportional to the initial frequency. We also note that unlike our situation, where an $L^2$ bound allows one to treat the nonlinear terms perturbatively, the passage from a linear $H^s$ illposedness result to a nonlinear one in the above problems is highly nontrivial (but see the works \cite{CO2, Eb, GT, GD, GuN}). 

On a different note, we point out that geometric optics techniques, which form the basis of our approach in this paper, have been employed to study localized instabilities of ideal fluids; see the review article \cite{FriLip} and the references therein.

\subsubsection*{On the illposedness result of Brushlinskii--Morozov \cite{BrMo} in the compresible case}
We note an insightful early investigation of Brushlinskii--Morozov \cite{BrMo} that demonstrated illposedness (or in their terminology, ``nonevolutionarity'') of the ideal Hall-MHD in the compressible case. The instability is also due to the degeneracy of the Hall current term; however, it is based on compressibility and is closer to the traditional instabilities discussed earlier (for instance, it is proved by finding highly oscillating plane wave solutions whose amplitudes grow).

\subsubsection*{Instabilities in degenerate dispersive equations}
While our instability mechanism is qualitatively different from the traditional instabilities discussed above, it is prevalent in \emph{degenerate dispersive equations}, which arise from a diverse range of physical and mathematical sources (see the papers cited below and references therein).

An instructive example that is closest to \eqref{eq:hall-mhd} is the two-dimensional variable coefficient ultrahyperbolic Schr\"odinger equation:
\begin{equation} \label{eq:uh-sch}
\rd_t b + if(y)\rd_x\rd_y b = 0, \quad (t, x, y) \in \bbR \times \bbR^{2}.
\end{equation}
Away from the hyperplanes on which $f$ vanishes, this equation is explicitly solvable by essentially the same procedure as in Section~\ref{sec:wavepackets} below: take the Fourier transform in $x$; make the change of variables $(t, y) \mapsto (\tau, \eta)$ so that $\rd_{\tau} = \xi_{x} \rd_{t}$, $\rd_{\eta} = f(y) \rd_{y}$; then observe that the resulting equation is the simple transport equation with the operator $\rd_{\tau} - \rd_{\eta}$.  From such an explicit solution, it may be checked that \eqref{eq:uh-sch} exhibits a qualitatively similar illposedness as that in Theorem~\ref{thm:norm-growth} near a linearly degenerate point $y_{0}$ of $f$, i.e., $f(y_{0}) = 0$ with $f'(y_{0}) \neq 0$. Moreover, we note that the ideas in this paper allow for a straightforward generalization of such an illposedness to nonlinear perturbations of \eqref{eq:uh-sch}.

Another illuminating exercise is to re-examine the classical example of an ill-posed degenerate dispersive equation due to Craig--Goodman \cite{CrGm}. Its behavior seems a bit different from that of \eqref{eq:hall-mhd} at first sight, but it may ultimately be understood by the methodology in this paper. The equation is
\begin{equation} \label{eq:crgm}
	\rd_{t} u \pm x \rd_{x}^{3} u =0, \quad (t, x) \in \bbR \times \bbR.
\end{equation}
By explicitly solving the equation, Craig--Goodman showed that \eqref{eq:crgm} is well-posed in the direction $\pm t > 0$, and ill-posed in the opposite direction. The unidirectionality, which is different from our case, may already be observed at the level of the (formal) energy identity for \eqref{eq:crgm}, which is
\begin{equation} \label{eq:crgm-en}
	\frac{1}{2} \nrm{u(t)}_{L^{2}}^{2}  \pm \frac{3}{2} \nrm{\rd_{x} u}_{L^{2}((0, t);L^{2})}^{2}= \frac{1}{2} \nrm{u(0)}_{L^{2}}^{2} .
\end{equation}
The deeper reason for the unidirectionality is the the behavior of bicharacteristics, all of which propagate towards (resp.~away from) the degeneracy in the direction of ill-(resp.~well-)posedness; see \cite[Section~2]{CrGm}. 

To exhibit illposedness in this example with the methodology of this paper, one begins by constructing degenerating wave packets for $\pm t \leq 0$ based on the bicharacteristics propagating towards the degeneracy (cf.~Section~\ref{sec:wavepackets}). Next, the key ingredient needed to upgrade the behavior of degenerating wave packets to that of actual solutions is a generalized energy identity (cf.~Section~\ref{sec:pf-lin}). The energy identity \eqref{eq:crgm-en} for $u$ is unsuitable for this purpose due to the presence of the term involving $\rd_{x} u$. Nevertheless, the problematic term may be removed by considering a suitable conjugation $v = T u$ up to acceptable lower order terms (e.g., a Fourier multiplier $\widehat{T u} = m(\xi) \widehat{u}$ with $m(\xi)$ smooth and $m(\xi) = \mathrm{sgn}\, \xi \abs{\xi}^{\frac{1}{2}}$ for $\abs{\xi} > 1$ would do). Such an approach has the advantage of being far more robust compared to the explicit solution method in \cite{CrGm}.

For further discussion and results in this direction, we refer to our follow-up work \cite{JO-disp}, in which we extend the methods developed in this paper to establish illposedness of the Cauchy problem in standard function spaces, such as Sobolev spaces with arbitrary high regularities, for a wide class of one-dimensional nonlinear degenerate dispersive equations (in particular, degenerate KdV-type equations, for which \eqref{eq:crgm} serves as a model). See also the prior works \cite{ASWY, CrGm} in the direction of illposedness for degenerate dispersive equations. Concerning the existence and uniqueness of solutions with degeneracies\footnote{The illposedness results in \cite{JO-disp} also apply to the equations considered in \cite{GHGM1, GHGM2, HGM}, which seem contradictory at first sight. Rather, these results are complementary. To wit, while \cite{JO-disp} shows that even the existence of the solution map fails with respect to standard function spaces (e.g., high regularity Sobolev spaces), \cite{GHGM2, HGM} prove existence and uniqueness in certain function spaces adapted to the degeneracies of the solution.}, we also note the interesting recent works \cite{GHGM1, GHGM2, HGM}.

\subsubsection*{Removing $z$-independence}
Since our domain $M$ is always taken to be periodic in $z$ and we work exclusively with $z$-independent solutions (with the exception of Theorem \ref{thm:illposed-strong2}), the reader might wonder whether this is essential. However, this is not the case. First of all, let us point out that in Theorem \ref{thm:norm-growth}, the requirement that the $L^2$-solution to the linearized equations be $z$-independent may be easily lifted. (This is rather trivial, as the extra terms appearing in the linearization for a $z$-dependent solution come with $\rd_z$, so that they disappear after integrating by parts against a degenerating wave packet which is $z$-independent.) In a similar vein, the linearized equations themselves can be considered in $\mathbb{R}^3$, and using the method developed in this paper, it is not difficult to prove the same rate of growth for initial data which has either compact support or decaying fast in the $z$-direction. 

However, the preceding discussion is not entirely satisfactory, as the background magnetic field $\bgB$ is still kept $z$-independent. More interestingly, by considering the linearized systems against a \emph{$z$-dependent background magnetic field} solving \eqref{eq:hall-mhd} or \eqref{eq:e-mhd}, which may be compactly supported in $\bbR^3$, it is possible to prove nonlinear illposedness results for compactly supported data in $\bbR^{3}$. For more details, see our follow-up work \cite{JO-z-dep}, in which this strategy is carried out.

\subsection{Notation, conventions and some useful vector calculus identities} \label{subsec:notation}
Here, we collect some notation, conventions and vector calculus identities that will be used freely in the remainder of the paper.

\subsubsection*{Notation and conventions}
By $A \aleq B$, we mean that there exists some positive constant $C > 0$ such that $\abs{A} \leq C B$. The dependency of the implicit constant $C$ is specified by subscripts, e.g. $A \aleq_{E} B$. By $A \aeq B$, we mean $A \aleq B$ and $A \ageq B$.

We denote by $\bbR$ the real line, $\bbZ$ the set of integers, $\bbT = \bbR /  {2 \pi \bbZ}$ the  {torus with length $2 \pi$}, $\bbN_{0} = \set{0, 1, 2, \ldots}$ the set of nonnegative integers and $\bbN = \set{1, 2, \ldots}$ the set of positive integers.

We write $M$ for the $3$-dimensional domain of the form $\bbT^{k} \times \bbR^{3-k}$ ($0 \leq k \leq 3$) equipped with the rectangular coordinates $(x, y, z)$, and $M^{2} = M^{2}_{x, y}$ for the two-dimensional projection of $M$ along the $z$-axis. We use the notation $\brk{u, v}_{M}$ and $\brk{u, v}$ for the standard $L^{2}$-inner product for vector fields on $M$ and $M^{2}$, respectively; i.e.,
\begin{equation*}
\brk{\bfu, \bfv}_{M} = \int_{M} \bfu \cdot \bfv \, \ud x \ud y \ud z, \quad \brk{u, v} = \int_{M^{2}} u \cdot v \, \ud x \ud y.
\end{equation*}
Given a vector $u$ on $M^{2}$, we define its perpendicular $u^{\perp}$ by
\begin{equation*} 
	u^{\perp} = \begin{pmatrix} - u^{y} \\ u^{x} \end{pmatrix},
\end{equation*}
and accordingly, we introduce the perpendicular gradient operator
\begin{equation} \label{eq:grad-perp}
\nb^{\perp} = \begin{pmatrix} - \rd_{y} \\ \rd_{x} \end{pmatrix}.
\end{equation}

We use the usual notation $W^{s, p}$ for the $L^{p}$-based Sobolev space of regularity $s$; when $p = 2$, we write $H^{s} = W^{s, 2}$. The mixed Lebesgue norm $L^{p}_{x} L^{q}_{y}$ is defined as 
\begin{equation*}
	\nrm{u}_{L^{p}_{x} L^{q}_{y}} = \nrm{\nrm{u(x, y)}_{L^{q}_{y}}}_{L^{p}_{x}}.
\end{equation*}
The norm $L^{p}_{t} H^{s}$ is defined similarly. 

Given any space $X$ of functions on $M$, we denote by $X_{comp}(M)$ the subspace of compactly supported elements of $X$, and by $X_{loc}(M)$ the space of functions $u$ such that $\chi u \in X_{loc}(M)$ for any smooth compactly supported function $\chi$ on $M$.

\subsubsection*{Vector calculus identities}
We recall some useful vector calculus identities:
\begin{align} 
	\bfU \times (\bfV \times \bfW) & = \bfV (\bfU \cdot \bfW) - \bfW (\bfU \cdot \bfV) , \label{eq:cross-cross} \\
%
%
	\nb \times (\bfU \times \bfV) 
	& = (\bfV \cdot \nb) \bfU + \bfU (\nb \cdot \bfV) - (\bfU \cdot \nb) \bfV - \bfV (\nb \cdot \bfU) , \label{eq:curl-cross} \\
%
%
	(\nb \times \bfU) \times \bfV 
	& = (\bfV \cdot \nb) \bfU - \bfV_{j} \nb \bfU^{j} , \label{eq:cross-curl} \\
%
%
	\nb \times (\nb \times \bfU) 
	& = - \lap \bfU + \nb (\nb \cdot \bfU) . \label{eq:curl-curl}
\end{align}

\subsubsection*{Vector calculus in cylindrical coordinates}
 {The cylindrical coordinates $(r, \theta, z)$ are defined by}
\begin{equation*}
	r = \sqrt{x^{2} + y^{2}}, \quad \theta = \tan^{-1} \frac{y}{x}.
\end{equation*}
In this paper, we use the \emph{coordinate derivative basis} $(\rd_{r}, \rd_{\tht}, \rd_{z})$ to decompose vectors into components, i.e., given a vector $\bfU$ on $M$, we define its components $\bfU^{r}, \bfU^{\tht}, \bfU^{z}$ by
\begin{equation*}
	\bfU = \bfU^{r} \rd_{r} + \bfU^{\tht} \rd_{\tht} + \bfU^{z} \rd_{z}.
\end{equation*}
Another widespread choice is its normalization $(e_{r}, e_{\tht}, e_{z}) = (\rd_{r}, r^{-1} \rd_{\tht}, \rd_{z})$, which differs from our choice by factors of $r$. The advantage of our choice is that the change of coordinates formulas are simpler; the disadvantage is that the inner product takes the inconvenient form
\begin{equation} \label{eq:dot-cylin}
	\bfU \cdot \bfV = \bfU^{r} \bfV^{r} + r^{2} \bfU^{\tht} \bfV^{\tht} + \bfU^{z} \bfV^{z}.
\end{equation}
The gradient, curl and divergence in the cylindrical coordinates are
\begin{equation} \label{eq:grad-cylin}
	\nb a = (\rd_{r} a) \rd_{r} + (r^{-2} \rd_{\tht} a) \rd_{\tht}  {+ (\rd_z a)\rd_z },
\end{equation}
\begin{equation}\label{eq:curl-cylin}
\begin{aligned}
\nabla \times (\bfU^r{\rd_r}+\bfU^\theta {\rd_\theta} + \bfU^z{\rd_{z}}) =& (r^{-1}\rd_\theta \bfU^z - r \rd_{z} \bfU^{\tht}){\rd_r} + (r^{-1} \rd_{z} \bfU^{r} - r^{-1} \rd_r \bfU^z) {\rd_\theta} \\
& + r^{-1}(\rd_r(r^{2}\bfU^\theta) - \rd_\theta \bfU^r){\rd_{z}},
\end{aligned}\end{equation}
\begin{equation} \label{eq:div-cylin}
\nabla \cdot (\bfU^r{\rd_r}+\bfU^\theta {\rd_\theta} + \bfU^z{\rd_{z}}) = r^{-1}\rd_r(r\bfU^r) + \rd_\theta \bfU^\theta + \rd_{z} \bfU^{z}.
\end{equation}
 {Assuming that $a$ is independent of $z$, the} perpendicular gradient takes the form
\begin{equation} \label{eq:grad-perp-cylin}
	\nb^{\perp} a = - (r^{-1} \rd_{\tht} a) \rd_{r} + (r^{-1} \rd_{r} a) \rd_{\tht}.
\end{equation}

\subsection{Organization of the paper}
The rest of the paper is organized as follows.
\begin{itemize}
\item In {\bf Section~\ref{sec:2.5d}}, we carry out some basic algebraic manipulation and derive the energy identities for the linearized \eqref{eq:hall-mhd} and \eqref{eq:e-mhd} under the $(2+\frac{1}{2})$-dimensional reduction, which is a particularly simple reformulation of these equations assuming $z$-independence.
\item {\bf Section~\ref{sec:wavepackets}} is the heart of this paper, where we construct degenerating wave packet approximate solutions to the linearized \eqref{eq:e-mhd} and \eqref{eq:hall-mhd}, under the $(2+\frac{1}{2})$-dimensional reduction and around a planar stationary magnetic field with an additional symmetry.
\item In {\bf Section~\ref{sec:pf-lin}}, the energy identities in Section~\ref{sec:2.5d} and the degenerating wave packets constructed in Section~\ref{sec:wavepackets} are combined to prove the linear Sobolev illposedness results, Theorems~\ref{thm:norm-growth} and \ref{thm:inst}. 
\item In {\bf Section~\ref{sec:pf-nonlin}}, we establish the nonlinear illposedness results, Theorems~\ref{thm:illposed-strong} and \ref{thm:illposed-strong2}. 
\item In {\bf Section~\ref{sec:gevrey}}, we establish the (linear) Gevrey illposedness result, Theorem~\ref{thm:illposed-gevrey}.
\item Finally, in {\bf Section~\ref{sec:fradiss}}, we establish the  illposedness result for the fractionally dissipative systems, Theorem~\ref{thm:illposed-fradiss}.
\end{itemize}
The paper is supplemented with {\bf Appendix~\ref{sec:L2-exist}}, where we sketch the proof of existence of an $L^{2}$-solution for the linearized systems.

\ackn{Both authors are grateful to Dongho Chae for bringing the Hall magnetohydrodynamic equation to their attention, and for his interest in this work. 
I.-J.~Jeong thanks Tarek Elgindi for suggesting several references related to the magnetohydrodynamic systems. 
I.-J.~Jeong has been supported by the POSCO Science Fellowship of POSCO TJ Park Foundation.  {S.-J.~Oh was supported by the Samsung Science and Technology Foundation under Project Number SSTF-BA1702-02, a Sloan Research Fellowship and a National Science Foundation CAREER Grant under NSF-DMS-1945615.}}

\section{The $(2+\frac{1}{2})$-dimensional reduction and linearized energy identities} \label{sec:2.5d}
The purpose of this section is to record the basic algebraic manipulations and energy identities for our proof of the illposedness results.

\subsection{The $(2+\frac{1}{2})$-dimensional reduction of \eqref{eq:hall-mhd} and \eqref{eq:e-mhd}} \label{subsec:2.5d-nonlin}
Here we derive a simpler reformulation of \eqref{eq:hall-mhd} and \eqref{eq:e-mhd} under one translational symmetry (or, more concretely, independence on the $z$-coordinate), which involves the $z$-component of the solution and of its curl. Following the usual terminology in fluid mechanics, we refer to this procedure as the \emph{$(2+\frac{1}{2})$-dimensional reduction}. The derivation in this subsection is formal; for justification in cases that arise in applications, we refer to Propositions~\ref{prop:justify-eq} and \ref{prop:justify}.

\subsubsection*{The $(2+\frac{1}{2})$-dimensional reduction of \eqref{eq:hall-mhd}}
We take the system \eqref{eq:hall-mhd} and simplify it under the assumption of $z$-independence. Dealing with the equation for $\bfB$ first, we have \begin{equation*}
\begin{split}
\rd_t \bfB - (\bfB\cdot\nabla) \bfu +(\bfu\cdot\nabla)\bfB + (\bfB\cdot\nabla)(\nabla\times \bfB) - ((\nabla\times \bfB)\cdot\nabla)\bfB = 0.
\end{split}
\end{equation*} Taking the $z$-component, we obtain \begin{equation*}
\begin{split}
\rd_t \bfB^{z} - (\bfB\cdot\nabla) \bfu^{z} +(\bfu\cdot\nabla)\bfB^{z} + (\bfB\cdot\nabla) (\nabla\times \bfB)^{z}  = 0.
\end{split}
\end{equation*} with the observation that
\begin{equation*}
	((\nb \times \bfB) \cdot \nb) \bfB^{z} = \rd_{y} \bfB^{z} \rd_{x} \bfB^{z} - \rd_{x} \bfB^{z} \rd_{y} \bfB^{z} = 0.
\end{equation*} On the other hand, returning to the form \begin{equation*}
\begin{split}
\rd_t \bfB - \nabla\times ( \bfu\times \bfB) + \nabla\times((\nabla\times \bfB)\times \bfB) = 0
\end{split}
\end{equation*} then using \eqref{eq:curl-curl}, \eqref{eq:cross-curl} and $z$-independence, we obtain \begin{equation} \label{eq:hall-mhd-2.5d-curlB}
\rd_t (\nabla\times \bfB)^{z} + \lap \left( ( \bfu \times \bfB)^{z} - (\bfB\cdot\nabla)\bfB^{z} \right) = 0.
\end{equation} Turning to the equation for $\bfu$, taking the $z$-component gives \begin{equation*}
\begin{split}
\rd_t \bfu^{z} + (\bfu\cdot\nabla) \bfu^{z} - \nu \lap \bfu^{z} = (\bfB \cdot \nb) \bfB^{z},
\end{split}
\end{equation*} where we note that the pressure term vanishes by $z$-independence. 
Taking the $z$-component of the curl gives \begin{equation*}
\begin{split}
\rd_t(\nabla\times\bfu)^{z} + (\bfu \cdot \nb) (\nb \times \bfu)^{z} - \nu \lap (\nb \times \bfu)^{z} = (\bfB \cdot \nb) (\nb \times \bfB)^{z}.
\end{split}
\end{equation*}  
Note that the divergence-free condition reads $$\rd_x \bfB^{x} + \rd_y \bfB^{y} = 0,\qquad \rd_x \bfu^{x} + \rd_y \bfu^{y} = 0.$$ We introduce the notation $\bfomg = (\nb \times \bfu)^{z}$ for the $z$-component of the vorticity. Then we arrive at the following closed system of four scalar quantities depending on two variables $(x,y)$: \begin{equation} \label{eq:hall-mhd-2.5d}
\left\{
\begin{aligned}
&\rd_{t} \bfu^{z} + (\bfu\cdot\nabla) \bfu^{z} - (\bfB\cdot\nabla)\bfB^{z} - \nu \lap \bfu^{z} =0 ,  \\
&\rd_{t} \bfomg - (\bfu \cdot \nb) \bfomg - (\bfB \cdot \nb) (\nb \times \bfB)^{z}  - \nu \lap \bfomg = 0 , \\
&\rd_t  \bfB^{z} - (\bfB\cdot\nabla) \bfu^{z} +(\bfu\cdot\nabla)\bfB^{z} + (\bfB\cdot\nabla) (\nb \times \bfB)^{z}  = 0 ,\\
&\rd_t (\nabla\times \bfB)^{z} + \lap \left( ( \bfu \times \bfB)^{z} - (\bfB\cdot\nabla)\bfB^{z} \right) = 0, 
\end{aligned}
\right.
\end{equation} where the system is to be supplemented with the div-curl relations 
\begin{equation}\label{eq:hall-mhd-2.5d-rel-u}
\left\{
\begin{aligned}
	&\rd_{x} \bfu^{x} + \rd_{y} \bfu^{y} = 0, \\
	&\rd_{x} \bfu^{y} - \rd_{y} \bfu^{x} = \bfomg,
\end{aligned}
\right.\end{equation} 
as well as
\begin{equation} \label{eq:hall-mhd-2.5d-rel-b}
\left\{
\begin{aligned}
	&\rd_{x} \bfB^{x} + \rd_{y} \bfB^{y} = 0, \\
	&\rd_{x} \bfB^{y} - \rd_{y} \bfB^{x} = (\nb \times \bfB)^{z}.
\end{aligned}
\right.\end{equation} 

\subsubsection*{The $(2+\frac{1}{2})$-dimensional reduction of \eqref{eq:e-mhd}}
The case of \eqref{eq:e-mhd} is easily obtained from the preceding computation by formally setting $\bfu^{z}$ and $\bfomg$ equal to zero. 
The $(2+\frac{1}{2})$-dimensional reduction of \eqref{eq:e-mhd} is the closed system of two scalar quantities
\begin{equation*}
	\bfB^{z}, \quad (\nb \times \bfB)^{z},
\end{equation*}
depending only on two variables $(x, y)$, of the following form:
\begin{equation} \label{eq:e-mhd-2.5d}
\left\{
\begin{aligned}
&\rd_t  \bfB^{z}  + (\bfB\cdot\nabla) (\nb \times \bfB)^{z}  = 0 ,\\
&\rd_t (\nb \times \bfB)^{z}  - \lap (\bfB\cdot\nabla)\bfB^{z} = 0.
\end{aligned}
\right.
\end{equation} 
As before, the system is to be supplemented with the div-curl relation
\begin{equation} \label{eq:e-mhd-2.5d-rel}
\left\{
\begin{aligned}
	&\rd_{x} \bfB^{x} + \rd_{y} \bfB^{y} = 0, \\
	&\rd_{x} \bfB^{y} - \rd_{y} \bfB^{x} = (\nb \times \bfB)^{z}.
\end{aligned}
\right.
\end{equation}
where the mean $\mean{\bfB}$ only arises in the case $M^{2} = \bbT^{2}$.

\subsection{Stationary planar magnetic fields with an additional symmetry} \label{subsec:planar-stat}
In this short subsection, we provide a quick proof of Proposition~\ref{prop:planar-stat} using the preceding computation.

\begin{proof}[Proof of Proposition~\ref{prop:planar-stat}]
In what follows, we write $\bfB$ instead of $\bgB$ for simplicity. By planarity, $\bfB^{z} = 0$, and by stationarity, $\rd_{t} \bfB = 0$. Thus, it follows from \eqref{eq:e-mhd-2.5d} that
\begin{equation} \label{eq:planar-stat-curl}
	\bfB \cdot \nb (\nb \times \bfB)^{z} = 0 \quad \hbox{ in } M.
\end{equation}
As remarked above, there are two possibilities for an additional symmetry: Either (1) $\bfB$ is independent of one of the coordinates, which may be taken to be $x$ without loss of generality, or (2) $\bfB^{x} \rd_{x} + \bfB^{y} \rd_{y}$ is axi-symmetric in $\bbR^{2}_{x, y}$, where the axis may be taken to be the origin $(0, 0)$ without loss of generality.

In case~(1), \eqref{eq:planar-stat-curl} and the divergence-free condition amount to:
\begin{equation*}
	\bfB^{y} \rd_{y}^{2} \bfB^{x} = 0, \quad \rd_{y} \bfB^{y}(y) = 0,
\end{equation*}
whose general solution has the form $\bfB = f(y) \rd_{x}$ or $\bfB = (c_{1} y + c_{0}) \rd_{x} + d \rd_{y}$, as desired.

In case~(2), we write $\bfB$ in the cylindrical coordinates $(r, \tht, z)$ as
\begin{equation*}
	\bfB = \bfB^{r}(r) \rd_{r} + \bfB^{\tht}(r) \rd_{\tht}.
\end{equation*}
Then \eqref{eq:planar-stat-curl} and the divergence-free condition become
\begin{equation*}
	\bfB^{r} \rd_{r} (r^{-1} \rd_{r} (r^{2} \bfB^{\tht})) = 0, \qquad \rd_{r} (r \bfB^{r}) = 0.
\end{equation*}
The second equation (divergence-free) and the requirement of smoothness of $\bfB$ at the origin force $\bfB^{r} = 0$; thus we are left with a general solution of the form $\bfB = f(r) \rd_{\tht}$. \qedhere
\end{proof}

\subsection{Perturbed and linearized equations under the $(2+\frac{1}{2})$-dimensional reduction} \label{subsec:2.5d-lin}
Here we derive the full and linearized equations satisfied by the perturbation $(\bfu, \bfB) = (0, \bgB) + (u, b)$ of the stationary solutions $(0, \bgB)$ considered in Theorem~\ref{thm:norm-growth}, under the $(2+\frac{1}{2})$-dimensional reduction. We first present formal derivations, and then describe the precise sense in which the reduced equations hold for $L^{2}$-solutions in Proposition~\ref{prop:justify-eq}.

In our derivation, as in the definition of an $L^{2}$-solution for the linearized equations, \emph{we assume the mean-zero condition \eqref{eq:mean-zero} for the perturbations when $M^{2} = \bbT^{2}_{x, y}$}, which ensure that Biot--Savart-type identities hold; see \eqref{eq:lin-ub-rel} and \eqref{eq:lin-b-rel} below.

\subsubsection*{Translationally-symmetric background, \eqref{eq:hall-mhd}}
 {Consider} a translationally-symmetric background magnetic field of the form $\bgB =  f(y)\rd_x$. Recall that $\bfu =u$, $\bfB = \bgB + b$; the vector fields $u$ and $b$ are divergence-free.  Introducing now \begin{equation*}
\begin{split}
(\nabla\times b)^{z} = -\lap \psi, \quad (\nabla\times u)^{z} = \omg
\end{split}
\end{equation*} 
we formally have the Biot--Savart-type identity
\begin{equation} \label{eq:lin-ub-rel}
	b^{x, y} = - \nb^{\perp} \psi, \qquad u^{x, y} = - \nb^{\perp} (-\lap)^{-1} \omg.
\end{equation}
Since 
\begin{equation*}
(\nb \times \bgB)^{z} = - f'(y),
\end{equation*}
we have
\begin{equation*}
\begin{split}
  \psi = (-\lap)^{-1}((\nb \times \bfB)^{z} + f'(y)) ,\quad \omg = \bfomg.
\end{split}
\end{equation*} 
Therefore, from \eqref{eq:hall-mhd-2.5d}, we may derive the following perturbation equation satisfied by the quadruple $(u^{z}, \omg, b^{z}, \psi)$:
\begin{equation} \label{eq:hall-mhd-2.5d-pert}
\left\{
\begin{aligned}
&\rd_{t} u^{z} - f(y) \rd_{x} b^{z} - \nu \lap u^{z} = - u \cdot \nb u^{z}, \\
&\rd_{t} \omg - f''(y) \rd_{x} \psi + f(y) \rd_{x} \lap \psi - \nu \lap \omg = - u \cdot \nb \omg + \nb^{\perp} \psi \cdot \nb \lap \psi,  \\
&\rd_t  b^{z} - f(y)\rd_x u^{z}   + f''(y)\rd_x\psi  - f(y)\rd_x \lap\psi = - u \cdot \nb b^{z} - \nb^{\perp} \psi \cdot \nb u^{z} \\
&\phantom{\rd_t  b^{z} - f(y)\rd_x u^{z}   + f''(y)\rd_x\psi  - f(y)\rd_x \lap\psi =}
	- \nb^{\perp} \psi \cdot \nb \lap \psi ,\\
&\rd_t \psi - f(y) \rd_{x} (-\lap)^{-1} \omg + f(y)\rd_xb^{z} = - u \cdot \nb \psi + \nb^{\perp} \psi \cdot \nb b^{z}.
\end{aligned}
\right.
\end{equation}
Removing all the quadratic terms in $u$ and $b$, we arrive at the linearized system: 
\begin{equation} \label{eq:hall-mhd-2.5d-lin}
\left\{
\begin{aligned}
&\rd_{t} u^{z} - f(y) \rd_{x} b^{z} - \nu \lap u^{z}= 0, \\
&\rd_{t} \omg - f''(y) \rd_{x} \psi + f(y) \rd_{x} \lap \psi - \nu \lap \omg= 0,  \\
&\rd_t  b^{z} - f(y)\rd_{x} u^{z}   + f''(y)\rd_x\psi  - f(y)\rd_x \lap\psi = 0, \\
&\rd_t \psi - f(y) \rd_{x} (-\lap)^{-1} \omg + f(y)\rd_xb^{z} = 0 {.} 
\end{aligned}
\right.
\end{equation}
We note that the LHS of the equation for $\rd_{t} \omg$ in \eqref{eq:hall-mhd-2.5d-pert} and \eqref{eq:hall-mhd-2.5d-lin} may be rewritten in the divergence form
\begin{equation} \label{eq:omg-eq-div}
\rd_{t} \omg - \nb \cdot (f' \nb^{\perp} \psi) + \rd_{x} (f \lap \psi) - \nu \lap \omg.
\end{equation}

\subsubsection*{Translationally-symmetric background, \eqref{eq:e-mhd}}
The counterpart of the perturbation equation for the electron-MHD case is simply obtained by setting $\omg = u^{z} = 0$. Thus, the Bio--Savart-type identity is
\begin{equation} \label{eq:lin-b-rel}
	b^{x, y} = - \nb^{\perp} \psi, 
\end{equation}
the perturbation equation is of the form
\begin{equation} \label{eq:e-mhd-2.5d-pert}
\left\{
\begin{aligned}
&\rd_t  b^{z} + f''(y)\rd_x\psi  - f(y)\rd_x \lap\psi = - \nb^{\perp} \psi \cdot \nb \lap \psi, \\
&\rd_t \psi + f(y)\rd_x b^{z} = \nb^{\perp} \psi \cdot \nb b^{z},
\end{aligned}
\right.
\end{equation}
and linearized equation is \begin{equation} \label{eq:e-mhd-2.5d-lin}
\left\{
\begin{aligned}
&\rd_t b^{z} - f(y)\rd_x \lap\psi  + f''(y)\rd_x\psi   = 0 ,\\
&\rd_t \psi  + f(y)\rd_x b^{z} = 0.
\end{aligned}
\right.
\end{equation}

\subsubsection*{Axisymmetric background, \eqref{eq:hall-mhd}}
Let us take the systems \eqref{eq:hall-mhd-2.5d} and \eqref{eq:e-mhd-2.5d} and write down the linearized equations around $\bgB = f(r) \rd_\theta$. We shall use the standard cylindrical coordinates system $(r,\theta,z)$ with coordinate vectors $(\rd_r, \rd_\theta, \rd_z)$, and denote the components of a vector $\bfU$ in the following form: \begin{equation*}
\begin{split}
\bfU = \bfU^{r} \rd_r + \bfU^{\theta} \rd_\theta + \bfU^{z} \rd_z .
\end{split}
\end{equation*} Assuming $z$-independence component functions in cylindrical coordinates, the formulas \eqref{eq:curl-cylin} and \eqref{eq:div-cylin} simplify to: \begin{equation}\label{eq:del_in_cylindrical_z_indep}
\begin{split}
\nabla \times (\bfU^r{\rd_r}+\bfU^\theta {\rd_\theta} + \bfU^z{\rd_{z}}) &= r^{-1}\rd_\theta \bfU^z{\rd_r} - r^{-1} \rd_r\bfU^z {\rd_\theta} + r^{-1}(\rd_r(r^{2}\bfU^\theta) - \rd_\theta \bfU^r){\rd_{z}}, \\
\nabla \cdot (\bfU^r{\rd_r}+\bfU^\theta {\rd_\theta} + \bfU^z{\rd_{z}}) &= r^{-1}\rd_r(r\bfU^r) + \rd_\theta \bfU^\theta  .
\end{split}
\end{equation} Moreover, for a scalar function $a$ independent of $z$, a combination of \eqref{eq:div-cylin} and \eqref{eq:grad-cylin} imply\begin{equation}\label{eq:lap_in_cylindrical_z_indep}
\begin{split}
\lap a = \frac{1}{r} \rd_r(r\rd_r a) + \frac{1}{r^2}\rd_\theta^2 a. 
\end{split}
\end{equation}

Equipped with the above preliminaries, we are ready to derive the perturbation and linearized equations. We write $(\bfu, \bfB) = (u, \bgB + b)$ and introduce \begin{equation*}
\begin{split}
(\nabla\times b)^{z} = -\lap\psi, \qquad (\nabla\times u)^{z} = \omg.
\end{split}
\end{equation*} 
Since
\begin{equation*}
(\nabla\times\bgB)^{z} = r^{-1}\rd_r(r^{2} f),
\end{equation*}
we obtain that \begin{equation*}
\begin{split}
\psi = (-\lap)^{-1}((\nb \times \bfB)^{z} - r^{-1}\rd_r(r^{2} f)),\quad \omg = \bfomg.
\end{split}
\end{equation*}
Then, using \begin{equation*}
\begin{split}
b^{r} = \frac{1}{r}\rd_\theta \psi, \quad b^{\theta} = -\frac{1}{r} \rd_r\psi, 
\end{split}
\end{equation*} which follows from the formulas $b = -\nb^{\perp} \psi$ and \eqref{eq:grad-perp-cylin}, we obtain the following perturbation equation:
\begin{equation} \label{eq:hall-mhd-2.5d-pert-axisym}
\left\{
\begin{aligned}
&\rd_{t} u^{z} - f(r) \rd_{\tht} b^{z} - \nu \lap u^{z} = - u \cdot \nb u^{z}, \\
&\rd_{t} \omg  - \left(f''(r) + \frac{3}{r} f'(r)\right) \rd_{\tht} \psi + f(r) \rd_{\tht} \lap \psi - \nu \lap \omg = - u \cdot \nb \omg + \nb^{\perp} \psi \cdot \nb \lap \psi,  \\
&\rd_t  b^{z} - f(r)\rd_{\theta} u^{z} + \left( f''(r) + \frac{3}{r}f'(r) \right)\rd_{\theta}\psi- f(r)\rd_{\theta} \lap\psi = - u \cdot \nb b^{z} - \nb^{\perp} \psi \cdot \nb u^{z} \\
&\phantom{\rd_t  b^{z} - f(r)\rd_{\theta} u^{z} + \left( f''(r) + \frac{3}{r}f'(r) \right)\rd_{\theta}\psi- f(r)\rd_{\theta} \lap\psi =}
	- \nb^{\perp} \psi \cdot \nb \lap \psi ,\\
&\rd_t \psi - f(r) \rd_{\tht} (-\lap)^{-1} \omg + f(r)\rd_\theta b^{z}  = - u \cdot \nb \psi + \nb^{\perp} \psi \cdot \nb b^{z}.
\end{aligned}
\right.
\end{equation}
Removing all the quadratic terms in $u$ and $b$, we arrive at the linearized system: 
\begin{equation} \label{eq:hall-mhd-2.5d-lin-axisym}
\left\{
\begin{aligned}
&\rd_{t} u^{z} - f(r) \rd_{\tht} b^{z} - \nu \lap u^{z}= 0 , \\
&\rd_{t} \omg  - \left(f''(r) + \frac{3}{r} f'(r)\right) \rd_{\tht} \psi + f(r) \rd_{\tht} \lap \psi - \nu \lap \omg =0 ,  \\
&\rd_t  b^{z} - f(r)\rd_{\theta} u^{z} + \left( f''(r) + \frac{3}{r}f'(r) \right)\rd_{\theta}\psi- f(r)\rd_{\theta} \lap\psi = 0 ,\\
&\rd_t \psi - f(r) \rd_{\tht} (-\lap)^{-1} \omg + f(r)\rd_\theta b^{z}  = 0.
\end{aligned}
\right.
\end{equation}
Moreover, by the formulas $u = - \nb^{\perp} (-\lap)^{-1} \omg$ and \eqref{eq:grad-perp-cylin},
\begin{equation*}
	u^{r} = \frac{1}{r} \rd_{\tht} (-\lap)^{-1} \omg, \quad
	u^{\tht} = - \frac{1}{r} \rd_{r} (-\lap)^{-1} \omg.
\end{equation*}
As before, the LHS of the equation for $\rd_{t} \omg$ can be rewritten in the divergence form
\begin{equation} \label{eq:omg-eq-div-axisym}
\rd_{t} \omg - \nb \cdot \left( (r f' + 2 f) \nb^{\perp} \psi\right) +r^{-1} \rd_{\tht} (r f \lap \psi) - \nu \lap \omg.
\end{equation}

Note the similarity of the form with linearized systems around a translationally-symmetric planar stationary solution.

\subsubsection*{Axisymmetric background, \eqref{eq:e-mhd}}
The $(2+\frac{1}{2})$-dimensional perturbation equation in the case of \eqref{eq:e-mhd} are simply obtained by formally setting $u = 0$ in \eqref{eq:hall-mhd-2.5d-pert-axisym}: 
\begin{equation} \label{eq:e-mhd-2.5d-pert-axisym}
\left\{
\begin{aligned}
&\rd_t b^{z}   - f(r)\rd_\theta \lap\psi  + \left( f''(r) + \frac{3}{r}f'(r) \right) \rd_\theta\psi = - \nb^{\perp} \psi \cdot \nb \lap \psi,\\
&\rd_t \psi  + f(r)\rd_\theta b^{z} = \nb^{\perp} \psi \cdot \nb b^{z}.
\end{aligned}
\right.
\end{equation} 
The corresponding linearized equation is
\begin{equation} \label{eq:e-mhd-2.5d-lin-axisym}
\left\{
\begin{aligned}
&\rd_t b^{z}   - f(r)\rd_\theta \lap\psi  + \left( f''(r) + \frac{3}{r}f'(r) \right) \rd_\theta\psi = 0 ,\\
&\rd_t \psi  + f(r)\rd_\theta b^{z} = 0.
\end{aligned}
\right.
\end{equation} 

\subsubsection*{Justification for $L^{2}$-solutions}
The linearized equations derived in this subsection hold for $L^{2}$-solutions to \eqref{eq:hall-mhd-lin} and \eqref{eq:e-mhd-lin} in the following sense.
\begin{proposition} \label{prop:justify-eq}
Let $(u, b)$ be a $z$-independent $L^{2}$-solution to \eqref{eq:hall-mhd-lin} around $(0, \bgB)$, where either (a)~$\bgB = f(y) \rd_{x}$ or (b)~$\bgB = f(r) \rd_{\tht}$ as in Theorem~\ref{thm:norm-growth}. Then $(u^{z}, \omg, b^{z}, \psi)$, defined from $(u, b)$ as above, is well-defined up to addition of a space-independent distribution\footnote{That is, a distribution on $I \times M$ whose spatial gradient vanishes.} for $\psi$, and the Biot--Savart-type identity \eqref{eq:lin-ub-rel} holds. Moreover, \eqref{eq:hall-mhd-2.5d-lin} or \eqref{eq:hall-mhd-2.5d-lin-axisym}, respectively in cases~(a) or (b), holds when tested against vector-valued functions of the form
\begin{equation*}
(\phi_{u}, \nb \cdot (-\lap)^{-1} \phi^{x,y}_{\omg}, \phi_{b}, \nb \phi_{\psi})
\end{equation*}
where $\phi_{u}, \phi^{x,y}_{\omg}, \phi_{b}, \phi_{\psi} \in C^{\infty}_{c}(I \times M)$.

Analogously, for a $z$-independent $L^{2}$-solution $b$ to \eqref{eq:e-mhd-lin} around the same $\bgB$, $(b^{z}, \psi)$ is well-defined up to addition of a space-independent distribution for $\psi$, and the Biot--Savart-type identity \eqref{eq:lin-b-rel} holds. Moreover, \eqref{eq:e-mhd-2.5d-lin} or \eqref{eq:e-mhd-2.5d-lin-axisym}, respectively in cases~(a) or (b), holds when tested against vector-valued functions of the form
\begin{equation*}
(\phi_{b}, \nb \phi_{\psi})
\end{equation*}
where $\phi_{b}, \phi_{\psi} \in C^{\infty}_{c}(I \times M)$.
\end{proposition}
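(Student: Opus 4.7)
My plan is to prove Proposition~\ref{prop:justify-eq} in three steps, mirroring the informal derivations in Section~\ref{subsec:2.5d-lin}. For the first step, I would establish that $(u^{z}, \omg, b^{z}, \psi)$ are well-defined with the claimed regularity and that the Biot--Savart identities hold. Since $(u, b) \in C_{w}(I; L^{2}(M))$ is $z$-independent on $M = M^{2} \times \bbT_{z}$, integration over $\bbT_{z}$ identifies $u, b$ with elements of $C_{w}(I; L^{2}(M^{2}))$; thus $u^{z}, b^{z} \in C_{w}(I; L^{2}(M^{2}))$ directly, and $\omg$, $(\nb \times b)^{z}$ are defined distributionally as $(x, y)$-curls of $L^{2}$ vector fields. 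For $\psi$, the divergence-free condition $\nb \cdot b = 0$ combined with $z$-independence gives $\rd_{x} b^{x} + \rd_{y} b^{y} = 0$ on $M^{2}$, so the 2D distributional Poincar\'e lemma produces $\psi$, unique modulo space-independent distributions, with $b^{x, y} = -\nb^{\perp} \psi$. Applying $\nb \times$ then reproduces $-\lap \psi = (\nb \times b)^{z}$, consistent with the conventions of Section~\ref{subsec:2.5d-lin}. The analogous construction yields $\varphi = (-\lap)^{-1} \omg$ with $u^{x, y} = -\nb^{\perp} \varphi$, completing \eqref{eq:lin-ub-rel} and \eqref{eq:lin-b-rel}. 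When $M^{2} = \bbT^{2}$, the mean-zero hypothesis \eqref{eq:mean-zero} is precisely what removes the cohomological obstruction to the Poincar\'e lemma and legitimizes the inversion $(-\lap)^{-1}$.

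The second step would derive each reduced scalar equation in \eqref{eq:hall-mhd-2.5d-lin} (resp.~\eqref{eq:e-mhd-2.5d-lin}) as a weak identity by evaluating the distributional form of \eqref{eq:hall-mhd-lin} (resp.~\eqref{eq:e-mhd-lin}) on the prescribed vector-valued test functions. The scalars $\phi_{u}$ and $\phi_{b}$ pair directly with the $z$-components of $u$ and $b$ and yield the $u^{z}$- and $b^{z}$-equations after pointwise rewriting of the linear-in-$\bgB$ terms using the explicit structure $\bgB = f(y) \rd_{x}$ or $f(r) \rd_{\tht}$. The remaining entries, built from $\phi^{x, y}_{\omg}$ and $\phi_{\psi}$, are engineered so that integration by parts in $(x, y)$ converts inner products with $u^{x, y}$ and $b^{x, y}$ into pairings with $\omg$ and $\psi$, respectively, via the Biot--Savart identities from Step~1. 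Crucially, the in-plane parts of the test functions can be arranged to be divergence-free in the 3D sense (being $(x, y)$-derivatives of $z$-independent scalars with vanishing $z$-component), so the Leray projection $\bbP$ in the $u$-equation of \eqref{eq:hall-mhd-lin} passes through harmlessly by self-adjointness, and the full $L^2$ weak formulation reduces to the desired scalar identities on $M^{2}$.

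The main technical obstacle I anticipate is the careful handling of the nonlocal operator $(-\lap)^{-1}$ and the mean-zero bookkeeping on $\bbT^{2}$: one must verify throughout that $\psi$ and $\varphi$ are honest distributions whose perpendicular gradients reproduce $b^{x, y}$ and $u^{x, y}$, and that each nonlocal substitution (in particular the terms involving $(-\lap)^{-1} \omg$) preserves the distributional framework at every step. The ambiguity of $\psi$ modulo space-independent distributions is also the reason the $\psi$-equation is tested in gradient form (against $\nb \phi_{\psi}$) rather than against a scalar directly. Once these points are settled, the remainder reduces to routine integrations by parts and vector-calculus manipulations using the identities from Section~\ref{subsec:notation}; the electron-MHD case follows by applying the same argument to $b$ alone, formally specializing $(u^{z}, \omg)$ to zero.
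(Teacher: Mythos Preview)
Your proposal is correct and follows essentially the same route as the paper, which also notes that the only delicate step is the construction of $\psi$ and $(-\lap)^{-1}\omg$, after which the derivation of the reduced equations is routine. The one cosmetic difference is that the paper packages the construction of $\psi$ via an explicit Poisson lemma (Lemma~\ref{lem:lap-invert}, proved by $L^{2}$-boundedness of Riesz transforms and yielding $\nb\psi \in L^{2}$ directly), whereas you invoke the 2D distributional Poincar\'e lemma; these are equivalent here, though the Riesz-transform formulation handles all the domains $(\bbT,\bbR)_{x}\times(\bbT,\bbR)_{y}$ uniformly without separate cohomological bookkeeping.
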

Note that the ambiguity of $\psi$ is harmless, in view of the fact that $\rd_{t} \psi$ is tested against $\nb \phi_{\psi}$ and all other occurrences of $\psi$ in \eqref{eq:lin-ub-rel}, \eqref{eq:hall-mhd-2.5d-lin} and \eqref{eq:hall-mhd-2.5d-lin-axisym} in the case of \eqref{eq:hall-mhd-lin} (resp. \eqref{eq:lin-b-rel}, \eqref{eq:e-mhd-2.5d-lin} and \eqref{eq:e-mhd-2.5d-lin-axisym} in the case of \eqref{eq:e-mhd-lin}) come with a spatial derivative; in every instance the space-independent distribution is annihilated.

The proof is straightforward, so we only sketch the main points. We focus on the case of \eqref{eq:hall-mhd-lin}, as \eqref{eq:e-mhd-lin} is entirely analogous. In the derivation, the only place where one has to be careful is when inverting $-\lap$ on $(\nb \times b)^{z}$ and $\omg = (\nb \times u)^{z}$, for which we rely on the following result:
\begin{lemma} \label{lem:lap-invert}
In $M^{2} = (\bbT, \bbR)_{x} \times (\bbT, \bbR)_{y}$, consider the Poisson equation
\begin{equation*}
	-\lap w = \rd_{x} g^{x} + \rd_{y} g^{y}, 
\end{equation*}
where $g^{x}, g^{y} \in L^{2}$. Then there exists a solution $w \in L^{1}_{loc} \cap \dot{H}^{1}$ such that $\nrm{\nb w}_{L^{2}} \aleq \nrm{g^{x,y}}_{L^{2}}$, which is unique up to addition of a constant.
\end{lemma}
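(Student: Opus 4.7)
The plan is to construct $w$ explicitly via Fourier analysis, handling the three possible domains $M^{2} \in \set{\bbR^{2}, \bbT_{x} \times \bbR_{y}, \bbT^{2}}$ (and the reflected mixed case) in a uniform way. Let $\wht{\cdot}$ denote the Fourier transform on non-compact factors and Fourier series on compact factors of $M^{2}$, with dual variable $\xi = (\xi_{x}, \xi_{y})$. Away from $\xi = 0$ I would define
\begin{equation*}
\wht{w}(\xi) = \frac{i \xi_{x} \wht{g^{x}}(\xi) + i \xi_{y} \wht{g^{y}}(\xi)}{|\xi|^{2}},
\end{equation*}
and on compact factors set $\wht{w}(0) = 0$, a choice that precisely reflects the constant ambiguity in the statement. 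In the presence of a non-compact factor this formula defines a bona fide tempered distribution because $\xi_{j}/|\xi|^{2}$ is locally integrable in two dimensions.

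For the gradient estimate I would observe that $\rd_{k} w$ has Fourier multiplier $-\xi_{k} \xi_{j} / |\xi|^{2}$ acting on $g^{j}$, whose symbol is bounded by $1$ in modulus. Plancherel then yields $\nrm{\nb w}_{L^{2}} \aleq \nrm{g^{x,y}}_{L^{2}}$; this is morally just the $L^{2}$-boundedness of the (composed) Riesz transform.

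To confirm $w \in L^{1}_{loc}$, on $\bbT^{2}$ I would note that $|\wht{w}(k)| \leq |\wht{g^{x,y}}(k)|$ for $k \in \bbZ^{2} \setminus \set{0}$, which places $w$ directly in $L^{2}(\bbT^{2}) \subset L^{1}_{loc}$. On $\bbR^{2}$ or $\bbT_{x} \times \bbR_{y}$, once $\nb w \in L^{2}_{loc}$ is known I would choose a representative of $w$ via the Poincar\'e inequality on balls (or, in the mixed case, equivalently by integrating in $y$ mode-by-mode in $x$), yielding $w \in L^{2}_{loc} \subset L^{1}_{loc}$.

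For uniqueness, any two candidate solutions differ by a harmonic distribution $v$ with $\nb v \in L^{2}(M^{2})$. On $\bbT^{2}$ every harmonic distribution is constant. On $\bbR^{2}$, each $\rd_{k} v$ is harmonic and lies in $L^{2}$, so the mean-value property applied on balls of radius $R \to \infty$ forces $\rd_{k} v \equiv 0$. On $\bbT_{x} \times \bbR_{y}$, separation of variables gives $v = \sum_{k} v_{k}(y) e^{ikx}$ with $v_{k}'' = k^{2} v_{k}$; for $k \neq 0$ the only $L^{2}$-gradient choice is $v_{k} \equiv 0$, while for $k = 0$ the affine solution $v_{0}(y) = \alp + \bt y$ has $\rd_{y} v_{0} \in L^{2}(\bbR)$ only when $\bt = 0$. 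The main (and only minor) obstacle I anticipate is the low-frequency behavior on non-compact factors, where $\xi_{j}/|\xi|^{2}$ is not square-integrable near the origin; this is resolved cleanly because the lemma only demands $L^{1}_{loc}$ regularity for $w$ itself, while the $L^{2}$-bound is imposed on $\nb w$ whose symbol $\xi_{k}\xi_{j}/|\xi|^{2}$ is bounded.
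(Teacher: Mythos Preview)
Your approach is essentially the same as the paper's: both hinge on the $L^{2}$-boundedness of the Riesz transforms $R_{j}R_{k}$ to get $\nrm{\nb w}_{L^{2}} \aleq \nrm{g^{x,y}}_{L^{2}}$, after which existence follows routinely. The paper invokes this estimate and then passes to general $g^{x,y}\in L^{2}$ by approximation with smooth data, omitting the details; you instead carry out an explicit Fourier construction and reconstruct $w$ from $\nb w$ via Poincar\'e on the non-compact factors. Both routes are standard and equivalent in spirit.

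One small correction to your writeup: the sentence ``this formula defines a bona fide tempered distribution because $\xi_{j}/|\xi|^{2}$ is locally integrable in two dimensions'' is not quite right. Local integrability of the multiplier does not guarantee that its product with an arbitrary $L^{2}$ function lies in $L^{1}_{loc}$; for instance, with $\wht{g}(\xi)=|\xi|^{-1}(\log(2/|\xi|))^{-3/4}\mathbf{1}_{|\xi|<1}$ one has $\wht{g}\in L^{2}(\bbR^{2})$ but $|\xi|^{-1}\wht{g}\notin L^{1}$ near the origin. This is exactly the low-frequency obstruction you flag at the end, and your resolution there is the correct one: define the components of $\nb w$ directly via the bounded multipliers $-\xi_{k}\xi_{j}/|\xi|^{2}$ (which land in $L^{2}$ and are manifestly curl-free), then recover $w\in L^{1}_{loc}$ by integrating. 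So the argument is fine once you drop the direct formula for $\wht{w}$ and lead with the construction of $\nb w$.
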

The key point in the proof of this lemma is the quantitative estimate $\nrm{\nb w}_{L^{2}} \aleq \nrm{g^{x,y}}_{L^{2}}$, which is a consequence of $L^{2}$-boundedness of Riesz transforms on $(\bbT, \bbR)_{x} \times (\bbT, \bbR)_{y}$; this estimate allows one to solve the equation by approximating $g^{x}, g^{y}$ with smooth functions. We omit the obvious details.

By Lemma~\ref{lem:lap-invert}, $\psi$ and $(-\lap)^{-1} \omg$ are well-defined as a distribution on $I \times M^{2}$ up to addition of a space-independent distribution. Regardless of the ambiguity, the Biot--Savart-type identities in \eqref{eq:lin-ub-rel} are justified, where the mean-zero condition is needed when $M^{2} = \bbT^{2}_{x,y}$. The rest of the derivation can be followed without change, and the property that $(u,b)$ solves \eqref{eq:hall-mhd-lin} in the sense of distributions translates to \eqref{eq:hall-mhd-2.5d-lin} and \eqref{eq:hall-mhd-2.5d-lin-axisym} (in the respective cases) with the test functions as above.

\subsection{Energy identities under the $(2+\frac{1}{2})$-dimensional reduction} \label{subsec:2.5d-en}
Here, we first formally derive energy-type identities for inhomogeneous solutions to the linearized equations computed in Section~\ref{subsec:2.5d-lin}, which play a central role in our paper. These identities are then justified in two important cases that arise in this paper, namely for a pair of $L^{2}$-solutions with an additional $H^{\frac{1}{2}}$ regularity for $b$, or for a pair of an $L^{2}$-solution and a suitable test function (Proposition~\ref{prop:justify}).

\subsubsection*{Translationally-symmetric case, \eqref{eq:hall-mhd}}
We first consider the case $\bgB = f(y) \rd_{x}$ for \eqref{eq:hall-mhd}. Motivated by the form of \eqref{eq:hall-mhd-2.5d-lin}, we introduce the error terms
\begin{equation} \label{eq:hall-2.5d-err-parallel}
\begin{aligned}
\errh_{u}^{(\nu)}[u^{z}, \omg, b^{z}, \psi]=& \rd_{t} u^{z} - f(y) \rd_{x} b^{z} - \nu \lap u^{z}, \\
 \errh_{\omg}^{(\nu)}[u^{z}, \omg, b^{z}, \psi]=& \rd_{t} \omg + f(y)\rd_x \lap\psi - f''(y)\rd_x\psi - \nu \lap \omg^{z},  \\
\errh_{b}[u^{z}, \omg, b^{z}, \psi] =& \rd_t  b^{z} - f(y)\rd_x \lap\psi + f''(y)\rd_x\psi - f(y)\rd_x u^{z},\\
\errh_{\psi}[u^{z}, \omg, b^{z}, \psi]=&\rd_t \psi + f(y)\rd_x b^{z} - f(y) \rd_{x} (-\lap)^{-1} \omg.
\end{aligned}
\end{equation}

Consider two quadruples of scalar functions
\begin{equation*}
	(\tu^{z}, \tomg, \tb^{z}, \tpsi), \quad
	(u^{z}, \omg, b^{z}, \psi).
\end{equation*}
and the two associated pairs of planar vector fields $(\tu^{x, y}, \tb^{x, y})$ and $(u^{x, y}, b^{x, y})$ given by
\begin{align*}
	(\tu^{x, y}, \tb^{x, y}) =& (-\nb^{\perp} (-\lap)^{-1} \tomg, - \nb^{\perp} \tpsi), \\
	(u^{x, y}, b^{x, y}) =& (-\nb^{\perp} (-\lap)^{-1} \omg, - \nb^{\perp} \psi).
\end{align*}
Introducing the shorthands 
\begin{align*}
	(\errh_{\tu}^{(\nu)}, \errh_{\tomg}^{(\nu)}, \errh_{\tb},\errh_{\tpsi})  
	=& (\errh_{u}^{(\nu)}, \errh_{\omg}^{(\nu)}, \errh_{b},\errh_{\psi})[\tu^{z}, \tomg, \tb^{z}, \tpsi],
\end{align*}
and
\begin{align*}
	(\errh_{u}^{(\nu)}, \errh_{\omg}^{(\nu)}, \errh_{b},\errh_{\psi})  
	=& (\errh_{u}^{(\nu)}, \errh_{\omg}^{(\nu)}, \errh_{b},\errh_{\psi})[u^{z}, \omg, b^{z}, \psi], \\
\end{align*}
the desired (bilinear) energy identity is given by 
\begin{equation} \label{eq:en-hall-mhd-parallel}
\begin{aligned}
& \frac{\ud}{\ud t} \left(\brk{\tb, b} + \brk{\tu, u} \right) + 2 \nu \brk{\nb \tu, \nb u} \\
& = - \brk{f'' \rd_{x} \tpsi, b^{z}} - \brk{\tb^{z}, f'' \rd_{x} \psi} 
	- \brk{f' \nb \tpsi, u^{x, y}} 
	- \brk{\tu^{x,y}, f' \nb \psi} \\
& \phantom{=}
	+\brk{\nb^{\perp} \errh_{\tpsi}, \nb^{\perp} \psi} + \brk{\nb^{\perp} \tpsi, \nb^{\perp} \errh_{\psi}} 
	+ \brk{\errh_{\tb}, b^{z}} + \brk{\tb^{z}, \errh_{b}} \\
& \phantom{=}
 	- \brk{\nb^{\perp} (-\lap)^{-1} \errh_{\tomg}^{(\nu)}, u^{x,y}} - \brk{\tu^{x,y}, \nb^{\perp} (-\lap)^{-1} \errh_{\omg}^{(\nu)}} 
	+ \brk{\errh_{\tu}^{(\nu)}, u^{z}} + \brk{\tu^{z}, \errh_{u}^{(\nu)}}.
\end{aligned}
\end{equation}
This identity is essentially \eqref{eq:lin-en-hall}, but allowing for errors on the RHS of the linearized equations. It is the precise form of the errors given in \eqref{eq:hall-2.5d-err-parallel} that is important here.

To prove \eqref{eq:en-hall-mhd-parallel}, we use \eqref{eq:hall-2.5d-err-parallel} (and \eqref{eq:omg-eq-div} for $\rd_{t} \tomg$, $\rd_{t} \omg$) to compute
\begin{align*}
&\frac{\ud}{\ud t} \brk{\tb, b} + \frac{\ud}{\ud t} \brk{\tu, u} + 2 \nu \brk{\nb \tu, \nb u} \\
&= \brk{\rd_{t} \nb^{\perp} \tpsi, \nb^{\perp} \psi} + \brk{\nb^{\perp} \tpsi, \rd_{t} \nb^{\perp} \psi} + \brk{\rd_{t} \tb^{z}, b^{z}} + \brk{\tb^{z}, \rd_{t} b^{z}} \\
&\phantom{=}
 	+ \brk{\rd_{t} \nb^{\perp} (-\lap)^{-1} \tomg, \nb^{\perp} (-\lap)^{-1} \omg} + \brk{\nb^{\perp} (-\lap)^{-1} \tomg, \rd_{t} \nb^{\perp} (-\lap)^{-1} \omg} + 2 \nu \brk{\tomg, \omg} \\
&\phantom{=}
	+ \brk{\rd_{t} \tu^{z}, u^{z}} + \brk{\tu^{z}, \rd_{t} u^{z}} + 2 \nu \brk{\tu^{z}, u^{z}} \\
&= \brk{- f \rd_{x} \tb^{z} + f \rd_{x} (-\lap)^{-1} \tomg, - \lap \psi} 
	+ \brk{-\lap \tpsi, - f \rd_{x} b^{z} + f \rd_{x} (-\lap)^{-1} \omg}  \\
&\phantom{=}
	+ \brk{f \rd_{x} \lap \tpsi + f \rd_{x} \tu^{z} - f'' \rd_{x} \tpsi, b^{z}} + \brk{\tb^{z}, f \rd_{x} \lap \psi + f \rd_{x} u^{z} - f'' \rd_{x} \psi} \\
&\phantom{=}
 	+ \brk{\nb^{\perp} (-\lap)^{-1} \left( - \rd_{x} (f \lap \tpsi) + \nb \cdot (f' \nb^{\perp} \tpsi) \right), \nb^{\perp} (-\lap)^{-1} \omg} \\
&\phantom{=}
	+ \brk{\nb^{\perp} (-\lap)^{-1} \tomg, \nb^{\perp} (-\lap)^{-1} \left( - \rd_{x} (f \lap \psi) + \nb \cdot (f' \nb^{\perp} \psi) \right)} \\
&\phantom{=}
	+ \brk{f \rd_{x} \tb^{z}, u^{z}} + \brk{\tu^{z}, f \rd_{x} b^{z}} \\
&\phantom{=}
	+\brk{\nb^{\perp} \errh_{\tpsi}, \nb^{\perp} \psi} + \brk{\nb^{\perp} \tpsi, \nb^{\perp} \errh_{\psi}} + \brk{\errh_{\tb}, b^{z}} + \brk{\tb^{z}, \errh_{b}} \\
&\phantom{=}
 	+ \brk{\nb^{\perp} (-\lap)^{-1} \errh_{\tomg}^{(\nu)}, \nb^{\perp} (-\lap)^{-1} \omg} + \brk{\nb^{\perp} (-\lap)^{-1} \tomg, \nb^{\perp} (-\lap)^{-1} \errh_{\omg}^{(\nu)}} \\
&\phantom{=}
	+ \brk{\errh_{\tu}^{(\nu)}, u^{z}} + \brk{\tu^{z}, \errh_{u}^{(\nu)}}.
\end{align*}
Many high order terms cancel, essentially from the same energy structure as in Proposition~\ref{prop:lin-en}. In this process, the formal identity $((\nb^{\perp})(-\lap)^{-1})^{\ast} (\nb^{\perp})(-\lap)^{-1} = (-\lap)^{-1}$ is used. After a suitable distribution of derivatives, we arrive at
\begin{align*}
&\frac{\ud}{\ud t} \brk{\tb, b} + \frac{\ud}{\ud t} \brk{\tu, u} + 2 \nu \brk{\nb \tu, \nb u} \\
&= \brk{- f'' \rd_{x} \tpsi, b^{z}} + \brk{\tb^{z}, - f'' \rd_{x} \psi} \\
&\phantom{=}
 	- \brk{f' \nb^{\perp} \tpsi, \nb (-\lap)^{-1} \omg} 
 	- \brk{\nb (-\lap)^{-1} \tomg, f' \nb^{\perp} \psi} \\
&\phantom{=}
	+\brk{\nb^{\perp} \errh_{\tpsi}, \nb^{\perp} \psi} + \brk{\nb^{\perp} \tpsi, \nb^{\perp} \errh_{\psi}} + \brk{\errh_{\tb}, b^{z}} + \brk{\tb^{z}, \errh_{b}} \\
&\phantom{=}
 	+ \brk{\nb^{\perp} (-\lap)^{-1} \errh_{\tomg}^{(\nu)}, \nb^{\perp} (-\lap)^{-1} \omg} 
	+ \brk{\nb^{\perp} (-\lap)^{-1} \tomg, \nb^{\perp} (-\lap)^{-1} \errh_{\omg}^{(\nu)}} \\
&\phantom{=}
	+ \brk{\errh_{\tu}^{(\nu)}, u^{z}} + \brk{\tu^{z}, \errh_{u}^{(\nu)}}.
\end{align*}
Then switching $\nb$ and $\nb^{\perp}$ in the third and fourth terms on the RHS (which incurs a sign change), and using $u^{x,y} =  - \nb^{\perp} (-\lap)^{-1} \omg$ (as well as the counterpart for $\tu^{x,y}$), we obtain \eqref{eq:en-hall-mhd-parallel}.

\subsubsection*{Translationally-symmetric case, \eqref{eq:e-mhd}}
Next, we consider the case $\bgB = f(y) \rd_{x}$ for \eqref{eq:e-mhd}. In view of \eqref{eq:e-mhd-2.5d-lin}, we introduce the error terms
\begin{equation} \label{eq:e-2.5d-err-parallel}
\begin{aligned}
\err_{b}[b^{z}, \psi]=&\rd_t b^{z}   + f''(y)\rd_x\psi  - f(y)\rd_x \lap\psi,\\
\err_{\psi}[b^{z}, \psi]=&\rd_t \psi  + f(y)\rd_x b^{z}.
\end{aligned}
\end{equation}

Consider two pairs of scalar functions
\begin{equation*}
	(\tb^{z}, \tpsi) \quad
	(b^{z}, \psi),
\end{equation*}
and the associated planar vector fields $\tb^{x, y} = - \nb^{\perp} \tpsi$ and $b^{x, y} = - \nb^{\perp} \psi$. As before, we introduce the shorthands
\begin{equation*}
	(\err_{\tb},\err_{\tpsi})  = (\err_{b}, \err_{\psi})[\tb^{z}, \tpsi], \quad
	(\err_{b},\err_{\psi})  = (\err_{b}, \err_{\psi})[b^{z}, \psi].
\end{equation*}
Then
\begin{equation} \label{eq:en-e-mhd-parallel}
\begin{aligned}
\frac{\ud}{\ud t} \brk{\tb, b}
=& - \brk{f'' \rd_{x} \tpsi, b^{z}} - \brk{\tb^{z}, f'' \rd_{x} \psi} \\
&	+\brk{\nb^{\perp} \err_{\tpsi}, \nb^{\perp} \psi} + \brk{\nb^{\perp} \tpsi, \nb^{\perp} \err_{\psi}} \\
&	+ \brk{\err_{\tb}, b^{z}} + \brk{\tb^{z}, \err_{b}} .
\end{aligned}
\end{equation}
This identity is obtained from \eqref{eq:en-hall-mhd-parallel} by formally setting $\tu^{z}, \tomg, u^{z}, \omg$ equal to zero.

\subsubsection*{Axisymmetric case, \eqref{eq:hall-mhd}}
Now we consider the  {case} $\bgB = f(r) \rd_{\tht}$ for \eqref{eq:hall-mhd}. In view of \eqref{eq:hall-mhd-2.5d-lin-axisym}, we introduce
\begin{equation} \label{eq:hall-2.5d-err-axi}
\begin{aligned}
\errh_{u}^{(\nu)}[u^{z}, \omg, b^{z}, \psi] =&\rd_{t} u^{z} - f \rd_{\tht} b^{z} - \nu \lap u^{z}, \\
\errh_{\omg}^{(\nu)}[u^{z}, \omg, b^{z}, \psi] =&\rd_{t} \omg + f(r) \rd_{\tht} \lap \psi - (f''(r) - \frac{3}{r} f'(r)) \rd_{\tht} \psi - \nu \lap \omg,  \\
\errh_{b}[u^{z}, \omg, b^{z}, \psi] =&\rd_t  b^{z}   - f(r)\rd_\theta \lap \psi  + (f''(r) + \frac{3}{r}f'(r))\rd_\theta \psi - f(r)\rd_{\theta}  {u^{z}}, \\
\errh_{\psi}[u^{z}, \omg, b^{z}, \psi] =&\rd_t \psi  + f(r)\rd_\theta b^{z} - f(r) \rd_{\tht} (-\lap)^{-1}  {\omg}. 
\end{aligned}
\end{equation}

Consider two quadruples of scalar functions
\begin{equation*}
	(\tu^{z}, \tomg, \tb^{z}, \tpsi), \quad
	(u^{z}, \omg, b^{z}, \psi),
\end{equation*}
and define the associated pairs of planar vector fields $(\tu, \tb)$, $(u, b)$, respectively, and the error terms $(\errh_{\tu}^{(\nu)}, \errh_{\tomg}^{(\nu)}, \errh_{\tb}, \errh_{\tpsi})$, $(\errh_{u}^{(\nu)}, \errh_{\omg}^{(\nu)}, \errh_{b}, \errh_{\psi})$ as before. The energy identity in this case is
\begin{equation} \label{eq:en-hall-mhd-axi}
\begin{aligned}
& \frac{\ud}{\ud t} \left(\brk{\tb, b} + \brk{\tu, u} \right) + 2 \nu \brk{\nb \tu, \nb u} \\
&= 	-\brk{(r f'' + 3 f') r^{-1} \rd_{\tht} \tpsi, b^{z}} 
	- \brk{\tb^{z}, (r f'' + 3 f') r^{-1} \rd_{\tht} \psi} \\
&\phantom{=}
	- \brk{(r f' + 2 f) \nb \tpsi, u^{r, \tht}}
	- \brk{\tu^{r, \tht}, (r f' + 2f) \nb \psi} \\
&\phantom{=}
	+\brk{\nb^{\perp} \errh_{\tpsi}, \nb^{\perp} \psi} + \brk{\nb^{\perp} \tpsi, \nb^{\perp} \errh_{\psi}} \\
&\phantom{=}
	+ \brk{\errh_{\tb}, b^{z}} + \brk{\tb^{z}, \errh_{b}} \\
&\phantom{=}
 	- \brk{\nb^{\perp} (-\lap)^{-1} \errh_{\tomg}, u^{r, \tht}} 
	- \brk{\tu^{r, \tht}, \nb^{\perp} (-\lap)^{-1} \errh_{\omg}} \\
&\phantom{=}
	+ \brk{\errh_{\tu}^{(\nu)}, u^{z}} + \brk{\tu^{z}, \errh_{u}^{(\nu)}}.
\end{aligned}
\end{equation}
The derivation is similar to \eqref{eq:en-hall-mhd-parallel}; we leave the details to the reader.

\subsubsection*{Axisymmetric case, \eqref{eq:e-mhd}}
Finally, we consider the case $\bgB = f(r) \rd_{\tht}$ for \eqref{eq:e-mhd}. From \eqref{eq:e-mhd-2.5d-lin-axisym}, we introduce the error terms
\begin{equation} \label{eq:e-2.5d-err-axi}
\begin{aligned}
\err_{b}[b^{z}, \psi] =&\rd_t  b^{z}   - f(r)\rd_\theta \lap\psi  + (f''(r) + \frac{3}{r}f'(r))\rd_\theta\psi, \\
\err_{\psi}[b^{z}, \psi] =&\rd_t \psi + f(r)\rd_\theta b^{z}. 
\end{aligned}
\end{equation}

Consider two pairs of scalar functions
\begin{equation*}
	(\tb^{z}, \tpsi), \quad
	(b^{z}, \psi),
\end{equation*}
and define the associated planar vector fields $\tb^{x,y}$, $b^{x, y}$, respectively, and the error terms $(\err_{\tb}, \err_{\tpsi})$, $(\err_{b}, \err_{\psi})$, respectively, as before. The energy identity in this case is
\begin{equation} \label{eq:en-e-mhd-axi}
\begin{aligned}
\frac{\ud}{\ud t} \brk{\tb, b}
=& -\brk{(r f'' + 3 f') r^{-1} \rd_{\tht} \tpsi, b^{z}} - \brk{\tb^{z}, (r f'' + 3 f') r^{-1} \rd_{\tht} \psi} \\
&	+\brk{\nb^{\perp} \err_{\tpsi}, \nb^{\perp} \psi} + \brk{\nb^{\perp} \tpsi, \nb^{\perp} \err_{\psi}} \\
&	+ \brk{\err_{\tb}, b^{z}} + \brk{\tb^{z}, \err_{b}},
\end{aligned}
\end{equation}
which is obtained by formally setting $\tu^{z}, \tomg, u^{z}, \omg$ equal to zero in \eqref{eq:en-hall-mhd-axi}.

\subsubsection*{Justification of the energy identities}
The above energy identities can be rigorously justified under the following conditions:
\begin{proposition} \label{prop:justify}
The energy identities \eqref{eq:en-hall-mhd-parallel} and \eqref{eq:en-hall-mhd-axi} hold in the following two cases:
\begin{itemize}
\item $(\tu^{z}, \tomg, \tb^{z}, \tpsi)$ and $(u^{z}, \omg, b^{z}, \psi)$ are derived from $L^{2}$-solutions $(\tu, \tb)$ and $(u, b)$ as in Proposition~\ref{prop:justify-eq}, respectively, under the additional conditions $(u, b)\in C_t(I;L^2)$ and $\tb, b \in L^{2}_{t}(I; H^{\frac{1}{2}})$; or
\item $(u^{z}, \omg, b^{z}, \psi)$ is derived from an $L^{2}$-solution $(u, b)$ as in Proposition~\ref{prop:justify-eq}, and $(\tu^{z}, \tomg , \tb^{z}, \tpsi)$ obeys\footnote{Here, by the assertion $\nb (-\lap)^{-1} \tomg \in X$, we mean $\tomg$ is of the form $-\lap w$ where $\nb w \in X$.}
\begin{equation*}
	(\tu^{z}, \nb (-\lap)^{-1} \tomg) \in C_{t} (I; L^{2}), \quad
	(\tb^{z}, \nb \tpsi) \in C_{t}(I; L^{2}) \cap L^{1}_{t}(I; H^{1}),
\end{equation*}
and the error terms obey
\begin{equation*}
	\errh_{\tu}^{(\nu)}, \nb (-\lap)^{-1} \errh_{\tomg}^{(\nu)}, \errh_{\tb}, \nb \errh_{\tpsi} \in L^{1}_{t}(I; L^{2}),
\end{equation*}
and when $\nu > 0$, also
\begin{equation*}
	\nb \tu^{z}, \tomg \in L^{2}_{t}(I; L^{2}).
\end{equation*}
\end{itemize}

Analogously, the energy identities \eqref{eq:en-e-mhd-parallel} and \eqref{eq:en-e-mhd-axi} hold in the following two cases:
\begin{itemize}
\item $(\tb^{z}, \tpsi)$ and $(b^{z}, \psi)$ are derived from $L^{2}$-solutions $\tb$ and $b$ as in Proposition~\ref{prop:justify-eq}, respectively, under the additional conditions $b \in C_t(I;L^2)$ and $\tb, b \in L^{2}_{t}(I; H^{\frac{1}{2}})$; or
\item $(b^{z}, \psi)$ is derived from an $L^{2}$-solution $b$ as in Proposition~\ref{prop:justify-eq}, and $(\tb^{z}, \tpsi)$ obeys
\begin{equation*}
	(\tb^{z}, \nb \tpsi) \in C_{t}(I; L^{2}) \cap L^{1}_{t}(I; H^{1})
\end{equation*}
and the error terms obey
\begin{equation*}
	\err_{\tb}, \nb \err_{\tpsi} \in L^{1}_{t}(I; L^{2}).
\end{equation*}
\end{itemize}
\end{proposition}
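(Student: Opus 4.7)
In both cases the strategy is to regularize so that the formal derivation of Section~\ref{subsec:2.5d-en} becomes rigorous, then pass to the limit using the stated regularity hypotheses. The fundamental obstacle throughout is the Hall-principal contribution: it is of highest order and vanishes only through a structural cancellation in the formal derivation, so the regularization must be chosen so that this cancellation persists under limits.

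For the first case, I would mollify each $L^{2}$-solution spatially by a Littlewood--Paley projection $P_{\le N}$. The projected pairs $(\tu_{N}, \tb_{N})$ and $(u_{N}, b_{N})$ are then smooth in space and satisfy the relevant linearized system modulo commutator errors of the form $[P_{\le N}, f\rd_{x}]$ (together with its $f'$, $f''$ and axisymmetric analogues) applied to $\lap\tpsi, \lap\psi, \tpsi, \psi, \tu^{z}, u^{z}, \tb^{z}, b^{z}, (-\lap)^{-1}\tomg$ and $(-\lap)^{-1}\omg$. For these smooth approximations the formal derivation is rigorous and produces \eqref{eq:en-hall-mhd-parallel} (resp.~\eqref{eq:en-hall-mhd-axi}, \eqref{eq:en-e-mhd-parallel}, \eqref{eq:en-e-mhd-axi}) with these commutators as additional forcing on the right-hand side. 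The subprincipal commutators converge to zero in $L^{1}_{t}$ using only $(\tu, u, \tb, b) \in C_{t}L^{2}$; the delicate term is $\brk{[P_{\le N}, f\rd_{x}]\lap\tpsi, P_{\le N}b^{z}}$ and its mirror image, which naturally sits in an $H^{-1/2} \times H^{1/2}$ duality pairing. Standard commutator estimates, namely the uniform-in-$N$ bound $\|[P_{\le N}, f\rd_{x}]g\|_{H^{-1/2}} \lesssim \|g\|_{H^{-1/2}}$ together with strong convergence to zero on each fixed $g \in H^{-1/2}$, applied with $g = \lap\tpsi \in L^{2}_{t}H^{-1/2}$ and paired against the hypothesis $b \in L^{2}_{t}H^{1/2}$, then drive this term to zero in $L^{1}_{t}$, completing the passage to the limit.

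For the second case, the test function already carries the spatial regularity demanded by the formal derivation, while the $L^{2}$-solution is defined distributionally against precisely the class of test functions identified in Proposition~\ref{prop:justify-eq}. I would convolve the test function in time against an approximate identity $\chi_{h}$ and multiply by a smooth cutoff compactly supported in $I$, producing admissible smooth, compactly supported test functions. The identity for each $h$ then follows from the distributional equation plus the product rule in $t$; the stated $L^{1}_{t}L^{2}$ control on the error terms $\errh^{(\nu)}_{\tu}, \nb(-\lap)^{-1}\errh^{(\nu)}_{\tomg}, \errh_{\tb}, \nb\errh_{\tpsi}$ and the $L^{1}_{t}H^{1}$ control on $(\tb^{z}, \nb \tpsi)$ permit passage to the limit $h \to 0$ by dominated convergence. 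The dissipation term, when $\nu > 0$, is handled via $\nb \tu^{z}, \tomg \in L^{2}_{t}L^{2}$ paired against $\nb u \in L^{2}_{t}L^{2}$ built into the definition of an $L^{2}$-solution. Once again, the hardest step is the Hall-term limit; in case~2 it is precisely the $L^{1}_{t}H^{1}$ assumption on $(\tb^{z}, \nb \tpsi)$ that allows one to push a full derivative onto the test-function side of the Hall pairing, leaving the solution side in $L^{\infty}_{t}L^{2}$.
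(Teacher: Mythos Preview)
Your first case is correct and matches the paper exactly: spatial mollification (via $P_{\le N}$) of both quadruples, with the Hall-term commutator handled by the $H^{-1/2} \times H^{1/2}$ pairing afforded by $\tb, b \in L^{2}_{t} H^{1/2}$.

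Your second case has a gap. You assert that time mollification of the test function produces ``admissible smooth, compactly supported test functions'', but time convolution leaves spatial regularity unchanged: $(\tb^{z}, \nb\tpsi)$ remains merely in $H^{1}$, not $C^{\infty}$, and Proposition~\ref{prop:justify-eq} requires test functions in $C^{\infty}_{c}(I \times M)$. You must mollify in space as well. Once this is done the argument works, and you correctly identify the mechanism---the $L^{1}_{t} H^{1}$ hypothesis on $(\tb^{z}, \nb\tpsi)$ lets the full Hall-term derivative land on the test-function side. The paper's route is slightly different and more uniform: it mollifies \emph{both} the solution and the test function in space in both cases, then disposes of the resulting Hall-term commutator either by splitting the one-derivative loss symmetrically (first case, $H^{1/2}$ on each factor) or by placing it entirely on $\tb$ (second case, $H^{1}$ on $\tb$). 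Your patched approach---mollifying only the test function---is a legitimate simplification in the second case, since once the test function is $C^{\infty}_{c}$ the distributional equation for the solution applies directly and no commutator on the solution side is incurred.
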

The idea is to first mollify $(\tu^{z}, \tomg, \tb^{z}, \tpsi)$ and $(u^{z}, \omg, b^{z}, \psi)$ in space; then the derivation of the energy identities go through, with additional errors generated from the mollification. Next, one checks that the above conditions allow one to take the mollification parameter to zero in the energy identities, while the mollification errors vanish. For instance, in the case of \eqref{eq:hall-mhd}, under the condition that $(\tu, \tb), (u, b) \in \calE(I)$ one can show, using standard commutator estimates for mollifiers, that all mollifications errors go to zero except $\errh_{\tb}, \errh_{\tpsi}$ and $\errh_{b}, \errh_{\psi}$, which lose one derivative in the commutator with the Hall term. Roughly speaking, distributing this loss equally to $b$ and $\tb$ results in the first case, and shifting it to $\tb$ results in the second case. We omit the straightforward details.

\section{Construction of degenerating wave packets} \label{sec:wavepackets}
The goal of this section is to carry out the construction of a degenerating wave packet approximate solutions for the linearized \eqref{eq:e-mhd} and \eqref{eq:hall-mhd} equations around stationary solutions as in Theorem~\ref{thm:norm-growth}. 
\subsection{Statement of the main propositions} \label{subsec:wavepackets-results}
The aim of this subsection is to state precisely the main properties of the construction in this section. 

We begin with some preparations. In what follows, we write $\rd_{x}^{-1}$ for a right inverse of $\rd_{x}$ that is formally defined as follows:
\begin{equation*}
	\rd_{x}^{-1} g = \begin{cases} \int_{0}^{x} g(x') \, \ud x' + \int_{0}^{ {2 \pi}} x' g(x')  \, \ud x' & \hbox{ when } (\bbT, \bbR)_{x} = \bbT_{x}, \\
	\int_{-\infty}^{x} g(x') \, \ud x' & \hbox{ when } (\bbT, \bbR)_{x} = \bbR_{x}. \end{cases}
\end{equation*}
When $(\bbT, \bbR)_{x} = \bbT_{x}$, $\rd_{x}^{-1} g$ is well-defined only when $\int g \, \ud x = 0$.  {In this case, note that $\int \rd_{x}^{-1} g \, \ud x = 0$.}
When $(\bbT, \bbR)_{x} = \bbR_{x}$, $\rd_{x}^{-1} g$ stays in $\calS(\bbR_{x})$ if $g \in \calS(\bbR_{x})$ and $\int g \, \ud x = 0$.

Assuming that $\rd_x^{-1}$ is well-defined for $e^{i \lmb x }g $, we introduce the notation
\begin{equation*}
	g^{(-1; \lmb)} = i \lmb e^{-i\lmb x} \rd_{x}^{-1} (e^{i \lmb x} g), \quad
	g^{(-2; \lmb)} = i \lmb e^{-i\lmb x} \rd_{x}^{-1} (e^{i \lmb x} g^{(-1; \lmb)}), \hbox{ etc.}
\end{equation*}
The factor $i \lmb$ is inserted to compensate for the effect of $\rd_{x}^{-1}$ on $e^{i \lmb x} g$; see Lemma~\ref{lem:bump} below, where the advantage of this normalization is most evident.

For any $x_{0} \in (\bbT, \bbR)_{x}$, we introduce the $x$-translation operator 
\begin{equation*}
T_{x_{0}} g(x, y, z) = g(x - x_{0}, y, z).
\end{equation*}

We first state the main result in the case of \eqref{eq:e-mhd}.
\begin{proposition}[Construction of degenerating wave packets for \eqref{eq:e-mhd}] \label{prop:wavepackets}
Let $\bgB$ and $M$ be as in Theorem~\ref{thm:norm-growth}. Then the following statements hold.
\begin{enumerate}[label=(\alph*)]
\item (translationally-symmetric case) Consider case~(a) in Theorem~\ref{thm:norm-growth}, i.e., $\bgB = f(y) \rd_{x}$ and $M^{2} = (\bbT, \bbR)_{x} \times (\bbT, \bbR)_{y}$. Assume, without loss of generality, that $f(0) = 0$ and $f'(0) > 0$, and fix $y_{1} > 0$ such that 
\begin{equation*}
	f'(y) > \frac{1}{2} f'(0), \quad 0 < f(y) < \frac{1}{2} \quad \hbox{ for } y \in [0, y_{1}].
\end{equation*}
Then to any $\lmb \in \bbN_{0}$ and a complex-valued Schwartz function $g_{0}(x, y) \in \calS(M^{2})$ such that
\begin{equation} \label{eq:wavepacket-g0}
	\supp g_{0} \subseteq (\bbT, \bbR)_{x} \times (\tfrac{1}{2} y_{1}, y_{1}), \qquad \int e^{i \lmb x} g_{0}(x, y) \, \ud x = 0 \hbox{ for all } y \in (0, y_{1}),
\end{equation}
we may associated a pair $(\tb^{z}_{(\lmb)}, \tpsi_{(\lmb)})[g_{0}]$ satisfying the following properties:
\begin{itemize}
\item (linearity) the map $g_{0} \mapsto (\tb^{z}_{(\lmb)}, \tpsi_{(\lmb)})[g_{0}]$ is (real) linear;
\item (initial data) at $t =0$, we have 
\begin{equation}\label{eq:initial-bz}
\begin{split}	
&\tb^{z}_{(\lmb)}(0) = \lmb f^{-1}\rd_x^{-1} \Re \left(\frac{1}{i} e^{i \lmb (x + G(y))} g_{0} \right) \\
&\qquad + f^{-1}\rd_x^{-1} \Re\left( e^{i\lmb(x+G)} \left( \frac{1}{2} \frac{ {f^2}\rd_y f}{\sqrt{1-f^2}} g_0 - f\sqrt{1-f^2} \rd_y g_0 - (1+f^2) \rd_x g_0 \right)  \right)
\end{split}
\end{equation} and 
\begin{equation}\label{eq:initial-psi} 
	\tpsi_{(\lmb)}(0) = \lmb^{-1} \Re \left( e^{i \lmb (x + G(y))} g_{0} \right),
\end{equation}
where $G(y)$ is a smooth function on $y \in (0, y_{1})$ determined by $f$, and
\begin{equation*}
	\nrm{\tb^{z}_{(\lmb)}(0)}_{L^{2}} + \nrm{\nb \tpsi_{(\lmb)}(0)}_{L^{2}}
	\geq c \nrm{g_{0}}_{L^{2}} - C \lmb^{-1} \nrm{g_{0}}_{H^{1}};
\end{equation*}
\item ($x$-invariance) for any $x_{0} \in (\bbT, \bbR)_{x}$,
\begin{equation*}
(\tb^{z}_{(\lmb)}, \tpsi_{(\lmb)})[e^{-i\lmb x_{0}} T_{x_{0}} g_{0}]
= T_{x_{0}} (\tb^{z}_{(\lmb)}, \tpsi_{(\lmb)})[g_{0}];
\end{equation*}
\item (regularity estimates) for any $m \in \bbN_{0}$ and $ {t \geq 0}$, 
\begin{align*}
	\max_{0 \leq k, \ell, k + \ell \leq m} 
		\nrm{(\lmb^{-2} \rd_{t})^{k} (\lmb^{-1} \rd_{x})^{\ell} (\lmb^{-1} f \rd_{y})^{m - k - \ell} \tb^{z}_{(\lmb)}(t)}_{L^{2}} 
	\aleq & \nrm{g_{0}^{(-1; \lmb)}}_{H^{m+1}}, \\
	\max_{0 \leq k, \ell, k + \ell \leq m} \nrm{(\lmb^{-2} \rd_{t})^{k} (\lmb^{-1} \rd_{x})^{\ell} (\lmb^{-1} f \rd_{y})^{m - k - \ell} \nb \tpsi_{(\lmb)}(t)}_{L^{2}} 
	\aleq & \nrm{g_{0}}_{H^{m+1}};
\end{align*}
\item (degeneration) there exists $0 < c_{f} < C_{f}$ such that the following holds:
\begin{itemize}
\item For $1 \leq p \leq \infty$ and $s \in \bbR$ obeying $\frac{1}{p} - s \leq \frac{1}{2}$, we have
\begin{equation} \label{eq:wavepackets-degen-upper}
	\nrm{\tb_{(\lmb)}(t)}_{L^{p}_{x} W^{s, p}_{y}}
	\aleq_{s} \lmb^{s} e^{C_{f} (s - \frac{1}{p} + \frac{1}{2})\lmb t} \nrm{(g_{0}, g_{0}^{(-1; \lmb)})}_{W^{\lfloor s \rfloor +2, p}},
\end{equation}
where $\tb_{(\lmb)}^{x, y}(t) = - \nb^{\perp} \tpsi_{(\lmb)}$;
\item There exists a decomposition $\tb_{(\lmb)}(t) = \tb_{(\lmb)}^{main}(t) + \tb_{(\lmb)}^{small}(t)$
such that for any $1 \leq p \leq \infty$ and $s \in \bbR$ be obeying $s - \frac{1}{p} + \frac{1}{2} \leq 0$, we have
\begin{equation}  \label{eq:wavepackets-degen} 
	\nrm{\tb_{(\lmb)}^{main}(t)}_{L^{p}_{x} W^{s, p}_{y}}
	\aleq_{s} \lmb^{s} e^{c_{f} (s - \frac{1}{p} + \frac{1}{2})\lmb t} \nrm{(g_{0}, g_{0}^{(-1; \lmb)})}_{W^{\lfloor -s \rfloor +2, p}},
\end{equation}
and for any $1 \leq p \leq \infty$ and $s \in \bbR$ such that $-\frac{1}{2} < s - \frac{1}{p} + \frac{1}{2} \leq 0$, we have
\begin{equation} \label{eq:wavepackets-degen-small}  
	\nrm{\tb_{(\lmb)}^{small}(t)}_{L^{p}_{x} W^{s, p}_{y}}
	\aleq_{s} \lmb^{-1} e^{c_{f} (s - \frac{1}{p} + \frac{1}{2}) \lmb t} \nrm{(g_{0}, g_{0}^{(-1; \lmb)})}_{W^{2, p}}; 
\end{equation} 
\end{itemize}
and analogous estimates hold for $(\lmb^{-1} \rd_{x})^{m} \tb_{(\lmb)}$ for any $m \in \bbN_{0}$;
\item (error bounds) for $ {t \geq 0}$, $\err_{\psi}[\tb^{z}_{(\lmb)}, \tpsi_{(\lmb)}](t) = 0$ and
\begin{equation*}
	\nrm{\err_{b}[\tb^{z}_{(\lmb)}, \tpsi_{(\lmb)}](t)}_{L^{2}} \aleq \nrm{g_{0}^{(-1; \lmb)}}_{H^{4}}.
\end{equation*}
\end{itemize}
In the above statements, we omitted the dependence of the implicit constants on $f$.

\item (axi-symmetric case: $\bgB = f(r) \rd_{\tht}$ and $M^{2} = \bbR^{2}_{x,y}$). Assume, without loss of generality, that $f'(r_{0}) > 0$, and fix $r_{1} > 0$ such that 
\begin{equation*}
	f'(r) > \frac{1}{2} f'(r_{0}), \quad 0 < f(r) < \frac{1}{2} \quad \hbox{ for } r \in [r_{0}, r_{1}].
\end{equation*}
Then for any $\lmb \in \bbN_{0}$ and a complex-valued smooth radial function $g_{0}(r)$ such that
\begin{equation} \label{eq:wavepacket-g0-axi}
	\supp g_{0} \subseteq (\tfrac{1}{2}(r_{0}+r_{1}), r_{1}),
\end{equation}
we may associated a pair $(\tb^{z}_{(\lmb)}, \tpsi_{(\lmb)})[g_{0}]$ satisfying the following properties:
\begin{itemize}
\item (linearity) the map $g_{0} \mapsto (\tb^{z}_{(\lmb)}, \tpsi_{(\lmb)})[g_{0}]$ is linear;
\item (initial data) at $t =0$, we have 
\begin{equation*}
\begin{split}	
&\tb^{z}_{(\lmb)}(0) = - f^{-1} \Re \left(  e^{i \lmb (\theta + G(r))} g_{0} \right) \\
&\qquad - f^{-1}\lmb^{-1} \Re\left(i e^{i\lmb(\theta+G)} ( \frac{1}{2} \frac{ {f^2}\rd_r f}{\sqrt{1-f^2}} g_0 - f\sqrt{1-f^2} \rd_r g_0 )   \right)
\end{split}
\end{equation*} and 
\begin{equation*} 
	\tpsi_{(\lmb)}(0) = \lmb^{-1} \Re \left( e^{i \lmb (\theta + G(r))} g_{0} \right),
\end{equation*}
where $G(r)$ is a smooth function on $r \in (r_{0}, r_{1})$ determined by $f$, and
\begin{equation*}
	\nrm{\tb^{z}_{(\lmb)}(0)}_{L^{2}} + \nrm{\nb \tpsi_{(\lmb)}(0)}_{L^{2}}
	\geq c \nrm{g_{0}}_{L^{2}} - C \lmb^{-1} \nrm{g_{0}}_{H^{1}},
\end{equation*}
where $c, C > 0$ are absolute constants;
\item (regularity estimates) for any $m \in \bbN_{0}$ and $ {t \geq 0}$,
\begin{align*}
	\max_{0 \leq \ell \leq m} 
		\nrm{(\lmb^{-2} \rd_{t})^{k} (\lmb^{-1} \rd_{ \tht})^{\ell} (\lmb^{-1} f \rd_{ {r}})^{m - k - \ell} \tb^{z}_{(\lmb)}(t)}_{L^{2}} 
	\aleq & \nrm{g_{0}}_{H^{m+1}}, \\
	\max_{0 \leq \ell \leq m} \nrm{(\lmb^{-2} \rd_{t})^{k} (\lmb^{-1} \rd_{ \tht})^{\ell} (\lmb^{-1} f \rd_{ {r}})^{m - k - \ell} \nb \tpsi_{(\lmb)}(t)}_{L^{2}} 
	\aleq & \nrm{g_{0}}_{H^{m+1}};
\end{align*}
\item (degeneration)
there exists $0 < c_{f} < C_{f}$ such that the following holds:
\begin{itemize}
\item For $1 \leq p \leq \infty$ and $s \in \bbR$ obeying $\frac{1}{p} - s \leq \frac{1}{2}$, we have
\begin{equation*}
	\nrm{\tb_{(\lmb)}(t)}_{L^{p}_{\tht} W^{s, p}_{r}}
	\aleq_{s} \lmb^{s} e^{C_{f} (s - \frac{1}{p} + \frac{1}{2})\lmb t} \nrm{g_{0}}_{W^{\lfloor s \rfloor +2, p}},
\end{equation*}
where $\tb_{(\lmb)}^{r} = r^{-1} \rd_{\tht} \tpsi_{(\lmb)}$ and $\tb_{(\lmb)}^{\tht} = - r^{-1} \rd_{r} \tpsi_{(\lmb)}$;
\item There exists a decomposition $\tb_{(\lmb)}(t) = \tb_{(\lmb)}^{main}(t) + \tb_{(\lmb)}^{small}(t)$
such that for any $1 \leq p \leq \infty$ and $s \in \bbR$ be obeying $s - \frac{1}{p} + \frac{1}{2} \leq 0$, we have
\begin{equation*} 
	\nrm{\tb_{(\lmb)}^{main}(t)}_{L^{p}_{\tht} W^{s, p}_{r}}
	\aleq_{s} \lmb^{s} e^{c_{f} (s - \frac{1}{p} + \frac{1}{2})\lmb t} \nrm{g_{0}}_{W^{\lfloor -s \rfloor +2, p}},
\end{equation*}
and for any $1 \leq p \leq \infty$ and $s \in \bbR$ such that $-\frac{1}{2} < s - \frac{1}{p} + \frac{1}{2} \leq 0$, we have
\begin{equation*} 
	\nrm{\tb_{(\lmb)}^{small}(t)}_{L^{p}_{\tht} W^{s, p}_{r}}
	\aleq_{s} \lmb^{-1} e^{c_{f} (s - \frac{1}{p} + \frac{1}{2}) \lmb t} \nrm{g_{0}}_{W^{2, p}}; 
\end{equation*}
\end{itemize}
and analogous estimates hold for $(\lmb^{-1} \rd_{\tht})^{\ell} \tb_{(\lmb)}$ for any $\ell \in \bbN_{0}$;
\item (error bounds) for $ {t \geq 0}$, $\err_{\psi}[\tb^{z}_{(\lmb)}, \tpsi_{(\lmb)}](t) = 0$ and
\begin{equation*}
	\nrm{\err_{b}[\tb^{z}_{(\lmb)}, \tpsi_{(\lmb)}](t)}_{L^{2}} \aleq \nrm{g_{0}}_{H^{4}}.
\end{equation*}
\end{itemize}
In the above statements, we omitted the dependence of the implicit constants on $f$, $r_{0}$ and $r_{1}$.
\end{enumerate}
\end{proposition}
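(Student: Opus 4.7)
The plan is a Wentzel--Kramers--Brillouin (WKB) construction tailored to the dispersion relation $\omg_{\bgB}(\xi) = f(y) \xi_{x} \abs{\xi}$ of the linearized system \eqref{eq:e-mhd-2.5d-lin}. In the translationally-symmetric case (a), I propose the two-scale ansatz
\begin{equation*}
\tpsi_{(\lmb)} = \lmb^{-1} \Re \bigl( e^{i\lmb x + i\lmb \Psi(\sigma, y)} A(\sigma, x, y; \lmb) \bigr), \qquad \sigma = \lmb t,
\end{equation*}
and \emph{define} $\tb^{z}_{(\lmb)}$ by solving the first equation of \eqref{eq:e-mhd-2.5d-lin}, namely $\tb^{z}_{(\lmb)} := - f^{-1} \rd_{x}^{-1} \rd_{t} \tpsi_{(\lmb)}$; this choice enforces $\err_{\psi} \equiv 0$ automatically. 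The rescaled time $\sigma = \lmb t$ is forced by the bicharacteristic analysis in Section~\ref{subsec:ideas}: the growth $\abs{\Xi(t)} \aeq \lmb e^{\lmb t}$ identifies $\lmb t$ as the natural dynamical variable on which the wave packet evolves nontrivially. The amplitude is expanded as $A = A_{0} + \lmb^{-1} A_{1}$. Plugging into the second equation of \eqref{eq:e-mhd-2.5d-lin} and matching at $O(\lmb^{2})$ yields the eikonal $(\rd_{\sigma} \Psi)^{2} = f(y)^{2} (1 + (\rd_{y}\Psi)^{2})$. The decisive step is to pick $G$ so that $f(y) \sqrt{1 + G'(y)^{2}} \equiv 1$ on the $y$-support of $g_{0}$, i.e., $G'(y) = - \sqrt{1 - f(y)^{2}}/f(y)$; since $H(y, p) = f \sqrt{1 + p^{2}}$ is precisely the Hamiltonian of the associated Hamilton--Jacobi problem, it is conserved along characteristics and is identically $1$ on the flowout of the initial support. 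Consequently $\rd_{\sigma} \Psi \equiv \pm 1$ on that region and the eikonal is integrable by inspection. This is the algebraic miracle that extends the construction beyond the usual semiclassical window $\abs{t} \ll \lmb^{-1}$ all the way to $\abs{t} \aeq 1$.

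At the same order of matching one reads off $B_{0} = -(\rd_{\sigma}\Psi) A_{0}/f$, recovering the leading coefficient $\mp f^{-1} g_{0}^{(-1;\lmb)}$ of $\tb^{z}_{(\lmb)}(0)$ in \eqref{eq:initial-bz} after accounting for the $(i\lmb)^{-1}$ produced by $\rd_{x}^{-1}$. Next, collecting the $O(\lmb)$ terms in the second equation with the $O(1)$ terms in the first yields a $2 \times 2$ linear system for $(A_{1}, B_{1})$ whose coefficient matrix has determinant zero (precisely by the eikonal). Pairing with a left null vector produces the solvability condition, which is a first-order transport equation for $A_{0}$ of schematic form
\begin{equation*}
\rd_{\sigma} A_{0} = (1 + f^{2}) \rd_{x} A_{0} + f^{2} (\rd_{y}\Psi) \rd_{y} A_{0} + c(\sigma, y) A_{0},
\end{equation*}
with $c$ a polynomial in $\rd_{\sigma}^{2} \Psi, \rd_{y}^{2} \Psi, f, f'$. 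Evaluating the coefficients at $\sigma = 0$ using $\rd_{y} \Psi(0) = G'$ and the relation $f \sqrt{1 + G'^{2}} = 1$ gives exactly the terms $-(1 + f^{2})$, $-f \sqrt{1 - f^{2}}$, and $\tfrac{1}{2} f^{2} \rd_{y} f / \sqrt{1-f^{2}}$ appearing in \eqref{eq:initial-bz}, thereby matching the prescribed subleading initial data for $B_{1}(0)$ after setting $A_{1}(0) = 0$. The transport is solved by characteristics; $A_{1}, B_{1}$ are then obtained by inverting the coefficient matrix on the range, and $\err_{b}$ is identified with the $O(1)$ WKB residual, which by inspection is bounded by four $(x, y)$-derivatives of $g_{0}^{(-1;\lmb)}$.

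All subsequent bounds follow from the explicit form of the ansatz. The key observation is that $\lmb^{-1} f \rd_{y}$ applied to $e^{i \lmb \Psi}$ produces multiplication by $f \rd_{y} \Psi$, which by the eikonal equals $\sqrt{1 - f^{2}}$ along characteristics and is uniformly bounded; this accounts for the normalization $(\lmb^{-1} f \rd_{y})^{m-k-\ell}$ in the regularity estimates. A plain $\rd_{y}$, by contrast, brings down $i \lmb \rd_{y} \Psi \aeq \lmb / f$, and the conservation law forces $f(y_{\mathrm{char}}(\sigma)) \aeq e^{-c_{f} \sigma}$ near the degeneracy, yielding the growth $\lmb \cdot e^{c_{f} \lmb t}$ per derivative. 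Combined with $L^{p}$-mass preservation of the transport (only a bounded Jacobian correction), this produces \eqref{eq:wavepackets-degen-upper}--\eqref{eq:wavepackets-degen-small}, with $\tb_{(\lmb)}^{main}$ and $\tb_{(\lmb)}^{small}$ corresponding to the $B_{0}$ and $\lmb^{-1} B_{1}$ contributions, respectively. The axisymmetric case~(b) proceeds identically with $(x, y) \mapsto (\tht, r)$, the cylindrical formulas \eqref{eq:lap_in_cylindrical_z_indep}, and the potential term $(f'' + 3 f'/r)$ of \eqref{eq:e-mhd-2.5d-lin-axisym}; the core geometric structure is unchanged. The main obstacle throughout is exactly the one emphasized above: upgrading the semiclassical construction from the standard lifespan $\abs{t} \ll \lmb^{-1}$ to $\abs{t} \aeq 1$. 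Without the integrability-forcing choice $f \sqrt{1 + G'^{2}} \equiv 1$, the subprincipal terms would accumulate $O(1)$ errors per unit semiclassical time and destroy the approximation on the physically relevant time scale.
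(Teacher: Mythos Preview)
Your WKB outline captures the correct eikonal and the correct mechanism for picking the phase $G$, and your definition $\tb^{z}_{(\lmb)} = -f^{-1}\rd_{x}^{-1}\rd_{t}\tpsi_{(\lmb)}$ to force $\err_{\psi}\equiv 0$ is exactly what the paper does. However, two concrete gaps would prevent your argument from closing.

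First, working directly in $(\sigma,x,y)$ without renormalization, the zeroth-order coefficient $c(\sigma,y)$ in your transport equation is \emph{not} $\tfrac{1}{2}f^{2}\rd_{y}f/\sqrt{1-f^{2}}$ as you assert; a direct computation gives $c = -\tfrac{1}{2}\rd_{y}f/\sqrt{1-f^{2}}$, which is $O(1)$ (it tends to $-\tfrac{1}{2}f'(0)$ as $y\to 0$). Relatedly, the $y$-divergence of your transport field $-\rd_{y}(f\sqrt{1-f^{2}})$ is also $O(1)$, so the characteristic Jacobian in $y$ is $\aeq e^{-f'(0)\sigma}$, not bounded; your sentence about ``$L^{p}$-mass preservation of the transport (only a bounded Jacobian correction)'' is therefore false in these coordinates. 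The paper's remedy is precisely the two steps you omit: the change of variables $\eta'(y)=1/f(y)$ (after which the transport speed is $\sqrt{1-f^{2}}\aeq 1$ and the divergence is $O(f^{2})$, exponentially decaying along characteristics) and the conjugation $\varphi = f^{-1/2}\psi$ (which kills the residual $O(1)$ subprincipal term). Only after both renormalizations does the amplitude $h$ satisfy uniform-in-$\tau$ $W^{m,p}_{x,\eta}$ bounds; the degeneration then comes cleanly from the factor $f^{-1/2}$ in $\tb$ together with the volume element $\ud y = f\,\ud\eta$.

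Second, your identification of $\tb^{main}/\tb^{small}$ with the $B_{0}$ and $\lmb^{-1}B_{1}$ amplitude pieces is not how the paper proceeds, and it would not deliver the negative-Sobolev estimates \eqref{eq:wavepackets-degen}--\eqref{eq:wavepackets-degen-small}. The paper's decomposition is a \emph{Littlewood--Paley} split in the $y$-frequency at the threshold $2^{k}\sim f_{\lmb t}^{-1}$; the bounds in the three regimes $2^{k}\geq \lmb f_{\lmb t}^{-1}$, $f_{\lmb t}^{-1}\leq 2^{k}\leq \lmb f_{\lmb t}^{-1}$, and $2^{k}\leq f_{\lmb t}^{-1}$ are obtained respectively by a direct $\rd_{y}$-bound, a ``differentiation-by-parts'' using $e^{i\lmb G}=(i\lmb G')^{-1}f\rd_{y}e^{i\lmb G}$, and Bernstein plus H\"older. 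An amplitude-order split does not by itself provide the frequency localization needed for $W^{s,p}_{y}$ with $s<0$; you would still need the phase-based integration-by-parts argument, which is the heart of the paper's proof of the degeneration property.
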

Note that, in case~(a), the mean-zero property in \eqref{eq:wavepacket-g0} ensures that $g_{0}^{(-1; \lmb)} \in \calS(M^{2})$ with the same support property as $g_{0}$.

\begin{remark} \label{rem:degeneration}
Key to our instability mechanism is the degeneration property. As it will be clear in our construction, the wave packet $(\tb^{z}_{(\lmb)}, \tpsi_{(\lmb)})$ is initially supported in $(\frac{1}{2} y_{1}, y_{1})$, but travels towards the hypersurface $\set{y = 0}$ (where we focus on case~(a) for concreteness), on which $\bgB$ is linearly degenerate. In this process, its $y$-support degenerates at a rate determined by $\lmb$ while the $L^{2}$ norm remains invariant; by H\"older's inequality, we already obtain the $L^{p}$-degeneration inequality
\begin{equation} \label{eq:wavepackets-degen-Lp}
	\nrm{\tb_{(\lmb)}}_{L^{2}_{x} L^{p}_{y}} \aleq e^{-c_{f} (\frac{1}{p} - \frac{1}{2}) \lmb t} \nrm{(g_{0}, g_{0}^{(-1; \lmb)})}_{H^{1}}.
\end{equation}
We remark that the simpler inequality \eqref{eq:wavepackets-degen-Lp} may be used in place of \eqref{eq:wavepackets-degen}--\eqref{eq:wavepackets-degen-small} in the ensuing proofs to establish some norm growth inequalities, but these are not sharp in the case $s > 0$.

To ensure such a behavior, which is not time symmetric, the choice of $\tb^{z}_{(\lmb)}(0)$ is crucial. Changing its sign reverses time\footnote{Note that the time reversal symmetry for \eqref{eq:e-mhd} does not immediately induce the analogous symmetry for the linearized equation \eqref{eq:e-mhd-lin}, since the background solution $\bgB$ reverses sign. However, for a planar stationary magnetic field, we may apply an additional reflection about $\set{z = 0}$, and obtain a time reversal symmetry for \eqref{eq:e-mhd-lin}; this is what we observe here.} for \eqref{eq:e-mhd-2.5d-lin}, and the corresponding wave packet \emph{expands} its $y$-support while keeping the $L^{2}$ norm invariant.
\end{remark}

\begin{remark}\label{rem:wavepackets-const}
A careful inspection of the proof reveals that the optimal constants $c_f$ and $C_{f}$ are actually given by $f'(0) - \dlt$ and $f'(0)$ (in the axi-symmetric case, $f'(r_0) - \dlt$ and $f'(r_{0})$), respectively, where $\dlt \searrow 0$ as we move the support of $g_{0}$ closer to the degeneracy. 
\end{remark}

Remarkably, the construction in the case of \eqref{eq:hall-mhd} turns out to be a minor extension of Proposition~\ref{prop:wavepackets} for a suitable choice of $\tu^{z}_{(\lmb)}$ and $\tomg_{(\lmb)}$.
\begin{proposition}[Construction of degenerating wave packets for \eqref{eq:hall-mhd}] \label{prop:wavepackets-hall}
Let $\bgB$, $M$, $\lmb$ and $g_{0}$ be as in Proposition~\ref{prop:wavepackets}. In case~(a) and when $(\bbT, \bbR)_{x} = \bbR_{x}$, assume also that
\begin{equation} \label{eq:wavepacket-hall-g0}
\int x e^{i \lmb x} g_{0}(x, y) \, \ud x = 0 \hbox{ for all } y \in (0, y_{1}).
\end{equation}
In each case, in addition to $(\tb^{z}_{(\lmb)}, \tpsi_{(\lmb)})$, take
\begin{equation} \label{eq:wavepackets-hall-u-omg}
	\tu^{z}_{(\lmb)}[g_{0}] = - \tpsi_{(\lmb)}[g_{0}], \quad \tomg_{(\lmb)}[g_{0}] = -\tb^{z}_{(\lmb)}[g_{0}].
\end{equation}
Then the following properties hold:
\begin{itemize}
\item (smoothing for fluid components) for $ {t \geq 0}$, we have
\begin{align*}
	\nrm{\tu^{z}_{(\lmb)}(t)}_{L^{2}} + \nrm{\nb^{\perp} (-\lap)^{-1} \tomg_{(\lmb)}(t)}_{L^{2}} \aleq & \lmb^{-1} \nrm{(g_{0}, g_{0}^{(-1; \lmb)}, g_{0}^{(-2; \lmb)})}_{H^{1}} ,\\
	\nrm{\nb \tu^{z}_{(\lmb)}(t)}_{L^{2}} + \nrm{\tomg_{(\lmb)}(t)}_{L^{2}} \aleq & \nrm{(g_{0}, g_{0}^{(-1; \lmb)} {)}}_{H^{1}} ;
\end{align*}
\item (error estimates) for $ {t \geq 0}$, we have
\begin{align*}
	\errh_{u}^{(\nu)}[\tu^{z}_{(\lmb)}, \tomg_{(\lmb)}, \tb^{z}_{(\lmb)}, \tpsi_{(\lmb)}] + \nu \lap \tpsi = & 0, \\
	\nrm{\nb^{\perp} (-\lap)^{-1} (\errh_{\omg}^{(\nu)}[\tu^{z}_{(\lmb)}, \tomg_{(\lmb)}, \tb^{z}_{(\lmb)}, \tpsi_{(\lmb)}] + \nu \lap {\tb^z})(t)}_{L^{2}} \aleq & \lmb^{-1} \nrm{(g_{0}^{(-1; \lmb)}, g_{0}^{(-2; \lmb)})}_{H^{4}}, \\
	\nrm{\errh_{b}^{(\nu)}[\tu^{z}_{(\lmb)}, \tomg_{(\lmb)}, \tb^{z}_{(\lmb)}, \tpsi_{(\lmb)}](t)}_{L^{2}} \aleq & \nrm{(g_{0}, g_{0}^{(-1; \lmb)}, g_{0}^{(-2; \lmb)})}_{H^{4}}, \\
	\nrm{\nb \errh_{\psi}^{(\nu)}[\tu^{z}_{(\lmb)}, \tomg_{(\lmb)}, \tb^{z}_{(\lmb)}, \tpsi_{(\lmb)}](t)}_{L^{2}} \aleq & \nrm{g_{0}^{(-2; \lmb)}}_{H^{4}}.
\end{align*}
\end{itemize}
In case~(b), $g_{0}^{(-1; \lmb)}$ and $g_{0}^{(-2; \lmb)}$ are replaced by $g_{0}$. In the above statements, we omitted the dependence of the implicit constants on $f$, $y_{1}$ (in case~(a)), and $r_{1} - r_{0}$ (in case~(b)).
\end{proposition}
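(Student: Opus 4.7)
The plan is to exploit the transport structure of the ``good variable'' $\bfZ = \bfB + \nb\times\bfu$ for \eqref{eq:hall-mhd}: by forcing the linearized $Z$-perturbation to vanish identically, the fluid error terms in \eqref{eq:hall-2.5d-err-parallel} reduce, up to dissipation, to the magnetic error terms already controlled by Proposition~\ref{prop:wavepackets}. Under the $(2+\tfrac{1}{2})$-dimensional reduction with $z$-independence, the condition $Z^z=0$ becomes $\tomg = -\tb^z$, while the condition on the horizontal components of $\bfZ$, together with the Biot--Savart identity, forces $\tu^z = -\tpsi$ (modulo a harmless constant) and $\tu^{x,y}_{(\lmb)} = \nb^\perp(-\lap)^{-1}\tb^z_{(\lmb)}$. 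This is precisely the prescription \eqref{eq:wavepackets-hall-u-omg}.

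With this ansatz, a direct substitution into \eqref{eq:hall-2.5d-err-parallel} yields the structural identities
\begin{align*}
\errh_u^{(\nu)}[\tu^z,\tomg,\tb^z,\tpsi] &= -\err_\psi[\tb^z,\tpsi] - \nu\lap\tu^z,\\
\errh_\omg^{(\nu)}[\tu^z,\tomg,\tb^z,\tpsi] &= -\err_b[\tb^z,\tpsi] - \nu\lap\tomg,\\
\errh_b[\tu^z,\tomg,\tb^z,\tpsi] &= \err_b[\tb^z,\tpsi] + f\rd_x\tpsi,\\
\errh_\psi[\tu^z,\tomg,\tb^z,\tpsi] &= \err_\psi[\tb^z,\tpsi] + f\rd_x(-\lap)^{-1}\tb^z,
\end{align*}
where $(\err_b,\err_\psi)$ are the electron-MHD errors from \eqref{eq:e-2.5d-err-parallel}. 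Since Proposition~\ref{prop:wavepackets} provides $\err_\psi\equiv 0$, the first line gives the first stated identity of the proposition (up to the bookkeeping substitution $\tu^z\leftrightarrow -\tpsi$). Applying $\nb^\perp(-\lap)^{-1}$ to the second line leaves $-\nb^\perp(-\lap)^{-1}\err_b$, whose $L^2$ norm is $\lmb^{-1}$ times $\nrm{\err_b}_{L^2}$ by the smoothing analysis described below, delivering the claimed bound. For $\errh_b$, the extra summand $f\rd_x\tpsi$ is bounded in $L^2$ by $\nrm{g_0}_{H^2}$ via the regularity estimates on $\tpsi$ in Proposition~\ref{prop:wavepackets}. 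For $\nb\errh_\psi$, the extra summand $\nb(f\rd_x(-\lap)^{-1}\tb^z)$ inherits the $\lmb^{-1}$-gain of $(-\lap)^{-1}$ on the $\lmb$-oscillating $\tb^z_{(\lmb)}$; the mean-zero assumption \eqref{eq:wavepacket-hall-g0} in case~(a) ensures that this iterated antiderivative structure is well-defined, which is what produces the double antiderivative $g_0^{(-2;\lmb)}$ in the final norm.

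The smoothing bounds on the fluid components follow from the same observations: $\tu^z_{(\lmb)}=-\tpsi_{(\lmb)}$ inherits the explicit $\lmb^{-1}$ prefactor visible in \eqref{eq:initial-psi}, propagated to all later times by the regularity estimates, while $\nb^\perp(-\lap)^{-1}\tomg_{(\lmb)} = -\nb^\perp(-\lap)^{-1}\tb^z_{(\lmb)}$ gains $\lmb^{-1}$ from the frequency $\aeq\lmb$ of $\tb^z_{(\lmb)}$. The main technical subtlety, and the expected main obstacle, is making this last gain quantitatively rigorous for a wave packet whose physical-space support contracts at rate $\aeq e^{-c_f\lmb t}$ toward the hypersurface $\{y=0\}$ (resp.~$\{r=r_0\}$), as naive pointwise elliptic bounds for $\nb^\perp(-\lap)^{-1}$ do not see this gain. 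The remedy is to insert the WKB structure $\tb^z_{(\lmb)} \sim \Re(e^{i\lmb\Phi(t,x,y)}A(t,x,y))$ from the construction in Section~\ref{sec:wavepackets} and express $\nb^\perp(-\lap)^{-1}$ acting on an $e^{i\lmb\Phi}$-oscillating amplitude as $\lmb^{-1}\frac{\nb^\perp\Phi}{|\nb\Phi|^2}\cdot(\text{amplitude})$ plus a commutator remainder that is itself $\lmb^{-1}$-smaller, using $|\nb\Phi|\aeq 1$ along the Hamiltonian trajectory (cf.~Section~\ref{subsec:ideas}); each factor of $(-\lap)^{-1}$ contributes morally one $\rd_x^{-1}$ at the amplitude level, accounting for the appearance of $g_0^{(-2;\lmb)}$ in the bound on $\errh_\psi$. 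The axi-symmetric case~(b) is handled identically after replacing $(\rd_x,y)$ by $(\rd_\theta,r)$ and using the cylindrical curl and divergence formulas \eqref{eq:grad-cylin}--\eqref{eq:div-cylin}.
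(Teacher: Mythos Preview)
Your structural identities relating the Hall-MHD errors $\errh_u^{(\nu)},\errh_\omg^{(\nu)},\errh_b,\errh_\psi$ to the electron-MHD errors $\err_b,\err_\psi$ are exactly what the paper derives (its \eqref{eq:wavepackets-hall-err}), and your explanation via the good variable $\bfZ=\bfB+\nb\times\bfu$ is precisely the paper's motivation (Remark~\ref{rem:wavepackets-hall-err}). From that point on the magnetic error bounds and the $\nrm{\tu^z_{(\lmb)}}_{L^2}\aleq\lmb^{-1}$ estimate follow directly from Proposition~\ref{prop:wavepackets}, as you say.

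Where you diverge from the paper is in extracting the $\lmb^{-1}$ gain from $\nb^\perp(-\lap)^{-1}$ acting on $\tb^z_{(\lmb)}$ (and on $\err_b$). Your proposal is a semiclassical parametrix, writing $\nb^\perp(-\lap)^{-1}(e^{i\lmb\Phi}A)\approx \lmb^{-1}\tfrac{\nb^\perp\Phi}{|\nb\Phi|^2}e^{i\lmb\Phi}A$ plus remainder. Note however that the assertion $|\nb\Phi|\aeq 1$ is not correct: in physical coordinates the phase is $x+G(\eta(y))$, whose $y$-gradient is $G'(\eta)/f(y)\aeq f(y)^{-1}$, and this blows up as the wave packet degenerates toward $y=0$. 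This does not kill the leading-order estimate (larger $|\nb\Phi|$ only helps), but it means the remainder analysis must be carried out carefully in the renormalized $\eta$-coordinate, where the amplitude regularity bounds of Lemma~\ref{lem:wkb-h-est} live, and the nonlocality of $(-\lap)^{-1}$ has to be reconciled with the moving compact support. This is workable but delicate.

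The paper bypasses all of this with a much simpler device: it exploits the exact $x$-invariance of the construction. Since $\rd_x$ commutes with the WKB transport equation, one has $\tb^z_{(\lmb)}=\rd_x(\text{wave packet built from }h^{(-2)})$, where $h^{(-2)}$ arises from $g_0^{(-2;\lmb)}$ via the same WKB construction. Then $\nb^\perp(-\lap)^{-1}\tb^z_{(\lmb)}=\nb^\perp(-\lap)^{-1}\rd_x(\cdots)$, and $\nb^\perp(-\lap)^{-1}\rd_x$ is a Calder\'on--Zygmund operator bounded on $L^2$; the $\lmb^{-1}$ gain then comes for free from the fact that $h^{(-2)}_0=(i\lmb)^{-2}f^{-1/2}g_0^{(-2;\lmb)}$ already carries an extra factor of $\lmb^{-1}$ relative to $h^{(-1)}_0$. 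The same trick handles $\nb^\perp(-\lap)^{-1}\err_b$ by writing $\err_b=\rd_x\errwp_\varphi[h_0^{(-2)};\lmb]$. In case~(b) the role of $\rd_x$ is played by $\rd_\theta=x\rd_y-y\rd_x$, and the compact $r$-support (inside $\{r<r_1\}$) lets one write $\rd_\theta a=\rd_x(ya)-\rd_y(xa)$ and again invoke Riesz transform bounds. This is also why the $g_0^{(-j;\lmb)}$ hierarchy enters so cleanly: each $\rd_x^{-1}$ shifts the index by one, rather than each $(-\lap)^{-1}$ as you suggest.
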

Note that, in case~(a) and when $(\bbT, \bbR)_{x} = \bbT_{x}$, the mean-zero condition in \eqref{eq:wavepacket-g0} ensures that $g_{0}^{(-2; \lmb)} \in \calS(M^{2})$ with the same support property as $g_{0}$. When $(\bbT, \bbR)_{x} = \bbR_{x}$, the additional condition \eqref{eq:wavepacket-hall-g0} implies the same properties of $g_{0}^{(-2; \lmb)}$.

\subsection{Derivation of a single second-order-in-time equation and renormalization} \label{subsec:renrm}
In this subsection, we carry out the following algebraic manipulations needed for our proof of Proposition~\ref{prop:wavepackets}:
\begin{itemize}
\item derivation of a single equation for $\psi$, which is second order in $t$;
\item introduction of a suitable change of variables $(y, \psi) \mapsto (\eta, \varphi)$ for $\bgB = f(y) \rd_{x}$ (resp. $(r, \psi) \mapsto (\eta, \varphi)$ for $\bgB = f(r) \rd_{\tht}$), which removes the degeneracy in the principal term and kills all subprincipal (i.e., third order) terms;
\item introduction of a rescaled time $\tau = \lmb t$, which puts the equation in a form where a standard WKB-type ansatz is applicable (see Section~\ref{subsec:wkb} below).
\end{itemize}
The viability of the second manipulation, which is crucial for the proof Proposition~\ref{prop:wavepackets}, is the main advantage of using the $(2+\frac{1}{2})$-dimensional reduction and working with the stationary solutions of the form $\bgB = f(y) \rd_{x}$ or $f(r) \rd_{\tht}$.

\subsubsection*{Translationally-symmetric background}
In order to construct an approximate solution for \eqref{eq:e-mhd-2.5d-lin}, we begin by noting that $\psi$ obeys the following equation:
 \begin{equation} \label{eq:e-mhd-2.5d-lin-second}
\rd_t^2 \psi + f(y)^2\rd_x^2\lap\psi   - f(y)f''(y)\rd_x^2\psi = 0 .
\end{equation}
Indeed, \eqref{eq:e-mhd-2.5d-lin-second} follows by taking $\rd_{t}$ of the second equation in \eqref{eq:e-mhd-2.5d-lin}, and using the first equation to substitute $\rd_{t} b^{z}$. Conversely, we may reconstruct $(\psi, b^{z})$ from a solution $\psi$ to \eqref{eq:e-mhd-2.5d-lin-second} by defining 
\begin{equation} \label{eq:e-mhd-2.5d-lin-bz}
b^{z} = - (f \rd_{x})^{-1} \rd_{t} \psi, 
\end{equation}
provided that $(f \rd_{x})^{-1}$ is well-defined for $\rd_{t} \psi$.

Next, we make a change of variables for \eqref{eq:e-mhd-2.5d-lin-second} to fix the degeneracy in the term $f^{2} \rd_{x}^{2} \rd_{y}^{2}$. Take the connected component of $\set{y : f(y) > 0}$ (in either $\bbT$ or $\bbR$) that intersects any neighborhood of $0$. On this component, take the maximum $y_{1} > 0$ such that $f'(y) \geq f'(0)/2$ and $f(y) \leq \frac{1}{2}$ for $y \in [0, y_{1}]$; if no such maximum exists, simply take $y_{1}$ large enough so that $\supp h \subseteq (\bbT, \bbR)_{x} \times [0, y_{1}]$. For $y \in [0, y_{1}]$, we make a change of variables $\eta = \eta(y)$, where
\begin{equation*}
	\eta'(y) = \frac{1}{f(y)}, \qquad \eta(y_{1}) = 0,
\end{equation*}
so that $\eta \to - \infty$ as $y \to 0^{+}$. Then \eqref{eq:e-mhd-2.5d-lin-second} becomes
\begin{equation*}
\rd_t^2 \psi + \rd_{x}^{2} \rd_{\eta}^{2} \psi + f^2\rd_x^4\psi
- (f^{-1} \rd_{\eta} f) \rd_{x}^{2}  \rd_{\eta} \psi    - \left( \rd_{\eta} \left(f^{-1} \rd_{\eta} f\right) \right) \rd_x^2\psi = 0 .
\end{equation*}
By construction, $h$ is supported in $(\bbT, \bbR)_{x} \times [0, y_{1}]$, on which such a change of variables is valid. 

Finally, for a parameter $\lmb > 0$ to be chosen later, we introduce
\begin{equation*}
	\tau = \lmb t, \quad \varphi = f^{-\frac{1}{2}} \psi.
\end{equation*}
The parameter $\lmb$ will be the magnitude of the space-time frequency of our approximate solution.
The role of the conjugation $\varphi = f^{-\frac{1}{2}} \psi$ is to remove the third order term $- (f^{-1} \rd_{\eta} f) \rd_{x}^{2}  \rd_{\eta} \psi$. Indeed, $\varphi$ in the $(\tau, x, \eta)$ coordinate system  solves:
\begin{equation}\label{eq:e-mhd-eta-conj2}
\begin{split}
\rd_\tau^2\varphi + (\lmb^{-1}\rd_x)^2\rd_\eta^2 \varphi + \lmb^2 f^2(\lmb^{-1}\rd_x)^4\varphi - \left[\frac{1}{2}\rd_\eta ( f^{-1} \rd_{\eta} f) + \frac{1}{4} f^{-2} (\rd_{\eta} f)^{2} \right] (\lmb^{-1}\rd_x)^2 \varphi = 0. 
\end{split}
\end{equation}
Note that $\varphi$ is related to $b$ by
\begin{equation} \label{eq:varphi->b}
b^{x} =  \frac{\rd_\eta(f^{\frac{1}{2}}\varphi)}{f}, \quad b^{y} = -f^{\frac{1}{2}}\rd_x\varphi, \quad b^{z} = - \frac{\rd_t\rd_x^{-1}\varphi}{f^{\frac{1}{2}}} .
\end{equation}

\subsubsection*{Axisymmetric background}
In the same fashion as before, from \eqref{eq:e-mhd-2.5d-lin-axisym} we derive the following single second-order-in-time equation for $\psi$:
\begin{equation} \label{eq:e-mhd-2.5d-lin-second-axisym}
\rd_t^2 \psi  + f(r)^2\rd_\theta^2\lap\psi - f(r) \left( f''(r) + \frac{3}{r}f'(r) \right)\rd_\theta^2\psi  = 0 .
\end{equation}
Conversely, a solution $(\psi, b^{z})$ to \eqref{eq:e-mhd-2.5d-lin-axisym} can be reconstructed from a solution $\psi$ to \eqref{eq:e-mhd-2.5d-lin-second-axisym} by defining 
\begin{equation} \label{eq:e-mhd-2.5d-lin-bz-axisym}
b^{z} = - (f \rd_{\tht})^{-1} \rd_{t} \psi,
\end{equation}
provided that $(f \rd_{\tht})^{-1}$ is well-defined for $\rd_{t} \psi$.

Expanding the Laplacian in the cylindrical coordinates, \eqref{eq:e-mhd-2.5d-lin-second-axisym} can be rewritten as
\begin{equation*}
\rd_t^2 \psi  + f^2\rd_\theta^2 \rd_{r}^{2}\psi 
+ \frac{1}{r} f^2 \rd_\theta^2 \rd_{r} \psi 
+ f^2\rd_\theta^4 \psi - f \left(f'' + \frac{3}{r}f' \right)\rd_\theta^2\psi  = 0 .
\end{equation*}
Fix $r_{1} > r_{0}$ so that $f' \geq \frac{1}{2} f'(r_{0})$ and $f \leq \frac{1}{2}$ on $[r_{0}, r_{1}]$, and furthermore $\supp h \subseteq \set{(\tht, r) : r_{0} \leq r \leq r_{1}}$. Make a change of variables $\eta = \eta(r)$, where
\begin{equation*}
	\eta'(r) = \frac{1}{f(r)}, \qquad \eta(r_{1}) = 0.
\end{equation*}
Note that $\eta \to - \infty$ as $r \to r_{0}^{+}$. Moreover, $\rd_{\eta} f = f \rd_{r} f$ and $\rd_{\eta} r = f$. Thus, in the $(t, \eta, \tht)$-coordinate system, we have 
\begin{equation*}
\rd_t^2 \psi  + \rd_\theta^2 \rd_{\eta}^{2} \psi 
+ f^2\rd_\theta^4 \psi 
+ \left(r^{-1} \rd_{\eta} r - f^{-1} \rd_{\eta} f \right)\rd_\theta^2 \rd_{\eta} \psi 
- \left(\rd_{\eta} \left(f^{-1} \rd_{\eta} f\right) + \frac{3}{r} \rd_{\eta} f \right)\rd_\theta^2\psi  = 0 .
\end{equation*}
Finally, for a parameter $\lmb \in \bbN_{0}$ to be fixed later, we introduce
\begin{equation*}
	\tau = \lmb t, \quad \varphi = \left(\frac{r}{f}\right)^{\frac{1}{2}} \psi.
\end{equation*}
Then $\varphi$ solves the following equation in the $(\tau,  \tht, \eta)$ coordinate system:
\begin{equation} \label{eq:e-mhd-eta-conj2-axisym}
\begin{aligned}
& \rd_{\tau}^{2} \varphi + (\lmb^{-1} \rd_{\tht})^{2} \rd_{\eta}^{2} \varphi + \lmb^{2} f^{2} (\lmb^{-1} \rd_{\tht})^{4} \varphi \\
& \qquad - \left[\frac{1}{2} \rd_{\eta} (f^{-1} \rd_{\eta} f) 
	+ \frac{1}{4} f^{-2} (\rd_{\eta} f)^{2}
	+ 3 r^{-1} \rd_{\eta} f - \frac{1}{4} r^{-2} f^{2} \right] (\lmb^{-1} \rd_{\tht})^{2} \varphi = 0.
\end{aligned}\end{equation}
Note that $\varphi$ is related to $b$ by
\begin{equation} \label{eq:varphi->b-axisym}
b^{r} 
= \left( \frac{f}{r} \right)^{\frac{1}{2}} \rd_\tht \varphi, \quad
b^{\tht} 
=  - \frac{1}{r f} \rd_{\eta}\left(\left(\frac{f}{r}\right)^{\frac{1}{2}} \varphi \right), \quad
b^{z} = - \frac{\rd_t\rd_\theta^{-1}\varphi}{f^{\frac{1}{2}}} .
\end{equation}

\subsection{A WKB-type ansatz} \label{subsec:wkb}
Here we carry out the core construction of the degenerating wave packet approximate solutions in the case of \eqref{eq:e-mhd}. In this subsection, we work exclusively in the renormalized coordinates $(\tau, x, \eta)$; correspondingly, we use the shorthand $f(\eta) = f(y(\eta))$. Moreover, we suppress the dependence of implicit constants on $f$.

\subsubsection*{A WKB-type ansatz for $\bgB = f(y) \rd_{x}$}
We start with the case $\bgB = f(y) \rd_{x}$ and $M^{2} = (\bbT, \bbR)_{x} \times (\bbT, \bbR)_{y}$.
We work with $\varphi(\tau, x, \eta)$ and use a WKB-type ansatz 
\begin{equation*}
\varphi = \lmb^{-1} e^{i \lmb (x + \Phi(\tau, \eta))} h(\tau, x, \eta),
\end{equation*}
with the initial condition $h(0, x, \eta) = h_{0}(x, \eta)$, where we assume that
\begin{equation*}
h_{0} \in \calS((\bbT, \bbR)_{x} \times \bbR_{\eta}), \quad
\supp h_{0} \subseteq (\bbT, \bbR)_{x} \times (-\infty, 0).
\end{equation*}
To obtain the equations for $\Phi$ and $h$, we simply evaluate: 
\begin{equation}\label{eq:wkb_expansion}
\begin{split}
&e^{- i\lmb(x+\Phi(\tau,\eta))}\left[ \rd_\tau^2 + (\lmb^{-1}\rd_x)^2\rd_\eta^2 + \lmb^2 f^2(\lmb^{-1}\rd_x)^4 \right](\lmb^{-1} e^{i\lmb(x+\Phi(\tau,\eta))} {h}(\tau,x,\eta))   \\
& = -\lmb(\rd_\tau\Phi)^2 h + 2i \rd_\tau\Phi \rd_\tau h + i \rd_\tau^2\Phi h + \lmb^{-1} \rd_\tau^2 h \\
&\phantom{=} + \left( - \lmb(\rd_\eta\Phi)^2 + i\rd_\eta^2\Phi + 2i\rd_\eta\Phi\rd_\eta + \lmb^{-1} \rd_\eta^2  \right)(-h + 2i(\lmb^{-1}\rd_x)h + (\lmb^{-1}\rd_x)^2h) \\
&\phantom{=} + f^2 \left( \lmb {h} - 4i \rd_x{h} - 6\lmb^{-1} \rd_x^2 h + 4i\lmb^{-2}\rd_x^3{h} + \lmb^{-3}\rd_x^4{h}  \right) \\
& =	\lmb( -(\rd_\tau\Phi)^2 + (\rd_\eta\Phi)^2 + f^2 )h \\
&\phantom{=}
+ (2i\rd_\tau\Phi\rd_\tau + i\rd_\tau^2\Phi - i\rd_\eta^2\Phi - 2i\rd_\eta\Phi\rd_\eta  - 2i(\rd_\eta\Phi)^2\rd_x - 4if^2\rd_x)h  \\
&\phantom{=}
+ \lmb^{-1} (\cdots)		
\end{split} \end{equation} and setting the first two terms on the far RHS (which are expected to be of orders $\lmb$ and $1$, respectively) to vanish, we obtain respectively the equations \begin{equation}\label{eq:wkb_phi}
\begin{split}
(\rd_\tau\Phi)^2 - (\rd_\eta\Phi)^2 = f^2, 
\end{split}
\end{equation} and \begin{equation}\label{eq:wkb_h}
\begin{split}
(\rd_\tau\Phi\rd_\tau - \rd_\eta\Phi\rd_\eta - (\rd_\eta\Phi)^2\rd_x  - 2f^2\rd_x)h = -\frac{1}{2}(\rd_\tau^2 \Phi - \rd_\eta^2\Phi) h. 
\end{split}
\end{equation} We seek a solution of \eqref{eq:wkb_phi} such that for $\eta < 0$, $h$ from \eqref{eq:wkb_h} is being transported to $\eta \rightarrow -\infty$. 

\subsubsection*{Hamilton--Jacobi equation}
We start by solving \eqref{eq:wkb_phi}. Taking $\Phi(\tau,\eta) = \tau + G(\eta)$, $G$ needs to satisfy $1 - f^2 = (G'(\eta))^2$ (recall that we have assumed from the beginning that $f < 1/2$ in $\eta \leq 0$). We choose $G$ so that $G'(\eta) > 0$ and $G(\eta) - \eta \to 0$ as $\eta \to -\infty$; thus \begin{equation} \label{eq:wkb_G_sol}
G(\eta) = \eta + \int_{-\infty}^{\eta} \left( \sqrt{1 - f^2(\eta')} - 1 \right) \ud \eta' ,
\end{equation} which fixes \begin{equation}\label{eq:wkb_phi_sol}
\begin{split}
\Phi(\tau,\eta) = \tau + \eta + \int_{-\infty}^{\eta} \left( \sqrt{1 - f^2(\eta')} - 1 \right) \ud \eta',
\end{split}
\end{equation} and leads to \begin{equation}\label{eq:wkb_h_sol}
\begin{split}
(\rd_\tau - \sqrt{1 - f^2}\rd_\eta - (1+f^2)\rd_x)h = -\frac{1}{2}\frac{f\rd_\eta f}{\sqrt{1 - f^2}} h. 
\end{split}
\end{equation}
Our choice of the sign of $G'(\eta)$ is justified by the fact that the characteristics for the LHS of  \eqref{eq:wkb_h_sol} travel towards $\eta \to - \infty$ forward in time, as we will explicitly compute below.
\begin{remark}
	The fact that we can explicitly solve the equation for the phase $\Phi$ (i.e., the Hamilton--Jacobi equation) is a manifestation of complete integrability of the bicharacteristic flow around $\bgB$ as in Theorem~\ref{thm:norm-growth}.
\end{remark}

\subsubsection*{Characteristics for the transport operator}
Our next step is to analyze the transport operator
\begin{equation} \label{eq:transport}
\calL = \rd_{\tau} - \sqrt{1 - f^{2}} \rd_{\eta} - (1+f^{2}) \rd_{x} ,
\end{equation}
towards the goal of estimating $h$ via the transport equation \eqref{eq:wkb_h_sol}.

To control the characteristics associate to $\calL$, we need information on the coefficients. Note that \begin{equation*}
\begin{split}
\eta(y) \approx c + c_0 \ln y 
\end{split}
\end{equation*} for some constant $c$ and $c_0 = \rd_y f(0) > 0$, where we use $\approx $ to denote that the ratio of both sides converges to 1 as $y \rightarrow 0$ (or equivalently, $\eta \rightarrow -\infty$). This implies that \begin{equation} \label{eq:wkb-f}
f(\eta) \approx c' e^{c_0\eta} 
\end{equation} and in particular we obtain $0 < f(\eta)\le C e^{c_0\eta}$ for all $\eta \le 0$ for some constant $C > 0$ independent of $\eta$ (but depending on $f$). Similarly, $\rd_\eta f = f \rd_y f$ and $\rd_\eta^2 f = f^2 \rd_y^2 f + f \left( \rd_yf \right)^2$ imply
\begin{equation*}
\begin{split}
|\rd_\eta f|(\eta) \le C \nrm{ \rd_y f }_{L^\infty_y} f(\eta) \lesssim e^{c_0\eta}, \quad 
|\rd_\eta^2 f|(\eta) \le (\nrm{ f\rd_y^2 f}_{L^\infty_y} + \nrm{ (\rd_y f)^2 }_{L^\infty_y})f(\eta) \lesssim e^{c_0\eta},
\end{split}
\end{equation*} for $\eta \le 0$. Continuing, it is straightforward to see that \begin{equation} \label{eq:f-exp}
|\rd_\eta^{(n)}f|(\eta) \lesssim_n f(\eta) \lesssim e^{c_0\eta},\quad \forall \eta \le 0.  
\end{equation} 

With the above information, we are now ready to study the geometry of the characteristics $X(\tau), Y(\tau)$ associated to $\calL$, which are defined as
\begin{equation}\label{eq:characteristics-R3}
\begin{split}
\frac{\ud}{\ud \tau} (X(\tau,x_0,\eta_0), Y(\tau,x_0,\eta_0) ) &= \left(  -(1 + (f(Y))^2), -\sqrt{1 - (f(Y))^2} \right),\\
(X(0,x_0,\eta_0), Y(0,x_0,\eta_0))& = (x_0,\eta_0) ,
\end{split}
\end{equation} so that
\begin{equation} \label{eq:characteristic-trans}
\frac{\ud}{\ud \tau} h(\tau, X(\tau), Y(\tau)) =  \left( \calL h \right)(\tau,X(\tau),Y(\tau)).
\end{equation} 
We will always assume that $\eta_0 < 0$ (by hypothesis, the support of $h_{0}$ lies in this region), which guarantees that $f^{-1} \rd_\eta f > c_0 / 2$ where $c_0 = \rd_yf(0) > 0$ from our choice of the change of variables from $y$ to $\eta$. From \eqref{eq:characteristics-R3}, one sees that $Y$ is independent of $x_0$ and \begin{equation} \label{eq:characteristic-Y}
\eta_0 -\tau \le Y(\tau,\eta_0) \le  \eta_0 - \frac{\tau}{2} .
\end{equation} 
In particular, observe that $Y(\tau, \eta_{0})$ stays in $(-\infty, 0)$ if $\eta_{0} \in (-\infty, 0)$. 
Using that $f(\eta) \le Ce^{c_0\eta}$ with the equation for $\rd_\tau X$ we obtain that \begin{equation} \label{eq:characteristic-X}
x_0 - 2\tau  <  X(\tau,x_0,\eta_0) < x_0 -\tau .
\end{equation}  

\subsubsection*{Analysis of the transport equation}
We now analyze \eqref{eq:wkb_h_sol} and obtain estimates for $h$. First, observe that \eqref{eq:wkb_h_sol} can be simplified using the method of integrating factors. Indeed,
introducing a real-valued function $\alp(\tau, x, \eta)$ defined by
\begin{equation} \label{eq:wkb-alp}
\calL \alp = - \frac{1}{2} \frac{f \rd_{\eta} f}{\sqrt{1 - f^{2}}}, 
\end{equation}
with the initial condition $\alp(\tau = 0) = 0$, we see that
\begin{equation} \label{eq:wkb-h'}
\calL (e^{-\alp} h ) = 0.
\end{equation}
By \eqref{eq:f-exp} and the bound $\abs{f} < \frac{1}{2}$, for any $m \in \bbN_{0}$ observe that
\begin{equation*}
\Abs{\rd_{\eta}^{m} \left( - \frac{1}{2} \frac{f \rd_{\eta} f}{\sqrt{1 - f^{2}}}\right)}
\aleq e^{ 2 c_{0} \eta} \quad \hbox{ for } \eta \in (-\infty, 0).
\end{equation*}
Moreover, again by \eqref{eq:f-exp} and the bound $\abs{f} < \frac{1}{2}$, we have
\begin{equation} \label{eq:characteristic-commute}
[\rd_{\eta}^{m}, \calL] = \sum_{\ell = 0}^{m} c^{m}_{\ell}(\eta) \rd_{\eta}, \quad \abs{c^{m}_{\ell}(\eta)} \aleq e^{ 2 c_{0} \eta}  \quad \hbox{ for } \eta \in (-\infty, 0),
\end{equation}
while $\calL$ commutes with $\rd_{x}$ and $\rd_{\tau}$. 

In view of \eqref{eq:characteristic-Y}, the exponential factor $e^{ 2 c_{0} \eta}$ turns into an exponential decay in $\tau$ along each characteristics. Thus, by the above commutator relations, integration along characteristics and Gronwall's inequality, we immediately obtain the following $L^{\infty}$ bound for $\alp$:
\begin{align*}
\sup_{0 \leq k + \ell \leq m} \sup_{\tau \geq 0} \nrm{\rd_{\tau}^{k} \rd_{x}^{\ell} \rd_{\eta}^{m - \ell - k} \alp(\tau)}_{L^{\infty}_{x, \eta}}
\aleq_{m} & 1.
\end{align*}
For $e^{-\alp} h$, we wish to prove $L^{p}$ bounds for $1 \leq p \leq \infty$. For this purpose, we note tha tthe divergence of $\calL$ with respect to the volume form $\ud x \wedge \ud \eta$ obeys
\begin{equation} \label{eq:characteristic-div}
\Abs{\mathrm{div}_{\ud x \wedge \ud \eta} \calL} 
= \Abs{\frac{f \rd_{\eta} f}{\sqrt{1-f^{2}}}} \aeq e^{ 2 c_{0} \eta} \quad \hbox{ for } \eta \in (-\infty, 0),
\end{equation}
which also decays exponentially along characteristics. Thus, for any $1 \leq p \leq \infty$, we obtain
\begin{equation*}
\max_{0 \leq k, \ell, k+\ell \leq m} \sup_{\tau \geq 0} \nrm{\rd_{\tau}^{k} \rd_{x}^{\ell} \rd_{\eta}^{m - \ell - k}(e^{-\alp} h)(\tau)}_{L^{p}_{x, \eta}}
\aleq_{m} \nrm{\rd_{x}^{\ell} \rd_{\eta}^{m-\ell} h_{0} }_{W^{m, p}_{x, \eta}}.
\end{equation*}
Therefore, we have arrived at the following result: \begin{lemma} \label{lem:wkb-h-est}
	Let $h$ be the solution to \eqref{eq:wkb_h_sol} with smooth initial data $h_0$ supported on $\eta \le 0$. Then for any $1 \leq p \leq \infty$, we have \begin{equation*}
	\begin{split}
	\max_{0 \leq k, \ell, k+\ell \leq m}  \sup_{\tau \ge 0}\nrm{ \rd_\tau^k\rd_x^\ell \rd_\eta^{m-k-\ell} h(\tau)}_{L^{p}_{x, \eta}}   \lesssim_m \nrm{h_0}_{W^{m, p}_{x, \eta}}.
	\end{split}
	\end{equation*}
Moreover, 
\begin{equation*}
	\begin{split}
	 \sup_{\tau \ge 0}\nrm{\rd_\eta^{m} h(\tau)}_{L^{p}_{x,\eta}}   \lesssim_m \nrm{h_0}_{L^{p}_{x} W^{m, p}_{\eta}}.
	\end{split}
\end{equation*}
\end{lemma}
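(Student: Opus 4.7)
The plan is to follow the strategy already telegraphed in the paragraphs preceding the lemma: pass to $\tld{h} := e^{-\alp} h$, where $\alp$ is defined by \eqref{eq:wkb-alp} with $\alp(0) = 0$, so that \eqref{eq:wkb-h'} gives $\calL \tld{h} = 0$. The first task is to verify the uniform bound
\begin{equation*}
\max_{0 \le k,\ell,\, k+\ell \le m} \sup_{\tau \ge 0} \nrm{\rd_\tau^k \rd_x^\ell \rd_\eta^{m-k-\ell} \alp(\tau)}_{L^\infty_{x,\eta}} \aleq_m 1.
\end{equation*}
By \eqref{eq:f-exp} and $|f| < \tfrac{1}{2}$, the inhomogeneity $-\tfrac{1}{2} f \rd_\eta f / \sqrt{1-f^2}$ and all its derivatives are pointwise controlled by $C_m e^{2 c_0 \eta}$. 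Combining $[\calL, \rd_\tau] = [\calL, \rd_x] = 0$ with the $\rd_\eta$-commutator \eqref{eq:characteristic-commute}, integrating along the characteristics \eqref{eq:characteristics-R3}, and invoking the uniform bound $\int_0^\infty e^{2 c_0 Y(\tau,\eta_0)}\, d\tau \aleq 1$ (which follows from \eqref{eq:characteristic-Y}) gives the estimate by induction on total order via Gronwall.

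Given these $L^\infty$ bounds on $\alp$, Leibniz's rule reduces the lemma to the analogous statements for $\tld h$. Since $\tld h$ is constant along characteristics, the case $p = \infty$ is immediate. For $1 \le p < \infty$, a change of variables via the characteristic flow gives
\begin{equation*}
\nrm{\tld h(\tau)}_{L^p_{x,\eta}}^p = \int |h_0(x_0, \eta_0)|^p\, J(\tau, x_0, \eta_0)\, dx_0\, d\eta_0,
\end{equation*}
with Jacobian $J = \exp\bigl(\int_0^\tau (\mathrm{div}_{\ud x \wedge \ud \eta}\calL)(X(s), Y(s))\, ds\bigr)$. By \eqref{eq:characteristic-div} and \eqref{eq:characteristic-Y}, $|\log J| \aleq \int_0^\infty e^{2 c_0 Y(\tau, \eta_0)}\, d\tau \aleq 1$, yielding the zeroth-order bound $\nrm{\tld h(\tau)}_{L^p_{x,\eta}} \aleq \nrm{h_0}_{L^p_{x,\eta}}$.

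For derivatives, I commute $\rd_\tau^k \rd_x^\ell \rd_\eta^{m-k-\ell}$ through $\calL \tld h = 0$. Only $\rd_\eta$-commutators contribute, and by \eqref{eq:characteristic-commute} they produce a closed triangular system
\begin{equation*}
\calL \bigl(\rd_\tau^k \rd_x^\ell \rd_\eta^{j} \tld h\bigr) = \sum_{j' \le j} c^{j}_{j'}(\eta) \, \rd_\tau^k \rd_x^\ell \rd_\eta^{j'} \tld h,
\end{equation*}
with $|c^j_{j'}(\eta)| \aleq e^{2 c_0 \eta}$. Integrating along characteristics and applying Gronwall inductively in the total order of derivatives, combined with the zeroth-order $L^p$ bound, yields the first displayed estimate. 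The second estimate is obtained identically, noting crucially that the commutator identity involves only $\rd_\eta$-derivatives (no $\rd_x$-derivatives), so the hierarchy $\{\rd_\eta^j \tld h\}_{j \le m}$ closes without ever invoking $\rd_x$-derivatives of $h_0$, giving the bound in terms of $\nrm{h_0}_{L^p_x W^{m,p}_\eta}$. The only mild obstacle is keeping the Gronwall constants uniform as $\eta_0 \to -\infty$, which is immediate from the uniform-in-$\eta_0$ integrability of $e^{2 c_0 Y(\cdot, \eta_0)}$ along every characteristic.
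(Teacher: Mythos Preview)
Your argument for the first estimate is correct and is exactly the paper's approach: integrating factor $e^{-\alp}$, the Jacobian bound \eqref{eq:characteristic-div} for the base $L^p$ estimate, and Gronwall after commuting derivatives through $\calL$.

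There is, however, a genuine gap in your justification of the second estimate. You assert that the commutator $[\rd_\eta, \calL]$ involves only $\rd_\eta$-derivatives, so that the hierarchy $\{\rd_\eta^j \tld h\}_{j\le m}$ closes without $\rd_x$-derivatives of $h_0$. This is false: from $\calL = \rd_\tau - \sqrt{1-f^2}\,\rd_\eta - (1+f^2)\,\rd_x$ one computes
\[
[\rd_\eta, \calL] \;=\; \frac{f\,\rd_\eta f}{\sqrt{1-f^2}}\,\rd_\eta \;-\; 2 f\,\rd_\eta f\,\rd_x,
\]
and the $\rd_x$ contribution is genuinely present (the $x$-transport speed $1+f^2$ depends on $\eta$). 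Consequently $\calL(\rd_\eta^j \tld h)$ feeds in $\rd_x\rd_\eta^{j-1}\tld h$, and your induction in pure $\eta$-derivatives does not close; one is forced to use $\nrm{h_0}_{W^{m,p}_{x,\eta}}$ rather than $\nrm{h_0}_{L^p_x W^{m,p}_\eta}$. Equivalently, in the explicit representation $\tld h(\tau,x,\eta) = h_0\bigl(x + \Phi_X(\tau,\eta),\,\eta_0(\tau,\eta)\bigr)$, the shear $\rd_\eta\Phi_X \neq 0$ couples $\rd_\eta\tld h$ to $\rd_{x}h_0$.

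This is in fact the same imprecision that appears in the paper's formula \eqref{eq:characteristic-commute}, which omits the $\rd_x$ term. Fortunately it is harmless for the rest of the paper: every application of Lemma~\ref{lem:wkb-h-est} (in particular the derivation of \eqref{eq:wavepackets-degen-amp}) places the full $W^{m,p}_{x,\eta}$-norm (ultimately $W^{m,p}_{x,y}$) of the initial profile on the right-hand side, so only the first estimate is actually used. But as a standalone claim, your argument for the second estimate does not go through.
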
  

As another application of \eqref{eq:characteristic-div}, we estimate the size of the $\eta$-support of $h(\tau)$:
\begin{lemma} \label{lem:wkb-h-supp}
Let $h$ be the solution to \eqref{eq:wkb_h_sol} with smooth initial data $h_0$ supported on $(\bbT, \bbR)_{x} \times [\eta_{0}, \eta_{1}] \subseteq (\bbT, \bbR)_{x} \times (-\infty, 0]$. Then $\supp h(\tau, \cdot, \cdot) \subseteq (\bbT, \bbR)_{x} \times [Y(\tau, \eta_{1}), Y(\tau, \eta_{0})]$. Moreover,
\begin{equation*}
	\abs{Y(\tau, \eta_{1}) - Y(\tau, \eta_{0})} \aeq \eta_{1} - \eta_{0}.
\end{equation*}
\end{lemma}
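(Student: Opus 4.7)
The plan has two steps, both based on the characteristic ODE \eqref{eq:characteristics-R3} for the transport operator $\calL$ in \eqref{eq:transport}.

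\emph{Step 1 (support propagation).} Rewrite \eqref{eq:wkb_h_sol} as $\calL h + c(\eta) h = 0$, where $c(\eta) = \tfrac{1}{2} f\rd_{\eta} f/\sqrt{1-f^{2}}$ is smooth and bounded on $(-\infty, 0]$ by \eqref{eq:f-exp}. Along any characteristic, \eqref{eq:characteristic-trans} reduces the PDE to the scalar linear ODE $\frac{\ud}{\ud \tau}[h(\tau, X, Y)] = -c(Y(\tau, \eta_{0})) \, h(\tau, X, Y)$; by uniqueness, $h(\tau, X(\tau, x_{0}, \eta_{0}), Y(\tau, \eta_{0})) = 0$ whenever $h_{0}(x_{0}, \eta_{0}) = 0$. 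To identify the support, I note that for fixed $\eta_{0}$ the $X$-equation $\rd_{\tau} X = -(1 + f^{2}(Y(\tau, \eta_{0})))$ depends only on $\eta_{0}$, so $x_{0} \mapsto X(\tau, x_{0}, \eta_{0})$ is a translation (hence surjective onto $(\bbT, \bbR)_{x}$). For the $\eta$-component, uniqueness for $\dot{Y} = -\sqrt{1 - f^{2}(Y)}$ forces non-crossing of characteristics, so $\eta_{0} \mapsto Y(\tau, \eta_{0})$ is strictly monotone; thus the image of $[\eta_{0}, \eta_{1}]$ under $Y(\tau, \cdot)$ is the closed interval with endpoints $Y(\tau, \eta_{0})$ and $Y(\tau, \eta_{1})$, giving the claimed inclusion.

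\emph{Step 2 (size of support).} Let $w(\tau, \eta_{0}) := \rd_{\eta_{0}} Y(\tau, \eta_{0})$. Differentiating the characteristic equation in $\eta_{0}$ yields the variational equation
\begin{equation*}
    \dot{w} = \frac{f(Y)\,\rd_{\eta} f(Y)}{\sqrt{1 - f^{2}(Y)}} \, w, \qquad w(0, \eta_{0}) = 1.
\end{equation*}
By \eqref{eq:f-exp} and $\abs{f} < \tfrac{1}{2}$, the coefficient is bounded by $C e^{2 c_{0} Y(\tau, \eta_{0})}$, and combined with the upper bound $Y(\tau, \eta_{0}) \leq \eta_{0} - \tau/2$ from \eqref{eq:characteristic-Y}, this gives $\abs{\dot{w}/w} \aleq e^{2c_{0} \eta_{0}} e^{-c_{0} \tau}$, which is integrable in $\tau \geq 0$ uniformly in $\eta_{0} \leq 0$. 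Gronwall's inequality then yields $w(\tau, \eta_{0}) \aeq 1$ uniformly, and integrating $w$ over $[\eta_{0}, \eta_{1}]$ gives $\abs{Y(\tau, \eta_{1}) - Y(\tau, \eta_{0})} \aeq \eta_{1} - \eta_{0}$.

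The proof requires no new ideas beyond what has already been set up: all the analytic content reduces to the integrability-in-$\tau$ of the variational coefficient, which is exactly the same decay-along-characteristics mechanism that powered the earlier bounds on $\alp$ and on $h$ itself (cf.\ \eqref{eq:characteristic-commute} and \eqref{eq:characteristic-div}). There is no real obstacle; the only point to be careful about is that all estimates on $f$ and its derivatives \eqref{eq:f-exp} are valid only for $\eta \leq 0$, and this is ensured by the initial condition $\eta_{1} \leq 0$ together with the monotonicity $Y(\tau, \cdot) \leq Y(0, \cdot) \leq 0$ from \eqref{eq:characteristic-Y}.
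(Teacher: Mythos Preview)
Your proof is correct and uses essentially the same mechanism as the paper: the paper transports the characteristic function $\chi$ of $[\eta_{0},\eta_{1}]$ by $\calL\chi=0$ and controls $\int\chi(\tau)\,\ud\eta$ via the divergence \eqref{eq:characteristic-div}, whereas you control the Jacobian $\rd_{\eta_{0}}Y$ via the variational equation, whose coefficient is exactly that same divergence. These are dual formulations of the same estimate (change of variables links $\int\chi(\tau)\,\ud\eta$ to $\int_{\eta_{0}}^{\eta_{1}}\rd_{\eta_{0}}Y\,\ud\eta_{0}'$), and both reduce to the exponential-in-$\tau$ integrability of $f\rd_{\eta}f/\sqrt{1-f^{2}}$ along characteristics; your version is marginally sharper in that it gives the pointwise bound $\rd_{\eta_{0}}Y\aeq 1$ rather than only the integrated one.
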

\begin{proof}
The statement concerning $\supp h(\tau, \cdot, \cdot)$ is easily proved using the method of characteristics. To prove the remaining statement, it suffices to restrict our attention to the case $(\bbT, \bbR)_{x} = \bbT_{x}$. Let $\chi(\tau, x, \eta)$ be the solution to $\calL \chi = 0$ with $\chi(0, \cdot, \cdot)$ equal to the characteristic function of $[\eta_{0}, \eta_{1}]$ in $\eta$. By the method of characteristics, we see that $\chi(\tau, x, \eta)$ is independent of $x$ and is equal to the characteristic function of the interval $[Y(\tau, \eta_{0}), Y(\tau, \eta_{1})]$ in $\eta$. Hence, $Y(\tau, \eta_{1}) - Y(\tau, \eta_{0}) = \int \chi(\tau, \eta) \, \ud \eta$, while the latter is comparable to the value at $\tau = 0$ (i.e., $\eta_{1} - \eta_{0}$) thanks to \eqref{eq:characteristic-div} (which, as remarked above, exponentially decay along characteristics). \qedhere
\end{proof}

\subsubsection*{Error in the $\varphi$-equation}
Finally, we estimate the error in the $\varphi$-equation. Let $\errwp_{\varphi}[h_{0}; \lmb](\tau, x, \eta)$ be the LHS of \eqref{eq:e-mhd-eta-conj2} evaluated with $\varphi = \lmb^{-1} e^{i \lmb (x + \Phi(\tau, \eta))} h$. In what follows, we will often abbreviate $\errwp_{\varphi} = \errwp_{\varphi}[h_{0}; \lmb](\tau, x, \eta)$. We compute
\begin{equation} \label{eq:wkb-error}
\begin{split}
&\errwp_{\varphi} = -\lmb^{-1} e^{i\lmb(x +\Phi)}( \frac{1}{2} \rd_\eta\left( {f^{-1}\rd_\eta f} \right) + \frac{1}{4} {(f^{-1}\rd_\eta f)^2}   ) ( -{h} + 2i(\lmb^{-1}\rd_x)h + (\lmb^{-1}\rd_x)^2 h) \\  
&\phantom{\errwp_{\varphi} =}
+ \lmb^{-1} e^{i\lmb(x +\Phi)} \left(\rd_\tau^2 h + \rd_\eta^2(-h + 2i(\lmb^{-1}\rd_x)h + (\lmb^{-1}\rd_x)^2 )h \right. \\  
&\phantom{\errwp_{\varphi} = + \lmb^{-1} e^{i\lmb(x +\Phi)}} 
\left. + i\lmb(\rd_\eta^2\Phi+2\rd_\eta\Phi\rd_\eta)(2i(\lmb^{-1}\rd_x)h + (\lmb^{-1}\rd_x)^2 h)) \right.  \\
&\phantom{\errwp_{\varphi} = + \lmb^{-1} e^{i\lmb(x +\Phi)}}
\left.  - \lmb^2(\rd_\eta\Phi)^2(\lmb^{-1}\rd_x)^2h  + f^2(-6\rd_x^2 h + 4i\lmb^{-1}\rd_x^3 h + \lmb^{-2}\rd_x^4 h ) \right) .
\end{split}
\end{equation} For each fixed $\tau \geq 0$, we see that $\errwp_{\varphi}$ is bounded in $L_{x, \eta}^2$ by \begin{equation} \label{eq:wkb-err-est}
\begin{aligned}
\nrm{\errwp_{\varphi}(\tau)}_{L_{x, \eta}^2} \lesssim& \lmb^{-1}( \nrm{ h}_{L_{x, \eta}^2}  + \nrm{ \rd_\tau^2 h}_{L_{x, \eta}^2}+ \nrm{ \rd_\eta^2 h }_{L_{x, \eta}^2} + \nrm{ \rd_\eta^2\rd_x^2 h}_{L_{x, \eta}^2} + \nrm{\rd_x^4 h}_{L_{x, \eta}^2} )(\tau) \\
\lesssim &\lmb^{-1} \nrm{h_0}_{H^4}.
\end{aligned}
\end{equation} Moreover, when we compute $\rd_x \errwp_{\varphi}$, we only lose at most a constant multiple of $\lmb$ (when $\rd_x$ falls on the phase $e^{i\lmb(x+\Phi)}$). Therefore, for any integer $m \ge 0$, we obtain 
\begin{equation} \label{eq:wkb-err-dx-est}
\sup_{\tau \ge 0}\nrm{(\lmb^{-1}\rd_x)^{m}\errwp_{\varphi}(\tau)}_{L_{x, \eta}^2} \lesssim_m \lmb^{-1} \nrm{h_{0}}_{H^{4+m}}.
\end{equation}
\subsubsection*{Modifications for $\bgB = f(r) \rd_{\tht}$}
Finally, we sketch the necessary modifications needed in case $\bgB = f(r) \rd_{\tht}$, which are all minor. The ansatz now takes the form
\begin{equation*}
\varphi = \lmb^{-1} e^{i \lmb (\tht + \Phi(\tau, \eta))} h(\tau, \eta),
\end{equation*}
with the initial condition $h(0, \eta) = h_{0}(\eta)$ satisfying
\begin{equation*}
h_{0} \in C^{\infty}(\bbR_{\eta}), \quad \supp h_{0} \subseteq (-\infty, 0).
\end{equation*}
Note that $\tht$ plays the role of $x$, and $h(\tau, \eta)$ is chosen to be independent of $\tht$.

Since \eqref{eq:e-mhd-eta-conj2} and \eqref{eq:e-mhd-eta-conj2-axisym} differ only by terms of order $2$ (in space) and lower, the Hamilton--Jacobi and transport equations satisfied by $\Phi$ and $h$ are exactly the same as in the previous case, where the term $-(1 + f^{2}) \rd_{x}$ is dropped. Therefore, Lemma~\ref{lem:wkb-h-est} holds with $x$ replaced by $\tht$. 

In this case, we define $\errwp_{\varphi}[h_{0}; \lmb](\tau, x, \eta)$ to be the LHS of \eqref{eq:e-mhd-eta-conj2-axisym}. Again, since \eqref{eq:e-mhd-eta-conj2} and \eqref{eq:e-mhd-eta-conj2-axisym} differ only by terms of order $\lmb^{-1}$ and lower, it is straightforward to establish the analogues of \eqref{eq:wkb-err-est} and \eqref{eq:wkb-err-dx-est} hold with $x$ replaced by $\tht$ (and without $ {m}$ on the RHS of \eqref{eq:wkb-err-dx-est}, although this point will be irrelevant).

\subsection{Proof of Propositions~\ref{prop:wavepackets} and~\ref{prop:wavepackets-hall}}
We are ready to complete the proofs of the results stated in Section~\ref{subsec:wavepackets-results}.
\begin{proof}[Proof of Proposition~\ref{prop:wavepackets}]
We first handle case~(a), i.e., when $\bgB = f(y) \rd_{x}$. We apply the WKB construction in Section~\ref{subsec:wkb} to
\begin{equation*}
	h_{0}^{(-1)} (x, \eta) = \frac{1}{i \lmb} f^{-\frac{1}{2}} (y(\eta))g_{0}^{(-1; \lmb)}(x, y(\eta)), \quad
	h_{0} (x, \eta) = f^{-\frac{1}{2}} (y(\eta))g_{0}(x, y(\eta)),
\end{equation*}
and denote the resulting amplitudes (i.e., the solution to \eqref{eq:wkb_h}) by $h^{(-1)}$ and $h$, respectively. By construction, we have the relations
\begin{equation*}
	e^{i \lmb x + i \lmb \tau + i \lmb G(\eta)} h(\tau, x, \eta)
	= \rd_{x} \left(e^{i \lmb x + i \lmb \tau + i \lmb G(\eta)} h^{(-1)}(\tau, x, \eta)\right), \quad
	\errwp_{\varphi}[h_{0}; \lmb]
	= \rd_{x} \errwp_{\varphi}[h_{0}^{(-1)}; \lmb].
\end{equation*}

Given $h$, we define the approximate solution by 
\begin{align}
	\tb^{z}_{(\lmb)} 
	= & f^{-\frac{1}{2}} \lmb \, \Re \left(\frac{1}{i} e^{i (\lmb^{2} t + \lmb x + \lmb G(\eta(y)))} (h^{(-1)} + \frac{1}{i \lmb} \rd_{\tau} h^{(-1)}) (\lmb t, x, \eta(y)) \right) , \label{eq:def-tb}\\
	\tpsi_{(\lmb)} =& f^{\frac{1}{2}} \lmb^{-1} \, \Re \left( e^{i (\lmb^{2} t + \lmb x + \lmb G(\eta(y)))} h(\lmb t, x, \eta(y))\right). \label{eq:def-tpsi}
\end{align} From the definition, it follows that the identities \begin{equation*}
\begin{split}
\rd_t \tpsi_{(\lmb)} &= f^{\frac{1}{2}} \lmb^{-1} \Re \left( e^{i (\lmb^{2} t + \lmb x + \lmb G(\eta(y)))} (\lmb \rd_\tau h + i\lmb^2 h) \right)  \\
&=  f^{\frac{1}{2}} \lmb \rd_x \Re \left( i e^{i (\lmb^{2} t + \lmb x + \lmb G(\eta(y)))} (\frac{1}{i\lmb} \rd_\tau h^{(-1)} +   h^{(-1)}) \right) \\
&= -f\rd_x \tb^{z}_{(\lmb)} 
\end{split}
\end{equation*} hold. Next, from the construction, the linearity and $x$-invariance properties (as stated in Proposition~\ref{prop:wavepackets}) are clear. Evaluating the expression  \eqref{eq:def-tpsi} at $t = 0$, we obtain that \begin{equation*}
\begin{split}
\tpsi_{(\lmb)}(t = 0) &=  f^{\frac{1}{2}} \lmb^{-1} \Re \left( e^{i \lmb  (x +  G )} h_0 \right)  = \lmb^{-1} \Re \left( e^{i \lmb  ( x+G)} g_0 \right)  . 
\end{split}
\end{equation*} Next, using the relation between $\tpsi_{(\lmb)}$ and $\tb_{(\lmb)}^z$, we have \begin{equation*}
\begin{split}
\tb^z_{(\lmb)}(t = 0) &= \lmb f^{-\frac{1}{2}} \rd_x^{-1} \Re \left( \frac{1}{i} e^{i\lmb(x+G)} (h_0 + \frac{1}{i\lmb}(\rd_{\tau} h)_0 \right) 
\end{split}
\end{equation*} and the first term is simply \begin{equation*}
\begin{split}
 \lmb f^{-1} \rd_x^{-1} \Re \left( \frac{1}{i} e^{i\lmb(x+G)}g_0  \right)
\end{split}
\end{equation*} whereas the second term is given by \begin{equation*}
\begin{split}
&-f^{-\frac{1}{2}} \rd_x^{-1} \Re \left( e^{i\lmb(x+G)} (\rd_\tau h)_0 \right) \\
&\qquad = -f^{-\frac{1}{2}} \rd_x^{-1} \Re \left( e^{i\lmb(x+G)} ( \sqrt{1-f^2}\rd_{\eta} + (1+f^2)\rd_x - \frac{1}{2} \frac{f\rd_{\eta} f}{\sqrt{1-f^2}} )h_0 \right) \\
&\qquad = -f^{-\frac{1}{2}} \rd_x^{-1} \Re \left( e^{i\lmb(x+G)} ( -\frac{1}{2} \frac{f^{\frac{1}{2}}\rd_{\eta}f}{f^2\sqrt{1-f^2}} g_0 + f^{-\frac{1}{2}}\sqrt{1-f^2}\rd_\eta g_0 + f^{-\frac{1}{2}}(1+f^2)\rd_x g_0 ) \right) \\
&\qquad = f^{-1}\rd_x^{-1} \Re\left( e^{i\lmb(x+G)} ( \frac{1}{2} \frac{\rd_y f}{\sqrt{1-f^2}} g_0 - f\sqrt{1-f^2} \rd_y g_0 - (1+f^2) \rd_x g_0 )   \right). 
\end{split}
\end{equation*} 
To prove the initial data lower bound, it suffices to estimate $\rd_{x} \tpsi_{(\lmb)}$. Note that \begin{equation*}
\begin{split}
\rd_{x} \tpsi_{(\lmb)} =\Re (e^{i \lmb (x + G(y))} g_{0}) + \lmb^{-1} \Re (e^{i\lmb (x+G(y))} \rd_x g_0)
\end{split}
\end{equation*} and clearly the second term in $L^2$ is bounded by $C\lmb^{-1}\nrm{g_0}_{H^1}$. Regarding the first term, we compute 
\begin{align*}
\left( \Re (e^{i \lmb (x + G(y))} g_{0}) \right)^{2} 
 =& \frac{1}{4} (e^{i \lmb (x + G(y))} g_{0} + e^{-i \lmb (x + G(y))} \bar{g_{0}})^{2} \\
=& \frac{1}{2} \abs{g_{0}}^{2} + \frac{1}{4} e^{2 i \lmb (x + G(y))} g_{0}^{2} + \frac{1}{4} e^{-2 i \lmb (x + G(y))} \bar{g_{0}}^{2}.
\end{align*}
Thus, integrating this equation over $M^{2}$ and using integration by parts in $x$ for the last two terms, we obtain the desired initial data lower bound.

To verify the remaining assertions in Proposition~\ref{prop:wavepackets}, we need to transfer the upper bounds proved in Section~\ref{subsec:wkb} to the present context. At $t = \tau = 0$ and any $0 \leq k \leq m$, we have the relations
\begin{align} 
	\nrm{\rd_{x}^{k} \rd_{\eta}^{m-k} h_{0}}_{L^{2}_{x, \eta}}
	=& \nrm{f^{-\frac{1}{2}} \rd_{x}^{k} (f \rd_{y})^{m -k} f^{-\frac{1}{2}} g_{0}}_{L^{2}_{x, y}}
	\aleq_{m} (f'(0) y_{1})^{-1} \nrm{g_{0}}_{H^{m}_{x, y}}, \label{eq:wp-g0->h0}
\end{align} and \begin{align}
	\nrm{\rd_{x}^{k} \rd_{\eta}^{m-k} h_{0}^{(-1)}}_{L^{2}_{x, \eta}}
	=& \lmb^{-1} \nrm{f^{-\frac{1}{2}} \rd_{x}^{k} (f \rd_{y})^{m -k} f^{-\frac{1}{2}} g_{0}^{(-1; \lmb)}}_{L^{2}_{x, y}}
	\aleq_{m} (f'(0) y_{1})^{-1} \lmb^{-1} \nrm{g_{0}^{(-1; \lmb)}}_{H^{m}_{x, y}}. \label{eq:wp-g1->h1} 
\end{align}
Combined with Lemma~\ref{lem:wkb-h-est}, for any $m \in \bbN_{0}$ and $t \geq 0$, we have
\begin{align}
	\max_{0 \leq \ell, k, \ell + k \leq m} \nrm{(\lmb^{-2} \rd_{t})^{\ell} (\lmb^{-1} \rd_{x})^{k} (\lmb^{-1} f \rd_{y})^{m-k-\ell} f^{-1} \tpsi_{(\lmb)}(t)}_{L^{2}_{x, y}}
	\aleq_{m, (f'(0) y_{1})^{-1}} & \lmb^{-1} \nrm{g_{0}}_{H^{m}_{x, y}}, \label{eq:wp-psi-est}\\
	\max_{0 \leq \ell, k \ell + k \leq m} \nrm{(\lmb^{-2} \rd_{t})^{\ell} (\lmb^{-1} \rd_{x})^{k} (\lmb^{-1} f \rd_{y})^{m-k-\ell} \tb_{(\lmb)}^z(t)}_{L^{2}_{x, y}}
	\aleq_{m, (f'(0) y_{1})^{-1}} & \nrm{g_{0}^{(-1; \lmb)}}_{H^{m+1}_{x, y}}. \label{eq:wp-b-est}
\end{align}
The regularity estimates now follow in a straightforward manner; note that in order to estimate $\rd_{x} \tpsi_{(\lmb)}$, we needed to use the fact that $f < \frac{1}{2}$ on the support of $(\tpsi_{(\lmb)}, \tb_{(\lmb)}^z)$. 

We now prove the degeneration properties. 
As a preparation, we begin by noting that each component $\tb_{(\lmb)}^{x} = \rd_{y} \tpsi_{(\lmb)}$, $\tb_{(\lmb)}^{y} = -\rd_{x} \tpsi_{(\lmb)}$ and $\tb_{(\lmb)}^{z}$ of $\tb_{(\lmb)}$ of $\tb_{(\lmb)}$ may be written in the form 
\begin{equation} \label{eq:wavepackets-degen-overall}
	\tilde{b}^{x, y, z}_{(\lmb)} (t, x, y) = f^{-\frac{1}{2}}(y) \Re\left( e^{i (\lmb^{2} t + \lmb x + \lmb G(\eta(y)))} \tilde{h}_{(\lmb)}^{x, y, z}(\lmb t, x, \eta(y)) \right),
\end{equation}
where, for any $p \in [1, \infty]$, $m \in \bbN_{0}$ and $\tau \geq 0$, each of $\tilde{h}_{(\lmb)}^{x}$, $\tilde{h}_{(\lmb)}^{y}$ and $\tilde{h}_{(\lmb)}^{z}$ obeys
\begin{align} 
	\nrm{\rd_{\eta}^{m} \tilde{h}_{(\lmb)}^{x, y, z}(\tau, x, \eta)}_{L^{p}_{x, \eta}}
	& \aleq_{m} \nrm{ (g_{0}, g_{0}^{(-1; \lmb)}(x, y))}_{W^{m+1, p}_{x, y}}, \label{eq:wavepackets-degen-amp} \\
\supp \tilde{h}_{(\lmb)}^{x, y, z}(\tau, \cdot, \cdot) 
	&\subseteq (\bbT, \bbR)_{x} \times (Y(\tau, 0) - Y_{supp}, Y(\tau, 0))_{\eta},
\label{eq:wavepackets-degen-supp}
\end{align}
where $Y(\tau, \eta_{0})$ is the $\eta$-characteristic for $\calL$ introduced in Section~\ref{subsec:wkb}, and $Y_{supp}$ is independent of $\tau$, $\lmb$ (but dependent on $f$). Indeed, from \eqref{eq:def-tb}, \eqref{eq:def-tpsi} and Lemma~\ref{lem:wkb-h-est}, \eqref{eq:wavepackets-degen-amp} with the $W^{m+1, p}_{x, \eta}$-norm on the RHS follows. Recall that $\supp (g_{0}, g_{0}^{(-1; \lmb)})$ (in the variables $x, \eta$) is contained in $(\bbT, \bbR)_{x} \times [\eta(\frac{1}{2} y_{1}), \eta(y_{1})] \subseteq (\bbT, \bbR)_{x} \times [-\frac{2}{c_{0}} \log 2, 0]$, where the the last inclusion follows from the hypothesis $f'(y) \geq \frac{1}{2} c_{0}$ and the choice $\eta(y_{1}) = 0$. Thus, the values of $f$ is comparable on $\supp (g_{0}, g_{0}^{(-1; \lmb)})$, so that $\nrm{(g_{0}, g_{0}^{(-1; \lmb)}}_{W^{m+1, p}_{x, \eta}} \aeq\nrm{(g_{0}, g_{0}^{(-1; \lmb)}}_{W^{m+1, p}_{x, y}}$ (with a constant depending only on $m$ and $f$); hence \eqref{eq:wavepackets-degen-amp} follows.
For \eqref{eq:wavepackets-degen-supp}, we use Lemma~\ref{lem:wkb-h-supp} and preceding assertion about the $\eta$-support of the initial data. 

In what follows, we will only be using \eqref{eq:wavepackets-degen-overall}, \eqref{eq:wavepackets-degen-amp} and \eqref{eq:wavepackets-degen-supp}, and hence the components $\tb_{(\lmb)}^{x, y, z}$ will be treated in the same manner; hence the superscripts $x, y, z$ will often be suppressed. Moreover, the same proof applies to $(\lmb^{-1} \rd_{x})^{\ell} \tb_{(\lmb)}$ for any $\ell \in \bbN_{0}$. 

Let $f_{\lmb t} = f(y(Y(\lmb t, 0)))$. Note that, by \eqref{eq:wkb-f}, \eqref{eq:wavepackets-degen-overall} and \eqref{eq:wavepackets-degen-supp}, for $t \geq 0$ we have
\begin{equation} \label{eq:wavepackets-degen-f}
	f(y) \aeq f(y(Y(\lmb t, 0))) = f_{\lmb t} \quad \hbox{ on } \supp \tb_{(\lmb)}(t, \cdot, \cdot),
\end{equation}
i.e., $f_{\lmb t}$ is the typical value of $f$ on the support of $\tb_{(\lmb)}$. We claim that
\begin{align} 
	\nrm{P_{y; k} \tb_{(\lmb)}(t, x, y)}_{L^{p}_{x, y}}
	& \aleq_{m} (2^{-k} \lmb f_{\lmb t}^{-1})^{m} f_{\lmb t}^{\frac{1}{p}-\frac{1}{2}} \nrm{(g_{0}, g_{0}^{(-1; \lmb)})}_{W^{m+1, p}_{x, y}} 
	& & \hbox{ for } 2^{k} \geq \lmb f_{\lmb t}^{-1}, \label{eq:wavepackets-degen-high} \\
	\nrm{P_{y; k} \tb_{(\lmb)}(t, x, y)}_{L^{p}_{x, y}}
	& \aleq_{m} (2^{k} \lmb^{-1} f_{\lmb t})^{m} f_{\lmb t}^{\frac{1}{p}-\frac{1}{2}} \nrm{(g_{0}, g_{0}^{(-1; \lmb)})}_{W^{m+1, p}_{x, y}} 
	& &\hbox{ for } f_{\lmb t}^{-1} \leq 2^{k} \leq \lmb f_{\lmb t}^{-1}, \label{eq:wavepackets-degen-med} \\
	\nrm{P_{y; k} \tb_{(\lmb)}(t, x, y)}_{L^{p}_{x, y}} 
	& \aleq \lmb^{-1} 2^{(1-\frac{1}{p}) k} f_{\lmb t}^{\frac{1}{2}}  \nrm{(g_{0}, g_{0}^{(-1; \lmb)})}_{W^{2, p}_{x, y}}
	& & \hbox{ for } 2^{k} \leq f_{\lmb t} ^{-1}
	 \label{eq:wavepackets-degen-low},
\end{align}
where $P_{y; k}$ is the inhomogeneous Littlewood--Paley projection\footnote{The precise definition is as follows. Denote by $\calF_{y}[f(y)](\hat{y})$ the Fourier transform in $y$, where $\hat{y}$ is the dual variable. Consider a smooth partition of unity $1 = m_{0}(\hat{y}) + \sum_{k} m_{k}(\hat{y})$ on $\bbR$, where $m_{0} = 1$ on $[-1, 1]$ and vanishes outside of $[-2, 2]$ and $m_{k}(\hat{y}) = m_{\leq 0}(\hat{y}/2^{k}) - m_{\leq 0}(\hat{y}/2^{k-1})$. Correspondingly, we define $\set{P_{y; k}}_{k \in \bbN_{0}}$ by $\calF_{y}[P_{y; k} f](\hat{y}) = m_{k}(\hat{y}) \calF_{y}[f](\hat{y})$.} to $y$-frequencies $\aeq 2^{k}$.

First, we demonstrate how \eqref{eq:wavepackets-degen-upper}--\eqref{eq:wavepackets-degen-small} follow from the preceding estimates.
To prove \eqref{eq:wavepackets-degen-upper}, it suffices to bound $\sum_{k \geq - \log_{2} f_{\lmb t}} 2^{s k} \nrm{P_{y; k} \tb_{(\lmb)}}_{L^{p}_{x, y}}$ by the RHS of \eqref{eq:wavepackets-degen-upper}, via the triangle inequality and the frequency localization property of $P_{y; k}$. We split the $k$-summation into the ranges above and use \eqref{eq:wavepackets-degen-high} with $m > \max\set{s, 0}$, \eqref{eq:wavepackets-degen-med} with $m > \max\set{-s, 0}$ and \eqref{eq:wavepackets-degen-low} in the respective ranges; as a result, we would obtain \eqref{eq:wavepackets-degen-upper} with $f_{\lmb t}$ on the RHS in place of $e^{-c_{f} \lmb t}$. Finally, we use 
\begin{equation} \label{eq:wavepackets-degen-cf}
e^{-C_{f} \lmb t} \aleq f_{\lmb t} = f(y(Y(\tau, 0))) \aleq e^{-c_{f} \lmb t} \quad \hbox{ with } c_{f} = \frac{1}{2} c_{0} = \frac{1}{2} \rd_{y} f(0), \quad C_{f} = 2 c_{f} = \rd_{y} f(0),
\end{equation}
which follows from \eqref{eq:wkb-f} and $-\tau \leq Y(\tau, 0) \leq -\frac{\tau}{2}$ from \eqref{eq:characteristic-Y}, to eliminate $f_{\lmb t}$.
For the proof of \eqref{eq:wavepackets-degen}--\eqref{eq:wavepackets-degen-small}, we decompose $\tb_{(\lmb)}$ into $\tb_{(\lmb)}^{main} + \tb_{(\lmb)}^{small}$, where
\begin{equation*}
	\tb^{main; x, y, z}_{(\lmb)} = \sum_{k \geq - \log_{2} f_{\lmb t}} P_{y; k} \tb_{(\lmb)}^{x, y, z}, \qquad
	\tb^{small; x, y, z}_{(\lmb)} = \sum_{k < - \log_{2} f_{\lmb t}} P_{y; k} \tb_{(\lmb)}^{x, y, z}.
\end{equation*}
Proceeding as before using \eqref{eq:wavepackets-degen-high} with $m > \max\set{s, 0}$, \eqref{eq:wavepackets-degen-med} with $m > \max\set{-s, 0}$ and \eqref{eq:wavepackets-degen-cf}, we obtain \eqref{eq:wavepackets-degen}. From \eqref{eq:wavepackets-degen-low} and \eqref{eq:wavepackets-degen-cf}, \eqref{eq:wavepackets-degen-small} also follows.

To complete the proof of the degeneration properties, it remains to establish \eqref{eq:wavepackets-degen-high}--\eqref{eq:wavepackets-degen-low}. For \eqref{eq:wavepackets-degen-high} in the case $m = 1$, we write $P_{y; k} = 2^{-k} \rd_{y} \tilde{P}_{y; k}$, where $\tilde{P}_{y; k}$ is a convolution operator in $y$ with an integrable kernel (the integral is bounded by an absolute constant), and estimate
\begin{align*}
\nrm{P_{y; k} \tb_{(\lmb)}(t, x, y)}_{L^{p}_{x, y}}
&\aleq 2^{-k} \nrm{\rd_{y} \tilde{b}_{(\lmb)}(t, x, y)}_{L^{p}_{x, y}} \\
&\aleq 2^{-k} \lmb \nrm{f^{\frac{1}{p}-\frac{3}{2}} G'(\eta) \tilde{h}_{(\lmb)}(\lmb t, x, \eta)}_{L^{p}_{x, \eta}} \\
&\phantom{\aleq}
+ 2^{-k} \nrm{f^{\frac{1}{p}-\frac{3}{2}} \tilde{h}_{(\lmb)}(\lmb t, x, \eta)}_{L^{p}_{x, \eta}}
+ 2^{-k} \nrm{f^{\frac{1}{p}-\frac{3}{2}} \rd_{\eta} \tilde{h}_{(\lmb)}(\lmb t, x, \eta)}_{L^{p}_{x, \eta}}.
\end{align*}
For the second inequality, we used \eqref{eq:wavepackets-degen-overall}, $\rd_{y} = f^{-1} \rd_{\eta}$ and $\ud y = f \ud \eta$. Using \eqref{eq:wavepackets-degen-amp}, \eqref{eq:wavepackets-degen-supp} and \eqref{eq:wavepackets-degen-f} (recall also that $G'(\eta) = \sqrt{1 - f^{2}(\eta)}$ and that $f < 1/2$ in $\set{\eta < 0}$), \eqref{eq:wavepackets-degen-high} in the case $m = 1$ follows. The cases $m \geq 2$ are treated similarly.

For \eqref{eq:wavepackets-degen-med}, we use the identity $e^{i \lmb G(\eta(y))} = i^{-1} \lmb^{-1} (G'(\eta(y))^{-1} f \rd_{y} e^{i \lmb G(\eta(y))}$ to rewrite \eqref{eq:wavepackets-degen-overall} as
\begin{align*}
\tilde{b}_{(\lmb)} (t, x, y) 
&= \Re\left( i^{-1} \lmb^{-1} (G'(\eta(y)))^{-1} \rd_{y} e^{i (\lmb^{2} t + \lmb x + \lmb G(\eta(y)))} f^{\frac{1}{2}}(y) \tilde{h}(\lmb t, x, \eta(y))\right) \\
&= \rd_{y} \Re \left(i^{-1} \lmb^{-1} (G'(\eta(y)))^{-1} e^{i (\lmb^{2} t + \lmb x + \lmb G(\eta(y)))} f^{\frac{1}{2}}(y) \tilde{h}(\lmb t, x, \eta(y)) \right)   \\
&\phantom{=}
- \Re\left( i^{-1} \lmb^{-1} e^{i (\lmb^{2} t + \lmb x + \lmb G(\eta(y)))} \rd_{y} \left((G'(\eta(y)))^{-1} f^{\frac{1}{2}}(y) \tilde{h}(\lmb t, x, \eta(y))\right) \right).
\end{align*}
Taking $P_{y; k}$ of both sides and considering their $L^{p}_{x, y}$-norms, we obtain
\begin{equation} \label{eq:wavepackets-degen-dbp}
\begin{aligned}
	\nrm{P_{y; k} \tilde{b}_{(\lmb)} (t, x, y)}_{L^{p}_{x, y}}
	& \aleq 2^{k} \lmb^{-1} \nrm{P_{y; k} (e^{i (\lmb^{2} t + \lmb x + \lmb G(\eta(y)))} f^{\frac{1}{2}}(y) \tilde{h}(\lmb t, x, \eta(y)))}_{L^{p}_{x, y}} \\
	&\phantom{\aleq} + \lmb^{-1} \nrm{P_{y; k} (e^{i (\lmb^{2} t + \lmb x + \lmb G(\eta(y)))} \rd_{y} (G'(\eta(y))^{-1} f^{\frac{1}{2}}(y) \tilde{h}(\lmb t, x, \eta(y))))}_{L^{p}_{x, y}}.
\end{aligned}
\end{equation}
To prove \eqref{eq:wavepackets-degen-med} in the case $m = 1$, we use $\rd_{y} = f^{-1} \rd_{\eta}$ and $\ud y = f \ud \eta$ to eestimate the RHS by
\begin{align*}
	2^{k} \lmb^{-1} \nrm{f^{\frac{1}{2}+\frac{1}{p}} \tilde{h}(\lmb t, x, \eta)}_{L^{p}_{x, \eta}} 
	+ \lmb^{-1} \nrm{f^{-\frac{1}{2}+\frac{1}{p}} \tilde{h}(\lmb t, x, \eta)}_{L^{p}_{x, \eta}}
	+ \lmb^{-1} \nrm{f^{-\frac{1}{2}+\frac{1}{p}} \rd_{\eta} \tilde{h}(\lmb t, x, \eta)}_{L^{p}_{x, \eta}},
\end{align*}
and then apply \eqref{eq:wavepackets-degen-amp}, \eqref{eq:wavepackets-degen-supp} and \eqref{eq:wavepackets-degen-f}. Note that since $2^{k} \geq e^{c_{f} \lmb t}$, the contribution of the first term dominates those of the other two. The cases $m \geq 2$ follows by repeating the above ``differentiation by parts'' procedure.

Finally, to prove \eqref{eq:wavepackets-degen-low}, we resume from \eqref{eq:wavepackets-degen-dbp}. Using Bernstein's inequality in $y$ (i.e., that $P_{y; k} : L^{p}_{y} \to 2^{(1-\frac{1}{p}) k} L^{1}_{y}$ is bounded), changing the variable $y$ to $\eta$ and then applying H\"older's inequality in $\eta$ (making use of \eqref{eq:wavepackets-degen-supp}), we estimate the RHS by
\begin{align*}
& 2^{(2-\frac{1}{p}) k} \lmb^{-1} \nrm{f^{\frac{3}{2}} \tilde{h}(\lmb t, x, \eta)}_{L^{p}_{x} L^{1}_{\eta}} 
	+ 2^{(1-\frac{1}{p}) k} \lmb^{-1} \nrm{f^{\frac{1}{2}} \tilde{h}(\lmb t, x, \eta)}_{L^{p}_{x} L^{1}_{\eta}}
	+ 2^{(1-\frac{1}{p}) k} \lmb^{-1} \nrm{f^{\frac{1}{2}} \rd_{\eta} \tilde{h}(\lmb t, x, \eta)}_{L^{p}_{x} L^{1}_{\eta}} \\
& \aleq
2^{(2-\frac{1}{p}) k} \lmb^{-1} \nrm{f^{\frac{3}{2}} \tilde{h}(\lmb t, x, \eta)}_{L^{p}_{x, \eta}} 
	+ 2^{(1-\frac{1}{p}) k} \lmb^{-1} \nrm{f^{\frac{1}{2}} \tilde{h}(\lmb t, x, \eta)}_{L^{p}_{x, \eta}}
	+ 2^{(1-\frac{1}{p}) k} \lmb^{-1} \nrm{f^{\frac{1}{2}} \rd_{\eta} \tilde{h}(\lmb t, x, \eta)}_{L^{p}_{x, \eta}}.
\end{align*}
Lastly, we apply \eqref{eq:wavepackets-degen-amp}, \eqref{eq:wavepackets-degen-supp} and \eqref{eq:wavepackets-degen-f}, which proves \eqref{eq:wavepackets-degen-low}.

To conclude the proof in case~(a), it only remains to establish the error bounds. By definition $\err_{\psi}[\tb_{(\lmb)}^z, \tpsi_{(\lmb)}] = 0$, and
\begin{equation*}
	\err_{b}[\tb_{(\lmb)}^z, \tpsi_{(\lmb)}] =  \lmb ^{2}\Re \, f^{-\frac{1}{2}}\errwp_{\varphi}[h_{0}^{(-1)}; \lmb],
\end{equation*}
so that
\begin{align*}
	\nrm{\err_{b}[\tb_{(\lmb)}^z, \tpsi_{(\lmb)}](t)}_{L_{x, y}^{2}}
	= \nrm{\lmb^{2} \errwp_{\varphi}[h_{0}^{(-1)}; \lmb](\lmb t)}_{L_{x, \eta}^{2}} \aleq \lmb \nrm{h_{0}^{(-1)}}_{H^{4}_{x, \eta}} \aleq \nrm{g_{0}^{(-1; \lmb)}}_{H^{4}_{x, y}},
\end{align*}
as desired. 

The proof in case~(b) is a minor modification of that in case~(a). Here, as $g_{0}$ is independent of $\tht$, there is no need for an auxiliary function $g_{0}^{(-1; \lmb)}$. We apply the WKB construction in Section~\ref{subsec:wkb} to
\begin{equation*}
	h_{0}(\eta) = f^{-\frac{1}{2}}(r(\eta)) g_{0}(r(\eta)),
\end{equation*}
and define
\begin{align}
	\tb^{z}_{(\lmb)} = & f^{-\frac{1}{2}} \lmb \Re\left(\frac{1}{i \lmb} e^{i (\lmb^{2} t + \lmb \tht + \lmb G(\eta(r)))} h(\lmb t, \eta(r))\right), \label{eq:wp-b} \\
	\tpsi_{(\lmb)} = & f^{-\frac{1}{2}} \lmb^{-1} \Re\left(e^{i (\lmb^{2} t + \lmb \tht + \lmb G(\eta(r)))} h(\lmb t, \eta(r))\right).  \label{eq:wp-psi}
\end{align}
Then the properties stated in Proposition~\ref{prop:wavepackets} are proved in the same manner as in case~(a). We omit the obvious details.
\qedhere
\end{proof}

Next, we turn to the proof of Proposition~\ref{prop:wavepackets-hall} for \eqref{eq:hall-mhd}. As we will see, a (technical) part of the proof is to ensure that $\nb (-\lap)^{-1}$ are well-defined in various contexts. In the case $\bgB = f(y) \rd_{x}$, we always prepare the the RHS to be of the form $\rd_{x} a$, so that we may rely on $L^{2}$-boundedness of the singular integral $\nb (-\lap)^{-1} \rd_{x}$ on $M^{2} = (\bbT, \bbR)_{x} \times (\bbT, \bbR)_{y}$. In the case $\bgB = f(r) \rd_{\tht}$ on $M^{2} = \bbR^{2}$, we use a similar trick with $\rd_{\tht}$ in place of $\rd_{x}$; indeed, by writing $\rd_{\tht} a = \rd_{x} (y a) - \rd_{y} (x a)$, we may handle $\nb (-\lap)^{-1} \rd_{\tht}$.

\begin{proof}[Proof of Proposition~\ref{prop:wavepackets-hall}]

We first handle the case~(a), i.e., $\bgB = f(y) \rd_{x}$.  As in the proof of Proposition~\ref{prop:wavepackets}, we construct $h^{(-1)}$ and $h$ from $g_{0}^{(-1; \lmb)}$ and $g_{0}$, respectively, and define $\tb^{z}_{(\lmb)}$, $\tpsi_{(\lmb)}$ by \eqref{eq:def-tb}, \eqref{eq:def-tpsi}, respectively. Moreover, we define $\tu^{z}_{(\lmb)}$ and $\tomg_{(\lmb)}$ from $\tb^{z}_{(\lmb)}$, $\tpsi_{(\lmb)}$ as in \eqref{eq:wavepackets-hall-u-omg}.
Then the estimates for $\tu^{z}_{(\lmb)}$, $\nb \tu^{z}_{(\lmb)}$ and $\tomg_{(\lmb)}$ claimed in Proposition~\ref{prop:wavepackets-hall} follow from \eqref{eq:wp-psi-est} and \eqref{eq:wp-b-est}. To handle $\nb^{\perp}(-\lap)^{-1} \tomg_{(\lmb)}$, we observe that, by $x$-invariance,
\begin{align*}
	\nb^{\perp}(-\lap)^{-1} \tomg_{(\lmb)}
	= & - \nb^{\perp} (-\lap)^{-1} \rd_{x} \rd_{x}^{-1}\tb^{z}_{(\lmb)} \\
	= &- \nb^{\perp} (-\lap)^{-1} \rd_{x} f^{-\frac{1}{2}} \lmb \left( e^{i (\lmb^{2} t + \lmb x + \lmb G(\eta(y))} (h^{(-2)} + \frac{1}{i \lmb} \rd_{\tau} h^{(-2)})(\lmb t, x, \eta(y)) \right)
\end{align*}
where $h^{(-2)}$ is constructed by the WKB analysis in Section~\ref{subsec:wkb} applied to 
\begin{equation*}
h_{0}^{(-2)} (x, \eta) = \frac{1}{(i \lmb)^{2}} f^{-\frac{1}{2}}(y(\eta)) g_{0}^{(-2;\lmb)} (x, y(\eta)),
\end{equation*}
for which we have, for any $0 \leq k \leq m$,
\begin{equation} \label{eq:wp-g0-2->h0-2}
\begin{aligned}
	\nrm{\rd_{x}^{k} \rd_{\eta}^{m-k} h_{0}^{(-2)}}_{L^{2}_{x, \eta}}
	&= \lmb^{-2}\nrm{f^{-\frac{1}{2}} \rd_{x}^{k} (f \rd_{y})^{m -k} f^{-\frac{1}{2}} g_{0}^{(-2; \lmb)}}_{L^{2}_{x, y}} \\
	&\aleq_{m} (f'(0) y_{1})^{-1} \lmb^{-2} \nrm{g_{0}^{(-2; \lmb)}}_{H^{m}_{x, y}}.
\end{aligned}\end{equation}
Using Lemma~\ref{lem:wkb-h-est} in Section~\ref{subsec:wkb}, we obtain the desired estimate for $\nb^{\perp}(-\lap)^{-1} \tomg_{(\lmb)}$.

It remains to verify the error estimates stated in Proposition~\ref{prop:wavepackets-hall}. Comparing  {\eqref{eq:hall-2.5d-err-parallel} and \eqref{eq:e-2.5d-err-parallel}}, observe that with our choice of $\tu^{z}_{(\lmb)}$ and $\tomg_{(\lmb)}$,
\begin{equation} \label{eq:wavepackets-hall-err}
\begin{aligned}
	\errh_{u}^{(\nu)} [\tu^{z}_{(\lmb)}, \tomg_{(\lmb)}, \tb^{z}_{(\lmb)}, \tpsi_{(\lmb)}] + \nu \lap \tpsi_{(\lmb)} 
	= & -\err_{\psi}[\tb^{z}_{(\lmb)}, \tpsi_{(\lmb)}], \\
	\errh_{\omg}^{(\nu)} [\tu^{z}_{(\lmb)}, \tomg_{(\lmb)}, \tb^{z}_{(\lmb)}, \tpsi_{(\lmb)}] + \nu \lap \tb^{z}_{(\lmb)}
	= & -\err_{b}[\tb^{z}_{(\lmb)}, \tpsi_{(\lmb)}], \\
	\errh_{b}^{(\nu)} [\tu^{z}_{(\lmb)}, \tomg_{(\lmb)}, \tb^{z}_{(\lmb)}, \tpsi_{(\lmb)}] 
	= & \err_{b}[\tb^{z}_{(\lmb)}, \tpsi_{(\lmb)}] + f \rd_{x} \tpsi_{(\lmb)}, \\
	\errh_{\psi}^{(\nu)} [\tu^{z}_{(\lmb)}, \tomg_{(\lmb)}, \tb^{z}_{(\lmb)}, \tpsi_{(\lmb)}]
	= & \err_{\psi}[\tb^{z}_{(\lmb)}, \tpsi_{(\lmb)}] + f \rd_{x} (-\lap)^{-1} \tb^{z}_{(\lmb)}.
\end{aligned}
\end{equation}
The estimates for $\errh_{u}^{(\nu)}$, $\errh_{b}^{(\nu)}$ and $\nb \errh_{\psi}^{(\nu)}$ follow from the error estimates in Proposition~\ref{prop:wavepackets} and the preceding bound for $\nb^{\perp}(-\lap)^{-1} \tb^{z}_{(\lmb)} = - \nb^{\perp}(-\lap)^{-1} \tomg_{(\lmb)}$ (for the last term). For $\nb^{\perp} (-\lap)^{-1} \errh_{\omg}^{(\nu)}$, we need to estimate $\nb^{\perp} (-\lap)^{-1} \err_{b}$. Again by $x$-invariance, note that
\begin{align*}
	\nb^{\perp} (-\lap)^{-1} \err_{b}[\tb_{(\lmb)}, \tpsi_{(\lmb)}]
	=& \lmb^{2} \nb^{\perp} (-\lap)^{-1} \Re f^{-\frac{1}{2}} \errwp_{\varphi}[h_{0}^{(-1)}; \lmb] \\
	=& \lmb^{2} \nb^{\perp} (-\lap)^{-1} \rd_{x} \Re f^{-\frac{1}{2}} \errwp_{\varphi}[h_{0}^{(-2)}; \lmb].
\end{align*}
Then the desired estimate for the $L^{2}$ norm of the last term follows from $L^{2}$-boundedness of $\nb^{\perp} (-\lap)^{-1} \rd_{x}$, \eqref{eq:wkb-err-est} and \eqref{eq:wp-g0-2->h0-2}.

The proof in case~(b) (i.e., $\bgB = f(r) \rd_{\tht}$) is similar, so we only sketch the necessary modifications. We define $(\tb^{z}_{(\lmb)}, \tpsi_{(\lmb)})$ by \eqref{eq:wp-b} and \eqref{eq:wp-psi} as in the proof of Proposition~\ref{prop:wavepackets}, and $(\tu^{z}_{(\lmb)}, \tomg_{(\lmb)})$ as in \eqref{eq:wavepackets-hall-u-omg}. Again, the estimates for $\tu^{z}_{(\lmb)}$, $\nb \tu^{z}_{(\lmb)}$ and $\tomg_{(\lmb)}$ claimed in Proposition~\ref{prop:wavepackets-hall} follow from the preceding proof. To handle $\nb^{\perp} (-\lap)^{-1} \tomg_{(\lmb)}$, we simply write $$\tomg_{(\lmb)} = - \tb^{z}_{(\lmb)} = \rd_{\tht} \left( (i \lmb)^{-1} \tb^{z}_{(\lmb)} \right),$$ and observe that since $\tb^{z}_{(\lmb)}(t)$ is always supported in $\set{r < r_{1}}$, for any $t \geq 0$ we have
\begin{align*}
	\nrm{\nb^{\perp} (-\lap)^{-1} \rd_{\tht} (i\lmb)^{-1} \tb^{z}_{(\lmb)}(t)}_{L^{2}}
	= & \lmb^{-1} \nrm{\nb^{\perp} (-\lap)^{-1} (\rd_{y} (x \tb^{z}_{(\lmb)}) - \rd_{x} (y \tb^{z}_{(\lmb)}))(t)}_{L^{2}} \\
	\aleq & r_{1} \lmb^{-1} \nrm{\tb^{z}_{(\lmb)}(t)}_{L^{2}},
\end{align*}
as desired. Finally, the error estimates follow from  {\eqref{eq:hall-2.5d-err-axi}, \eqref{eq:e-2.5d-err-axi}}, the error estimates in Proposition~\ref{prop:wavepackets}, the preceding bound for $\nb^{\perp}(-\lap)^{-1} \tb^{z}_{(\lmb)}$, and
\begin{equation*}
	\nrm{\nb^{\perp}(-\lap)^{-1} \err_{b}[\tb^{z}_{(\lmb)}, \tpsi_{(\lmb)}]}_{L^{2}} \aleq r_{1} \lmb^{-1} \nrm{\err_{b}[\tb^{z}_{(\lmb)}, \tpsi_{(\lmb)}]}_{L^{2}}, 
\end{equation*}
which is proved again using the trick of pulling out $(i \lmb)^{-1}\rd_{\tht}$ from $\err_{b}[\tb^{z}_{(\lmb)}, \tpsi_{(\lmb)}]$. 
\end{proof}

\begin{remark} \label{rem:wavepackets-hall-err}
Key to the proof was the remarkable simplicity of the error terms under the choice \eqref{eq:wavepackets-hall-u-omg}, for which the fluid variables $\tu^{z}_{(\lmb)}$ and $\tomg_{(\lmb)}$ are also one order smoother than energy. The origin of such a nice structure may be traced back to the existence of a set of ``good variables'' for \eqref{eq:hall-mhd} with $\nu = 0$: Introducing the vector field
\begin{equation*}
\bfZ := \bfB + \bfomg,
\end{equation*} 
\eqref{eq:hall-mhd} with $\nu = 0$ has the following reformulation in terms of $(\bfZ, \bfB)$: \begin{equation}\label{eq:hall-mhd-ZB}
\left\{
\begin{aligned}
&\rd_{t} \bfZ + \bfu\cdot\nabla\bfZ -  \bfZ\cdot\nabla\bfu = 0 ,  \\
&\rd_{t} \bfB + \bfu\cdot\nabla\bfB -  \bfB\cdot\nabla\bfu  + \nb \times ((\nb \times \bfB) \times \bfB)= 0, \\
&\nb \cdot \bfZ = \nb \cdot \bfB = 0, \\
& \nabla \times \bfu = \bfZ - \bfB, \quad  \nabla\cdot\bfu= 0. 
\end{aligned}
\right.
\end{equation} 
For more details on this reformulation we refer to \cite{JO2}, where it plays a central role. This reformulation have already appeared in the work of Chae and Wolf in \cite{CWo1} for the purpose of obtaining partial regularity results for the $2+\frac{1}{2}$ dimensional Hall-MHD system. 

In terms of these variables, our approximate solution for \eqref{eq:hall-mhd} with $\nu = 0$ corresponds to taking the $\bfZ$-perturbation zero, and the $\bfB$-perturbation identical to the \eqref{eq:e-mhd} case. The last div-curl identities for $\bfu$ explains why this choice results in the crucial smoothing of $\tu_{(\lmb)}$ by one order compared to $\tb_{(\lmb)}$. 

\end{remark}

\section{Proof of the linear illposedness results in Sobolev spaces} \label{sec:pf-lin}
In this section, we prove Theorems~\ref{thm:norm-growth} and \ref{thm:inst}.
\subsection{Proof of Theorems~\ref{thm:norm-growth} and \ref{thm:inst} for \eqref{eq:e-mhd}}
In order to apply Proposition~\ref{prop:wavepackets}, we  {begin} by constructing a family of bump functions $p_{0, \lmb}$ on $(\bbT, \bbR)_{x}$ for which we have a uniform control of $p_{0, \lmb}^{(-j; \lmb)}$:
\begin{lemma} \label{lem:bump}
For each $\lmb \in \bbN$ and $n \in \bbN$, there exist nonzero $p_{0, \lmb} \in \calS((\bbT, \bbR)_{x})$ such that each $p_{0, \lmb}^{(-j; \lmb)}$ for $1 \leq j \leq n$ is well-defined and belongs to $\calS((\bbT, \bbR)_{x})$, obeys
\begin{equation} \label{eq:bump-est}
	\nrm{(p_{0, \lmb}, \ldots, p_{0, \lmb}^{(-n; \lmb)})}_{H^{m}_{x}} \aleq_{m, n} \nrm{p_{0, \lmb}}_{L^{2}} \quad \hbox{ for every } m \in \bbN_{0},
\end{equation}
and has one of the following properties:
\begin{itemize}
\item $p_{0, \lmb}, \ldots, p_{0, \lmb}^{(-n; \lmb)}$ are supported in $(-1, 1)$; or
\item $\calF[p_{0, \lmb}], \ldots, \calF[p_{0, \lmb}^{(-n; \lmb)}]$ are supported in $(-1, 1)$.
\end{itemize}
\end{lemma}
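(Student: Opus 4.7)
The plan is to construct $p_{0, \lmb}$ separately in the two cases, exploiting the fact that $g \mapsto g^{(-j; \lmb)}$ is (essentially) a Fourier multiplier with symbol $(\lmb/(\xi + \lmb))^{j}$, which is bounded by $2^{j}$ on $\set{\xi : \abs{\xi + \lmb} \geq \lmb/2}$ whenever $\lmb \geq 1$. The first option (bounded physical support) is the subtle one and will be the main obstacle.

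The Fourier-support case is the straightforward one. For $(\bbT, \bbR)_{x} = \bbT_{x}$, I would take $p_{0, \lmb} \equiv 1$, so that $\calF[p_{0, \lmb}]$ is supported at $\set{0} \subseteq (-1, 1)$; using $\rd_{x}^{-1} e^{i \lmb x} = (i \lmb)^{-1} e^{i \lmb x}$ on $\bbT_{x}$ for $\lmb \in \bbN$, a direct computation yields $p_{0, \lmb}^{(-j; \lmb)} \equiv 1$ for all $j$, making \eqref{eq:bump-est} trivial. For $(\bbT, \bbR)_{x} = \bbR_{x}$, fix a nonzero $\chi \in \calS(\bbR_{x})$ with $\calF[\chi]$ smooth and compactly supported in $(-1/2, 1/2)$, and take $p_{0, \lmb} = \chi$. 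A direct computation gives
\begin{equation*}
\calF[p_{0, \lmb}^{(-j; \lmb)}](\xi) = \Bigl(\tfrac{\lmb}{\xi + \lmb}\Bigr)^{j} \calF[\chi](\xi),
\end{equation*}
which is supported in $(-1/2, 1/2) \subseteq (-1, 1)$, with the multiplier bounded by $2^{j}$ there for $\lmb \geq 1$. The $H^{m}$-bound then follows from Plancherel together with $\nrm{\chi}_{H^{m}} \aleq_{m, \chi} \nrm{\chi}_{L^{2}}$ for our fixed $\chi$.

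The physical-support case requires more care, because supporting $p_{0, \lmb}^{(-j; \lmb)}$ in $(-1, 1)$ for all $1 \leq j \leq n$ is equivalent to $p_{0, \lmb} \in C^{\infty}_{c}(-1, 1)$ together with the $n$ moment conditions
\begin{equation*}
\int x^{k} e^{i \lmb x} p_{0, \lmb}(x) \, \ud x = 0, \qquad 0 \leq k \leq n-1
\end{equation*}
(equivalently, $\rd_{\xi}^{k} \calF[p_{0, \lmb}](-\lmb) = 0$ for $0 \leq k \leq n-1$). A fixed $\chi$ does not satisfy these, so I would add a small $\lmb$-dependent correction: fix $\chi \in C^{\infty}_{c}(-1, 1)$ and auxiliary bumps $\chi_{0}, \ldots, \chi_{n-1} \in C^{\infty}_{c}(-1, 1)$ so that $A_{k \ell} := \int x^{k} \chi_{\ell}(x) \, \ud x$ is an invertible $n \times n$ matrix (achievable by a standard choice), and set
\begin{equation*}
p_{0, \lmb}(x) = \chi(x) - \sum_{\ell = 0}^{n-1} c_{\ell}(\lmb) \, e^{-i \lmb x} \chi_{\ell}(x).
\end{equation*}
The factor $e^{-i \lmb x}$ in each correction exactly cancels the $e^{i \lmb x}$ in the moment integrals, so the linear system for $(c_{\ell}(\lmb))$ has the $\lmb$-independent matrix $A$ and right-hand sides $i^{k} \rd_{\xi}^{k} \calF[\chi](-\lmb)$, which are $O(\lmb^{-N})$ for every $N$ by the Schwartz decay of $\calF[\chi]$. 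Hence $c_{\ell}(\lmb) = O(\lmb^{-N})$ for every $N$.

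With this construction, $p_{0, \lmb} \in C^{\infty}_{c}(-1, 1)$ by design, the moment conditions hold, and the same direct-integration argument gives $\supp p_{0, \lmb}^{(-j; \lmb)} \subseteq (-1, 1)$ for $1 \leq j \leq n$. The main estimate, namely the uniform bound on $\nrm{p_{0, \lmb}^{(-j; \lmb)}}_{H^{m}}$, is the hardest part and will be carried out on the Fourier side. On the region $\abs{\xi + \lmb} \geq \lmb/2$ (which contains the support of $\calF[\chi]$ modulo Schwartz tails), the multiplier $(\lmb/(\xi + \lmb))^{j}$ is bounded by $2^{j}$, yielding a main contribution of size $\nrm{\chi}_{H^{m}} \aleq_{m, \chi} \nrm{\chi}_{L^{2}}$. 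On the complementary region $\abs{\xi + \lmb} < \lmb/2$, where the multiplier blows up, the vanishing of $\calF[p_{0, \lmb}]$ to order $n$ at $\xi = -\lmb$ combined with the rapid decay of $c_{\ell}(\lmb)$ and the Schwartz decay of $\calF[\chi]$ forces $\rd_{\xi}^{k} \calF[p_{0, \lmb}](-\lmb) = O(\lmb^{-N})$ for every $N$ and every $k \leq n$; a Taylor expansion in $\xi + \lmb$ then produces an $H^{m}$-contribution of order $O(\lmb^{-N})$ from this region. Combining, $\nrm{p_{0, \lmb}^{(-j; \lmb)}}_{H^{m}} \aleq_{m, n, \chi} \nrm{\chi}_{L^{2}} \aeq \nrm{p_{0, \lmb}}_{L^{2}}$ uniformly in $\lmb$, completing the proof.
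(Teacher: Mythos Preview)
Your proof is correct and follows the same overall strategy as the paper. The Fourier-support cases ($p_{0,\lambda}\equiv 1$ on $\bbT_x$; a fixed bump with compact Fourier support on $\bbR_x$) are identical to the paper's, and for the physical-support case on $\bbR_x$ both you and the paper perturb a fixed $C^\infty_c$ bump by $O(\lambda^{-N})$ correctors to enforce the moment conditions ensuring that the iterated antiderivatives remain compactly supported.

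Your one substantive difference---taking correctors of the form $e^{-i\lambda x}\chi_\ell$ rather than unphased functions---is in fact a clarification of the paper's argument. With your choice the moment system has the $\lambda$-\emph{independent} matrix $A_{k\ell}=\int x^k\chi_\ell\,\ud x$ and right-hand side $\int x^k e^{i\lambda x}\chi\,\ud x=O(\lambda^{-N})$, so $c_\ell(\lambda)=O(\lambda^{-N})$ is immediate. The paper's printed proof states the moment conditions as $\int x^k p_{0,\lambda}\,\ud x=0$ (missing the $e^{i\lambda x}$ factor that you correctly identify as necessary) and uses unphased correctors $q_j$; taken literally this yields a $\lambda$-independent linear system that cannot produce $\alpha_j(\lambda)=O(\lambda^{-N})$, so there are evident typos there, and your construction is the right realization of the intended argument. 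The paper's concrete choice $q_k=\tfrac{2^{k+1}}{k!}(\partial^k p_0)(2\,\cdot)$, which makes the moment matrix upper-triangular with unit diagonal, is a nice device you could adopt in place of the abstract ``invertible $A$''. Your Fourier-side argument for the uniform $H^m$ bound (splitting at $\abs{\xi+\lambda}=\lambda/2$, using the order-$n$ vanishing together with Schwartz decay near $\xi=-\lambda$) is sound and considerably more explicit than the paper's one-line assertion that the bound ``follows'' from the smallness of the coefficients.
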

We emphasize that the implicit constant is independent of $\lmb$.
\begin{proof}
In $\bbT_{x}$, the simple choice $p_{0, \lmb} = 1$ does the job. 

In $\bbR_{x}$, the second case is easily handled by making a $\lmb$-independent choice $p_{0, \lmb} = p_{0}$, where $p_{0} \neq 0$ and $\supp \calF [p_{0}] \subseteq (-1, 1)$. Indeed, since $\calF [e^{i \lmb x} p_{0}]$ is supported away from $0$, \eqref{eq:bump-est} follows from the formula $$\calF[p_{0}^{(-n; \lmb)}] (\xi) = \left( \frac{\lmb}{\xi+\lmb} \right)^n\calF[p_{0}](\xi).$$

Thus, the only remaining case is the first case in $\bbR_{x}$. We start with a nonnegative function $p_{0} \in C^{\infty}_{c}(-1, 1)$ with $\int p_{0} \, \ud x= 1$. We would like to construct $p_{0, \lmb}$ as a small perturbation of $p_{0}$; however, to make each $p^{(-j; \lmb)}_{0, \lmb}$ is supported in $(-1, 1)$, we need to ensure that $\int x^{k} p_{0, \lmb} \, \ud x = 0$ for $k = 0, \ldots, n-1$. For this purpose, we introduce auxiliary functions $q_{k}$ for $k=0, \ldots, n-1$ that are defined as follows:
 \begin{equation*}
	q_{k}(x) = \frac{2^{k+1}}{k!} (\rd_{x}^{k} p_{0})(2 x ).
\end{equation*}
Then $q_{0} \neq p_{0}$ (by the support property), $\int q_{0} = 1$, $\supp q_{k} \subseteq (-1, 1)$ and
\begin{equation*}
	\int x^{k} q_{k} = 1, \quad \int x^{j} q_{k} = 0 \quad \hbox{ for any } 1 \leq k \leq n, \ 0 \leq j \leq k-1.
\end{equation*}
In other words, the matrix $A_{jk} = \int x^{j} q_{k} \, \ud x$ is upper triangular with diagonal entries all equal to $1$; in particular, $A$ is invertible with $\nrm{A^{-1}} \aleq_{p_{0}} 1$. Now, for any $\lmb \in \bbN$, we define
\begin{equation*}
	p_{0, \lmb} =  {p_{0}(x)} - \sum_{j=0}^{n-1} \alp_{j}(\lmb) q_{j}(x),
\end{equation*}
where $\alp_{j}(\lmb) \in \bbR$'s are chosen so that $\int x^{k} p_{0, \lmb} = 0$ for $k = 0, \ldots, n-1$. Such a choice exists by the invertibility of $A$, and we have the estimate
\begin{equation*}
	\sup_{0 \leq j \leq n-1} \abs{\alp_{j}(\lmb)} \aleq_{p_{0}} \sup_{0 \leq j \leq n-1} \Abs{\int x^{j} e^{i \lmb x} p_{0}(x) \, \ud x}.
\end{equation*}
Finally, by repeated integration by parts, observe that the RHS is bounded by $C_{N, p_{0}} \lmb^{-N}$ for any $N > 0$. From this property, the desired uniform-in-$\lmb$ estimate \eqref{eq:bump-est} follows. \qedhere
\end{proof}

We now complete the proof of Theorem \ref{thm:norm-growth}. 

\begin{proof}[Proof of Theorem \ref{thm:norm-growth}]
	We consider only the translationally-symmetric case, as the proof in the axi-symmetric case requires only minor modifications. The proof is a straightforward application of Proposition \ref{prop:wavepackets}. We divide the argument into three simple steps. 
	
	\medskip
	
	\textit{(i) choice of initial data}
	
	\medskip 
	
	We start with an initial amplitude with single frequency and normalized energy
	\begin{equation*}
	g_{0, \lmb}(x, y) = p_{0, \lmb}(x) q_{0}(y),
	\end{equation*}
	where $p_{0}$ is given by Lemma~\ref{lem:bump} and $q_{0}$ is a fixed smooth function supported in $(\frac{1}{2} y_{1}, y_{1})$. Then we apply Proposition~\ref{prop:wavepackets} to construct the initial data $$\tb_{(\lmb)}(0)  = ( \rd_y\tpsi_{(\lmb)}(0),-\rd_x\tpsi_{(\lmb)}(0) , \tb_{(\lmb)}^z(0)) $$ for \eqref{eq:e-mhd-2.5d-lin}, and normalize its $L^2$ norm by 1. The lower bound stated in Proposition \ref{prop:wavepackets} guarantees that (by taking $\lmb \ge 1$ large if necessary) $\nrm{g_{0,\lmb}}_{L^2}  {\aeq} 1$ uniformly in $\lmb \gg 1$. We denote by $\tb_{(\lmb)} = ( \rd_y\tpsi_{(\lmb)} ,-\rd_x\tpsi_{(\lmb)} , \tb_{(\lmb)}^z) $ the corresponding degenerating wave packet solution, and $b_{(\lmb)}$ be an $L^{2}$-solution with the same initial data. 
	
	\medskip
	
	\textit{(ii) application of the generalized energy identity}
	
	\medskip 
	
	Notice that since $\tilde{b}_{(\lmb)}$ is smooth, Proposition \ref{prop:justify-eq} is applicable. Then using \eqref{eq:en-e-mhd-parallel}, we obtain 
	\begin{equation*}
	\begin{split}
	&\brk{\tilde{b}_{(\lmb)}, b_{(\lmb)}}(t) - \brk{\tilde{b}_{(\lmb)}, b_{(\lmb)}}(0) \\
	&\qquad =  \brk{\tilde{b}_{(\lmb)}, b_{(\lmb)}}(t) - 1 \\ 
	&\qquad = \int_0^t \int -f''(\rd_x\tilde{\psi}_{(\lmb)} b_{(\lmb)}^z + \rd_x\psi_{(\lmb)} \tb^z_{(\lmb)}) + \nabla^\perp \err_{\tpsi} \cdot \nabla^\perp\psi_{(\lmb)} + \err_{\tb}b^z_{(\lmb)} \, \ud x \ud y  \ud s  
	\end{split}
	\end{equation*} and then applying the error bounds from Proposition \ref{prop:wavepackets} gives that  \begin{equation*}
	\begin{split}
	\left|\brk{\tilde{b}_{(\lmb)}, b_{(\lmb)}}(t) - \brk{\tilde{b}_{(\lmb)}, b_{(\lmb)}}(0)\right| & \lesssim  t \nrm{b_{(\lmb)}}_{L^\infty(I;L^2)} \left(  \nrm{\tb_{(\lmb)}}_{L^\infty(I;L^2)} + \nrm{ \nabla^\perp \err_{\tpsi} }_{L^\infty(I;L^2)} + \nrm{ \err_{\tb}}_{L^\infty(I;L^2)} \right) \\
	&\lesssim t \nrm{b_{(\lmb)}}_{L^\infty(I;L^2)}
	\end{split}
	\end{equation*} where the multiplicative constants depend on $f$ but not on $\lmb$. Thus, choosing $0 < T = T(\frac{\nrm{b}_{L^\infty(I;L^2)}}{\nrm{b_0}_{L^2}})$ sufficiently small (independent of $\lmb$),
	\begin{equation}\label{eq:lb}
	\brk{\tilde{b}_{(\lmb)}, b_{(\lmb)}}(t) >  \frac{1}{2} \quad \hbox{ for } t \in [0, T].
	\end{equation} 
		
	\medskip
	
	\textit{(iii) growth of Sobolev norms}
	
	\medskip 

	Let $p' \in [1, \infty]$ and $s \in \bbR$ such that $s + \frac{1}{p'} - \frac{1}{2} \geq 0$. By our normalization of $\tb_{(\lmb)}(0)$ and \eqref{eq:wavepackets-degen}, we have the decomposition $\tb_{(\lmb)} = \tb_{(\lmb)}^{main} + \tb_{(\lmb)}^{small}$, where
\begin{equation*}
	\begin{split}
	\nrm{\tb_{(\lmb)}^{main}(t)}_{L^{p'}_{x} W^{-s, p'}_{y}} 
	\aleq_{s} \lmb^{-s} e^{-c_f(s + \frac{1}{p'}-\frac{1}{2})\lmb t}, \quad
	\nrm{\tb_{(\lmb)}^{small}(t)}_{L^{2}_{x, y}} 
	\aleq \lmb^{-1},
	\end{split}
	\end{equation*} which holds uniformly for $t \in [0,T]$. Let $p$ be the Lebesgue dual of $p$ (i.e., $\frac{1}{p} = 1 - \frac{1}{p'}$). By \eqref{eq:lb}, duality, \eqref{eq:wavepackets-degen} and \eqref{eq:wavepackets-degen-small},
	\begin{align*}
	\frac{1}{2}
	&< \brk{\tb_{(\lmb)}, b_{(\lmb)}}(t)
	\leq \nrm{\tb_{(\lmb)}^{main}}_{L^{p'}_{x} W^{-s, p'}_{y}} \nrm{b_{(\lmb)}}_{L^{p}_{x} W^{s, p}_{y}}
	+ \nrm{\tb_{(\lmb)}^{small}}_{L^{2}_{x, y}} \nrm{b_{(\lmb)}}_{L^{2}_{x, y}} \\
	& \leq C_{s} \lmb^{-s} e^{-c_f(s + \frac{1}{p'}-\frac{1}{2})\lmb t} \nrm{b_{(\lmb)}}_{L^{p}_{x} W^{s, p}_{y}} + C \lmb^{-1}.
	\end{align*} 
	By requiring $\lmb$ to be sufficiently large, we may absorb the last term into the LHS. 
	Due to our construction and normalization, it is easy to see that $\nrm{b_{(\lmb)}(0)}_{W^{s, p}_{x, y}} \aeq \lmb^{s}$; hence we have proved the desired norm growth.  \qedhere
	\end{proof}

We now prove Theorem \ref{thm:inst}. Recall that for the purpose of stating this result, we have assumed that the solution map is uniquely well-defined for $L^2$ initial data. 

\begin{proof}[Proof of Theorem \ref{thm:inst}]
	Again, we only consider the translationally-symmetric case; in the axi-symmetric case, $\theta$ plays the role of $x$ and the arguments are somewhat simpler thanks to the periodicity in $\theta$. Note that it is sufficient to consider an arbitrarily small $s' > 0$; in particular, we may assume that $s' < \frac{1}{2}$.
		
 	We construct initial data $\tb_{(\lmb)}(0)$ for all $\lmb \ge \lmb_{0}$, where $\lmb_{0} \in \bbN$ is sufficiently large with respect to $\bgB$,  as in the proof of Theorem \ref{thm:norm-growth}. Using the $L^{2}$-solution map, the solution $b_{(\lmb)}$ with initial data $b_{(\lmb)}(0) = \tilde{b}_{(\lmb)}(0)$ is well-defined on the time interval $[0,1]$. Each $b_{(\lmb)}(0)$ is normalized to be 1 in $L^2$. The idea is to take the series $$b =\sum_{\lmb } \alp_{\lmb} b_{(\lmb)}$$ with an appropriate choices of $\set{\lmb} \subseteq 2^{\bbN_{0}}$ and $\alp_{\lmb} > 0$. Note that $x$-translation is preserved by uniqueness. By linearity and boundedness, $\rd_{x}$'s are propagated. Furthermore, again by linearity and boundedness, any $x$-frequency support properties are preserved. 
 	
 	When $b_{(\lmb)}$ is chosen so that its $x$-frequency support lies in the region $\lmb + O(1)$ (that is, the first statement of Theorem~\ref{thm:lin-stab} and the second case of Lemma~\ref{lem:bump}), then we simply choose $\alp_{\lmb}$ to be any super-polynomially decaying sequence such that $e^{c \lmb} {\alp_{\lmb}} \to \infty$ for any $c > 0$, and arrange $\lmb$'s so that
 	\begin{equation*}
 	\brk{b_{(\lmb')}(t), \tilde{b}_{(\lmb)}(t)} = 0 \hbox{ if } \lmb' \neq \lmb.
 	\end{equation*} This choice of coefficients $\alpha_\lmb$ guarantees that the initial data is $C^\infty$-smooth. On the other hand, consider $0 < t < \dlt$, where $\dlt > 0$ is sufficiently small so that $\brk{b_{(\lmb)}(t), \tb_{(\lmb)}(t)} > \frac{1}{2}$ (see the proof of Theorem~\ref{thm:norm-growth}). By the orthogonality condition,
	\begin{align*}
	\frac{1}{2} \alp_{\lmb} < \brk{b_{(\lmb)}(t), \tb_{(\lmb)}(t)} = \brk{b(t), \tb_{(\lmb)} (t)} \aleq \nrm{b(t)}_{H^{s'}} \nrm{\tb_{(\lmb)}(t)}_{H^{-s'}}.
\end{align*}
Since $0 < s' < \frac{1}{2}$, by \eqref{eq:wavepackets-degen}, \eqref{eq:wavepackets-degen-small} and our normalization, we have $\nrm{\tb_{(\lmb)}}_{H^{-s'}} \aleq_{s'} \lmb^{-s'} e^{- c_{0}(\bgB) s' \lmb t}$; thus $\nrm{b(t)}_{H^{s'}} \ageq \alp_{\lmb} \lmb^{s'} e^{c_{0}(\bgB) s' \lmb t}$, where the implicit constant is independent of $\lmb$ and $t$. Taking $\lmb \to \infty$, we see that $b(t) \not \in H^{s'}$ for any $0 < t < \dlt$, as desired.

 	
 	 Next, we consider the second statement in Theorem~\ref{thm:lin-stab}, i.e., when we would like $\supp b_{(\lmb)} \subseteq (-1, 1) \times (\bbT, \bbR)_{y} \times \bbT_{z}$. We construct $p_{0, \lmb}(x)$ by Lemma~\ref{lem:bump} with $n > s+1$, and  choose $\alp_{\lmb} = \lmb^{-s}$. For $\lmb' > \lmb$, we simply have
 	\begin{equation*}
 	\sum_{\lmb' > \lmb} \brk{\alpha_{\lmb'} b_{(\lmb')}, \tilde{b}_{(\lmb)}} (t) \aleq \sum_{\lmb' > \lmb} \lmb'^{-s} \aleq \lmb^{-s}.
 	\end{equation*} On the other hand, to treat $\lmb' < \lmb$, we require $\tilde{b}_{(\lmb)}$ to obey
 	\begin{equation*}
 		\nrm{(\lmb^{-1} \rd_{x})^{-k} \tilde{b}_{(\lmb)}}_{L^\infty([0,1];L^{2})} \aleq_{k} 1
 	\end{equation*}
 	for all $0 \le k \le n$. To prove the above estimate, use the $x$-invariance to estimate the LHS in terms of $g_{0, \lmb}^{(-n-1; \lmb)}(x, y) = p_{0, \lmb}^{(-n-1; \lmb)}(x) q(y)$ and then use \eqref{eq:bump-est}. Thus
 	\begin{equation*}
 	\sum_{\lmb' < \lmb} \brk{\alpha_{\lmb'} b_{(\lmb')}, \tilde{b}_{(\lmb)}} (t) 
 	= \sum_{\lmb' < \lmb} \brk{\alpha_{\lmb'} (\lmb^{-1} \rd_{x})^{n} b_{(\lmb')}, (\lmb^{-1} \rd_{x})^{-n} \tilde{b}_{(\lmb)}} (t) 
 	\aleq \sum_{\lmb' < \lmb} \left( \frac{\lmb'}{\lmb} \right)^{n} \lmb'^{-s}
 	\aleq \lmb^{-s}
 	\end{equation*}
 	where the last inequality holds since $n > s + 1$. Choosing $\lmb$'s to be sufficiently separated, we may ensure that the last implicit constant in each estimate is small. Hence for $0 < t < \dlt$, where $\dlt > 0$ is sufficiently small so that $\brk{b_{(\lmb)}(t), \tb_{(\lmb)}(t)} > \frac{1}{2}$ (see the proof of Theorem~\ref{thm:norm-growth}), we obtain
\begin{align*}
	\lmb^{-s}
	\aleq \brk{b(t), \tb_{(\lmb)}(t)} 
	\aleq \nrm{b(t)}_{H^{s'}} \nrm{\tb_{(\lmb)}(t)}_{H^{-s'}}.
\end{align*}
As before, we have $\nrm{\tb_{(\lmb)}}_{H^{-s'}} \aleq_{s'} \lmb^{-s'} e^{- c_{0}(\bgB) s' \lmb t}$. Thus $\nrm{b(t)}_{H^{s'}} \ageq \lmb^{s'-s} e^{c_{0}(\bgB) s' \lmb t}$, where the implicit constant is independent of $\lmb$, $t$. Taking $\lmb \to \infty$, we see that $b(t) \not \in H^{s'}$ for any $0 < t < \dlt$.  

The fact that $b(t)$ is not contained even in the local Sobolev space $H^{s'}_{loc}$ follows from the preceding duality arguments, as the approximate solution $\tb$ is compactly supported in $y$ and either compactly supported or decaying sufficiently fast in $x$. This finishes the proof. \end{proof}

\subsection{Proof of Theorems~\ref{thm:norm-growth} and \ref{thm:inst} for \eqref{eq:hall-mhd}}
The proof is analogous with the case of  {e}lectron-MHD. However, a slight twist from the \eqref{eq:e-mhd} case is to choose
\begin{equation*}
	u_{0(\lmb)} = 0.
\end{equation*}
The idea is that it differs from the initial data in Proposition~\ref{prop:wavepackets-hall} only by $O(\lmb^{-1})$, so it does not matter. When $\nu > 0$, we need to use the dissipation term to control some errors; however, the same scheme works. 

\begin{proof}[Proof of Theorem \ref{thm:norm-growth} for \eqref{eq:hall-mhd}]
	Again, we focus only on the translationally symmetric case. 
	
	\medskip
	
	\textit{(i) choice of initial data}
	
	\medskip 
	
	We take the same function $g_{0,\lmb}$ as in the above proof, and apply Proposition \ref{prop:wavepackets-hall} to construct the degenerating wave packets $\tb_{(\lmb)}, \tu_{(\lmb)}$ associated with $g_{0,\lmb}$. We normalize  $g_{0,\lmb}$ so that the $L^2$ norm of $\tb_{(\lmb)}(0)$ becomes 1 (we still have $\nrm{g_{0,\lmb}}_{L^2}  {\aeq} 1$ uniformly in $\lmb \gg 1$). Now let $(u_{(\lmb)}, b_{(\lmb)})$ be a solution with the initial data $(0, \tb_{(\lmb)}(0))$. 
	
	\medskip
	
	\textit{(ii) application of the generalized energy identity}
	
	\medskip 
	
	Notice that since the functions $\tu_{(\lmb)}, \tilde{b}_{(\lmb)}$ are smooth, Proposition \ref{prop:justify-eq} is applicable. Then using \eqref{eq:en-hall-mhd-parallel}, we obtain 
	\begin{equation*}
	\begin{split}
	&\brk{\tilde{b}_{(\lmb)}, b_{(\lmb)}}(t) - \brk{\tilde{b}_{(\lmb)}, b_{(\lmb)}}(0) + \brk{\tilde{u}_{(\lmb)}, u_{(\lmb)}}(t)  - \brk{\tilde{u}_{(\lmb)}, u_{(\lmb)}}(0) + 2\nu\int_0^t \brk{\nabla\tu_{(\lmb)} , \nabla u_{(\lmb)}} ds  \\
	&\qquad =  \brk{\tilde{b}_{(\lmb)}, b_{(\lmb)}}(t) - 1 + \brk{\tilde{u}_{(\lmb)}, u_{(\lmb)}}(t)  + 2\nu\int_0^t \brk{\nabla\tu_{(\lmb)} , \nabla u_{(\lmb)}} ds   \\ 
	&\qquad = \int_0^t \int -f''(\rd_x\tilde{\psi}_{(\lmb)} b_{(\lmb)}^z + \rd_x\psi_{(\lmb)} \tb^z_{(\lmb)}) -f'( \nabla\tpsi_{(\lmb)} \cdot\nabla^\perp(-\Delta)^{-1}\omega_{(\lmb)} + \nabla\psi_{(\lmb)}\cdot\nabla^\perp(-\Delta)^{-1}\tomg_{(\lmb)}  )   \\
	&\qquad\qquad + \nabla^\perp \errh_{\tpsi} \cdot \nabla^\perp\psi_{(\lmb)} + \errh_{\tb}b^z_{(\lmb)}   + \nabla^\perp(-\Delta)^{-1}\errh_{\tomg}^{(\nu)} \cdot\nabla^\perp(-\Delta)^{-1}\omega + \errh_{\tu}^{(\nu)}u^z_{(\lmb)} \, \ud x \ud y  \ud s  
	\end{split}
	\end{equation*} and then after a bit of rearranging, \begin{equation*}
	\begin{split}
	&\left|\brk{\tilde{b}_{(\lmb)}, b_{(\lmb)}}(t) - 1 \right|  \lesssim \nrm{\tu_{(\lmb)}(t)}_{L^2}\nrm{u_{(\lmb)}(t)}_{L^2} + \nu t^{1/2} \nrm{\nabla\tu_{(\lmb)}}_{L^\infty(I;L^2)}\nrm{u_{(\lmb)}}_{L^2(I;\dot{H}^1)} \\
	&\qquad + t\nrm{b_{(\lmb)}}_{L^\infty(I;L^2)} \left( \nrm{\tb_{(\lmb)}}_{L^\infty(I;L^2)} + \nrm{\nabla^\perp\errh_{\tpsi}}_{L^\infty(I;L^2)} + \nrm{\errh_{\tb}}_{L^\infty(I;L^2)}  \right) \\
	&\qquad + t\nrm{u_{(\lmb)}}_{L^\infty(I;L^2)} \left( \nrm{\tb_{(\lmb)}}_{L^\infty(I;L^2)} + \nrm{\nabla^\perp(-\Delta)^{-1}(\errh_{\tomg}^{(\nu)}+\nu\Delta\tomg_{(\lmb)})}_{L^\infty(I;L^2)} + \nrm{\errh_{\tu}^{(\nu)}+\nu\Delta\tpsi_{(\lmb)}}_{L^\infty(I;L^2)}  \right) \\
	&\qquad +\nu t^{1/2}\nrm{u_{(\lmb)}}_{L^2(I;\dot{H}^1)} \left(  \nrm{\tomg_{(\lmb)}}_{L^\infty(I;L^2)} + \nrm{\nabla\tpsi_{(\lmb)}}_{L^\infty(I;L^2)} 
	 \right) 
	\end{split}
	\end{equation*}
	and applying the error bounds together with the smoothing estimates from Proposition \ref{prop:wavepackets}, for $\lmb \ge 1, 0 < t \le 1$ we obtain \begin{equation*}
	\begin{split}
	&\left|\brk{\tilde{b}_{(\lmb)}, b_{(\lmb)}}(t) - 1 \right|  \lesssim ((1+\nu) t^{1/2} +\lmb^{-1}) \left(   \nrm{b_{(\lmb)}}_{L^\infty(I;L^2)} +\nrm{u_{(\lmb)}}_{L^\infty(I;L^2)} + \nrm{u_{(\lmb)}}_{L^2(I;\dot{H}^1)} \right) 
	\end{split}
	\end{equation*}
	 where the multiplicative constants depend on $f$ but not on $\lmb$. Thus, choosing $0 < T \le 1$ sufficiently small (independent of $\lmb$ and depending on $\nu$ only when $\nu \gg 1$), we obtain for all sufficiently large $\lmb$ that 
	\begin{equation}\label{eq:lb2}
	\brk{\tilde{b}_{(\lmb)}, b_{(\lmb)}}(t) > \frac{1}{2} \quad \hbox{ for } t \in [0, T].
	\end{equation}
	
	\medskip
	
	\textit{(iii) growth of Sobolev norms}
	
	\medskip 
	
	With \eqref{eq:lb2} in hand, the proof of the norm growth estimate proceeds exactly as in the electron-MHD case, via duality and the degeneration estimates. We omit the details. \qedhere
\end{proof}

\begin{proof}[Proof of Theorem \ref{thm:inst} for \eqref{eq:hall-mhd}]
	We simply repeat the proof of Theorem \ref{thm:inst} for the electron-MHD case, using instead the lower bound \eqref{eq:lb2}: take data and solution of the form \begin{equation*}
	\begin{split}
	b = \sum_{\lmb } \alpha_\lmb b_{(\lmb)} , \qquad \tb = \sum_{\lmb } \alpha_\lmb \tb_{(\lmb)}
	\end{split}
	\end{equation*} with appropriately chosen $\alpha_\lmb$ as in the above proof.
\end{proof}

\section{Proof of the nonlinear illposedness results in Sobolev spaces} \label{sec:pf-nonlin}

We are in a position to complete the proof  of Theorem  \ref{thm:illposed-strong}. We emphasize in advance that in the proof below, all the implicit constants are independent  {of} $\lmb$  {as well as the adequate norm of the solution map (which will be finite by a contradiction assumption).} 

\subsection{Proof of Theorem \ref{thm:illposed-strong} for \eqref{eq:e-mhd}}
 
We consider only the case when $\bgB = f(y)\rd_x$; the proof in the axi-symmetric case requires only minor modifications. The proof is by contradiction. That is,  we further assume from now on that for $s_0 \ge 3$, the solution map is bounded, and for $s_0 > \max\{2, 3(1-\alpha)\}$, the solution map is $\alpha$-H\"older continuous.

\medskip

\textit{(i) choice of initial data}

\medskip

We fix a complex-valued Schwartz function $g_{0}(x, y) \in \calS(M^{2})$ with 
\begin{equation*}
\supp g_{0} \subseteq (\bbT, \bbR)_{x} \times (\tfrac{1}{2} y_{1}, y_{1}).
\end{equation*} We may take $g_0$ to be compactly supported in $x$ as well. Then, for $\lmb \in \bbN$, we  {choose the} initial data explicitly  {as} \begin{equation}\label{eq:illposed-strong-id}
\begin{split}
\bfB_{(\lmb)}(0) = \bgB + \eps \lmb^{-s-n} \tilde{b}_{(\lmb)}(0)
\end{split}
\end{equation} where  $\eps > 0, s > 0$ and $f$ are from the statement of the theorem, $n \ge 0$ is a parameter that will be chosen to be depending on $s, \alpha$ below, and \begin{equation*}
\begin{split}
\tilde{b}_{(\lmb)}(0) = (-\rd_y \tilde{\psi}_{(\lmb)}(0) , \rd_x\tilde{\psi}_{(\lmb)}(0) ,\tilde{b}_{(\lmb)}^z(0) ), 
\end{split}
\end{equation*} where the pair $(\tilde{\psi}_{(\lmb)}(0), \tilde{b}_{(\lmb)}^z(0))$ is explicitly given in \eqref{eq:initial-psi}, \eqref{eq:initial-bz}. For each $\lmb$, we normalize  $\nrm{\tilde{b}_{(\lmb)}(0)}_{L^2} = 1$. Since $\nrm{\tb_{(\lmb)}(0)}_{\dot{H}^{s'}} \lesssim_{s'} \lmb^{s'}$ for any $s' > 0$, the initial data $\bfB_{(\lmb)}(0)$ belongs to the ball $\calB_{\eps}(\bgB;H^s_{comp})$, by replacing the coefficient $\eps$ in \eqref{eq:illposed-strong-id} by $\eps/A$ for some large constant $A > 0$ independent of $\lmb$ if necessary.

\medskip

\textit{(ii) application of the generalized energy identity}

\medskip

By the assumption of Theorem~\ref{thm:illposed-strong}, there exists $\delta > 0$ and a unique local solution $\bfB_{(\lmb)}(t)$ in $L^\infty_t([0,\delta],H^{s_0})$. The additional hypothesis guarantees that, for all $\lmb \in \mathbb{N}$, the sequence of solutions $$b_{(\lmb)}(t) := \bgB - \bfB_{(\lmb)}(t)$$ is uniformly bounded in the space $L^\infty_t([0,\delta],H^{s_0})$ (with $s_0$ depending on $\alpha$). Moreover, by the uniqueness assumption, $b_{(\lmb)}$ is independent of $z$, and introducing $\psi_{(\lmb)}$ such that $b_{(\lmb)} = (\nabla^\perp\psi_{(\lmb)},b_{(\lmb)}^z)$, we have that the pair $(\psi_{(\lmb)}, b_{(\lmb)})$ solves the system  {(cf.~\eqref{eq:e-mhd-2.5d-pert})} \begin{equation}\label{eq:emhd-nonlinear}
\left\{ \begin{aligned}
&\rd_tb^z_{(\lmb)} - f\rd_x\Delta\psi_{(\lmb)} + f''\rd_x\psi_{(\lmb)} = -\nabla^\perp\psi_{(\lmb)} \cdot\nabla\Delta\psi_{(\lmb)} \\
&\rd_t\psi_{(\lmb)} + f\rd_xb^z_{(\lmb)} = \nabla^\perp\psi_{(\lmb)} \cdot\nabla b_{(\lmb)}^z 
\end{aligned} \right. 
\end{equation} with initial data $(\tilde{\psi}_{(\lmb)}(0), \tilde{b}_{(\lmb)}^z(0))$. 

Regarding the approximate solution $(\tpsi_{(\lmb)},\tb_{(\lmb)})$, let us recall  the error bounds \begin{equation*}
\begin{split}
\sup_{t \in [0,\delta]} \nrm{\err_{\tb}(t)}_{L^2} &\lesssim 1,\\
\sup_{t \in [0,\delta]} \nrm{\nabla  \err_{\tpsi}  }_{L^2} & \lesssim 1 ,
\end{split}
\end{equation*} where all the implicit constants are independent of $\lmb$. By the definition of the error terms, we have \begin{equation}\label{eq:emhd-nonlinear2}
\left\{ \begin{aligned}
&\rd_t\tb^z_{(\lmb)} - f\rd_x\Delta\tpsi_{(\lmb)} + f''\rd_x\tpsi_{(\lmb)} = \err_{\tb} \\
&\rd_t\tpsi_{(\lmb)} + f\rd_x\tb^z_{(\lmb)} =  \err_{\tpsi}.
\end{aligned} \right. 
\end{equation}  Using \eqref{eq:emhd-nonlinear} and \eqref{eq:emhd-nonlinear2}, we compute that \begin{equation}\label{eq:gei-unbounded}
\begin{split}
\frac{\ud}{\ud t} \brk{b_{(\lmb)}(t),\tb_{(\lmb)}(t)} &= -\brk{f''\rd_x\tpsi_{(\lmb)}, b^z_{(\lmb)}} - \brk{\tb_{(\lmb)}^z,f''\rd_x\psi_{(\lmb)}} \\
&\quad + \brk{\nabla\err_{\tpsi},\nabla\psi_{(\lmb)}} + \brk{\nabla\tpsi_{(\lmb)},\nabla ( \nabla^\perp\psi_{(\lmb)}\cdot\nabla b^z_{(\lmb)})} \\
&\quad + \brk{\err_{\tb},b^z_{(\lmb)}} + \brk{\tb_{(\lmb)}^z, -\nabla^\perp\psi_{(\lmb)}\cdot\nabla \Delta\psi_{(\lmb)}}. 
\end{split}
\end{equation} 

 {First}, we proceed in the case $s_0 \ge 3$ to contradict boundedness of the solution map. We bound the first and the third lines on the  {RHS,} respectively, by \begin{equation*}
\begin{split}
\left| -\brk{f''\rd_x\tpsi_{(\lmb)}, b^z_{(\lmb)}} - \brk{\tb_{(\lmb)}^z,f''\rd_x\psi_{(\lmb)}} \right|  \lesssim \nrm{\tb_{(\lmb)}}_{L^2} \nrm{b_{(\lmb)}}_{L^2} \lesssim \nrm{b_{(\lmb)}}_{L^2} 
\end{split}
\end{equation*} and \begin{equation*}
\begin{split}
\left| \brk{\err_{\tb},b^z_{(\lmb)}} + \brk{\tb_{(\lmb)}^z, -\nabla^\perp\psi_{(\lmb)}\cdot\nabla \Delta\psi_{(\lmb)}} \right| & \lesssim \nrm{\err_{\tb}}_{L^2} \nrm{b_{(\lmb)}}_{L^2} + \nrm{\tb_{(\lmb)}}_{L^2} \nrm{\nabla\Delta\psi_{(\lmb)}}_{L^2} \nrm{b_{(\lmb)}}_{L^\infty} \\
& \lesssim \nrm{b_{(\lmb)}}_{L^2}, 
\end{split}
\end{equation*} where we have used that $\sup_{t \in [0,\delta]}\nrm{\tb_{(\lmb)}}_{L^2} \lesssim 1$, the error bound on $\nrm{\err_{\tb}}_{L^2}$, and the uniform bound  $$\nrm{\nabla\Delta\psi_{(\lmb)}}_{L^2}  \nrm{b_{(\lmb)}}_{L^\infty}  \lesssim \nrm{b_{(\lmb)}}_{L^2}^{1/3}\nrm{b_{(\lmb)}}_{H^{3}}^{2/3} \nrm{b_{(\lmb)}}_{L^{2}}^{2/3} \nrm{b_{(\lmb)}}_{H^{3}}^{1/3}  
\lesssim \nrm{b_{(\lmb)}}_{L^2} \nrm{b_{(\lmb)}}_{H^3}
\lesssim \nrm{b_{(\lmb)}}_{L^2},$$ for $s_0 \ge 3$. Regarding the second line we have \begin{equation*}
\begin{split}
& \brk{\nabla\err_{\tpsi},\nabla\psi_{(\lmb)}} + \brk{\nabla\tpsi_{(\lmb)}, \nabla^\perp\psi_{(\lmb)}\cdot\nabla \nabla b^z_{(\lmb)} }   + \brk{\nabla\tpsi_{(\lmb)}, \nabla^\perp\nabla \psi_{(\lmb)}\cdot\nabla b^z_{(\lmb)} }  
\end{split}
\end{equation*} and the first two terms are bounded by a constant multiple of $\nrm{b_{(\lmb)}}_{L^2}$, again with $s_0 \ge 3$.  Lastly, \begin{equation*}
\begin{split}
\left|  \brk{\nabla\tpsi_{(\lmb)}, \nabla^\perp\nabla \psi_{(\lmb)}\cdot\nabla b^z_{(\lmb)} }   \right| \lesssim \nrm{\nabla^2\psi_{(\lmb)} \nabla b^z_{(\lmb)}}_{L^2} \lesssim \nrm{\nabla b_{(\lmb)}}_{L^4}^2 \lesssim \nrm{b_{(\lmb)}}_{L^2} \nrm{b_{(\lmb)}}_{H^3} 
\end{split}
\end{equation*} by an application of the Sobolev inequality. Collecting the bounds and using the energy identity for $b_{(\lmb)}$, we conclude that \begin{equation*}
\begin{split}
\left| \frac{\ud}{\ud t} \brk{b_{(\lmb)}(t), \tb_{(\lmb)}(t)} \right| \lesssim \nrm{b_{(\lmb)}(t)}_{L^2} \lesssim \nrm{b_{(\lmb)}(0)}_{L^2}
\end{split}
\end{equation*} where the implicit constants are independent on $\lmb$.  Therefore, by taking sufficiently small $0 < T \le \delta$, we can guarantee that \begin{equation*}
\begin{split}
\brk{b_{(\lmb)}(t),\tb_{(\lmb)}(t)} > \frac{1}{2}\nrm{b_{(\lmb)}(0)}_{L^2} = \frac{1}{2}\eps\lmb^{-s-n}, \qquad 0 < t \le T. 
\end{split}
\end{equation*} uniformly for all sufficiently large $\lmb$. 

Now we show how to arrive at the above inequality in the case $0 < \alpha \le 1$ under the $\alpha$-H\"older continuity assumption. While the choice of $n \ge 0$ did not play any role in the above, now we shall take it to be sufficiently large. Using the assumption of H\"older continuity around the stationary solution $\bgB$, we obtain the bound \begin{equation*}
\begin{split}
\nrm{b_{(\lmb)}}_{H^{s_0}} \lesssim \eps^\alpha \lmb^{-n\alpha}.
\end{split}
\end{equation*} Then, we can obtain better bounds on the quadratic terms $\nrm{\nabla b_{(\lmb)}}_{L^4}^2$ and $\nrm{b_{(\lmb)}\nabla^2 b_{(\lmb)}}_{L^2}$. Regarding the former, we bound \begin{equation*}
\begin{split}
\nrm{\nabla b_{(\lmb)}}_{L^4}^2 &\lesssim \nrm{b_{(\lmb)}}_{L^2}^{2\theta} \nrm{b_{(\lmb)}}_{H^{s_0}}^{2(1-\theta)}, \quad \theta = 1 - \frac{3}{2s_0} \\
& \lesssim  \eps^{2(1-\theta)\alpha + 2\theta } \lmb^{-2\theta(n+s)-2(1-\theta)n\alpha} \lesssim \eps \lmb^{-n-s}  
\end{split}
\end{equation*}  for $s_0 > \max\{ \frac{3}{2}, 3(1-\alpha) \}$  {by taking $n$ sufficiently large}. A similar bound can be obtained for $\nrm{b_{(\lmb)}\nabla^2 b_{(\lmb)}}_{L^2}$, now with $s_0 > \max\{ 2, 3(1-\alpha) \}$. 

\medskip

\textit{(iii) growth of Sobolev norms}

\medskip

Proceeding as in the proof of Theorem \ref{thm:norm-growth} using duality and \eqref{eq:wavepackets-degen}--\eqref{eq:wavepackets-degen-small}, we obtain \begin{equation*}
\begin{split}
\nrm{b_{(\lmb)}(t)}_{H^{s_0}} \ageq_{s_{0}} \lmb^{s_{0}} e^{c_{f} s_{0} \lmb t} \nrm{b_{(\lmb)}(0)}_{L^{2}} = \eps \lmb^{s_{0}-s-n} e^{c_{f} s_{0} \lmb t},
\end{split}
\end{equation*} 
which is a contradiction since $\lmb$ may be arbitrarily large. This finishes the proof for the electron-MHD case. \qedsymbol

\subsection{Proof of Theorem \ref{thm:illposed-strong} for \eqref{eq:hall-mhd}}

We shall restrict ourselves to the case $s_0 \ge 3$, necessary changes for the H\"older case of $s_0 > \max\{2,3(1-\alpha)\}$ being obvious. We also fix $\bgB = f(y)\rd_x$ and some $\nu \ge 0$. 

To begin with, take  {the} initial data as in \eqref{eq:illposed-strong-id} together with trivial initial velocity; that is,  $\bfu_{(\lmb)}(0) = 0 $. Then, by the assumption of existence and uniqueness, we obtain a $z$-independent solution quadruple $(u^z_{(\lmb)}, \omega_{(\lmb)}, b^z_{(\lmb)}, \psi_{(\lmb)})$ to the system \eqref{eq:hall-mhd-2.5d-pert}. The solution is uniformly bounded (in $\lmb$) in the space \begin{equation*}
\begin{split}
\nabla u^z_{(\lmb)}, \omega_{(\lmb)} \in L^\infty([0,\delta];H^{s_0-1}),\qquad \nabla\psi_{(\lmb)}, b^z_{(\lmb)} \in L^\infty([0,\delta]; H^{s_0})
\end{split}
\end{equation*} with some constant $\delta > 0$. Appealing to Proposition \ref{prop:wavepackets-hall} with initial data \eqref{eq:initial-bz}, \eqref{eq:initial-psi}, and \eqref{eq:wavepackets-hall-u-omg}, we obtain the approximate solution $(\tu_{(\lmb)}^z, \tomg_{(\lmb)}, \tb_{(\lmb)}^z, \tpsi_{(\lmb)})$ with the estimates \begin{equation*}
\begin{split}
\nrm{\tu^{z}_{(\lmb)}(t)}_{L^{2}} + \nrm{\nb^{\perp} (-\lap)^{-1} \tomg_{(\lmb)}(t)}_{L^{2}} \aleq & \lmb^{-1}   ,\\
\nrm{\nb \tu^{z}_{(\lmb)}(t)}_{L^{2}} + \nrm{\tomg_{(\lmb)}(t)}_{L^{2}} \aleq & 1 ,
\end{split}
\end{equation*}
and 
\begin{equation*}
\begin{split}
\nrm{\errh_{\tu}^{(\nu)}(t) + \nu \lap \tpsi_{(\lmb)}(t)}_{L^{2}} \aleq & \lmb^{-1} , \\
\nrm{\nb^{\perp} (-\lap)^{-1} (\errh_{\tomg}^{(\nu)} + \nu \lap \tomg_{(\lmb)})(t)}_{L^{2}} \aleq & \lmb^{-1} , \\
\nrm{\errh_{\tb}^{(\nu)}(t)}_{L^{2}} \aleq & 1, \\
\nrm{\nb  \errh_{\tpsi}^{(\nu)}  (t)}_{L^{2}} \aleq & 1.
\end{split} 
\end{equation*}  Now using \eqref{eq:en-hall-mhd-parallel}, we obtain 
\begin{equation*}
\begin{split}
&\frac{\ud}{\ud t}\left(\brk{\tilde{b}_{(\lmb)}, b_{(\lmb)}}(t) + \brk{\tilde{u}_{(\lmb)}, u_{(\lmb)}}(t) \right) + 2\nu \brk{\nabla\tu_{(\lmb)} , \nabla u_{(\lmb)}}   \\
&\qquad = - \brk{f'' \rd_{x} \tpsi_{(\lmb)}, b_{(\lmb)}^{z}} - \brk{\tb^{z}_{(\lmb)}, f'' \rd_{x} \psi_{(\lmb)}} 
- \brk{f' \nb \tpsi_{(\lmb)}, u_{(\lmb)}^{x, y}} 
- \brk{\tu_{(\lmb)}^{x,y}, f' \nb \psi_{(\lmb)}} \\
&\qquad \phantom{=}
+\brk{\nb^{\perp} \errh_{\tpsi}, \nb^{\perp} \psi_{(\lmb)}} + \brk{\nb^{\perp} \tpsi_{(\lmb)}, \nb^{\perp} \errh_{\psi}} 
+ \brk{\errh_{\tb}, b^{z}_{(\lmb)}} + \brk{\tb^{z}_{(\lmb)}, \errh_{b}} \\
&\qquad \phantom{=}
- \brk{\nb^{\perp} (-\lap)^{-1} \errh_{\tomg}^{(\nu)} , u_{(\lmb)}^{x,y}} - \brk{\tu_{(\lmb)}^{x,y}, \nb^{\perp} (-\lap)^{-1} \errh_{\omg}^{(\nu)}} 
+ \brk{\errh_{\tu}^{(\nu)}, u_{(\lmb)}^{z}} + \brk{\tu_{(\lmb)}^{z}, \errh_{u}^{(\nu)}}.
\end{split}
\end{equation*} and then, the terms on the first line of the RHS is bounded simply by \begin{equation}\label{eq:last-1}
\begin{split}
&\left| - \brk{f'' \rd_{x} \tpsi_{(\lmb)}, b_{(\lmb)}^{z}} - \brk{\tb^{z}_{(\lmb)}, f'' \rd_{x} \psi_{(\lmb)}} 
- \brk{f' \nb \tpsi_{(\lmb)}, u_{(\lmb)}^{x, y}} 
- \brk{\tu_{(\lmb)}^{x,y}, f' \nb \psi_{(\lmb)}} \right| \\
&\qquad \lesssim  \nrm{\tb_{(\lmb)}}_{L^2}  \nrm{b_{(\lmb)}}_{L^2} +\nrm{\tb_{(\lmb)}}_{L^2}   \nrm{u_{(\lmb)}}_{L^2} + \nrm{\tu_{(\lmb)}}_{L^2} \nrm{b_{(\lmb)}}_{L^2} \lesssim  \nrm{b_{(\lmb)}}_{L^2}  + \nrm{u_{(\lmb)}}_{L^2}.
\end{split}
\end{equation}  To bound the other terms, we recall the form of the error for a solution of the (nonlinear) Hall-MHD equations  {(cf.~\eqref{eq:hall-mhd-2.5d-pert} and \eqref{eq:hall-2.5d-err-parallel})}: \begin{equation*}
\left\{
\begin{aligned}
&\errh_{u}^{(\nu)} = - u_{(\lmb)}^{x,y} \cdot \nb u^{z}_{(\lmb)}, \\
&\errh_\omega^{(\nu)} = - u_{(\lmb)}^{x,y} \cdot \nb \omg_{(\lmb)} + \nb^{\perp} \psi_{(\lmb)} \cdot \nb \lap \psi_{(\lmb)} = \nabla\cdot(-\omg_{(\lmb)}u^{x,y}_{(\lmb)} + \Delta\psi_{(\lmb)}\nabla^\perp\psi_{(\lmb)}),  \\
&\errh_b = - u_{(\lmb)}^{x,y} \cdot \nb b_{(\lmb)}^{z} -  \nb^{\perp} \psi_{(\lmb)} \cdot \nb u_{(\lmb)}^{z} -  \nb^{\perp} \psi_{(\lmb)} \cdot \nb \lap \psi_{(\lmb)} ,\\
&\errh_\psi = - u_{(\lmb)}^{x,y} \cdot \nb \psi_{(\lmb)} + \nb^{\perp} \psi_{(\lmb)} \cdot \nb b^{z}_{(\lmb)}   .
\end{aligned}
\right.
\end{equation*}
{For the approximate solution, we write $\errh_{\tu} = \errh_{\tu}^{(\nu)} + \nu \lap \tu^{z}_{(\lmb)}$ and $\errh_{\tomg} = \errh_{\tomg}^{(\nu)} + \nu \lap \tomg_{(\lmb)}$.}
 Then, \begin{equation}\label{eq:last-2}
\begin{split}
\left| \brk{\nb^{\perp} \errh_{\tpsi}, \nb^{\perp} \psi_{(\lmb)}} + \brk{\nb^{\perp} \tpsi_{(\lmb)}, \nb^{\perp} \errh_{\psi}}  \right| & \lesssim  \nrm{b_{(\lmb)}}_{L^2} + \nrm{ \nabla^\perp u_{(\lmb)} \cdot \nabla\psi_{(\lmb)}  }_{L^2} + \nrm{u_{(\lmb)}\cdot\nabla\nabla^\perp\psi_{(\lmb)}}_{L^2} \\
& \quad  + \nrm{\nabla^\perp\nabla^\perp\psi_{(\lmb)}\cdot\nabla b_{(\lmb)}^z}_{L^2} + \nrm{\nabla^\perp\psi_{(\lmb)}\cdot\nabla\nabla^\perp b_{(\lmb)}^z}_{L^2} \\
&\lesssim \nrm{b_{(\lmb)}}_{L^2} + \nrm{u_{(\lmb)}}_{L^2}
\end{split}
\end{equation} where we have used \begin{equation*}
\begin{split}
\nrm{\nabla^\perp\nabla^\perp\psi_{(\lmb)}\cdot\nabla b_{(\lmb)}^z}_{L^2}  \lesssim \nrm{b_{(\lmb)}}_{L^2} \nrm{b_{(\lmb)}}_{H^3} \lesssim \nrm{b_{(\lmb)}}_{L^2} 
\end{split}
\end{equation*} as in the electron-MHD case. 
Next,
\begin{equation}\label{eq:last-3}
\begin{split}
\left|\brk{\errh_{\tb}, b^{z}_{(\lmb)}} + \brk{\tb^{z}_{(\lmb)}, \errh_{b}}\right| 
& \lesssim \nrm{\errh_{\tb}}_{L^2} \nrm{b_{(\lmb)}}_{L^2} + \nrm{\tb^z_{(\lmb)}}_{L^2} \nrm{u_{(\lmb)}}_{L^2}\nrm{\nabla b^z_{(\lmb)}}_{L^\infty} \\
& \quad + \nrm{\tb^z_{(\lmb)}}_{L^2} \nrm{b_{(\lmb)}}_{L^2} (\nrm{\nabla u_{(\lmb)}^{z}}_{L^\infty} + \nrm{b_{(\lmb)}}_{H^{3}} ) \\ & \lesssim \nrm{b_{(\lmb)}}_{L^2} + \nrm{u_{(\lmb)}}_{L^2} .
\end{split}
\end{equation} 
Now rewriting \begin{equation*}
\begin{split}
&- \brk{\nb^{\perp} (-\lap)^{-1} \errh_{\tomg}^{(\nu)} , u_{(\lmb)}^{x,y}} - \brk{\tu_{(\lmb)}^{x,y}, \nb^{\perp} (-\lap)^{-1} \errh_{\omg}^{(\nu)}} \\
&=  -\brk{\nabla^\perp(-\Delta)^{-1}(\errh_{\tomg} + \nu\Delta\tomg_{(\lmb)}),u^{x,y}_{(\lmb)}} - \nu  \brk{\nabla^\perp\tomg_{(\lmb)},u^{x,y}_{(\lmb)}} \\
& \phantom{=} + \brk{\tu_{(\lmb)}^{x,y}, \nabla^\perp(-\Delta)^{-1}\nabla\cdot(-\omg_{(\lmb)}u^{x,y}_{(\lmb)} + \Delta\psi_{(\lmb)}\nabla^\perp\psi_{(\lmb)})} ,
\end{split}
\end{equation*} we obtain 
 \begin{equation}\label{eq:last-4}
\begin{split}
&\left| - \brk{\nb^{\perp} (-\lap)^{-1} \errh_{\tomg}^{(\nu)} , u_{(\lmb)}^{x,y}} - \brk{\tu_{(\lmb)}^{x,y}, \nb^{\perp} (-\lap)^{-1} \errh_{\omg}^{(\nu)}}  \right| \\
&\lesssim \nrm{\nabla^\perp(-\Delta)^{-1}(\errh_{\tomg}+\nu\Delta\tomg_{(\lmb)})}_{L^2}\nrm{u_{(\lmb)}}_{L^2} + \nu\nrm{\tomg_{(\lmb)}}_{L^2} \nrm{\nabla u_{(\lmb)}}_{L^2} \\
&\quad + \nrm{\tu_{(\lmb)}}_{L^2} (\nrm{\omg_{(\lmb)}}_{L^\infty} \nrm{u_{(\lmb)}}_{L^2} + \nrm{\Delta\psi_{(\lmb)}}_{L^\infty}\nrm{b_{(\lmb)}}_{L^2} ) \\
&\lesssim \nrm{b_{(\lmb)}}_{L^2}  + \nrm{u_{(\lmb)}}_{L^2} + \nu\nrm{\nabla\tu_{(\lmb)}}_{L^2}\nrm{\nabla u_{(\lmb)}}_{L^2} .
\end{split}
\end{equation} Lastly, \begin{equation}\label{eq:last-5}
\begin{split}
&\left| \brk{\errh_{\tu}^{(\nu)}, u_{(\lmb)}^{z}} + \brk{\tu_{(\lmb)}^{z}, \errh_{u}^{(\nu)}} \right|  \lesssim \nrm{\errh_{\tu}+\nu\Delta\tpsi_{(\lmb)}}_{L^2}\nrm{u_{(\lmb)}}_{L^2} + \nu\nrm{\nb\tu_{(\lmb)}}_{L^2}\nrm{\nb u_{(\lmb)}}_{L^2}  . 
\end{split}
\end{equation} Collecting the bounds \eqref{eq:last-1}--\eqref{eq:last-5}  {and recalling that $\nrm{\nb \tu_{(\lmb)}}_{L^{2}} \aleq 1$ (cf.~Proposition~\ref{prop:wavepackets-hall})}, we obtain \begin{equation*}
\begin{split}
\frac{\ud}{\ud t} \left(\brk{\tilde{b}_{(\lmb)}, b_{(\lmb)}}(t) + \brk{\tilde{u}_{(\lmb)}, u_{(\lmb)}}(t) \right) &\lesssim  \nrm{b_{(\lmb)}}_{L^2}  + \nrm{u_{(\lmb)}}_{L^2} +  {\nu \nrm{\nabla u_{(\lmb)}}_{L^2},}
\end{split}
\end{equation*} and integrating in time and using the energy inequality \begin{equation*}
\begin{split}
\nrm{b_{(\lmb)}(t)}_{L^2}^2 + \nrm{u_{(\lmb)}(t)}_{L^2}^2 + 2\nu\int_0^t \nrm{\nabla u_{(\lmb)}}_{L^2}^2 ds \lesssim \nrm{b_{(\lmb)}(0)}_{L^2}^2 + \nrm{u_{(\lmb)}(0)}_{L^2}^2  =  \nrm{b_{(\lmb)}(0)}_{L^2}^2  
\end{split}
\end{equation*} gives \begin{equation*}
\begin{split}
\left|  \brk{\tilde{b}_{(\lmb)}, b_{(\lmb)}}(t) - \brk{\tilde{b}_{(\lmb)}, b_{(\lmb)}}(0) \right| \lesssim (t+ \nu^{1/2}t^{1/2} +  \lmb^{-1})\nrm{b_{(\lmb)}(0)}_{L^2} .
\end{split}
\end{equation*} Since \begin{equation*}
\begin{split}
\brk{\tilde{b}_{(\lmb)}, b_{(\lmb)}}(0) = \nrm{b_{(\lmb)}(0)}_{L^2}(1 + O(\lmb^{-1})),
\end{split}
\end{equation*}  we may take  {a small number} $0 < T \le \delta $ such that for all sufficiently large $\lmb$, \begin{equation*}
\begin{split}
 \brk{\tilde{b}_{(\lmb)}, b_{(\lmb)}}(t)  > \frac{1}{2} \nrm{b_{(\lmb)}(0)}_{L^2}, \qquad t \in [0,T]. 
\end{split}
\end{equation*} The rest of the argument is the same as in the electron-MHD case. The proof is complete.  \qedsymbol

\subsection{Proof of Theorem \ref{thm:illposed-strong2} for \eqref{eq:e-mhd}} \label{subsec:illposedness-strong-e}
In this section, we give the proof of Theorem \ref{thm:illposed-strong2}  {for \eqref{eq:e-mhd}. Compared to \eqref{eq:hall-mhd}, a rather strong localization is possible in this case, and thus the proof works also on $M = \bbT^{3}$. We proceed in several steps. 

\medskip

\textit{(i) choice of initial data and contradiction hypothesis}

\medskip

As described in Section~\ref{subsec:ideas}, the key idea is to superpose many instabilities in physical space. In order to fit everything in a compact interval, which allows us to consider the case $M = \bbT^{3}$, and control the constants involved in the instability argument, we use a simple rescaling argument. 

Let $M$, $s > 3+\frac{1}{2}$ and $\eps > 0$ be given by the statement of Theorem~\ref{thm:illposed-strong2}; in what follows, we suppress the dependence of implicit constants on $s$. We simultaneously give constructions involving the translational- and axi-symmetric stationary magnetic fields; the former construction works for $M = \mathbb{T}_x \times (\bbT,\bbR)_y \times \bbT_z$, and the latter applies to all of $M = (\bbT, \bbR)_x \times (\bbT,\bbR)_y \times \bbT_z$.} To this end, we take \begin{equation}\label{eq:illposed-strong2-stat}
\begin{split}
\mathring{\mathbf{B}}^{(tr,axi)} = \sum_{k = k_0}^{\infty}  {\bgB}_k^{(tr,axi)} := \sum_{k = k_0}^{\infty}  {2^{-sk}} \widetilde{\mathbf{B}}^{(tr,axi)}(2^{k}x , 2^{k} (y-y_k) ), \quad y_k = 2^{-\frac{k}{2}} 
\end{split}
\end{equation} where \begin{equation*}
\begin{split}
\widetilde{\mathbf{B}}^{tr}(x,y) = f^{tr}_0(y)\rd_x 
\end{split}
\end{equation*} in the translationally symmetric case, and \begin{equation*}
\begin{split}
\widetilde{\mathbf{B}}^{axi}(x,y) = f^{axi}_{0} (\sqrt{x^2 + y^2}) (x \partial_{y} - y \partial_{x})
\end{split}
\end{equation*} in the axi-symmetric case. Here,  {$k_0 \ge 10$} is some large positive integer (to be specified later),  $f_0^{tr}(y) \in C^\infty_{comp}$ supported in $|y| \le 1/10$ and $f_0^{tr}(y) = y$ for $|y| \le 1/20$, and $f^{axi}_0 \in C^\infty_{comp}$ is supported in $r \le 1/5$ and satisfies $f^{axi}_0(r) = r - 1/20$ for $1/40 \le r \le 1/10$. We further assume that $f^{axi}_0$ is a constant for $r$ small so that $\widetilde{\mathbf{B}}^{axi}$ defines a smooth vector field on $\mathbb{R}^2_{x,y}$. Note that in both cases, $ {\bgB^{(tr, axi)}_k}$ is a stationary solution and the supports of $ {\bgB^{(tr, axi)}_k}$ are disjoint, so $ {\bgB^{(tr, axi)}}$ defines a stationary solution to \eqref{eq:e-mhd}. The  {coefficient $2^{-sk}$ guarantees that $\bgB^{tr}_{k} \in H^s_{comp}(M)$ with $\nrm{\bgB^{tr}_{k}}_{H^{s}} \aleq 2^{-\frac{1}{2} k}$  when $M = \mathbb{T}_x \times (\bbT,\bbR)_y \times \bbT_z$, and $\bgB^{axi}_{k} \in H^{s}_{comp}(M)$ with $\nrm{\bgB^{tr}_{k}}_{H^{s}} \aleq 2^{-\frac{1}{2} k}$ when $M = (\bbT, \bbR)_{x} \times (\bbT, \bbR)_{y} \times \bbT_{z}$. In all cases, $\bgB^{(tr, axi)} \in H^{s}_{comp}(M)$ and we may ensure that $\nrm{\bgB^{(tr, axi)}}_{H^{s}} < \frac{1}{2} \eps$ by taking $k_{0}$ large enough.}

We now fix the initial data. In the translationally symmetric case, take some compactly supported function $g_0^{tr} \in C^\infty(M)$  {that is independent of $x$} and whose $y$-support is contained in $1/40 < y < 1/20$ so that the hypotheses of Proposition \ref{prop:wavepackets} is satisfied for $g_0 = g_0^{tr}$,  {$\bgB = \widetilde{\bfB}^{tr} = f^{tr}_{0}(y) \rd_{x}$ and $y_{1} = \frac{1}{20}$.} Then define $\tb_{(\lmb)} := (\nabla^\perp \tpsi_{(\lmb)}, \tb^z_{(\lmb)})$ to be the associated degenerating wave packet solution provided by Proposition~\ref{prop:wavepackets}. For
\begin{equation*}
 \lmb_k =  { 2^{Nk} }
\end{equation*}
for some $N \gg 1$ sufficiently large  to be specified later (depending only on $s$), we define \begin{equation} \label{eq:rescaled-wp-trans}
\begin{aligned}
\tb_k^{tr}( {t},x,y,z) =& 2^{\frac{k}{2}}\tb_{( {2^{-k}} \lmb_k)}( {2^{-sk} 2^{2k} t,2^{k}} x,2^k(y-y_k)), \\
 {\tpsi^{tr}_{k}(t, x, y, z) = }&  {2^{-\frac{k}{2}} \tpsi_{(2^{-k} \lmb_{k})} (2^{-sk} 2^{2k} t, 2^{k} x, 2^{k} (y - y_{k})),}
\end{aligned}
\end{equation} 
 {By construction, $e^{-i \lmb_{k} x} \tb_k^{tr}(x, y, z)$ is independent of both $x$ and $z$, and by suitably normalizing $g^{tr}_{0}$, we may take $\nrm{\tb_k^{tr}}_{L^{2}} = 1$. Note that these definitions are consistent with the relation $(\tb_{k}^{x, y})^{tr} = \nb^{\perp} \tpsi_{k}^{tr}$.
}
Then, we define the initial data to be  \begin{equation}\label{eq:illposed-strong2-id-tr}
\begin{split}
\mathbf{B}^{tr}_0 & = \bgB^{tr} + \sum_{k = k_0}^{\infty}  {2^{-k}} \lmb_k^{-s} \tilde{b}_k^{tr}(t = 0).
\end{split}
\end{equation} Recalling that $\tb_k^{tr}$ is uniformly bounded in $L^2$, we see that 
 {$\nrm{\lmb_k^{-s} \tilde{b}_k^{tr}(t=0)}_{H^{s}} \aleq 1$; thanks to the factor $2^{-k}$, we have $\mathbf{B}^{tr}_{0} \in H^s_{comp}$ and we may ensure that $\nrm{\mathbf{B}^{tr}_{0}}_{H^s} < \eps$ by taking $k_0$ large.}

 We proceed similarly in the axi-symmetric case. Take $g_0^{axi}(r) \in C^\infty_{comp}$ with $\supp(g_0^{axi}) \subset (3/40,1/10)$. With this choice, the hypotheses of Proposition \ref{prop:wavepackets} in the axi-symmetric case is satisfied with $\widetilde{\mathbf{B}}^{axi} = f_0^{axi}(r)\rd_\theta$ and $g_0 = g_0^{axi}$. Applying Proposition \ref{prop:wavepackets} with this data gives $\tb_{(\lmb)} = (\nabla^\perp\tpsi_{(\lmb)}, \tb^z_{(\lmb)})$ and define \begin{equation} \label{eq:rescaled-wp-axi}
 {\tb_k^{axi}( {t,}x,y,z) =   2^k \tb_{( {2^{-k}} \lmb_k)}( {2^{-sk} 2^{2k} t,} 2^k x, 2^k(y-y_k)). } 
\end{equation} We then define the initial data as  \begin{equation}\label{eq:illposed-strong2-id-axi}
\begin{split}
\mathbf{B}^{axi}_0 & = \bgB^{axi} + \sum_{k = k_0}^{\infty}  {2^{-k}} \lmb_k^{-s} \tilde{b}_k^{axi}(t = 0).
\end{split}
\end{equation} 
 {Again, $\bfB^{axi}_{0} \in H^{s}_{comp}(M)$ and we may take $\nrm{\bfB^{axi}_{0}}_{H^{s}} < \eps$ by taking $k_{0}$ adequately large. }

 {At this point, from \eqref{eq:rescaled-wp-trans} and \eqref{eq:rescaled-wp-axi}, it is easy to check that $\tb^{(tr, axi)}_{k}$ obeys the following types of boundedness, error and degeneration estimates, respectively:
\begin{align}
 {\nrm{\tb_{k}(t)}_{L^{2}} } \aleq 2^{C_{s} k}, \label{eq:bdd-k} \\
\nrm{\err_{\tb,k}(t)}_{L^{2}} + \nrm{\nb \err_{\tpsi,k}(t)}_{L^{2}} \aleq 2^{C_{s} k}, \label{eq:error-k} \\
\nrm{\tb^{(tr, axi)}(t)}_{L^{2}_{x} H^{-\frac{1}{4}}_{y}} \aleq 2^{C_{s} k} \exp\left( - 2^{-c_{s} k} \lmb_{k} t\right),  \label{eq:degen-k}
\end{align}
where $c_{s}, C_{s} > 0$ are constants depending only on $s$ (that may change from line to line) and
\begin{align*}
\err_{\tb,k}(t) = \err_{b}[\tb^{z}_{k}, \tpsi_{k}], \quad
\err_{\tpsi,k}(t) = \err_{\psi}[\tb^{z}_{k}, \tpsi_{k}]
\end{align*}
are defined according to \eqref{eq:e-2.5d-err-parallel} and \eqref{eq:e-2.5d-err-axi} respect to $\bgB^{(tr, axi)}_{k}$, $(\tb^{z}_{k})^{(tr, axi)}$ and $\tpsi_{k}^{(tr, axi)}$.
}

Towards a contradiction, we assume that there exist $\delta > 0$ and a solution $\mathbf{B}^{(tr,axi)} \in L^\infty_t([0,\delta];H^s)$ to \eqref{eq:e-mhd} with initial data \eqref{eq:illposed-strong2-id-tr} and \eqref{eq:illposed-strong2-id-axi}, and set $b(t) := \mathbf{B}^{(tr,axi)}(t) - \bgB^{(tr,axi)}$, respectively. Since we do not assume uniqueness of the solution, $b(t)$ may depend on $z$ as well, and it satisfies\footnote{Even in this case, \eqref{eq:e-mhd} can be reformulated in terms of $b^z$ and $\psi$, where $-\Delta_{x,y}\psi = (\nabla \times b)^z$ with $\Delta_{x,y} := \rd_{xx} + \rd_{yy}$. But now the expression for $\rd_t\psi$ involves non-local terms.} 
\begin{equation}\label{eq:e-mhd-pert}
\begin{split}
& \quad \rd_t b + (b\cdot\nabla)(\nabla\times\bgB) - (\nabla\times\bgB)\cdot\nabla b + (\bgB\cdot\nabla)(\nabla\times b) - (\nabla\times b)\cdot\nabla\bgB  \\
& \quad = \nabla \times ((\nabla\times b)\times b). 
\end{split}
\end{equation} 

\medskip

 {\textit{(ii) localization of the energy identity}}

\medskip

 Before we continue, let us briefly give an outline of the argument.  As discussed in Section~\ref{subsec:ideas}, we would like to localize the energy identity for $b$ (as well as the generalized energy identity between $b$ and $\tb_k$ in the next step) to the support of $ {\bgB^{(tr, axi)}_k}$. If the corresponding localized statements were exactly true, the proof will be completed immediately since near $ {\bgB^{(tr, axi)}_k}$, the $H^s$-norm of the perturbation will grow at a rate of $\lmb_k t$, which clearly dominate losses of $2^{A k}$ coming from various normalizations, by taking $\lmb_k = 2^{Nk}$ with $N$ large. Not surprisingly, the main enemy is the loss of one derivative coming from the commutator $[\chi, (\bgB \cdot \nabla)(\nabla\times b)]$ where $\chi$ is a cutoff. A derivative on $b$ localized to the support of $\bgB_k$ could in principle cost $\lmb_k$, but we gain a little bit by a time integration, which gives a necessary control of the local energy in the time scale $\aeq \lmb_k^{s/(s+1)}$, which is still sufficient for unbounded growth in $k$. 

Now we proceed to prove a localized version of the energy estimate for the perturbation $b$. 
 {The proof is similar for both the translationally- and axi-symmetric cases, and for simplicity we only consider the translationally symmetric case from now on.}
Ideally we would like to show that the $L^2$-norm of $b$ localized to the support of $\bgB_k$ admits an energy inequality by itself, but since there will be some contribution from neighboring pieces, we use cutoff functions with fast decaying tails which can accommodate such interactions. To this end, we prepare a $C^\infty$ positive function $\chi : \mathbb{R} \rightarrow \mathbb{R}_+$ with the following properties: \begin{itemize}
	\item $\chi(y) =1$ for $y \in [-1/4,1/4]$\footnote{This property is not essential but for convenience of the estimates below.},
	\item $|\chi'(y)| \le |\chi(y)|$ for all $y \in \mathbb{R}$, and
	\item $\chi$ decays exponentially; i.e. $\chi(y) \le 2^{-|y|}$ for $y > 1/2$. 
\end{itemize}  
Then, in the case $(\mathbb{T}, \mathbb{R})_y = \mathbb{R}_y$, we simply define $\chi_k(y):= \chi(2^k(y-y_k))$. In the case of $\mathbb{T}_y = \mathbb{R}/(2\pi\mathbb{Z})$, which we view as the interval $[-\pi,\pi]$ with the endpoints identified, we proceed as follows: for $k$ sufficiently large, take the $2\pi$-periodic function \begin{equation*}
\begin{split}
\sum_{n \in \mathbb{Z}} \chi_k(y + 2\pi n) 
\end{split}
\end{equation*} and one may modify this function only on the interval $[y_k - 2^{-1-k}, y_k + 2^{-1-k}]$ so that it is identically 1 on $[y_k-2^{-2-k},y_k+2^{-2-k}]$, which we redefine as $\chi_k$. Note that in this process we can guarantee that $|\chi_k'(y)|\lesssim 2^k|\chi(y)|$ on $\mathbb{T}_y$ with a constant independent of $k$. Regarding the decay, we shall only need $|\chi_k(y)| \lesssim 2^{-2^k|y-y_k|}$ for $|y- y_k| \le 1/10$, which holds for all $k $ sufficiently large, in the case of $\mathbb{T}_y$ as well. For the simplicity of the argument we shall proceed in the case of $\mathbb{R}_y$. Multiplying both sides of \eqref{eq:e-mhd-pert} by $\chi_k(y)$ and taking the $L^2$ inner product in $M$ with $\chi_k(y) b$, we obtain \begin{equation*}
\begin{split}
\left|\brk{\chi_k(b\cdot\nabla)(\nabla\times \bgB), \chi_k b}\right| \lesssim \nrm{\nabla^2\bgB}_{L^\infty} \nrm{\chi_k b}_{L^2}^2, 
\end{split}
\end{equation*}  \begin{equation*}
\begin{split}
\brk{\chi_k  (\nabla\times\bgB)\cdot\nabla b ,\chi_kb } = 0
\end{split}
\end{equation*} (after integrating by parts as $ (\nabla\times\bgB)\cdot\nabla =-f'(y)\rd_z$ commutes with $\chi_k$), and \begin{equation*}
\begin{split}
&\left|\brk{ \chi_k ((\bgB\cdot\nabla)(\nabla\times b) - (\nabla\times b)\cdot\nabla\bgB ), \chi_k b} \right| = 4\left| \brk{\chi_k' fb^z,\rd_x(\chi_k b^x)}\right| \\
&\qquad\lesssim 2^k \nrm{\chi_kb}_{L^2} \nrm{\nabla(\chi_kb)}_{L^2} \\
&\qquad \lesssim_s 2^k \nrm{\chi_{k} b}_{L^2}^{2-\frac{1}{s}} \nrm{\chi_kb}_{H^s}^{\frac{1}{s}} \lesssim 2^{2k} \nrm{b}_{H^s}^{\frac{1}{s}} \nrm{\chi_{k} b}_{L^2}^{2-\frac{1}{s}}
\end{split}
\end{equation*} after observing cancellations using integration by parts. We then used the algebra property of $H^s$ with Sobolev embedding. (Actually it is clear that the previous inner product should be of the form $\int \chi_k'\chi_k b\nabla b  $ since unless a derivative falls on the cutoff we obtain complete cancellations.) Finally we treat the nonlinearity \begin{equation*}
\begin{split}
\brk{\chi_k \nabla\times((\nabla\times b)\times b), \chi_k b},
\end{split}
\end{equation*} which has terms of the type $\brk{\chi_k' b\nabla b, \chi_k b}$ after integration by parts since unless the curl falls on $\chi_k$, we obtain a cancellation. The bound $|\chi_k'| \lesssim 2^k |\chi_k|$ allows us to estimate such terms by $\lesssim 2^k\nrm{\nabla b}_{L^\infty} \nrm{\chi_kb}_{L^2}^2 $. 
Collecting all the estimates, \begin{equation}\label{eq:local-energy-estimate}
\begin{split}
\frac{\ud}{\ud t}\nrm{\chi_kb}_{L^2} \lesssim 2^{2k}\left( \nrm{\chi_k b}_{L^2}^{1 - \frac{1}{s}} + \nrm{\chi_kb}_{L^2} \right) 
\end{split}
\end{equation} where the  {implicit} constant now depends on $\nrm{b}_{L^\infty_t H^s}$. Let us estimate, at the initial time, the local energy \begin{equation*}
\begin{split}
\nrm{\chi_kb(t = 0)}_{L^2}^2 =  {2^{-2k}} \lmb_k^{-2s} + \sum_{k_0 \le k' , k' \ne k }  {2^{-2k'}} \lmb_{k'}^{-2s}\nrm{\chi_k \tb_{k'}(t = 0)}^2_{L^2}  . 
\end{split}
\end{equation*}  Note that the contribution to the above sum for $ k' > k$ is negligible relative to $ {2^{-2k}} \lmb_k^{-2s}$ from the decay of $ {2^{-2k'}} \lmb_{k'}^{-2s}$ in $k'$. On the other hand, for $k' < k$, we use the decay of $\chi_k$: for any $k' < k$, the support of $\tb_{k'}$ is separated from $y_k$ by at least $c2^{-\frac{k}{2}}$ with $c > 0$ independent of $k$. Hence, \begin{equation*}
\begin{split}
|\chi_k| \lesssim 2^{-c2^{\frac{k}{2}}} \lesssim_{N,s} 2^{-4Nsk} \ll   {2^{-2k}} \lmb_k^{-2s} 
\end{split}
\end{equation*} on the support of $\tb_{k'}$ with any $k' > k \ge k_0$, and we obtain that $\nrm{\chi_kb(t = 0)}_{L^2}   {\aleq 2^{-2k}} \lmb_k^{-s}$ by choosing $k_0 $ sufficiently large with respect to $N, s$. Using this together with \eqref{eq:local-energy-estimate} yields that \begin{equation}\label{eq:localenergy-time}
\begin{split}\nrm{\chi_kb {(t)}}_{L^2} \lesssim_s \left(  {2^{-\frac{k}{s}}} \lmb_k^{-1} + 2^{2k}t \right)^s \lesssim_s  {2^{-k}} \lmb_k^{-s} + 2^{2ks}t^s. 
\end{split}
\end{equation}

\medskip

 {\textit{(iii) localization of the generalized energy identity and conclusion of the proof}}

\medskip

We shall now need a version of the generalized energy inequality which is localized in space.  {As before, since the argument is similar for both the translationally- and axi-symmetric cases, we only consider the translationally symmetric case for simplicity.}

 {Recall from the construction in Section~\ref{sec:wavepackets}} that the support of  {the rescaled and translated degenerating wave packet} $\tb_k$ is contained in $[y_k - 2^{-2-k},y_k+2^{-2-k}]$, and in particular $\chi_k\tb_k = \chi_k^2\tb_k$.  We now compute \begin{equation*}
\begin{split}
\frac{\ud}{\ud t} \brk{\chi_k b, \tb_k} &= \brk{-\chi_k f'\rd_zb^z + \chi_k f''b^y -\chi_k f\rd_x(\rd_xb^y-\rd_yb^x) ,\tb^z_k}  \\
&\qquad + \brk{\chi_k b^z,  f\rd_x\lap_{x,y}\tpsi_k -   f''\rd_x\tpsi_k + \err_{\tb^z,k}} \\
&\qquad + \brk{-\chi_k f'\rd_zb^y -\chi_k f\rd_x(\rd_zb^x-\rd_xb^z)   , \chi_k \tb^y } \\
&\qquad + \brk{\chi_k b^y,   f \rd_{xx}\tb^z_k -  \rd_x \err_{\tpsi,k} } \\
&\qquad + \brk{ \chi_k f'\rd_zb^x + \chi_k\rd_y(f(\rd_zb^x-\rd_xb^z)) +\chi_k f\rd_z(\rd_xb^y-\rd_yb^x)   ,  \tb^x_k }  \\
&\qquad + \brk{ \chi_k b^x, -\chi_k \rd_y (f\rd_x\tb^z) + \chi_k \rd_y \err_{\tpsi,k}}  + \brk{\chi_k \nabla\times ((\nabla\times b)\times b), \tb_k}.
\end{split}
\end{equation*} Taking absolute values, the terms containing $\err_{\tb^z,k}$ and $\err_{\psi,k}$ are bounded by \begin{equation*} 
\begin{split}
\lesssim \nrm{\chi_k b }_{L^2}\left( \nrm{\err_{\tb^z,k}}_{L^2} + \nrm{  \nabla\err_{\tpsi,k}}_{L^2}\right). 
\end{split}
\end{equation*} Next, the terms involving a $z$-derivative vanish after moving the derivative to the other side, from $z$-independence of $\tb_k$. This leaves us with \begin{equation*} 
\begin{split}
I:= &\brk{\chi_k f(y)\rd_x(\rd_yb^x-\rd_xb^y) + \chi_k f''b^y  ,\tb^z_k } +  \brk{\chi_k b^z, f\rd_x\lap_{x,y}\tpsi_k -  f''\rd_x\tpsi_k } \\
&\quad + \brk{-\chi_k f\rd_{xx}b^z  ,  \rd_x\tpsi_k } + \brk{-\chi_k b^y, - f \rd_{xx}\tb^z_k } + \brk{-\chi_k \rd_y(f \rd_x b^z)    , \rd_y \tpsi_k  } + \brk{ \chi_k b^x, -\rd_y (f\rd_x\tb^z_k) }
\end{split}
\end{equation*} and  \begin{equation*} 
\begin{split}
II:= &\brk{\chi_k \nabla\times ((\nabla\times b)\times b), \tb_k}. 
\end{split}
\end{equation*} After observing cancellations, we see that \begin{equation*} 
\begin{split}
|I| &= \left|2\brk{b^x, \chi_k' \rd_x\tb^z_k} + 2\brk{\chi'_k\rd_y\tpsi_k, f \rd_x b^z} + \brk{\chi_k f'' b^y,\tb^z_k} - \brk{\chi_k b^z,f''\rd_x\tpsi_k} \right|\lesssim \nrm{\chi_kb}_{L^2} \nrm{\tb_k}_{L^2} 
\end{split}
\end{equation*} where we have used the fact that $\tb_k$ vanishes on the support of $\chi'_k$. Finally, using that $\chi'_k \tb_k  \equiv 0$ and $\chi_k\tb_k = \chi_k^2\tb_k$, we bound \begin{equation*}
\begin{split}
\left|II \right| &\lesssim \nrm{\nabla^2 b}_{L^\infty} \nrm{\chi_kb}_{L^2}\nrm{\tb_k}_{L^2} + \nrm{\nabla(\chi_k b)}_{L^4}^2\nrm{\tb_k}_{L^2} \\ 
&\lesssim_s 2^{sk}\nrm{b}_{H^s} \nrm{\chi_kb}_{L^2}\nrm{\tb_k}_{L^2}. 
\end{split}
\end{equation*}  We have therefore arrived at the following inequality: \begin{equation}\label{eq:gei-local}
\begin{split}
\left| \frac{\ud}{\ud t} \brk{\chi_k b, \tb_k} \right| &\lesssim_s 2^{ks}(1 + \nrm{b}_{H^{s}})\nrm{ \chi_k b }_{L^2}\left(  \nrm{\err_{\tb^z,k}}_{L^2} +  \nrm{ \nabla\err_{\tpsi,k}}_{L^2} + \nrm{ \tb_k}_{L^2} \right) \\
&\lesssim_s 2^{C_{s} k} \nrm{\chi_kb}_{L^2} ,
\end{split}
\end{equation} where we have used  {\eqref{eq:bdd-k} and \eqref{eq:error-k}}; the final constant depends also on $\nrm{b}_{L^\infty_tH^s}$.

We are in a position to complete the proof. Combining \eqref{eq:localenergy-time} with \eqref{eq:gei-local} gives \begin{equation*} 
\begin{split}
\left|\brk{\chi_k b, \tb_{k}(t)} - \lmb_k^{-s}\right| \lesssim_s   {2^{C_{s} k}} \int_0^t \left(  {2^{-k}} \lmb_k^{-s} + 2^{2ks}t'^s \right) \ud t' \lesssim_s  {2^{C_{s} k}}( {2^{-k}} \lmb_k^{-s}t + 2^{2ks}t^{s+1}), 
\end{split}
\end{equation*} so that we are able to obtain \begin{equation*} 
\begin{split}
\brk{\chi_k b, \tb_{(\lmb_k)}}(t) \ge \frac{1}{2}  {2^{-k}} \lmb_k^{-s}
\end{split}
\end{equation*} on the time interval $[0,t^*_k]$ with 
\begin{equation} \label{eq:t-ast-k}
 {t^{*}_{k} = 2^{-c_{s} k} \lmb_{k}^{-\frac{s}{s+1}}}
\end{equation}
where $\lmb_k = 2^{Nk}$ with $N = N(s) \gg 1$ and $k_{0}$ is sufficiently large.
Taking $k_0 $ larger if necessary, it is easy to guarantee that $t^*_k \le \delta$ for all $k \ge k_0$. 

Next, using interpolation in $y$, we have  
\begin{equation*}
	2^{-k} \lmb_{k}^{-s} \aleq \nrm{\tb_{(\lmb_{k})}(t^{*}_{k})}_{L^{2}_{x} H^{-\frac{1}{4}}_{y}} \nrm{\chi_{k} b(t^{*}_{k})}_{L^{2}_{x} H^{\frac{1}{4}}_{y}} \aleq \nrm{\tb_{(\lmb_{k})}(t^{*}_{k})}_{L^{2}_{x} H^{-\frac{1}{4}}_{y}} \nrm{\chi_{k}b(t^{*}_{k})}_{L^{2}}^{1-\frac{1}{4 s}} \nrm{\chi_{k} b(t^{*}_{k})}_{H^{s}}^{\frac{1}{4 s}}.
\end{equation*}
By the degeneration property \eqref{eq:degen-k}, \eqref{eq:localenergy-time} and \eqref{eq:t-ast-k}, it follows that
\begin{equation*}
	\nrm{\chi_{k} b(t^{*}_{k})}_{H^{s}} 
	\ageq_{s} 2^{-C_{s} k} \lmb_{k}^{- 3\frac{s^{2}}{s+1}} \exp\left( 2^{-c_{s} k} \lmb_{k}^{\frac{1}{s+1}}\right).
\end{equation*}
By the algebra property of $H^{s}$, we may replace the LHS by $\nrm{b(t^{*}_{k})}_{H^{s}}$ by altering $C_{s}'$ on the RHS. Now recall that $\lmb_{k} = 2^{Nk}$; thus by taking $N = N(s)$ sufficiently large, we may ensure that
\begin{equation*}
	\nrm{b(t^{*}_{k})}_{H^{s}} \ageq_{s} 2^{c_{s} k}
\end{equation*}
for some $c_{s} > 0$ independent of $k \geq k_{0}$. This clearly contradicts boundedness of $\nrm{b}_{L^\infty_t H^s}$.  \qedsymbol

\subsection{Proof of Theorem~\ref{thm:illposed-strong2} for \eqref{eq:hall-mhd}} \label{subsec:illposedness-strong-hall}
 {Here we indicate the necessary modifications for the case of \eqref{eq:hall-mhd}. 
In this case, the energy identity for $\bfu$ does not obey as favorable localization properties as in \eqref{eq:e-mhd} due to the pressure (see \eqref{eq:p-si}). Instead, we require $M$ to be noncompact (more specifically, $(\bbT, \bbR)_{y} = \bbR_{y}$) and place the instabilities at dyadic loci $y_{k} \aeq \mu^{k}$. 

\medskip

\textit{(i) choice of initial data and contradiction hypothesis}

\medskip}

Again,  {two constructions using} the translation- and axi-symmetric  {building blocks can be described} almost simultaneously.  {We borrow the definitions of $\widetilde{\bfB}^{(tr, axi)}$, $f_{0}^{(tr, axi)}$ and $g_{0}^{(tr, axi)}$ from the previous proof.} In the Hall-MHD case, the stationary magnetic field is taken  {to be
\begin{equation*}
\mathring{\mathbf{B}}^{(tr,axi)} = \sum_{k = k_0}^{\infty}  {\bgB}_k^{(tr,axi)} := \sum_{k = k_0}^{\infty}  {2^{-k}} \widetilde{\mathbf{B}}^{(tr,axi)}(x , y-y_k ), \quad y_k = y_{k-1} + \mu^k,
\end{equation*}
where $y_{1} = 1$ and $\mu \gg 1$ depending only on $N$ and $s$.
Compared to  \eqref{eq:illposed-strong2-stat}, note that there are no spatial rescalings, and the requirement that $(\bbT, \bbR)_{y} = \bbR_{y}$ is used to justify the choices of $y_{k}$.}

 {Next, we apply Proposition~\ref{prop:wavepackets-hall} for $g_{0} = g_{0}^{(tr, axi)}$ and $\bgB = \widetilde{\bfB}^{(tr, axi)}$ (with $y_{1} = \frac{1}{20}$ in the translationally symmetric case and $(r_{0}, r_{1}) = (\frac{1}{20}, \frac{1}{10})$ in the axi-symmetric case), from which we obtain
$\tb_{(\lmb)} = (\nb^{\perp} \tpsi_{(\lmb)}, \tb^{z}_{(\lmb)})$ and $\tu_{(\lmb)} = (\nb^{\perp}(-\lap)^{-1} \tb^{z}_{(\lmb)}, -\tpsi_{(\lmb)})$. For $\lmb_{k} = 2^{Nk}$ with $N$ to be chosen later, we set 
\begin{align*}
	\tb_{k}^{(tr, axi)}(t, x, y, z) = \tb_{(\lmb_{k})}(2^{-k} t, x, y-y_{k}), \quad
	\tpsi_{k}^{(tr, axi)}(t, x, y, z) = \tpsi_{(\lmb_{k})}(2^{-k} t, x, y-y_{k}).
\end{align*}
By construction, it may be checked that $(\tb^{(tr, axi)}_{k}, \tu^{(tr, axi)}_{k})$ obeys all estimates claimed in Propositions~\ref{prop:wavepackets} and \ref{prop:wavepackets-hall} (formulated in terms of $\bgB^{(tr, axi)}_{k}$, $(\tb^{z}_{k})^{(tr, axi)}$ and $\tpsi_{k}^{(tr, axi)}$) with implicit constants of size $O(2^{Ck})$.

We take as the initial data 
\begin{equation*}
	\bfB_{0}^{(tr, axi)} = \bgB^{(tr, axi)} + \sum_{k=k_{0}}^{\infty} 2^{-k} \lmb_{k}^{-s} \tb_{k}^{(tr, axi)}(t=0), \quad \bfu_{0}^{(tr, axi)} = 0.
\end{equation*}
Clearly, $(\bfu_{0}, \bfB_{0}) \in H^{s-1}_{comp} \times H^{s}_{comp}$, and its $H^{s-1} \times H^{s}$ can be smaller than $\eps > 0$ by taking $k_{0}$ sufficiently large.
}

Towards contradiction, assume there exists a solution $(\mathbf{u},\mathbf{B}) \in L^\infty_t([0,\delta];H^{s-1} \times H^s)$ for some $\delta > 0$ with initial data $(\mathbf{u}_0, \mathbf{B}_0)$ with $\mathbf{u}_0 = 0$ and $\mathbf{B}_0$ is defined as in \eqref{eq:illposed-strong2-id-tr}. Set $b(t) = \mathbf{B}(t) - \bgB$. 
 {The system of equations for $\mathbf{u}$ and $b$ are:} \begin{equation}\label{eq:hall-mhd-pert-vel}
\begin{split}
\rd_t\bfu+ \bfu\cdot\nabla\bfu +\nabla\bfp - \nu\lap\bfu  = (\nabla\times \bfB)\times \bfB 
\end{split}
\end{equation} and \begin{equation}\label{eq:hall-mhd-pert-mag}
\begin{split}
&\rd_t b + (b\cdot\nabla)(\nabla\times\bgB) - (\nabla\times\bgB)\cdot\nabla b + (\bgB\cdot\nabla)(\nabla\times b) - (\nabla\times b)\cdot\nabla\bgB  \\
&\quad = \nabla \times ((\nabla\times b)\times b) + \bfB\cdot\nabla \mathbf{u} - \mathbf{u} \cdot\nabla \bfB . 
\end{split}
\end{equation} 

\medskip

 {\textit{(ii) localization of the energy identity}}

\medskip

 {For simplicity, we proceed in the case of translationally symmetric case, and leave the similar axi-symmetric case to the reader.} 

 {Let us first obtain a simple $L^2$-estimate for $\bfp$. Recall from \eqref{eq:p-grad} that $\bfp$ has been only fixed up to a constant; we fix this ambiguity by \emph{defining}
$\bfp$ as \begin{equation} \label{eq:p-si}
\begin{split}
\bfp = \sum_{i,j} R_{i} R_{j} (\bfu^i\bfu^j) - \sum_{i,j} R_{i} R_{j} (\bfB^i\bfB^j) - \frac{|\bfB|^2}{2}.
\end{split}
\end{equation} Then} we obtain\footnote{In fact, the same estimate justifies the choice of $\bfp$ as above for our solution.} \begin{equation*}
\begin{split}
\nrm{\bfp}_{L^2} \lesssim \nrm{|\bfu|^2}_{L^2} + \nrm{|\bfB|^2}_{L^2} \lesssim \nrm{\bfu}_{H^{s-1}}^2 + \nrm{\bfB}_{H^s}^2 
\end{split}
\end{equation*} using the embeddings $\nrm{|\bfu|^2}_{L^2} \lesssim \nrm{\bfu}_{L^\infty}\nrm{\bfu}_{L^2} \lesssim \nrm{\bfu}_{H^{s-1}}^2$ and similarly for $\bfB$.

 {We now introduce the cutoff functions. This time, we fix some smooth function $\chi(y) \ge 0$ supported on $[-1,1]$, $\chi(y) = 1$ on $[-1/2,1/2]$ and define \begin{equation*}
\begin{split}
\chi_k(y) = \chi(2\mu^{-k}(y-y_k)).
\end{split}
\end{equation*} We have that $\chi_k$ is supported on $[y_k-\mu^k/2,y_k+\mu^k/2]$, and $|\chi_k'(y)| \lesssim \mu^{-k}$.}

 {W}e multiply both sides of \eqref{eq:hall-mhd-pert-vel} by $\chi_k$ and take the $L^2$ inner product with $\chi_k {u}$. We handle the  {RHS} as \begin{equation*}
\begin{split}
\brk{\chi_k(\nabla\times \bfB)\times \bfB , \chi_k \bfu }&=\brk{\chi_k \bfB \cdot\nabla \bfB ,\chi_k \bfu } - \brk{\chi_k \nabla \frac{|\bfB|^2}{2}, \chi_k\bfu }\\
& = \brk{\chi_k(\bfB\cdot\nabla)b,\chi_k\bfu}   + \brk{|\bfB|^2\nabla\chi_k ,\chi_k \bfu},
\end{split}
\end{equation*} where we have used that $\bgB \cdot\nabla \bgB = 0$ and an integration by parts. Applying integration by parts to the other terms, we obtain \begin{equation}\label{eq:lei-1}
\begin{split}
&\left| \frac{1}{2} \frac{\ud}{\ud t}\nrm{\chi_k \bfu}_{L^2}^2 -  \brk{\chi_k(\bfB\cdot\nabla)b,\chi_k\bfu}   \right| \\
&\qquad \lesssim \nrm{\chi_k'}_{L^\infty} \nrm{\chi_k \bfu}_{L^2} \left( \nrm{|\bfu|^2}_{L^2} + \nrm{\bfp}_{L^2} + \nrm{|\bfB|^2}_{L^2} + \nu\nrm{\nabla\bfu}_{L^2} \right)   \\
& \qquad \lesssim  \nrm{\chi_k'}_{L^\infty}    \nrm{\chi_k \bfu}_{L^2} 
\end{split}
\end{equation} on $t \in [0,\delta]$ with a constant depending on $\nu \ge  0$ and the norm of $(\bfu,\bfB)$ in $L^\infty_t(H^{s-1}\times H^s)$. We now multiply both sides of \eqref{eq:hall-mhd-pert-mag} by $\chi_k$ and take the $L^2$-inner product with $\chi_kb$. We proceed similarly as in the \eqref{eq:e-mhd} case, except that we simply use the quantity $\nrm{\chi_k'}_{L^\infty}$ whenever a derivative falls on $\chi_k$. Then we obtain this time \begin{equation}\label{eq:lei-2}
\begin{split}
 \left| \frac{1}{2} \frac{\ud}{\ud t}\nrm{\chi_k b}_{L^2}^2 - \brk{\chi_k (\bfB\cdot\nabla)\bfu , \chi_k b} \right|  &\lesssim \nrm{\nabla^2\bgB}_{L^\infty}\nrm{\chi_kb}_{L^2}^2  + \nrm{\chi_k'}_{L^\infty}\nrm{\nabla b}_{L^2} \nrm{\chi_k b}_{L^2} \\
&\qquad  + \nrm{\chi_k'}_{L^\infty}\nrm{b\nabla b}_{L^2}\nrm{\chi_kb}_{L^2} + \nrm{\nabla\bfB}_{L^\infty}\nrm{\chi_k\bfu}_{L^2}\nrm{\chi_kb}_{L^2} \\
&\lesssim \left( \nrm{\chi_k'}_{L^\infty} + \nrm{\chi_k b}_{L^2} + \nrm{\chi_k\bfu}_{L^2} \right) \nrm{\chi_kb}_{L^2} ,
\end{split}
\end{equation} where the last implicit constant depends on $\nrm{\bfB}_{L^\infty_tH^s}$. Then, putting \eqref{eq:lei-1} and \eqref{eq:lei-2} together and applying integration by parts, we have \begin{equation*}
\begin{split}
\left| -  \brk{\chi_k(\bfB\cdot\nabla)b,\chi_k\bfu}  - \brk{\chi_k (\bfB\cdot\nabla)\bfu , \chi_k b} \right| \lesssim \nrm{\chi_k'}_{L^\infty}\nrm{|\bfB|b}_{L^2}\nrm{\chi_k\bfu}_{L^2}
\end{split} 
\end{equation*} and hence \begin{equation*}
\begin{split}
\left| \frac{\ud}{\ud t}(\nrm{\chi_k\bfu}_{L^2}^2 + \nrm{\chi_kb}_{L^2}^2) \right| \lesssim \nrm{\chi_k'}_{L^\infty}    \nrm{\chi_k \bfu}_{L^2} +\left( \nrm{\chi_k'}_{L^\infty} + \nrm{\chi_k b}_{L^2} + \nrm{\chi_k\bfu}_{L^2} \right) \nrm{\chi_kb}_{L^2}. 
\end{split}
\end{equation*} Note that $\nrm{\chi_k b(t=0)}_{L^2} =  {2^{-k}} \lmb_k^{-s} =  {2^{-(Ns+1)k}}$. We now choose $\mu \gg 1 $ in a way that (depending only on $N$ and $s$) $\nrm{\chi_k'}_{ {L^{\infty}}}\lesssim \mu^{-k} \lesssim  {2^{-(Ns+1)k}}$. Then using Gronwall's inequality, we obtain, for $t \in [0,\delta]$, \begin{equation} \label{eq:loc-en-hall}
\begin{split}
  \nrm{\chi_k\bfu {(t)}}_{L^2}^2 + \nrm{\chi_k b {(t)}}_{L^2}^2 \lesssim  {2^{-2k}} \lmb_k^{-2s}  {= \nrm{\chi_{k} b(t=0)}_{L^{2}}^{2}}
\end{split}
\end{equation} with  {an implicit} constant independent of $k$. 

\medskip

 {\textit{(iii) localization of the generalized energy identity and conclusion of the proof}}

\medskip

Using the same cutoff $\chi_k$ as in the previous step, it is straightforward to obtain a localized version of the generalized energy inequality in this case: taking the $L^2$-inner product with the  {degenerating wave packet solution $(\tb_k, \tu_{k})$ from step~(i), we may prove} \begin{equation*}
\begin{split}
 {\left|\frac{\ud}{\ud t} \brk{\chi_kb, \tb_k} + \frac{\ud}{\ud t} \brk{\chi_k \bfu, \tu_k}\right| \lesssim_s 2^{C_{s} k} \nrm{\chi_k b(t=0)}_{L^2} .}
\end{split}
\end{equation*}
The rest of the argument is parallel with the  {\eqref{eq:e-mhd}} case,  {using interpolation and the degeneration property. In fact, the proof in this case is simpler since the energy bound \eqref{eq:loc-en-hall} is stronger and valid for a longer time. We omit the straightforward details.} 
 \qedsymbol

\section{Proof of Gevrey space illposedness} \label{sec:gevrey}
\subsection{Reduction to construction of degenerating wave packets}
We shall treat the electron- and Hall-MHD cases simultaneously.   The main step of the proof is a version of Proposition \ref{prop:wavepackets} and Proposition \ref{prop:wavepackets-hall} applicable for Gevrey (in particular, analytic) class of data. Note that the statement of Proposition \ref{prop:wavepackets-Gevrey} is essentially the same with Propositions \ref{prop:wavepackets} and \ref{prop:wavepackets-hall} except for the form of the initial data and the phase is now taken to be $e^{i\lmb(x+y)}$. For simplicity, we restrict ourselves to the $x$-independent case. This allows  {us to only consider} degenerating wave packets  {that} are pure functions of $y$, modulo the phase $e^{i\lmb x}$.  

\begin{proposition}\label{prop:wavepackets-Gevrey}
	Let $\bgB = f(y)\rd_x$ as in Theorem~\ref{thm:illposed-gevrey}, and assume without loss of generality that $f(0) = 0$ and $f'(0) > 0$. Let $g_0(y) \in C^\infty(\mathbb{T})$ be a complex-valued function such that $f^{-1}g_0 \in C^\infty(\mathbb{T})$ as well. Assume further that $g_0$ is supported in $[-y_1,y_1]$ for some $y_1 > 0$. For any such $g_0$ and $\lmb \in \mathbb{N}_0$, we may associated a pair $(\tb^{z}_{(\lmb)}, \tpsi_{(\lmb)})[g_{0}]$ satisfying the following properties:
	\begin{itemize}
		\item (linearity) the map $g_{0} \mapsto (\tb^{z}_{(\lmb)}, \tpsi_{(\lmb)})[g_{0}]$ is   linear;
		\item ($x$-separation) for all $t$, $e^{-i\lmb x}\tb^{z}_{(\lmb)}$ and $e^{-i\lmb x}\tpsi_{(\lmb)}$ are functions of $y$ only; 
		\item (initial data) at $t =0$, we have 
		\begin{equation}\label{eq:initial-bz-gev}
		\begin{split}	
		&\tb^{z}_{(\lmb)}(0) =  -e^{i\lmb(x+y)}\left( \sqrt{2}g_0 + \frac{1}{\sqrt{2}i\lmb}(\rd_yg_0 - \frac{1}{2}f^{-1}\rd_y f g_0)    \right),
		\end{split}
		\end{equation}
		\begin{equation}\label{eq:initial-psi-gev} 
		\tpsi_{(\lmb)}(0) = \lmb^{-1}  e^{i \lmb (x +y)} g_{0} ,
		\end{equation} and 
		\begin{equation*}
		\nrm{\Re [\tb^{z}_{(\lmb)}(0) ]}_{L^{2}} + \nrm{\Re [\nb \tpsi_{(\lmb)}(0) ]}_{L^{2}}
		\geq c \nrm{g_{0}}_{L^{2}} - C \lmb^{-1} \nrm{g_{0}}_{H^{1}};
		\end{equation*}
		
		\item (regularity estimates) for any $m \in \bbN_{0}$ and $t \in [0, 1]$,
		\begin{align*}
		\sup_{0 \leq k  \leq m} 
		\nrm{(\lmb^{-2} \rd_{t})^{k} (\lmb^{-1} f \rd_{y})^{m - k } \tb^{z}_{(\lmb)}(t)}_{L^{2}} 
		\aleq & \nrm{g_{0}}_{H^{m+1}}, \\
		\sup_{0 \leq k \leq m} \nrm{(\lmb^{-2} \rd_{t})^{k} (\lmb^{-1} f \rd_{y})^{m - k} \nb \tpsi_{(\lmb)}(t)}_{L^{2}} 
		\aleq & \nrm{g_{0}}_{H^{m+1}};
		\end{align*}
		\item ($L^{p}$-degeneration) for any $1 \leq p \leq 2$ and $t \in [0, 1]$, with some $c_f > 0$,
		\begin{equation*}
		\nrm{\tb^{z}_{(\lmb)}(t)}_{L^{2}_{x} L^{p}_{y}}
		+ \nrm{\nb \tpsi_{(\lmb)}(t)}_{L^{2}_{x} L^{p}_{y}}
		\aleq e^{- c_{f} (\frac{1}{p} - \frac{1}{2})\lmb t} \nrm{ g_{0} }_{H^{1}};
		\end{equation*}
		\item (error bounds) for $t \in [0, 1]$, $\err_{\psi}[\tb^{z}_{(\lmb)}, \tpsi_{(\lmb)}](t) = 0$ and
		\begin{equation*}
		\nrm{\err_{b}[\tb^{z}_{(\lmb)}, \tpsi_{(\lmb)}](t)}_{L^{2}} \aleq \nrm{g_{0} }_{H^{2}}.
		\end{equation*}
	\end{itemize} 
	
	In the case of Hall-MHD, in addition to $(\tb^{z}_{(\lmb)}, \tpsi_{(\lmb)})$, we take
	\begin{equation} \label{eq:wavepackets-hall-u-omg-Gev}
	\tu^{z}_{(\lmb)}[g_{0}] = - \tpsi_{(\lmb)}[g_{0}], \quad \tomg_{(\lmb)}[g_{0}] = -\tb^{z}_{(\lmb)}[g_{0}] 
	\end{equation}
	and then we have 
	\begin{itemize}
		\item (smoothing for fluid components) for $t \in [0, 1]$, we have
		\begin{align*}
		\nrm{\tu^{z}_{(\lmb)}(t)}_{L^{2}} + \nrm{\nb^{\perp} (-\lap)^{-1} \tomg_{(\lmb)}(t)}_{L^{2}} \aleq & \lmb^{-1} \nrm{ g_{0} }_{H^{1}} ,\\
		\nrm{\nb \tu^{z}_{(\lmb)}(t)}_{L^{2}} + \nrm{\tomg_{(\lmb)}(t)}_{L^{2}} \aleq & \nrm{ g_{0} }_{H^{1}} ;
		\end{align*}
		\item (error estimates) for $t \in [0, 1]$, we have
		\begin{align*}
		\errh_{u}^{(\nu)}[\tu^{z}_{(\lmb)}, \tomg_{(\lmb)}, \tb^{z}_{(\lmb)}, \tpsi_{(\lmb)}] + \nu \lap \tpsi = & 0, \\
		\nrm{\nb^{\perp} (-\lap)^{-1} (\errh_{\omg}^{(\nu)}[\tu^{z}_{(\lmb)}, \tomg_{(\lmb)}, \tb^{z}_{(\lmb)}, \tpsi_{(\lmb)}] + \nu \lap \tomg)(t)}_{L^{2}} \aleq & \lmb^{-1} \nrm{g_0}_{H^{2}}, \\
		\nrm{\errh_{b}^{(\nu)}[\tu^{z}_{(\lmb)}, \tomg_{(\lmb)}, \tb^{z}_{(\lmb)}, \tpsi_{(\lmb)}](t)}_{L^{2}} \aleq & \nrm{g_0}_{H^{2}}, \\
		\nrm{\nb \errh_{\psi}^{(\nu)}[\tu^{z}_{(\lmb)}, \tomg_{(\lmb)}, \tb^{z}_{(\lmb)}, \tpsi_{(\lmb)}](t)}_{L^{2}} \aleq & \nrm{g_{0} }_{H^{2}}.
		\end{align*}
	\end{itemize}
\end{proposition}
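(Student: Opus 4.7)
The plan is to adapt the WKB construction of Section~\ref{sec:wavepackets} with the only essential change being the choice of phase: instead of the $f$-dependent phase $G(\eta)$ (which is not compatible with analytic/Gevrey data), we use the linear phase $y(\eta)$ in $\eta$-coordinates, i.e., work in the renormalized coordinates of Section~\ref{subsec:renrm} with the ansatz
\begin{equation*}
\varphi(\tau, x, \eta) = \lmb^{-1} e^{i\lmb(x + \Phi(\tau,\eta))} h(\tau,\eta),
\qquad \Phi(0,\eta) = y(\eta), \quad h(0,\eta) = f^{-\frac{1}{2}} g_0(y(\eta)),
\end{equation*}
with $h$ taken $x$-independent (this immediately gives the $x$-separation property). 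Because all $\rd_x h$ terms in the expansion \eqref{eq:wkb_expansion} vanish, the same separation of orders of $\lmb$ yields the Hamilton--Jacobi equation $(\rd_\tau \Phi)^2 - (\rd_\eta\Phi)^2 = f^2$ and the transport equation $\rd_\tau\Phi\,\rd_\tau h - \rd_\eta\Phi\,\rd_\eta h + \tfrac{1}{2}(\rd_\tau^2\Phi - \rd_\eta^2\Phi)h = 0$, and the remaining lower-order discrepancies get absorbed into the error term $\errwp_\varphi$.

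The first main step will be to solve the HJ equation by bicharacteristics. From $\rd_\eta\Phi(0,\eta) = \rd_\eta y = f(\eta)$ we pick the positive branch $\rd_\tau\Phi(0,\eta) = \sqrt{2}\,f(\eta)$. The Hamiltonian $p_\tau^2 - p_\eta^2 - f^2(\eta)$ is $\tau$-independent, so $p_\tau = \sqrt{2}\,f(\eta_0)$ is conserved along each bicharacteristic issued from $(\tau{=}0,\eta_0)$; using the HJ relation $p_\eta^2 = 2 f^2(\eta_0) - f^2(Y)$, we obtain the explicit ODE
\begin{equation*}
	\frac{\ud Y}{\ud \tau} = -\frac{\sqrt{2 f^{2}(\eta_0) - f^{2}(Y)}}{\sqrt{2}\,f(\eta_0)} \in \bigl[-1,\,-\tfrac{1}{\sqrt{2}}\bigr].
\end{equation*}
Hence $Y(\tau,\eta_0)$ is strictly decreasing, the flow $\eta_0 \mapsto Y(\tau,\eta_0)$ is a diffeomorphism onto its image (no characteristic crossings), and combining with \eqref{eq:wkb-f}--\eqref{eq:f-exp} gives $f(Y(\tau,\eta_0)) \aleq f(\eta_{0})\,e^{-c_f \tau}$ with some $c_f>0$. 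This yields a smooth phase $\Phi$ on a neighborhood of the image of the characteristics from $\supp g_0$, along with uniform bounds $|\rd_\tau^{k}\rd_\eta^{\ell}\Phi| \aleq_{k,\ell} 1$ for all $\tau \geq 0$ (the good sign comes from $f(Y)$ decaying exponentially). The transport equation for $h$ is then analyzed as in Section~\ref{subsec:wkb}: introduce the integrating factor $\alpha$ solving $\calL\alpha = -\tfrac{1}{2}(\rd_\tau^{2}\Phi - \rd_\eta^{2}\Phi)$ to reduce to $\calL(e^{-\alpha}h) = 0$, and use the exponential decay of $\rd_\tau^{2}\Phi - \rd_\eta^{2}\Phi$ along characteristics (which inherits it from $f(Y)$) to obtain $L^p$ bounds on all derivatives of $h$ in terms of $\|g_0\|_{H^{m}}$, analogous to Lemma~\ref{lem:wkb-h-est}.

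I then define $\tpsi_{(\lmb)} = f^{\frac{1}{2}}\varphi$ and $\tb^{z}_{(\lmb)} = -(f\rd_x)^{-1}\rd_t \tpsi_{(\lmb)}$ (cf.\ \eqref{eq:e-mhd-2.5d-lin-bz}), which immediately gives \eqref{eq:initial-psi-gev}. For \eqref{eq:initial-bz-gev}, a direct calculation using $\rd_t \tpsi_{(\lmb)}|_{t=0}$ produces the $\sqrt{2}g_0$ leading term (since $\rd_\tau\Phi(0,\eta)=\sqrt{2}f$), and the subleading term is computed from $\rd_\tau h|_{\tau=0}$. Here we exploit the crucial cancellation $\rd_\tau^{2}\Phi - \rd_\eta^{2}\Phi\big|_{\tau = 0} = 0$ (obtained by differentiating HJ in $\tau$ at $\tau = 0$), which reduces the transport equation at $\tau=0$ to $\sqrt{2} f\,\rd_\tau h = f\,\rd_\eta h$; translating back to $y$-derivatives via $\rd_\eta = f\rd_y$ produces exactly the factor $\rd_y g_0 - \tfrac{1}{2}f^{-1}\rd_y f\, g_0$ appearing in \eqref{eq:initial-bz-gev}. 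The $\Re$-lower bound on $(\tb^{z}_{(\lmb)}(0), \nb \tpsi_{(\lmb)}(0))$ follows by the same oscillatory integration-by-parts argument as in the proof of Proposition~\ref{prop:wavepackets}. The $L^p$-degeneration then follows from the key geometric fact that $\supp \tb_{(\lmb)}(t,\cdot)$ in $y$ is the image of $\supp g_0$ under the characteristic flow at time $\tau = \lmb t$, which has Lebesgue measure $\aleq e^{-c_f \lmb t}$ (since $\ud y = f\,\ud \eta$ and $f(Y) \aleq e^{-c_f \tau}$); combined with the preservation of the $L^2$-mass and H\"older's inequality, one obtains the stated estimate for $1 \leq p \leq 2$. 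The error bounds for $\err_b$ and $\err_\psi$ are proved exactly as in \eqref{eq:wkb-err-est}, and the Hall-MHD extension with the choice \eqref{eq:wavepackets-hall-u-omg-Gev} follows identically to Proposition~\ref{prop:wavepackets-hall}, using the identities \eqref{eq:wavepackets-hall-err}.

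The main obstacle compared to Section~\ref{sec:wavepackets} is that with the linear phase $y(\eta)$, we no longer have the luxury of the explicit separable form $\Phi = \tau + G(\eta)$; rather, $\Phi$ is defined only implicitly through the HJ flow, and $\rd_\tau\Phi$ now depends genuinely on both $\tau$ and $\eta$. Consequently the transport equation for $h$ carries the nontrivial potential $\tfrac{1}{2}(\rd_\tau^{2}\Phi - \rd_\eta^{2}\Phi)$ and the characteristics have $\eta_0$-dependent speeds, raising the risk of caustics on the long time interval $\tau \in [0,\lmb]$. The fix --- and essentially the entire technical content of the argument --- is the observation that every one of these new contributions is at least quadratic in $f$ (or its derivatives along the flow), and therefore decays exponentially in $\tau$ by \eqref{eq:wkb-f}--\eqref{eq:f-exp} combined with $dY/d\tau \leq -\tfrac{1}{\sqrt{2}}$; this both rules out caustics and keeps all coefficients of the transport equation, as well as the integrating factor $\alpha$, uniformly bounded for all $\tau \geq 0$.
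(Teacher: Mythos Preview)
Your proposal is correct and follows essentially the same approach as the paper: the WKB ansatz with initial phase $\Phi(0,\eta)=y(\eta)$, solution of the Hamilton--Jacobi equation via characteristics (using conservation of $\rd_\tau\Phi$), the integrating-factor reduction of the transport equation, and the definitions of $(\tpsi_{(\lmb)},\tb^z_{(\lmb)})$ all match the paper's Section~6.2--6.3; you also correctly identify the cancellation $\rd_\tau^2\Phi-\rd_\eta^2\Phi\big|_{\tau=0}=0$ needed for \eqref{eq:initial-bz-gev} and the structural reason (quadratic-in-$f$ coefficients) behind the uniform-in-$\tau$ control.

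One point worth sharpening: your assertion of ``uniform bounds $|\rd_\tau^k\rd_\eta^\ell\Phi|\lesssim_{k,\ell}1$'' understates the actual mechanism. What the paper establishes (and what is actually needed) is that the \emph{ratios} $\rd^{(\geq 2)}\Phi/\rd_\tau\Phi$ stay bounded along characteristics; the second-order case is handled by observing that $Q(\tau):=(\rd_{\tau\tau}\Phi/\rd_\tau\Phi)(\tau,Y(\tau,\eta_0))$ satisfies a Riccati-type inequality $\dot Q \leq \tfrac{f^2(Y)}{f^2(\eta_0)}\,Q^2$, whose exponentially decaying coefficient prevents blow-up, and higher derivatives are bootstrapped from the differentiated Hamilton--Jacobi relations. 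Your ``at least quadratic in $f$'' heuristic is exactly this, but the Riccati bound is the concrete way to close it.
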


\begin{remark}
	Note that $g_0$ itself cannot belong to all Gevrey classes (in particular, analytic) since it has compact support. In the proof below, we shall take some Gevrey class function $\tilde{g}_0$ which is supported near $y = 0$ and truncate it to obtain $g_0$. 
\end{remark}

\begin{remark}
	While stronger degeneration properties on par with \eqref{eq:wavepackets-degen-upper}--\eqref{eq:wavepackets-degen-small} are expected to hold, in order to reduce the amount of technicality, we chose to state and prove only the simpler $L^{p}$-degeneration property that is sufficient for the proof of Theorem~\ref{thm:illposed-gevrey}.
\end{remark}

We now give the proof of Theorem~\ref{thm:illposed-gevrey} assuming the above statement. 

\begin{proof}[Proof of Theorem~\ref{thm:illposed-gevrey} from Proposition \ref{prop:wavepackets-Gevrey}]
	We first consider \eqref{eq:e-mhd-2.5d-lin}, the linearized electron-MHD equations at $\bgB= f(y)\rd_x$, with data having a single frequency in $x$. We may take some function $\tilde{g}_0(y)$ such that $\tilde{g}_0$ and $f^{-1}\tilde{g}_0$ belong to $G^\sigma$ given $\sigma > 0$, since we have assumed that $f(y) \in G^\sigma$ and $G^\sigma$ is closed under multiplication. 
	
	Take some $y_1 > 0$ and let $\chi$ be a smooth bump function in $y$ with $\chi = 1$ on $|y| \le y_1/2$ and vanishes for $|y| > y_1$. We could have assumed that the support of $\tilde{g}_0$ contains the interval $[-y_1/2,y_1/2]$. Take $g_0 := \tilde{g} \chi$ and define \begin{equation*} 
	\begin{split}
	\tilde{b}_{(\lmb)}(0) = (\rd_y \tilde{\psi}_{(\lmb)}(0), -\rd_x\tilde{\psi}_{(\lmb)}(0), \tb_{(\lmb)}^z(0))
	\end{split}
	\end{equation*} with $(\tb_{(\lmb)}^z, \tpsi_{(\lmb)})[g_0]$ which is provided by Proposition \ref{prop:wavepackets-Gevrey}. On the other hand, define the initial data $b_{(\lmb)}(0)$ from \begin{equation*} 
	\begin{split}
	b^z_{(\lmb)}(0) := -e^{i\lmb(x+y)}\sqrt{2}g_0, \quad \psi_{(\lmb)}(0) := \lmb^{-1}e^{i\lmb(x+y)}g_0 
	\end{split}
	\end{equation*} which clearly belongs to $G^\sigma$. We may take the real parts of $\tb_{(\lmb)}(0), b_{(\lmb)}(0)$ to ensure that the data are real-valued, and further normalize the $L^2$-norm of $\tb_{(\lmb)}(0)$ by 1. We then proceed as in the proof of Theorem \ref{thm:norm-growth}: denoting the unique solution of \eqref{eq:e-mhd-2.5d-lin} with initial data $b_{(\lmb)}(0)$ by $b_{(\lmb)}(t)$, we have \begin{equation*} 
	\begin{split}
	\left|\brk{\tilde{b}_{(\lmb)}, b_{(\lmb)}}(t) - \brk{\tilde{b}_{(\lmb)}, b_{(\lmb)}}(0)\right| & \lesssim t \nrm{b_{(\lmb)}}_{L^\infty([0,t];L^2)} \lesssim t \nrm{b_{(\lmb)}(0)}_{L^2}.
	\end{split}
	\end{equation*} On the other hand, \begin{equation*} 
	\begin{split}
	\brk{\tilde{b}_{(\lmb)}, b_{(\lmb)}}(0) \gtrsim \nrm{b_{(\lmb)}(0)}_{L^2}
	\end{split}
	\end{equation*} independently of $\lmb$ so that for sufficiently small $\delta > 0$, we obtain \begin{equation} \label{eq:gevrey-b-tb-lb} 
	\begin{split}
	\brk{\tilde{b}_{(\lmb)}, b_{(\lmb)}}(t) \gtrsim \nrm{b_{(\lmb)}(0)}_{L^2},\quad t \in [0,\delta]
	\end{split}
	\end{equation} again with a constant independent of $\lmb$. 
	
	 {We now claim that there exists $c_{\ast} \in \bbR$ such that for each positive integer $n$,
\begin{equation} \label{eq:dn-b-growth}
	\begin{split}
	\nrm{\rd_y^n b_{(\lmb)}(t)}_{L^2} \gtrsim e^{(c_{\ast} + c_f \lmb t) n} \nrm{b_{(\lmb)}(0)}_{L^2},
	\end{split}
	\end{equation} 
	We emphasize that the implicit constant is independent of $n$. When $n = 1$, the claim follows by \eqref{eq:gevrey-b-tb-lb}, the degeneration property and the Sobolev inequality as before. Next, for any $n \geq 1$, we have
	\begin{equation*}
	\nrm{\rd_{y} b_{(\lmb)}}_{L^{2}} \leq \nrm{b_{(\lmb)}}_{L^{2}}^{\frac{n-1}{n}} \nrm{\rd_{y}^{n} b_{(\lmb)}}_{L^{2}}^{\frac{1}{n}}
\end{equation*}
	which can be seen easily by using the Fourier transform. Then \eqref{eq:dn-b-growth} follows from the case $n = 1$ and the bound $\sup_{t \in [0, \dlt]} \nrm{b_{(\lmb)}(t)}_{L^{2}} \leq e^{c_{\ast}} \nrm{b_{(\lmb)}(0)}_{L^{2}}$ for some $c_{\ast} \in \bbR$ independent of $\lmb$.
	}
	
	 {To conclude the proof in the electron-MHD case,} we consider initial data of the form \begin{equation*} 
	\begin{split}
	b(0) = \sum_{\lmb \in \mathbb{N} } c_\lmb \tilde{b}_{(\lmb)}(0) 
	\end{split}
	\end{equation*} where we normalize each $\tilde{b}_{(\lmb)}(0) $ in $L^2$ and $c_\lmb = e^{-\lmb^{1/\sigma}}$. This ensures that the initial data belongs to $G^\sigma(\mathbb{T}^3)$. Again by the assumption of uniqueness, we deduce that the solution satisfies \begin{equation} \label{eq:gevrey-key-lb} 
	\begin{split}
	&\nrm{\rd_y^n b(t)}_{L^2(\mathbb{T}^3)}^2 \gtrsim \sum_{\lmb \in \mathbb{N}} c_\lmb^2 e^{ {(c_{\ast} + 2c_f \lmb t) n }} = \sum_{\lmb \in \mathbb{N}} e^{-2\lmb^{1/\sigma}  +  {2(c_{\ast} + c_f \lmb t) n }}.
	\end{split}
	\end{equation} 
{At this point, we divide the argument into two cases:
\pfstep{Case~1: $\sgm \geq 1$} When $\sgm \geq 1$, this series simply does not converge for any $t > 0$ for $n$ large depending on $t, \sigma$, which concludes the proof.
\medskip

\pfstep{Case~2: $0 < \sgm < 1$} Fix a small parameter $0 < \eps \ll \frac{\sgm^{2}}{1-\sgm}$. Since each summand on the RHS of \eqref{eq:gevrey-key-lb} is nonnegative, we may show that, for sufficiently large $n$ depending on $c_{f}$, $c_{\ast}$ and $t$,
\begin{equation} \label{eq:gevrey-n-lb}
	\nrm{\rd_{y}^{n} b(t)}_{L^{2}(\bbT^{3})} \ageq e^{c_{\sgm} (c_{f} t)^{\frac{1}{1-\sgm}} n^{\frac{1}{1-\sgm}} + c_{\ast} n}, \quad c_{\sgm} = \sgm^{\frac{\sgm}{1-\sgm}} - \sgm^{\frac{1}{1-\sgm}},
\end{equation}
by keeping only the summand with $\lmb = \lfloor (\sgm c_{f} t)^{\frac{\sgm}{1-\sgm}} n^{\frac{\sgm}{1-\sgm}} \rfloor$.\footnote{The choice of $\lmb$ is motivated by the Laplace method for deriving asymptotics of an exponential integral.}
Observe the crucial properties that $\frac{1}{1- \sgm} > 1$ and $c_{\sgm} > 0$, since $0 < \sgm < 1$. Recalling \eqref{eq:gevrey-rad} and the crude  bound $n! \lesssim  e^{n \log n }$, we see that \eqref{eq:gevrey-n-lb} implies that the $G^{\sgm'}$ radius of convergence of $b(t)$ is zero (i.e., $b(t) \not \in G^{\sgm'}$) for every $\sgm' > 0$. This finishes the proof for the electon-MHD case.

}
	
	We now indicate the necessary modifications for the Hall-MHD case. In addition to the initial data $b_{(\lmb)}(0)$ defined above, we simply take $u_{(\lmb)}(0) = 0$. Let $(u_{(\lmb)}, b_{(\lmb)})$ be the solution with  {the initial data} $(0, b_{(\lmb)}(0))$ which exists by assumption. Then, applying the generalized energy inequality from Proposition \ref{prop:justify} with the degenerating wave packet from Proposition \ref{prop:wavepackets-Gevrey} yields \begin{equation*}
	\begin{split}
	&\left| \brk{\tilde{b}_{(\lmb)}, b_{(\lmb)}}(t) - \brk{\tilde{b}_{(\lmb)}, b_{(\lmb)}}(0) \right| \\
	&\qquad \lesssim ((1+\nu)t^{1/2} + \lmb^{-1})\left(   \nrm{b_{(\lmb)}}_{L^\infty(I;L^2)} +\nrm{u_{(\lmb)}}_{L^\infty(I;L^2)} + \nrm{u_{(\lmb)}}_{L^2(I;\dot{H}^1)} \right) 
	\end{split}
	\end{equation*}   applying the error bounds together with the smoothing estimates from Proposition \ref{prop:wavepackets-Gevrey}. Thus, choosing $0 < \dlt \le 1$ sufficiently small, we obtain for all sufficiently large $\lmb$ that 
	\begin{equation*} 
	\brk{\tilde{b}_{(\lmb)}, b_{(\lmb)}}(t) \gtrsim \nrm{b_{(\lmb)}(0)}_{L^2} \quad \hbox{ for } t \in [0, \dlt].
	\end{equation*} The rest of the argument is the same with the electron-MHD case. \end{proof}

\subsection{Analysis of the Hamilton-Jacobi equation}

The heart of the matter in establishing Proposition \ref{prop:wavepackets-Gevrey} is to repeat the WKB-type analysis for the phase function which is simply given initially by $y$. That is, we seek a solution of \eqref{eq:wkb_phi} with initial data $\Phi(0,\eta) = y(\eta)$. Before we proceed, we recall the renormalized form of \eqref{eq:e-mhd-2.5d-lin}: after change of variables \begin{equation*} 
\begin{split}
\tau = \lmb t, \quad \eta'(y) = \frac{1}{f(y)}, \quad \varphi = f^{-\frac{1}{2}}\psi =: e^{i\lmb x} \phi 
\end{split}
\end{equation*} we arrive at \begin{equation}\label{eq:e-mhd-eta-conj3}
\begin{split}
\rd_\tau^2\phi - \rd_\eta^2 \phi + \lmb^2 f^2\phi + \left[\frac{1}{2}\rd_\eta ( f^{-1} \rd_{\eta} f) + \frac{1}{4} f^{-2} (\rd_{\eta} f)^{2} \right]\phi = 0. 
\end{split}
\end{equation} The ansatz for $\phi$ will be \begin{equation*} 
\begin{split}
\phi(\tau,\eta) = \lmb^{-1}e^{i\lmb\Phi(\tau,\eta)} h(\tau,\eta) 
\end{split}
\end{equation*} where $h_0 = h(0,\cdot)$ is simply given by $f^{-\frac{1}{2}}g_0$ with $g_0$ given in the statement of Proposition \ref{prop:wavepackets-Gevrey}. We recall the system of equations \begin{equation}\label{eq:wkb_phi-Gev}
\begin{split}
(\rd_\tau\Phi)^2 - (\rd_\eta\Phi)^2 = f^2, \quad \Phi(0,\eta) = y(\eta)
\end{split}
\end{equation} and \begin{equation}\label{eq:wkb_h-Gev}
\begin{split}
(\rd_\tau\Phi\rd_\tau - \rd_\eta\Phi\rd_\eta) h = -\frac{1}{2} (\rd_\tau^2\Phi - \rd_\eta^2\Phi)h . 
\end{split}
\end{equation}

\subsubsection*{Explicitly solvable model case} It will be instructive to take a look at the simplest model case of $f(y) = y$, to get an idea of the behavior of the solutions to \eqref{eq:wkb_phi-Gev} and \eqref{eq:wkb_h-Gev}.  In this case, $f(\eta) = e^\eta$ and hence $\Phi(0,\eta) = e^\eta$. Let us also set $h_0(\eta) = e^\eta$ (for $\eta \le 0$). Here and in the following, we shall use the notation $A \sim B$ to denote that the ratio $A/B$ converges to some positive constant in the limit $\eta \rightarrow -\infty$, and use $A \approx B$ when the constant is $1$. Taking the ansatz $\Phi(\tau,\eta) = e^\eta H(\tau)$, we are led to solve \begin{equation*}
\begin{split}
(H')^2 - H^2 = 1, \quad H(0) = 1 
\end{split}
\end{equation*} and we have the solution (unique up to sign) \begin{equation*}
\begin{split}
H(\tau) = \sinh(\tau -c_0),\quad c_0 = \sinh^{-1}(1).
\end{split}
\end{equation*} With this $\Phi$,  {\eqref{eq:wkb_h-Gev}} becomes simply \begin{equation*}
\begin{split}
\left(\rd_\tau - \tanh(\tau - c_0) \rd_\eta \right) h = 0, 
\end{split}
\end{equation*} noticing the cancellation $\rd_\tau^2\Phi - \rd_\eta^2 \Phi = 0$. The solution is then explicitly given by \begin{equation*}
\begin{split}
h(\tau,\eta) = h_0\left(\eta + \log(\frac{\cosh(\tau-c_0)}{\cosh(-c_0)}) \right) = e^\eta \, \frac{\cosh(\tau - c_0)}{\cosh(-c_0)}.
\end{split}
\end{equation*} 
One sees that in this case, all the characteristic curves are parallel in the $(\tau,\eta)$-plane and moves to $\eta \rightarrow -\infty$ with asymptotically unit speed, and {that $h(\tau, \eta) / e^{\eta + \tau} \to 1$ along each characteristic curve in the region $\eta < -\log(\frac{\cosh(\tau-c_0)}{\cosh(-c_0)}) \approx  -\tau $}. Assuming that the support of $h_0$ is contained in $\{ \eta < C \}$ for some $C > 0$, one sees that all the Sobolev norms of $h(\tau)$ are uniformly bounded in terms of the corresponding norm of the initial data in the limit $\tau \rightarrow +\infty$. 

\subsubsection*{Initial data} We now take some general smooth $f$ with $\rd_y f(0) = c_0 > 0$. We recall that \begin{equation*}
\begin{split}
\eta(y) \approx c + \frac{1}{c_0}\ln y 
\end{split}
\end{equation*} for some constant $c$ (which can be normalized to be 0), and hence \begin{equation*}
\begin{split}
f(\eta) \approx c_0e^{c_0\eta} .
\end{split}
\end{equation*} We had the following asymptotic expressions for the derivatives: \begin{equation} \label{eq:f-exp-Gev}
|\rd_\eta^{(n)}f|(\eta) \lesssim_n f(\eta) \lesssim e^{c_0\eta},\quad \forall \eta \le 0.  
\end{equation}  We note that from $\Phi(0,\eta) = y(\eta)$, $\rd_\eta\Phi(0,\eta) = f(\eta) \approx c_0 e^{c_0\eta}$. Therefore, $\rd_\tau\Phi(0,\eta) = \sqrt{2}f(\eta)  \approx \sqrt{2}c_0e^{c_0\eta}$.   Similarly, one may check that $\rd_{\tau\eta}\Phi(0,\eta) = \sqrt{2}\rd_\eta f(\eta)$, $\rd_{\tau}^2\Phi(0,\eta) = \rd_{\eta}^2\Phi(0,\eta) = \rd_\eta f(\eta)$. 

\subsubsection*{Characteristics}  Define the characteristic curves by \begin{equation*}
\begin{split}
\frac{\ud}{\ud \tau}Y(\tau,\eta_0) = - \frac{\rd_\eta\Phi}{\rd_\tau\Phi}(\tau,Y(\tau,\eta_0)),\quad Y(0,\eta_0) = \eta_0. 
\end{split}
\end{equation*} Then, differentiating the equation for $\Phi$ in $\tau$ and $\eta$, we respectively obtain \begin{equation}\label{eq:Phi-tau}
\begin{split}
\frac{\ud}{\ud \tau} \rd_\tau\Phi(\tau,Y(\tau,\eta_0)) = 0
\end{split}
\end{equation} and \begin{equation}\label{eq:Phi-eta}
\begin{split}
\frac{\ud}{\ud \tau} \rd_\eta\Phi(\tau,Y(\tau,\eta_0)) = \frac{ff'}{\rd_\tau\Phi}(\tau,Y(\tau,\eta_0)). 
\end{split}
\end{equation} Therefore, we deduce that \begin{equation*}
\begin{split}
\frac{\ud}{\ud \tau} \left(- \frac{\rd_\eta\Phi}{\rd_\tau\Phi}(\tau,Y(\tau,\eta_0))\right) = - \frac{ff'}{(\rd_\tau\Phi)^2}(\tau,Y(\tau,\eta_0)) < 0. 
\end{split}
\end{equation*} Since $\rd_\eta\Phi/\rd_\tau\Phi|_{\tau =0 } = 1/\sqrt{2}$ and $\rd_\eta\Phi/\rd_\tau\Phi \le 1$ from the equation, we obtain \begin{equation}\label{eq:Y-Gev}
\begin{split}
\eta_0 - \tau  < Y(\tau,\eta_0) \le   \eta_0 -  \frac{1}{\sqrt{2}} \tau  
\end{split}
\end{equation} for all $\tau \ge 0$ and $\eta_0 $ large negative.  In particular $f(\tau,Y(\tau,\eta)) \le e^{c_0(\eta-c\tau)}$ and similarly for $f'$, so that \eqref{eq:Phi-eta} implies $\rd_\eta\Phi(\tau,Y(\tau,\eta))  {\aeq}  e^{c_0\eta}$. In the following we shall always assume that $\eta_0 \le 0$ is taken to be sufficiently negative so that the above estimates hold.

\subsubsection*{Second derivatives of $\Phi$} 
We compute  {
\begin{equation}\label{eq:Phi-tautau}
\begin{split}
 \frac{\ud}{\ud \tau} \left( \frac{\rd_{\tau\tau}\Phi}{\rd_\tau\Phi} (\tau,Y(\tau,\eta_{0})) \right) = 
\frac{f^{2}}{(\rd_{\tau} \Phi)^{2}} \left(\frac{\rd_{\tau} \Phi}{\rd_{\eta} \Phi} \right)^{2} \left(\frac{\rd_{\tau \tau} \Phi}{\rd_{\tau} \Phi}\right)^{2}(\tau,Y(\tau,\eta_{0})).
\end{split}
\end{equation} 
For each fixed $\eta_{0} \ll -1$, we introduce $Q(\tau) = ({\rd_{\tau\tau}\Phi}/{\rd_\tau\Phi}) (\tau,Y(\tau,\eta_{0}))$. Since the RHS of \eqref{eq:Phi-tautau} is nonnegative, for all $\tau \geq 0$ it follows that 
\begin{equation*}
Q(\tau) \geq Q(0)  > 0.
\end{equation*}
Next, note that $\rd_{\tau} \Phi$ is invariant and $\frac{\rd_{\tau} \Phi}{\rd_{\eta} \Phi}$ is decreasing along characteristics; at $\tau = 0$ they are equal to $\sqrt{2} f(\eta_{0})$ and $\sqrt{2}$, respectively. Thus,
\begin{equation*}
	\frac{\ud}{\ud \tau} Q(\tau) \leq \frac{f^{2}(Y(\tau, \eta_{0}))}{f^{2}(\eta_{0})} Q(\tau)^{2}.
\end{equation*}
Solving this differential inequality, we see that
\begin{equation*}
	Q(\tau) \leq \frac{Q(0)}{1 - Q(0) \int_{0}^{\tau} \frac{f^{2}(Y(\tau, \eta_{0}))}{f^{2}(\eta_{0})} \, \ud \tau'},
\end{equation*}
as long as the denominator is positive. Recall that $Q(0) \approx \frac{1}{\sqrt{2}} c_{0}$ and $f(\eta_{0}) \approx c_{0} e^{c_{0} \eta_{0}}$ as $\eta_{0} \to -\infty$. Moreover, $Y(\tau, \eta_{0}) \leq \eta_{0} - \frac{1}{\sqrt{2}} \tau$ by \eqref{eq:Y-Gev}. Thus,
\begin{equation*}
	Q(0) \int_{0}^{\tau} \frac{f^{2}(Y(\tau, \eta_{0}))}{f^{2}(\eta_{0})} \, \ud \tau'
	\approx \frac{c_{0}}{\sqrt{2}} \int_{0}^{\infty} e^{-\sqrt{2} c_{0} \tau'} \, \ud \tau' 
	\leq \frac{1}{2},
\end{equation*}
which ensures that the above denominator is $\ageq 1$ for all sufficiently negative $\eta_{0}$.

In conclusion, we have proved that
 \begin{equation*}
\begin{split}
\frac{\rd_{\tau\tau}\Phi}{\rd_\tau\Phi}(\tau,Y(\tau,\eta))  {\aeq} 1
\end{split}
\end{equation*} for all $\eta $ sufficiently negative. In turn, using \eqref{eq:Phi-tau} and \eqref{eq:Phi-eta}, we respectively deduce that  \begin{equation*}
\begin{split}
\frac{\rd_{\eta\tau} \Phi}{\rd_\tau\Phi}= \frac{\rd_\tau\Phi}{\rd_\eta\Phi}\frac{\rd_{\tau\tau}\Phi}{\rd_\tau\Phi}  {\aeq} 1, \quad
\frac{\rd_\eta^2\Phi}{\rd_\tau\Phi} = \frac{\rd_\tau\Phi}{\rd_\eta\Phi} \frac{\rd_{\tau\eta}\Phi}{\rd_\tau\Phi} -  \frac{ff'}{\rd_\eta\Phi\rd_\tau\Phi}  {\aeq} 1  
\end{split}
\end{equation*} along the characteristics, for sufficiently negative $\eta$. }

\subsubsection*{Higher derivatives}  To begin with, differentiating  {\eqref{eq:wkb_phi-Gev} twice}, we obtain a linear system of equations in third order derivatives of $\Phi$: \begin{equation}\label{eq:Phi-third}
\begin{split}
\begin{cases}
(\rd_{\tau\tau}\Phi)^2 + \rd_\tau\Phi\rd_{\tau\tau\tau}\Phi - (\rd_{\tau\eta}\Phi)^2 - \rd_\eta\Phi \rd_{\tau\tau\eta}\Phi = 0, \\
\rd_{\tau\eta}\Phi\rd_{\tau\tau}\Phi + \rd_\tau\Phi \rd_{\tau\tau\eta}\Phi - \rd_{\eta\eta}\Phi\rd_{\tau\eta}\Phi - \rd_\eta\Phi \rd_{\tau\eta\eta}\Phi = 0 , \\
\rd_\tau\Phi\rd_{\tau\eta\eta}\Phi + (\rd_{\tau\eta}\Phi)^2 - (\rd_{\eta\eta}\Phi)^2 - \rd_\eta\Phi\rd_{\eta\eta\eta}\Phi = (f')^2 + ff'' . 
\end{cases}
\end{split}
\end{equation} An estimate on a single third-order term along the characteristics lead to the corresponding estimates for all the other third-order derivatives, using \eqref{eq:Phi-third}. To this end we shall estimate $\rd_{\tau\tau\tau}\Phi$:  differentiating \eqref{eq:Phi-tau} twice in $\tau$ and using that $\rd_\tau\Phi$ is constant along the characteristics,
\begin{equation*}
\begin{split}
\frac{\ud}{\ud \tau} \frac{\rd_{\tau\tau\tau}\Phi}{\rd_\tau\Phi}(\tau,Y(\tau,\eta_0)) &=  \frac{\rd_{\tau\tau}\Phi}{\rd_\tau\Phi} \frac{2f^2}{\rd_\tau\Phi\rd_\eta\Phi}   \frac{\rd_{\tau\tau\eta}\Phi}{\rd_\tau\Phi} + \rd_\tau \left(\frac{\rd_{\tau\tau}\Phi}{\rd_\tau\Phi} \frac{f^2}{\rd_\tau\Phi\rd_\eta\Phi} \right) \frac{\rd_\tau\Phi}{\rd_\eta\Phi} \frac{\rd_{\tau\tau}\Phi}{\rd_\tau\Phi} \\
& =  \frac{\rd_{\tau\tau}\Phi}{\rd_\tau\Phi} \frac{2f^2}{\rd_\tau\Phi\rd_\eta\Phi}  \left(\frac{(\rd_{\tau\tau}\Phi)^2}{\rd_\tau\Phi\rd_\eta\Phi} + \frac{\rd_{\tau\tau\tau}\Phi}{\rd_\tau\Phi} \frac{\rd_\tau\Phi}{\rd_\eta\Phi} - \frac{(\rd_{\tau\eta}\Phi)^2}{\rd_\tau\Phi\rd_\eta\Phi}\right) \\
&\quad +  \frac{\rd_{\tau\tau}\Phi}{\rd_\tau\Phi} \frac{f^2}{\rd_\tau\Phi\rd_\eta\Phi}\left( \frac{\rd_{\tau\tau\tau}\Phi}{\rd_\tau\Phi} - \frac{\rd_{\tau\tau}\Phi(2\rd_{\tau\tau}\Phi\rd_\eta\Phi+ \rd_\tau\Phi\rd_{\tau\eta}\Phi)}{(\rd_\tau\Phi)^2\rd_\eta\Phi} \right) \frac{\rd_\tau\Phi}{\rd_\eta\Phi}  ,
\end{split}
\end{equation*}  where the expressions on the  {RHS's} are evaluated at $(\tau,Y(\tau,\eta_0))$. Apart from the expression ${\rd_{\tau\tau\tau}\Phi}/{\rd_\tau\Phi}$ which we need to estimate, all the ratios appearing on the last expression are of $ {\aeq} 1$, except that $f^2/(\rd_\tau\Phi\rd_\eta\Phi)$ decays exponentially in $\tau$. From this we conclude that \begin{equation*}
\begin{split}
\frac{\rd_{\tau\tau\tau}\Phi}{\rd_\tau\Phi}(\tau,Y(\tau,\eta_0))  {\aeq} 1 
\end{split}
\end{equation*} for all $\eta_0$ sufficiently negative. Using \eqref{eq:Phi-third} we also deduce \begin{equation*}
\begin{split}
\frac{\rd^3\Phi}{\rd_\tau\Phi}(\tau,Y(\tau,\eta_0))  {\aeq} 1 . 
\end{split}
\end{equation*} It is clear now that a similar estimates hold for higher derivatives of arbitrary order. For instance, to obtain such estimates for the fourth order derivatives, it is sufficient to prove $\rd_\tau^4\Phi/\rd_\tau\Phi  {\aeq} 1$, and for this purpose one simply needs to differentiate the above expression for ${\rd_{\tau\tau\tau}\Phi}/{\rd_\tau\Phi}$ in $\tau$ and observe that the RHS can be written in the form where all the expressions are of order 1 except for the quantity $\rd_\tau^4\Phi/\rd_\tau\Phi$ itself which is multiplied with a temporally decaying factor $f^2/(\rd_\tau\Phi\rd_\eta\Phi)$.

\subsubsection*{Analysis of the transport equation}
We consider 
\begin{equation*}
\calL = \rd_{\tau} - \frac{\rd_\eta\Phi}{\rd_\tau\Phi} \rd_{\eta} ,
\end{equation*}
towards the goal of estimating $h$ via the transport equation \eqref{eq:wkb_h-Gev}. Before we begin,  {note that the divergence of $\calL$ with respect to $\ud \eta$ is 
\begin{equation} \label{eq:div-L-Gev}
	- \rd_{\eta} \frac{\rd_\eta\Phi}{\rd_\tau\Phi}
	= \frac{\rd_{\eta}^{2} \Phi - \rd_{\tau}^{2} \Phi}{\rd_{\tau} \Phi}.
\end{equation}
Comparing this expression with the RHS of \eqref{eq:wkb_h-Gev}, we see that}
 the $L^2$-norm is conserved: $\nrm{h(\tau)}_{L^2} = \nrm{h_0}_{L^2}$. We shall now proceed to show that actually all $W^{s,p}$-norms of $h$ are uniformly  bounded in $\tau$ as well.

First, observe that \eqref{eq:wkb_h-Gev} can be simplified using the method of integrating factors: introducing a real-valued function $\alp(\tau, \eta)$ defined by
\begin{equation} \label{eq:wkb-alp-Gev}
\calL \alp = - \frac{1}{2} \frac{\rd_\tau^2\Phi-\rd_\eta^2\Phi}{\rd_\tau\Phi}, 
\end{equation}
with the initial condition $\alp(\tau = 0) = 0$, we see that
\begin{equation} \label{eq:wkb-h'-Gev}
\calL (e^{-\alp} h ) = 0.
\end{equation}
For any $m \in \bbN_{0}$ we claim that
\begin{equation*}
\Abs{\rd_{\eta}^{m} \left( - \frac{1}{2} \frac{\rd_\tau^2\Phi-\rd_\eta^2\Phi}{\rd_\tau\Phi} \right)}(\tau,Y(\tau,\eta_0)) \lesssim_m e^{-2cc_0\tau}
\end{equation*} holds for $\eta \ll -1$. It follows directly from \eqref{eq:f-exp} and the estimates for derivatives of $\Phi$ along the characteristics obtained in the above. To see this in the case $m = 0$, note that \begin{equation*}
\begin{split}
\rd_\tau^2\Phi - \rd_\eta^2\Phi & = \frac{\rd_\eta\Phi}{\rd_\tau\Phi} \rd_{\tau\eta}\Phi - \frac{\rd_\tau\Phi}{\rd_\eta\Phi} \rd_{\tau\eta}\Phi + \frac{ff'}{\rd_\eta\Phi} 
\end{split}
\end{equation*} so that \begin{equation*}
\begin{split}
- \frac{1}{2} \frac{ \rd_\tau^2\Phi - \rd_\eta^2\Phi }{\rd_\tau\Phi} = \frac{1}{2} \frac{\rd_{\tau\eta}\Phi}{\rd_\tau\Phi} \frac{f^2}{\rd_\tau\Phi\rd_\eta\Phi} - \frac{1}{2} \frac{ff'}{\rd_\eta\Phi\rd_\tau\Phi} ,
\end{split}
\end{equation*} which is $ {\aeq} e^{-2cc_0\tau}$ when evaluated along a characteristic. It is now straightforward to extend the above estimate to $m \ge 1$.  {Then by \eqref{eq:div-L-Gev}}, for any $\ell \ge 1$ we have decay of the coefficients \begin{equation*}
\begin{split}
\left|\rd_\eta^\ell \left(\frac{\rd_\eta\Phi}{\rd_\tau\Phi}\right)\right|(\tau,Y(\tau,\eta_0)) \lesssim_\ell e^{-2cc_0\tau}.
\end{split}
\end{equation*} In the case $\ell = 1$, it shows that the divergence of $\calL$ with respect to $\ud \eta$ decays exponentially in $\tau$ along characteristics. 

Using the above observations, we obtain the following $L^{\infty}$-bound for $\alp$:
\begin{align*}
\sup_{0 \leq k \leq m} \sup_{\tau \geq 0} \nrm{\rd_{\tau}^{k}  \rd_{\eta}^{m  - k} \alp(\tau)}_{L^{\infty}_{\eta}}
\aleq_{m} & 1 
\end{align*} from which it follows that \begin{lemma} \label{lem:wkb-h-est-gev}
	Let $h$ be the solution of \eqref{eq:wkb_h} with smooth initial data $h_0$ supported on $\eta \le 0$. Then we have the estimates \begin{equation*}
	\begin{split}
	\max_{0 \le k \le m}  \sup_{\tau \ge 0}\nrm{ \rd_\tau^k \rd_\eta^{m-k } h(\tau)}_{L^p(\mathbb{R}_\eta)}   \lesssim_m \nrm{h_0}_{W^{m,p}(\mathbb{R}_\eta)}
	\end{split}
	\end{equation*} for any integer $m \ge 0$ and $1 \leq p \leq \infty$. 
\end{lemma}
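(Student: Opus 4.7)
The plan is to combine the integrating-factor reduction \eqref{eq:wkb-h'-Gev} with commutator estimates that exploit the exponential decay of the coefficients of $\calL$ and their derivatives along the characteristics $Y(\tau,\eta_0)$. First, set $H := e^{-\alpha}h$, so that $\calL H = 0$. The $L^p$ bound on $H$ follows from the method of characteristics together with the divergence identity \eqref{eq:div-L-Gev}: writing $\ud \eta(\tau) = e^{-\int_0^{\tau}\mathrm{div}_{\ud\eta}\calL\,\ud\tau'}\,\ud\eta(0)$ along the flow, and recalling that $|\mathrm{div}_{\ud\eta}\calL(\tau,Y(\tau,\eta_0))|\lesssim e^{-2cc_0\tau}$, one obtains $\sup_{\tau\geq 0}\nrm{H(\tau)}_{L^p_{\eta}}\lesssim \nrm{h_0}_{L^p_\eta}$. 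The previously established $L^\infty$ bound on $\alpha$ then gives the $m=0$, $k=0$ case for $h$.

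Next, I would handle pure $\eta$-derivatives by induction in $m$. Differentiating $\calL H=0$ in $\eta$ produces $\calL \rd_\eta^m H = [\calL,\rd_\eta^m] H$, where the commutator is a sum of terms of the form $c_\ell(\tau,\eta)\,\rd_\eta^{\ell}H$ with $1\leq \ell\leq m$, and coefficients obeying $|c_\ell(\tau,Y(\tau,\eta_0))|\lesssim_m e^{-2cc_0\tau}$ by the derivative bounds on $\rd_\eta\Phi/\rd_\tau\Phi$ obtained just before the lemma. Writing the transport inequality for $|\rd_\eta^m H|^p$ along characteristics and using the exponential decay of both the divergence of $\calL$ and the commutator coefficients, a Gr\"onwall-type argument delivers $\sup_{\tau\geq 0}\nrm{\rd_\eta^m H(\tau)}_{L^p_\eta}\lesssim_m \nrm{h_0}_{W^{m,p}_\eta}$. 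Conjugating back by $e^\alpha$ and using the bounds on $\rd_\eta^j\alpha$ yields the same estimate for $h$ in place of $H$.

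To pass from pure $\eta$-derivatives to mixed $(\tau,\eta)$-derivatives with $0\leq k\leq m$, I would use the equation \eqref{eq:wkb_h-Gev} itself to trade each $\rd_\tau$ for $(\rd_\eta\Phi/\rd_\tau\Phi)\rd_\eta$ plus a zeroth-order factor $-\tfrac{1}{2}(\rd_\tau^2\Phi-\rd_\eta^2\Phi)/\rd_\tau\Phi$. Since both $\rd_\eta\Phi/\rd_\tau\Phi$ and the forcing coefficient, as well as all their $\eta$-derivatives, are uniformly bounded (and in fact exponentially decaying along characteristics for $\eta$-derivatives of order $\geq 1$), a finite iteration expresses $\rd_\tau^k\rd_\eta^{m-k}h$ as a linear combination of $\rd_\eta^\ell h$ with $0\leq \ell\leq m$, the coefficients having uniform $L^\infty$ bounds. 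Combined with the pure-$\eta$ estimates from the previous step, this yields the full statement.

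The main difficulty will be the commutator step with $1<p<\infty$: one must avoid Gr\"onwall blow-up when the commutator produces $\rd_\eta^m H$ itself (the $\ell = m$ term). The key input is that the coefficient of $\rd_\eta^m H$ in $[\calL,\rd_\eta^m]$ is precisely $-m\,\rd_\eta(\rd_\eta\Phi/\rd_\tau\Phi)$, which is $\aeq e^{-2cc_0\tau}$ along characteristics and hence integrable in $\tau$. This integrability is what produces a \emph{uniform-in-$\tau$} constant in Gr\"onwall, and it must be verified carefully together with the absence of boundary effects (the support of $h(\tau,\cdot)$ stays in $\eta\leq 0$ by \eqref{eq:Y-Gev}, so the characteristic estimates used above are valid for all $\tau\geq 0$).
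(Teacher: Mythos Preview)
Your proposal is correct and follows essentially the same approach as the paper. The paper's argument is stated very tersely---after establishing the exponential decay of $\rd_\eta^m\bigl(\tfrac{1}{2}(\rd_\tau^2\Phi-\rd_\eta^2\Phi)/\rd_\tau\Phi\bigr)$ and of $\rd_\eta^\ell(\rd_\eta\Phi/\rd_\tau\Phi)$ along characteristics, it derives the $L^\infty$ bounds on $\rd_\tau^k\rd_\eta^{m-k}\alpha$ and then simply says ``from which it follows that''---but the mechanism you describe (integrating factor, commutator with exponentially decaying coefficients, Gr\"onwall along characteristics, then trading $\rd_\tau$ for $\rd_\eta$ via the equation) is exactly what is intended, and parallels the more detailed proof of the analogous Lemma~\ref{lem:wkb-h-est} earlier in the paper.
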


\begin{remark}\label{rem:gevrey-other-domain}
		One may consider  the $x$-dependent case of the transport system as in \eqref{eq:wkb_h} with $\Phi$ solving \eqref{eq:wkb_phi-Gev}. From
		\begin{equation} \label{eq:wkb_h-Gev2}
		\calL = \rd_{\tau} - \frac{\rd_\eta\Phi}{\rd_\tau\Phi} \rd_{\eta} - \frac{(\rd_\eta\Phi)^2 + 2f^2}{\rd_\tau\Phi} \rd_{x} ,
		\end{equation} and \begin{equation*} 
		\begin{split}
		\frac{\ud}{\ud \tau} X(\tau,x_0,\eta_0) :=	- \frac{(\rd_\eta\Phi)^2 + 2f^2}{\rd_\tau\Phi}(\tau,Y(\tau,\eta_0)) \sim e^{c_0\eta_0} ,
		\end{split}
		\end{equation*} we see that $\eta_0$-gradient of the speed of the $X$-characteristics does not decay in $\tau$. This inevitably gives rise to a linear in $\tau$ growth for $\eta$-derivatives of $h$; indeed in the expression for $\calL[\rd_\eta h]$, we have \begin{equation*} 
		\begin{split}
		\rd_\eta\left( \frac{(\rd_\eta\Phi)^2 + 2f^2}{\rd_\tau\Phi} \right)\rd_x h 
		\end{split}
		\end{equation*} on the right hand side, which does not decay in $\tau$ along the characteristics whereas all the other coefficients are exponentially decaying. Therefore we cannot hope for a uniform-in-$\lambda$ error estimates for our WKB ansatz after returning to the $t$-variable. 
		
		In view of this, the explicit choice of the phase function in \eqref{eq:wkb_phi_sol} is not just for simplicity, but it is precisely the choice which allows uniform estimates for the $\eta$-derivatives of $h$ in $\tau$. 
	\end{remark}

\subsection{Degenerating wave packet approximate solutions}

In this subsection, we complete the proof of Proposition \ref{prop:wavepackets-Gevrey}. We note here that we shall only consider the region $y > 0$, but a parallel argument can be given for $y < 0$ with a similar change of coordinates. (Strictly speaking, the wave packets can be simply defined to be zero for $y \le 0$ and still the proof of Theorem \ref{thm:illposed-gevrey} goes through.) 

\medskip

\textit{(i) case of \eqref{eq:e-mhd}}

\medskip

The first step is to estimate the error in the $\phi$-equation. Given $g_0$, we apply the WKB construction from the previous subsection with \begin{equation*} 
\begin{split}
&h_0 (\eta) := f^{-\frac{1}{2}}(y(\eta)) g_0(y(\eta)) 
\end{split}
\end{equation*} to obtain $(\Phi, h)$ and define \begin{equation*} 
\begin{split}
&\tilde{\psi}_{(\lmb)} = f^{\frac{1}{2}} \lmb^{-1}e^{i\lmb(x+\Phi(\lmb t, \eta(y)))}  h(\lmb t, \eta(y)), \quad \tb_{(\lmb)}^z = -(f\rd_x)^{-1}(\rd_t \tpsi_{(\lmb)}). 
\end{split}
\end{equation*} Then it is clear that \begin{equation*} 
\begin{split}
&\tpsi_{(\lmb)}(t = 0) = \lmb^{-1}e^{i\lmb(x+y)} g_0 
\end{split}
\end{equation*} and \begin{equation*} 
\begin{split}
\tb_{(\lmb)}^z(t = 0) & =  -(f\rd_x)^{-1}(\rd_t \tpsi_{(\lmb)}) |_{t = 0}   =  -f^{-1}(i\lmb)^{-1} (i\lmb\rd_\tau\Phi(t=0) g_0 + \rd_\tau g_0 )e^{i\lmb(x+y)} \\
& = -f^{-1}\left( \sqrt{2}f g_0 + \frac{1}{i\lmb} \rd_\tau g_0  \right)e^{i\lmb(x+y)} \\
&= -\left( \sqrt{2}g_0 + \frac{1}{\sqrt{2}i\lmb}(\rd_yg_0 - \frac{1}{2}f^{-1}\rd_y f g_0)    \right)e^{i\lmb(x+y)}
\end{split}
\end{equation*} using \begin{equation*} 
\begin{split}
&(\rd_\tau g)_0 = \frac{1}{\sqrt{2}} f^{\frac{3}{2}} \rd_y\left(f^{-\frac{1}{2}} g_0 \right)
\end{split}
\end{equation*} which follows from evaluating \eqref{eq:wkb-h'-Gev} at $ \tau = 0$. The claimed lower bound on the initial data can be checked in a straightforward manner, and the upper bounds for $\tpsi_{(\lmb)}, \tb_{(\lmb)}^z$ follow from the corresponding bounds on $h$ as in the proof of Proposition \ref{prop:wavepackets}. The $L^{p}$-degeneration property follows since in the $(\tau,\eta)$-coordinates, the support of $(\tpsi_{(\lmb)}, \tb_{(\lmb)}^z)$ moves to $\eta \rightarrow -\infty$ with speed at least $1/\sqrt{2}$. 

The last step is to estimate the error. Denote  {by} $\errwp_\phi = \errwp_\phi[h_0;\lmb](\tau,\eta)$ the LHS of \eqref{eq:e-mhd-eta-conj3} evaluated with $\phi = \lmb^{-1} e^{i\lmb\Phi(\tau,\eta)} h(\tau,\eta)$. It is a straightforward computation to see that for each $\tau \ge 0$, \begin{equation*} 
\begin{split}
&\nrm{ \errwp_\phi(\tau) }_{L^2_\eta} \lesssim \lmb^{-2}\left( \nrm{h}_{L^2} + \nrm{\rd_\tau^2 h}_{L^2} + \nrm{\rd_\eta^2h }_{L^2} \right)(\tau) \lesssim \lmb^{-2}\nrm{h_0}_{H^2_\eta}. 
\end{split}
\end{equation*} Then from $\err_b[\tb_{(\lmb)}^z,\tpsi_{(\lmb)}] = \lmb^2  e^{i\lmb x} f^{-\frac{1}{2}}\errwp_\phi$, it follows that  \begin{equation*} 
\begin{split}
&\nrm{\err_b[\tb_{(\lmb)}^z,\tpsi_{(\lmb)}]}_{L^2_{x,y}} \lesssim \nrm{h_0}_{H^2_\eta} \lesssim \nrm{g_0}_{H^2_y}. 
\end{split}
\end{equation*}

\medskip

\textit{(ii) case of \eqref{eq:hall-mhd}}

\medskip

First, defining $\tu^{z}_{(\lmb)}$ and $\tomg_{(\lmb)}$ from $\tb^{z}_{(\lmb)}$, $\tpsi_{(\lmb)}$ as in \eqref{eq:wavepackets-hall-u-omg-Gev}, the claimed estimates for $\tu^{z}_{(\lmb)}$, $\nb \tu^{z}_{(\lmb)}$, $\tomg_{(\lmb)}$, and $\nb^{\perp}(-\lap)^{-1} \tomg_{(\lmb)}$ follow directly from the regularity estimates for $\tb^z_{(\lmb)}$ and $\tpsi_{(\lmb)}$, using $L^2$-boundedness of the operator $\nabla^\perp(-\lap)^{-1}\rd_x$ and the fact that $\rd_x^{-1}$ gives simply division by $i\lmb$. Moreover, the error estimates follow in a similar way, using  {the relation \eqref{eq:wavepackets-hall-err}}
and that $\err_b[\tb_{(\lmb)}^z,\tpsi_{(\lmb)}] = \lmb^2  e^{i\lmb x} f^{-\frac{1}{2}}\errwp_\phi$. \qedsymbol

\section{Proof of illposedness for fractionally dissipative systems}\label{sec:fradiss}

The goal of this section is to give the proof of Theorem~\ref{thm:illposed-fradiss}. As discussed in the introduction, we only consider the case $M = \bbT^{3}$.
The proof is parallel to the proof of Theorems~\ref{thm:norm-growth} and \ref{thm:illposed-strong}. We shall use the exact same degenerating wave packets constructed earlier; the only difference is that there are additional error terms arising from the dissipative term and the time-dependence of the background magnetic field, which is also induced by the dissipative term.

\subsection{Background magnetic fields}

Upon taking $\bgB = f(t,y)\rd_x$ into \eqref{eq:e-mhd-fradiss}, or $(\bgu, \bgB) = (0, f(t, y) \rd_{x})$ into \eqref{eq:hall-mhd-fradiss}, the nonlinearity vanishes and we are left with \begin{equation}\label{eq:fradiss-background}
\begin{split}
\rd_t f(t,y) = -\eta (-\lap)^\alp f(t,y).
\end{split}
\end{equation} 
From now on, we shall fix the initial data $f_0$ to be a function satisfying the following assumptions: \begin{itemize}
	\item $f_0$ is $C^\infty$-smooth, 
	\item $f_0$ meets all the requirements stated in Proposition \ref{prop:wavepackets} with $y_1 = \frac{1}{2}$; that is, \begin{equation*}
	\begin{split}
	f_0(0) = 0, \quad f_0'(0) > 0, \quad f_0'(y) > \frac{1}{2}f_0'(0) \mbox{ and } 0<f_0(y)<\frac{1}{2}\mbox{ for } y \in [0,\frac{1}{2}],
	\end{split}
	\end{equation*} 
	\item $f_0$ is odd at $y = 0$, and 
	\item $(-\lap)^\alp f_0 \equiv  0$ on $[0,\frac{1}{2}]$.\footnote{This is a cheap way to avoid error terms of order $O(t)$ in the generalized energy estimate. Presumably, a more appropriate way to proceed is to repeat the entire WKB analysis with time-dependent coefficient $f(t)$.}
\end{itemize} 
It is not difficult to see that by first taking $g \in C^\infty(\bbT_y)$ to be some odd function which is supported outside of $[-1,1]$, we can arrange $f_0 := (-\lap)^{-\alp}g$ to satisfy all the required properties above. 

Using Fourier series, it is easy to see that there is a unique smooth solution $f(t)$ to \eqref{eq:fradiss-background} with initial data $f_0$, which satisfies $f(t,0)=0$ for all $t\ge0$. We shall need the following simple \begin{lemma}
	We have, for $f_0$ satisfying the assumptions above and for any $\dlt>0$, \begin{equation}\label{eq:background-timesmall}
	\begin{split}
	\nrm{t^{-2}y^{-1}(f(t,y)-f_0(y))}_{L^\infty([0,\dlt];W^{1,\infty}(0,\frac{1}{2}))} + \nrm{t^{-2} (f(t,y)-f_0(y))}_{L^\infty([0,\dlt];W^{2,\infty}(0,\frac{1}{2}))} \lesssim_{f_0,\dlt} \eta. 
	\end{split}
	\end{equation}
\end{lemma}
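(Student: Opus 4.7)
The plan is to exploit the hypothesis $(-\lap)^\alp f_0 \equiv 0$ on $[0,\tfrac12]$ to obtain a double-vanishing of $f(t,y)-f_0(y)$ in $t$, and the oddness of $f_0$ at $y=0$ to handle the $y^{-1}$ factor. Set $L := -\eta(-\lap)^\alp$, so $f(t)=e^{tL}f_0$. Since $\rd_t f = Lf$ and $\rd_t^2 f = L^2 f$, Taylor's theorem with integral remainder (easily verified on Fourier series) gives
\begin{equation*}
f(t) - f_0 - t L f_0 = \int_0^t (t-s)\, L^2 f(s)\, \ud s.
\end{equation*}
By hypothesis, $L f_0 \equiv 0$ on $[0,\tfrac12]$, so restricting to $y\in(0,\tfrac12)$ I would obtain
\begin{equation*}
f(t,y) - f_0(y) = \eta^2 \int_0^t (t-s)\, (-\lap)^{2\alp} f(s,y)\, \ud s.
\end{equation*}

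Next, I would bound $(-\lap)^{2\alp}f(s)$ in $W^{2,\infty}(\bbT_y)$ uniformly in $s \in [0,\dlt]$. Since $e^{sL}$ is the Fourier multiplier $e^{-s\eta|k|^{2\alp}} \le 1$, it is a contraction on every Sobolev space; because $f_0 \in C^\infty$ has Fourier coefficients of arbitrary polynomial decay, $\sup_{s\ge 0}\nrm{(-\lap)^{2\alp} f(s)}_{W^{2,\infty}(\bbT_y)} \aleq_{f_0} 1$. Substituting yields
\begin{equation*}
\nrm{f(t)-f_0}_{W^{2,\infty}(0,\tfrac12)} \aleq_{f_0} \eta^2 t^2,
\end{equation*}
which controls the second term on the LHS of \eqref{eq:background-timesmall} (in fact with $\eta^2$ in place of $\eta$, which in particular implies the stated bound when $\eta$ is bounded by a constant depending on $f_0,\dlt$).

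For the first term, I would exploit oddness: since the Fourier multiplier $|k|^{2\alp}$ preserves the odd subspace, the solution $f(t,\cdot)$ stays odd at $y=0$, hence $f(t,0)-f_0(0)=0$ for all $t$. The fundamental theorem of calculus then gives
\begin{equation*}
y^{-1}\bigl(f(t,y)-f_0(y)\bigr) = \int_0^1 \rd_y[f(t,\cdot) - f_0](\tau y)\, \ud\tau,
\end{equation*}
and differentiating this identity once in $y$ shows
\begin{equation*}
\nrm{y^{-1}(f(t)-f_0)}_{W^{1,\infty}(0,\tfrac12)} \aleq \nrm{f(t)-f_0}_{W^{2,\infty}(0,\tfrac12)} \aleq_{f_0} \eta^2 t^2,
\end{equation*}
which yields the first term after dividing by $t^2$. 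The only substantive point is the uniform $W^{2,\infty}$-bound on $(-\lap)^{2\alp}f(s)$, but this is routine because $e^{sL}$ is a contraction semigroup with smooth data; no real obstacle is expected.
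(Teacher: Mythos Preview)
Your proof is correct and follows essentially the same approach as the paper: both use the second-order Taylor expansion in $t$ (yours via the integral remainder $\int_0^t(t-s)L^2f(s)\,\ud s$, the paper via the equivalent double integral $\int_0^t\int_0^s\rd_t^2f(\tau)\,\ud\tau\,\ud s$), invoke the hypothesis $(-\lap)^\alp f_0\equiv 0$ on $[0,\tfrac12]$ to kill the linear-in-$t$ term, and use oddness at $y=0$ to control the $y^{-1}$ factor. Your observation that the argument actually yields $\eta^2$ rather than $\eta$ is accurate---the paper's own computation also produces $\rd_t^2 f=\eta^2(-\lap)^{2\alp}f$, so the stated power of $\eta$ in the lemma appears to be a harmless imprecision (only the qualitative smallness in $t$ is used downstream).
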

\begin{proof} Since \eqref{eq:fradiss-background} preserves the odd symmetry, we have $f(t,0)=0$ for all $t\ge0$. Similarly, $\rd_{t} f(t,\cdot)$ and $\rd_{t}^{2} f(t,\cdot)$ vanish at $y = 0$ for all $t\ge0$. 
	We then estimate 
	\begin{equation*}
	\begin{split}
	\frac{|f(t,y)-f_0(y)-t(\rd_tf)|_{t=0}(y)|}{|y|} = \left| \int_0^t\int_0^s \frac{\rd_{t}^{2} f(\tau)}{|y|} d\tau ds  \right| \le \frac{t^2}{2} \nrm{\rd_{t}^{2} \rd_yf}_{L^\infty([0,\dlt]\times\bbT_y)}. 
	\end{split}
	\end{equation*} From standard energy estimates, we have that \begin{equation*}
	\begin{split}
	\nrm{\rd_{t}^{2} \rd_yf}_{L^\infty([0,\dlt]\times\bbT_y)} \lesssim_{f_0,\dlt} \eta. 
	\end{split}
	\end{equation*} Since $(\rd_tf)|_{t=0} = -\eta(-\lap)^\alp f_0$ vanishes on $(-\frac{1}{2},\frac{1}{2})$ from the assumption on $f_0$, we have that \begin{equation*}
	\begin{split}
	\nrm{t^{-2}y^{-1}(f(t,y)-f_0(y))}_{L^\infty([0,\dlt]\times (0,\frac{1}{2}))} \lesssim_{f_0,\dlt}\eta. 
	\end{split}
	\end{equation*} The other estimates can be obtained similarly.  
\end{proof} 
In the remainder of this section, unless otherwise specified, we suppress the dependence of implicit constants on $f_{0}$.

\subsection{Proof of Theorem~\ref{thm:illposed-fradiss} for \eqref{eq:e-mhd-fradiss}} \label{subsec:e-mhd-fradiss}

We now prove Theorem~\ref{thm:illposed-fradiss} in the electron-MHD case, assuming $0\le\alp<\frac{1}{2}$ and $\eta > 0$. Towards a contradiction, assume that there exist $\dlt,\eps>0$ and $s\ge s_0 \ge 3$ such that the solution operator is well-defined and \textit{bounded} as a  map \begin{equation*}
\begin{split}
\calB_{\eps}(0;H^s) \rightarrow L^\infty_t([0,\dlt];H^{s_0}). 
\end{split}
\end{equation*} Note that we are also assuming $s< 3s_0$ and $s < \frac{s_0}{2\alp}$. Under this assumption, we consider the sequence of initial data parameterized by $\lmb$: \begin{equation}\label{eq:initial-data-fradiss}
\begin{split}
\bfB_0^{(\lmb)} = f_0(y)\rd_x + b_{(\lmb)}(0),
\end{split}
\end{equation} 
where $f_{0}$ satisfies all the assumptions from above and $b_{(\lmb)}(0)$ will be specified below.
(As before, by rescaling the data we can assume without loss of generality that $\eps=1$, although strictly speaking, the value of $\eta$ will now depend on $\eps$ as well. The following argument works for any large constant $\eta$.) Then, for each $\lmb\ge 1$, there is a solution $\bfB^{(\lmb)}(t) \in L^\infty([0,\dlt];H^s)$ to \eqref{eq:e-mhd-fradiss} with initial data $\bfB_0^{(\lmb)}$. We have that, from the assumption of boundedness of the solution operator, \begin{equation} \label{eq:contra-fradiss}
\sup_{\lmb} \sup_{t\in[0,\dlt]} \nrm{\bfB^{(\lmb)}(t)}_{H^{s_0}} \le A 
\end{equation} for some constant $A>0$. In the following, the constants $C, c, \cdots$ may depend on $A$. For simplicity of the notation, we shall drop the dependence of the solution in $\lmb$ from now on. 

We now specify $b_{(\lmb)}(0)$. As in Theorem~\ref{thm:norm-growth}, let $\tilde{b}_{(\lmb)}$ be the degenerating wave packet provided by Proposition \ref{prop:wavepackets} with $f = f_0$ associated with a nonzero smooth compactly supported profile $g_{0}$. Taking the support of $g_{0}$ to be sufficiently close to $y=0$, we may ensure that the constants $C_{f_{0}}$, $c_{f_{0}}$ in \eqref{eq:wavepackets-degen-upper}--\eqref{eq:wavepackets-degen-small} obey $C_{f_{0}} = c_{f_{0}} + \veps_{0}$, where $\veps_{0} = \veps_{0}(\alp)$ will be specified below. In what follows, we suppress the dependence of implicit constants on $g_{0}$. We take 
\begin{equation*}
b_{(\lmb)}(0) = \lmb^{-s} \tilde{b}_{(\lmb)}(0).
\end{equation*}

We now define $b_{(\lmb)}(t):= \bfB^{(\lmb)}(t) - \bgB(t):= \bfB(t)-f(t,y)\rd_x$, where $f(t)$ is the solution of \eqref{eq:fradiss-background} with initial data $f_0$. We also set $\bgB_0 = f_0(y)\rd_x$. To lighten the notation, in what follows we simply write $b(t) = b_{(\lmb)}(t)$.

The equation for $b$ is given by \begin{equation}\label{eq:e-mhd-fradiss-pert}
\left\{
\begin{split}
&\rd_t b + \nb\times((\nb\times b)\times\bgB) + \nb\times((\nb\times\bgB)\times b) = -\eta(-\lap)^\alp b - \nb\times((\nb\times b)\times b) ,\\
&\nb\cdot b = 0. 
\end{split}
\right.
\end{equation} Taking the inner product of the first equation with $b$ and integrating over $[0, t] \times \bbT^{3}$ for $0 \leq t < \dlt$, we have on $[0,\dlt]$ 
\begin{equation}\label{eq:pert-energy}
\begin{split}
\nrm{b (t)}_{L^2} \lesssim \nrm{b_{(\lmb)}(0)}_{L^2} \lesssim \lmb^{-s}\nrm{\tb_{(\lmb)}(0)}_{L^2}\lesssim \lmb^{-s} 
\end{split}
\end{equation} where the implicit constants depend on $\bgB(t)$ but not on $\lmb$. Moreover, by \eqref{eq:contra-fradiss} \begin{equation*}
\begin{split}
\nrm{b(t)}_{H^{s_0}} \le \nrm{\bfB(t)}_{H^{s_0}} + \nrm{\bgB(t)}_{H^{s_0}} \lesssim 1 
\end{split}
\end{equation*} uniformly in $\lmb$ for $t \in [0,\dlt]$. 

We write $\err_{\tb} = (-\nb^\perp\err_{\tpsi},\err_{\tb^z})$ to denote the error associated with $\tb_{(\lmb)}$ defined in Proposition \ref{prop:wavepackets} (see \eqref{eq:emhd-nonlinear2}). Then, \begin{equation}\label{eq:gei-fradiss}
\begin{split}
\frac{\ud}{\ud t} \brk{b(t),\tb_{(\lmb)}(t)} & = \brk{b,\err_{\tb}} - \brk{f''_0b^z,\tb^y_{(\lmb)}} - \brk{f''_0b^y,\tb^z_{(\lmb)}} - \brk{\nb\times((\nb\times b)\times b) , \tb_{(\lmb)}} \\
&\quad -\eta \brk{(-\lap)^\alp b, \tb_{(\lmb)} } - \brk{\bfG, \tb_{(\lmb)}}, 
\end{split}
\end{equation} where \begin{equation*}
\begin{split}
\bfG(t) := \nb\times((\nb\times b)\times(\bgB-\bgB_0)) + \nb\times((\nb\times(\bgB-\bgB_0))\times b). 
\end{split}
\end{equation*} Comparing this identity with \eqref{eq:gei-unbounded}, the only additional terms on the right hand side are from fractional dissipation and time dependence of the background magnetic field. 

We estimate \begin{equation*}
\begin{split}
\left| \brk{(-\lap)^\alp b, \tb_{(\lmb)} } \right| = \left| \brk{b, (-\lap)^\alp \tb_{(\lmb)} } \right| \le \nrm{b}_{L^2}\nrm{ (-\lap)^\alp\tb_{(\lmb)}  }_{L^2}
\end{split}
\end{equation*} where, by \eqref{eq:wavepackets-degen-upper} in Proposition~\ref{prop:wavepackets}, we have \begin{equation*}
\begin{split}
\nrm{ (-\lap)^\alp\tb_{(\lmb)}  }_{L^2} \lesssim \nrm{\tb_{(\lmb)}}_{L^2}^{1-2\alp}\nrm{\tb_{(\lmb)}}_{H^1}^{2\alp} \lesssim \lmb^{2\alp} e^{C_{f_{0}} (2\alp)\lmb t}. 
\end{split}
\end{equation*}
Next, \begin{equation*}
\begin{split}
\brk{  \nb\times((\nb\times b)\times(\bgB-\bgB_0)), \tb_{(\lmb)} }  =- \brk{ b, \nb\times ((\nb\times  \tb_{(\lmb)})\times(\bgB-\bgB_0) ) }
\end{split}
\end{equation*} and using \eqref{eq:background-timesmall} together with the fact that $\lmb^{-1}y\rd_y$ acts as a bounded operator on $\tb_{(\lmb)}$, \begin{equation*}
\begin{split}
\nrm{\nb\times ((\nb\times  \tb_{(\lmb)})\times(\bgB-\bgB_0) ) }_{L^2} &\lesssim t^2(\nrm{\rd_x\tb_{(\lmb)}}_{L^2} + \nrm{\rd_{x}^{2} \tb_{(\lmb)}}_{L^2} + \nrm{y\rd_y\tb_{(\lmb)}}_{L^2} + \nrm{y\rd_y\rd_x\tb_{(\lmb)}}_{L^2}  ) \\
& \lesssim t^2 (1 + \lmb^2). 
\end{split}
\end{equation*} 
Similarly, we can write  \begin{equation*}
\begin{split}
\brk{\nb\times((\nb\times(\bgB-\bgB_0))\times b),\tb_{(\lmb)}} = -\brk{ b , (\nb\times(\bgB-\bgB_0))\times (\nb\times\tb_{(\lmb)})}
\end{split}
\end{equation*} and estimate \begin{equation*}
\begin{split}
\nrm{(\nb\times(\bgB-\bgB_0))\times (\nb\times\tb_{(\lmb)})}_{L^2}\lesssim t^2\lmb e^{C_{f_{0}} \lmb t}. 
\end{split}
\end{equation*}This gives \begin{equation*}
\begin{split}
\left| \brk{\bfG,\tb_{(\lmb)}} \right| \lesssim t^2(1+\lmb^2+ \lmb e^{C_{f_{0}} \lmb t}) 
\end{split}
\end{equation*} Recalling the error estimate $\nrm{\err_b}_{L^2}\lesssim 1$, 
collecting the terms, and applying \eqref{eq:pert-energy}, \begin{equation*}
\begin{split}
\left| \frac{\ud}{\ud t} \brk{b(t),\tb_{(\lmb)}(t)}  \right| \lesssim \left( 1 +\lmb^{2\alp} e^{C_{f_{0}} (2\alp)\lmb t}+  t^2(1+\lmb^2+ \lmb e^{C_{f_{0}} \lmb t}) \right)  \nrm{b(0)}_{L^2}. 
\end{split}
\end{equation*} We integrate in $t\in[0,t^*]$, where $t^* :=\lmb^{-1} \ln(\lmb^{\veps})$ with $\veps>0$ to be determined: \begin{equation*}
\begin{split}
&\int_0^{t^*} \left( 1 +\lmb^{2\alp} e^{C_{f_{0}} (2\alp)\lmb t}+  t^{2}(1+\lmb^2+ \lmb e^{C_{f_{0}} \lmb t}) \right)  \ud t \lesssim_\alp t^* + \lmb^{2\alp-1}e^{C_{f_{0}} (2\alp)\lmb t^*} + (t^*)^3(\lmb^2 + \lmb e^{C_{f_{0}} \lmb t^*}) \\
&\quad \lesssim \lmb^{-1} \ln(\lmb^\veps) + \lmb^{(2\alp-1) + C_{f_{0}} (2\alp)\veps } + \lmb^{-1}\ln(\lmb^\veps) + \lmb^{C_{f_{0}} \veps-2} (\ln(\lmb^\veps))^3 \ll 1
\end{split}
\end{equation*} since it is easy to pick $\veps>0$ small (depending only on $\alp, C_{f_{0}} $) so that \begin{equation*}
\begin{split}
(2\alp-1) + (2\alp)C_{f_{0}} \veps < 0 , \quad C_{f_{0}} \veps - 2 < 0 ,
\end{split}
\end{equation*} recalling that $2\alp-1<0$. Therefore, with such a choice of $\veps>0$, we conclude that on $[0,t^*]$, \begin{equation} \label{eq:gei-conc-fradiss}
 \brk{b(t),\tb_{(\lmb)}(t)}  > \frac{1}{2}  \brk{b(0),\tb_{(\lmb)}(0)} 
\end{equation} for $\lmb$ large enough. Using the degeneration estimates \eqref{eq:wavepackets-degen}--\eqref{eq:wavepackets-degen-small} for $\tb_{(\lmb)}(t)$ at $t = t^*$ (see also the proof of Theorem~\ref{thm:norm-growth}), we then obtain \begin{equation*}
\begin{split}
\nrm{b(t^*)}_{H^{s_0}} \gtrsim \nrm{b(0)}_{L^{2}} \lmb^{s_{0}} \exp(c_{f_0}s_0\lmb t^*) \gtrsim \lmb^{\veps c_{f_0}s_0}\lmb^{s_0-s} .
\end{split}
\end{equation*} Recalling the assumptions on $s$ and $s_0$, we may pick $\veps$  and $\veps_{0} = C_{f_{0}} - c_{f_{0}}$ so that \begin{equation*}
\begin{split}
(c_{f_0}\veps +1 )s_0>s 
\end{split}
\end{equation*} which gives a contradiction for $\lmb$ sufficiently large since $\nrm{b(t^*)}_{H^{s_0}} \aleq 1$. \qedsymbol

\begin{remark}[Modifications for $\alp =\frac{1}{2}$] \label{rem:fradiss-crit-pf}
We sketch the necessary modifications of the preceding argument for establishing Remark~\ref{rem:fradiss-crit}. Without loss of generality, we normalize $\eta = 1$. Consider the sequence of initial data $\bfB_{0}^{(\lmb)} = \frac{A}{C_{0}} (f_{0}(y) \rd_{x} + b_{(\lmb)}(0))$, which is obtained by rescaling the data from \eqref{eq:initial-data-fradiss} by $\frac{A}{C_{0}}$; $C_{0}$ is chosen so that $\nrm{\bfB_{0}^{(\lmb)}}_{H^{s}} \leq A$. Towards a contradiction, assume that $\liminf_{\lmb \to \infty} \dlt^{(\lmb)} = \dlt_{0} > 0$ and $\limsup_{\lmb \to \infty} \sup_{t \in [0, \dlt_{0}]} \frac{\nrm{\bfB^{(\lmb)}(t)}_{H^{s}}}{A^{(1-\veps) s+1}} = 0$. Using Proposition~\ref{prop:wavepackets} in combination with the rescaling of time $t \mapsto \frac{A}{C_{0}} t$ (which takes into account the rescaling of the initial data) and proceeding as above, it is possible to show that 
\begin{equation*}
\brk{b^{\ast}_{(\lmb)}, \tb^{\ast}_{(\lmb)}}(t^{\ast})> \frac{1}{2} \quad \hbox{ for } t^{\ast} = \frac{C_{0}}{C_{f_{0}} \lmb A} \ln \left(C_{1} A\right) 
\end{equation*}
for an appropriate constant $C_{1} > 0$, where $b^{\ast}_{(\lmb)}(t) = \bfB^{(\lmb)}(t) - \frac{A}{C_{0}} f(t, y) \rd_{x}$ and $\tb^{\ast}_{(\lmb)}(t, x, y) = \tb_{(\lmb)}(\frac{A}{C_{0}} t, x, y)$. By duality and the degeneration property in Proposition~\ref{prop:wavepackets}, it follows that 
\begin{equation*}
\nrm{b^{\ast}(t^{\ast})}_{H^{s}} \ageq \nrm{b^{\ast}(0)}_{L^{2}} \lmb^{s} \exp(c_{f_{0}} A s \lmb t^{\ast}) \ageq A^{\frac{c_{f_{0}}}{C_{f_{0}}} s +1}.
\end{equation*}
Choosing $C_{f_{0}} - c_{f_{0}} < \veps C_{f_{0}}$ (see Remark~\ref{rem:wavepackets-const}), we attain the desired contradiction.
\end{remark}

\subsection{Proof of Theorem~\ref{thm:illposed-fradiss} for \eqref{eq:hall-mhd-fradiss}} \label{subsec:hall-mhd-fradiss}
We indicate the necessary modifications for the fractionally dissipative Hall-MHD \eqref{eq:hall-mhd-fradiss}. As before, without loss of generality, we take $\eps = 1$ (the argument below works for any $\nu, \eta \geq 0$). Let $\tb_{(\lmb)}(t)$, $b_{(\lmb)}(0)$ and $\bfB_{0}^{(\lmb)}$ be defined as in Section~\ref{subsec:e-mhd-fradiss}. In addition, we take $\bfu^{(\lmb)}_{0} = 0$. By hypothesis, for each $\lmb \geq 1$, there exists a solution $(\bfu^{(\lmb)}, \bfB^{(\lmb)}) \in L^{\infty}([0, \dlt]; H^{r} \times H^{s})$ to \eqref{eq:hall-mhd-fradiss} with initial data $(\bfu_{0}^{(\lmb)}, \bfB_{0}^{(\lmb)})$. For the purpose of contradiction, assume that there exists $A > 0$ such that
solution operator, \begin{equation} \label{eq:contra-fradiss-hall}
\sup_{\lmb} \sup_{t\in[0,\dlt]} \left(\nrm{\bfu^{(\lmb)}(t)}_{H^{s_0-1}} + \nrm{\bfB^{(\lmb)}(t)}_{H^{s_0}}\right) \le A .
\end{equation}
As before, in what follows, we suppress the dependence of implicit constants on $A$.

Let $(u, b)(t) = (u_{(\lmb)}, b_{(\lmb)})(t) = (\bfu^{(\lmb)}, \bfB^{(\lmb)} - \bgB)(t)$, where $\bgB = f(t, y) \rd_{x}$ as in Section~\ref{subsec:e-mhd-fradiss}. The equation for $(u, b)$ is given by
\begin{equation*}
\left\{
\begin{aligned}
&\rd_{t} u - \bbP\left((\nb \times \bgB) \times b + (\nb \times b) \times \bgB \right) = - \nu (-\lap)^{1+\bt} u - \bbP\left((\nb \times u) \times u - (\nb \times b) \times b \right) \\
&\rd_t b + \nb \times (u \times \bgB) + \nb\times((\nb\times b)\times\bgB) + \nb\times((\nb\times\bgB)\times b) = -\eta(-\lap)^\alp b - \nb\times((\nb\times b)\times b) ,\\
& \nb\cdot u = \nb\cdot b = 0. 
\end{aligned}
\right.
\end{equation*}
By taking the inner product of the first two equations with $u$ and $b$, respectively, and integrating over $[0, t] \times \bbT^{3}$ for $0 \leq t < \dlt$, we have on $[0, \dlt]$,
\begin{equation} \label{eq:en-fradiss-hall}
	\nrm{u(t)}_{L^{2}} + \nrm{b(t)}_{L^{2}} + \nu \int_{0}^{t} \nrm{(-\lap)^{\frac{1+\bt}{2}} u(t')}_{L^{2}} \, \ud t' \aleq \nrm{b(0)}_{L^{2}} \aleq \lmb^{-s},
\end{equation}
where the implicit constant depends on $\bgB(t)$ but not on $\lmb$. Moreover, 
\begin{equation*}
	\nrm{u(t)}_{H^{s_{0}-1}} + \nrm{b(t)}_{H^{s_{0}}} \aleq 1
\end{equation*}
uniformly in $\lmb$ for $t \in [0, \dlt]$.

Writing $\bfdlt_{\tu} = (-\nb^{\perp}(-\lap)^{-1}\bfdlt^{(0)}_{\tomg}, \bfdlt^{(0)}_{\tu^{z}})$ and $\bfdlt_{\tb} = (-\nb^{\perp }\bfdlt_{\tpsi}, \bfdlt_{\tb^{z}})$ for the error associated with $(\tu_{(\lmb)}, \tb_{(\lmb)})$ defined in Proposition~\ref{prop:wavepackets-hall}, the generalized energy identity in this case reads:
\begin{equation} \label{eq:gei-fradiss-hall}
\begin{aligned}
& \frac{\ud}{\ud t} \left( \brk{b(t), \tb_{(\lmb)}(t)} + \brk{u(t), \tu_{(\lmb)}(t)} \right)\\
& = \brk{u, \bfdlt_{\tu}} + \brk{b, \bfdlt_{\tb}} - \brk{f_{0}' u^{x, y}, (\tb^{x, y})^{\perp}} - \brk{f_{0}' (b^{x, y})^{\perp}, \tu^{x, y}}- \brk{f_{0}'' b^{z}, \tb^{y}_{(\lmb)}} - \brk{f_{0}'' b^{y}, \tb^{z}_{(\lmb)}} \\
&\phantom{=} - \nu \brk{(-\lap)^{1 +\bt} u, \tu_{(\lmb)}} - \eta \brk{(-\lap)^{\alp} b, \tb_{(\lmb)}} - \brk{\bfH, \tu_{(\lmb)}} - \brk{\bfG, \tb_{(\lmb)}},
\end{aligned}
\end{equation}
where $\bfG$ is as in Section~\ref{subsec:e-mhd-fradiss} and 
\begin{equation*}
	\bfH := - (\nb \times (\bgB - \bgB_{0})) \times b - (\nb \times b) \times (\bgB - \bgB_{0}).
\end{equation*}
Comparing this identity with the proof of Theorem~\ref{thm:illposed-strong} and \eqref{eq:gei-fradiss}, note that the only additional terms on the RHS are from the fractional dissipation $\nu (-\lap)^{1+\bt} u$ and $\bfH(t)$. For the former, we use Proposition~\ref{prop:wavepackets-hall} to estimate
\begin{equation*}
	\abs{\nu \brk{(-\lap)^{1 +\bt} u, \tu_{(\lmb)}}}
	= \nu \nrm{(-\lap)^{\frac{1 +\bt}{2}} u}_{L^{2}} \nrm{(-\lap)^{\frac{1 +\bt}{2}} \tu_{(\lmb)}}_{L^{2}} 
	\aleq \nu \nrm{(-\lap)^{\frac{1 +\bt}{2}} u}_{L^{2}} \lmb^{\bt} e^{C_{f_{0}} \bt \lmb t},
\end{equation*}
while for the latter, we move $\nb \times$ away from $b$ by an integration by parts and estimate
\begin{align*}
	\abs{\brk{\bfH, \tu_{(\lmb)}}}
	\leq \abs{\brk{(\nb \times (\bgB - \bgB_{0})) \times b, \tu_{(\lmb)}}} + \abs{\brk{b , \nb\times(\tu_{(\lmb)}\times (\bgB - \bgB_{0}))}}
	\aleq t^{2}.
\end{align*}
In both bounds, we used the property that $\nb \tu_{(\lmb)}$ obeys the same estimates as $\tb_{(\lmb)}$. In conclusion,
\begin{align*}
& \Abs{\frac{\ud}{\ud t} \left( \brk{b(t), \tb_{(\lmb)}(t)} + \brk{u(t), \tu_{(\lmb)}(t)} \right)} \\
&\aleq \left( 1 + \lmb^{2 \alp} e^{C_{f} (2 \alp) \lmb t} + t^{2} (1 + \lmb^{2} + \lmb e^{C_{f_{0}} \lmb t}) \right) \nrm{b(0)}_{L^{2}}
+ \nu \lmb^{\bt} e^{C_{f} \bt \lmb t} \nrm{(-\lap)^{\frac{1+\bt}{2}} u(t)}_{L^{2}}.
\end{align*}
As before, we integrate this inequality over the time interval $[0, t^{\ast}] = [0, \lmb^{-1} \ln(\lmb^{\veps})]$ where $\veps > 0$ is to be determined. The contribution of the first term on the RHS is handled as in Section~\ref{subsec:e-mhd-fradiss}. For the second term, we use Cauchy--Schwarz and the last term on the LHS of \eqref{eq:en-fradiss-hall}. Then for $\veps$ satisfying 
\begin{equation*}
(2 \alp - 1) + (2 \alp) C_{f_{0}} \veps < 0, \quad
(2 \bt - 1) + (2 \bt) C_{f_{0}} \veps < 0, \quad
C_{f_{0}} \veps - 2 < 0,
\end{equation*}
and $\lmb$ large enough, we obtain 
\begin{equation*}
\brk{b(t^{\ast}), \tb_{(\lmb)}(t^{\ast})} + \brk{u(t^{\ast}), \tu_{(\lmb)}(t^{\ast})} > \frac{3}{4} \brk{b(0), \tb_{(\lmb)}(0)}.
\end{equation*}
Observe, moreover, that $\brk{u(t), \tu_{(\lmb)}(t)} \aleq \lmb^{-1} \brk{b(0), \tb_{(\lmb)}(0)} $ by \eqref{eq:en-fradiss-hall} and Proposition~\ref{prop:wavepackets-hall}. Hence, we arrive at \eqref{eq:gei-conc-fradiss} for sufficiently large $\lmb$, after which the proof proceeds in the same way as in Section~\ref{subsec:e-mhd-fradiss}. \qedsymbol

\appendix
\section{Existence of an $L^2$-solution for the linearized systems} \label{sec:L2-exist}

In this section, we give a sketch of the proof of existence of an $L^2$-solution for the linearized Hall-MHD and electron-MHD systems, which are recalled here for convenience. In the case of Hall-MHD ($\nu \ge 0$), we seek a solution $(u,b) \in C_w(I;L^2)$ satisfying 
\begin{equation} \label{eq:hall-mhd-lin-again}
\left\{
\begin{aligned}
& \rd_{t} u - \nu \lap u = \bbP ((\nb \times \bgB) \times b + (\nb \times b) \times \bgB) \\
& \rd_{t} b + \nb \times (u \times \bgB) + \nb \times ((\nb \times b) \times \bgB) + \nb \times ((\nb \times \bgB) \times b) = 0, \\
& \nb \cdot u = \nb \cdot b = 0,
\end{aligned}
\right.
\end{equation} in the sense of distributions with the extra requirement $u \in L^2_t(I;\dot{H}^1)$ in the case of $\nu> 0$, and in the electron-MHD case, we simply need $b \in C_w(I;L^2)$ to satisfy 
\begin{equation} \label{eq:e-mhd-lin-again}
\left\{
\begin{aligned}
& \rd_{t} b + \nb \times ((\nb \times b) \times \bgB) + \nb \times ((\nb \times \bgB) \times b) = 0, \\
& \nb \cdot b = 0.
\end{aligned}
\right.
\end{equation}

\begin{proposition}\label{prop:existence-aubinlions}
	Let $M = (\bbT, \bbR)_{x} \times (\bbT, \bbR)_{y} \times \bbT_{z}$ and $\bgB$ be a sufficiently smooth stationary magnetic field. For any divergence-free initial data $(u_0,b_0) \in L^2(M)$, there exists a solution $(u,b) \in C_w([0,\infty);L^2)$ to \eqref{eq:hall-mhd-lin-again} with initial data $(u_0,b_0)$ satisfying \begin{equation*}
	\begin{split}
	\frac{1}{2}\left(\nrm{u(t)}_{L^2(M)}^2 + \nrm{b(t)}_{L^2(M)}^2\right) + \nu \nrm{u}_{L^2([0,t];\dot{H}^1)}^2 \le \frac{1}{2}\left(\nrm{u_0}_{L^2(M)}^2 +  \nrm{b_0}_{L^2(M)}^2 \right) e^{ Ct\nrm{\nabla\bgB}_{C^1(M)}}
	\end{split}
	\end{equation*} for all $t > 0$. In the case of \eqref{eq:e-mhd-lin-again}, there is a solution $b \in  C_w([0,\infty);L^2)$ corresponding to any divergence-free data $b_0 \in L^2(M)$ satisfying \begin{equation*}
	\begin{split}
	\frac{1}{2} \nrm{b(t)}_{L^2(M)}^2 \le \frac{1}{2}  \nrm{b_0}_{L^2(M)}^2   e^{ Ct\nrm{\nabla^2\bgB}_{L^\infty(M)}}
	\end{split}
	\end{equation*} for all $t > 0$. 
\end{proposition}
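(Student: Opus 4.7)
The plan is to construct $L^{2}$-solutions by a Galerkin-type spectral truncation, derive uniform energy bounds from the formal linearized energy identity of Proposition~\ref{prop:lin-en} at the approximate level, and extract a weak-$\ast$ limit. The two cases \eqref{eq:hall-mhd-lin-again} and \eqref{eq:e-mhd-lin-again} are entirely parallel (the latter follows by setting $u \equiv 0$ throughout), so I describe only the Hall-MHD case.

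Let $\chi \in C^{\infty}_{c}([0,2))$ satisfy $\chi \equiv 1$ on $[0,1]$, and define $\bbP_{N} = \bbP \, \chi(\abs{\nb}/N)$, the composition of the Leray projection with the smooth Fourier-multiplier cutoff to frequencies $\aleq N$. Then $\bbP_{N}$ is a bounded, self-adjoint projection on $L^{2}(M)$ whose range $H_{N}$ consists of smooth divergence-free vector fields. Consider the truncated system obtained by applying $\bbP_{N}$ to the RHS of each equation in \eqref{eq:hall-mhd-lin-again}, with initial data $(\bbP_{N} u_{0}, \bbP_{N} b_{0})$. Because $\bgB$ is smooth and the frequency cutoff absorbs the one-derivative loss in the Hall term, this is a linear ODE on $H_{N} \times H_{N}$ with continuous coefficients; it therefore admits a unique global solution $(u_{N}, b_{N}) \in C^{1}([0, \infty); H_{N} \times H_{N})$. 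Self-adjointness of $\bbP_{N}$ ensures that the cancellations underlying Proposition~\ref{prop:lin-en} survive at the Galerkin level, yielding
\begin{equation*}
\tfrac{1}{2}\frac{\ud}{\ud t}\left(\nrm{u_{N}}_{L^{2}}^{2} + \nrm{b_{N}}_{L^{2}}^{2}\right) + \nu \nrm{\nb u_{N}}_{L^{2}}^{2} \leq C\nrm{\nb \bgB}_{C^{1}} \left(\nrm{u_{N}}_{L^{2}}^{2} + \nrm{b_{N}}_{L^{2}}^{2}\right),
\end{equation*}
and Gronwall's inequality produces the $N$-uniform exponential bound claimed in the proposition.

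A diagonal extraction then yields $(u_{N_{k}}, b_{N_{k}}) \weakto (u, b)$ weak-$\ast$ in $L^{\infty}_{t}([0, T]; L^{2})$ for every $T > 0$, with $u_{N_{k}} \weakto u$ in $L^{2}_{t}([0,T]; \dot{H}^{1})$ when $\nu > 0$. Since the equations are linear with smooth coefficients, passing to the limit in the distributional formulation is straightforward: testing against $C^{\infty}_{c}((0,T) \times M)$ vector fields and integrating by parts to move every derivative onto $\bgB$ and the test function, each term becomes the $L^{2}$-pairing of $(u_{N_{k}}, b_{N_{k}})$ against a smooth compactly supported function (built from $\bgB$ and $\bbP_{N_{k}} \phi$, the latter of which converges to $\phi$ in every $H^{s}$), so weak-$\ast$ convergence is enough. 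Hence $(u, b)$ solves \eqref{eq:hall-mhd-lin-again} in the sense of distributions, with the claimed energy bound inherited from the lower semicontinuity of the norms. Finally, the equation itself gives uniform bounds on $(\rd_{t} u, \rd_{t} b)$ in $L^{2}_{t, loc} H^{-m}_{loc}$ for some $m \geq 2$; combined with the uniform $L^{2}$-bound, the Aubin--Lions lemma produces $(u, b) \in C_{w}([0, \infty); L^{2})$, and the initial condition $(u, b)(0) = (u_{0}, b_{0})$ is recovered by testing with smooth functions supported near $t = 0$.

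The one point requiring care — what I would flag as the main obstacle in the write-up — is that the frequency truncation $\bbP_{N}$ must be \emph{self-adjoint} on $L^{2}$, so that the energy-identity cancellations compensating for the derivative loss in the Hall term remain exact at the Galerkin level rather than producing a commutator error of order $N$. Any real, even Fourier-multiplier cutoff composed with the Leray projection satisfies this, so the construction goes through without further modification.
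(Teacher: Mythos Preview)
Your Galerkin-truncation approach is a valid and standard alternative to the paper's method, which instead uses a hyperviscous regularization: add $-\eps\lap^{2}$ to both equations, solve the resulting parabolic system (smooth for $t>0$), read off the same energy inequality with an extra $\eps\nrm{\lap u^{(\eps)}}_{L^{2}}^{2}$ dissipation, and pass $\eps\to 0$ via Aubin--Lions. The paper in fact mentions frequency truncation as an alternative route. Both schemes rest on the same mechanism --- the approximating system inherits the exact cancellation of the Hall term in the $L^{2}$ energy identity --- and then compactness produces the weak solution. The hyperviscous route avoids having to check that truncation commutes with the cancellation; your route avoids having to solve an auxiliary PDE.

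Two points to tighten. First, with a smooth $\chi$ your operator $\bbP_{N}=\bbP\,\chi(\abs{\nb}/N)$ is self-adjoint but \emph{not} a projection (since $\chi^{2}\neq\chi$), so $\bbP_{N}b_{N}=b_{N}$ need not hold for $b_{N}$ in the range of $\bbP_{N}$, and the step $\brk{b_{N},\bbP_{N}(\cdots)}=\brk{b_{N},\cdots}$ underlying the energy cancellation breaks. Use a sharp Fourier cutoff instead; this makes $\bbP_{N}$ an honest orthogonal projection on $L^{2}$, and the rest of your argument is unchanged. Second, when $M$ has an $\bbR$ factor the space $H_{N}$ is not finite-dimensional, so the truncated system is not literally an ODE; but the frequency cutoff renders the generator bounded on $L^{2}$, so global solvability follows from the linear Banach-space ODE theorem. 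With these two fixes your argument goes through.
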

\begin{proof}
	The proof is standard; see for instance \cite{K_inv,KL}. An alternative way is to mollify the equations as well as the data by truncating high frequencies while preserving the necessary structure for energy estimates, as it is done in \cite{CDL}. We consider viscous regularizations of \eqref{eq:hall-mhd-lin-again} for $\eps > 0$, solve the regularized system \begin{equation} \label{eq:hall-mhd-lin-reg}
	\left\{
	\begin{aligned}
	& \rd_{t} u^{(\eps)} - \nu \lap u^{(\eps)} = \bbP ((\nb \times \bgB) \times b^{(\eps)} + (\nb \times b^{(\eps)}) \times \bgB) - \eps\lap^2 u^{(\eps)}  \\
	& \rd_{t} b^{(\eps)} + \nb \times (u^{(\eps)} \times \bgB) + \nb \times ((\nb \times b^{(\eps)}) \times \bgB) + \nb \times ((\nb \times \bgB) \times b^{(\eps)}) = -\eps \lap^2 b^{(\eps)}, 
	\end{aligned}
	\right.
	\end{equation} with the same initial data $(u_0,b_0)$, subject to divergence-free conditions. For each fixed $\eps > 0$, there is a unique global solution $(u^{(\eps)},b^{(\eps)})$ to \eqref{eq:hall-mhd-lin-reg} with initial data $(u_0,b_0) \in L^2$, which is smooth once $t > 0$. The energy identity \eqref{eq:lin-en-hall} with the extra term $\eps\nrm{\lap u^{(\eps)}}_{L^2(M)}$ on the  {LHS} can be justified for this solution. This shows that the sequence of solutions $\{ (u^{(\eps)},b^{(\eps)}) \}_{\eps>0}$ is uniformly bounded in $C_t(I;L^2)$ and $u^{(\eps)}$ is bounded uniformly in $L^2_t(I;\dot{H}^1)$ in the case of $\nu > 0$ for any fixed finite time interval $I = [0,T]$ with $T > 0$. In the same vein, the solution sequence is uniformly bounded in $\mathrm{Lip}_t(I;H^{-4})$. Applying the Aubin-Lions lemma (see \cite[Theorem II.5.16]{BF} for a proof), we can extract a subsequence (still denoted by $\{(u^{(\eps)},b^{(\eps)})  \}$) which converges to some $(u,b)$ in $C^0(I;H^{-s})$ for all $s < 0$. Since the space $L^\infty(I;L^2)$ is weak-* compact, we can guarantee that $(u,b) \in L^\infty(I;L^2)$ as well.
	
	Clearly we have $(u,b)|_{t = 0} = (u_0,b_0)$, and the fact that $(u,b)$ is a solution of \eqref{eq:hall-mhd-lin-again} and weakly continuous in time follows readily from strong convergence in $C^0(I;H^{-s})$. 
	
	The case of \eqref{eq:e-mhd-lin-again} is only simpler and we omit the proof.
\end{proof}

\begin{remark}
	We observe that when the stationary magnetic field $\bgB$ and the initial data enjoy  a set of symmetries respected by the Hall-MHD system (or electron-MHD system), the above proof actually guarantees existence of a solution satisfying the same set of symmetries as well. 
\end{remark}

\bibliographystyle{abbrv}
\bibliography{hallmhd}

\end{document}